  \def\gn#1#2{{$\href{http://groupnames.org/\#?#1}{#2}$}}
\def\gn#1#2{$#2$}  
\tikzset{sgplattice/.style={inner sep=1pt,norm/.style={red!50!blue},char/.style={blue!50!black},
  lin/.style={black!50}},cnj/.style={black!50,yshift=-2.5pt,left=-1pt of #1,scale=0.5,fill=white}}
\newcommand\myeq{\mathrel{\overset{\makebox[0pt]{\mbox{\normalfont\tiny\sffamily def}}}{=}}}
\newcommand{\cal}{\mathcal}
\DeclareMathOperator{\Hom}{Hom}
\DeclareMathOperator{\supp}{supp}
\newcommand*{\relrelbarsep}{.386ex}
\newcommand*{\relrelbar}{%
  \mathrel{%
    \mathpalette\@relrelbar\relrelbarsep
  }%
}
\newcommand*{\@relrelbar}[2]{%
  \raise#2\hbox to 0pt{$\m@th#1\relbar$\hss}%
  \lower#2\hbox{$\m@th#1\relbar$}%
}
\providecommand*{\rightrightarrowsfill@}{%
  \arrowfill@\relrelbar\relrelbar\rightrightarrows
}
\providecommand*{\leftleftarrowsfill@}{%
  \arrowfill@\leftleftarrows\relrelbar\relrelbar
}
\providecommand*{\xrightrightarrows}[2][]{%
  \ext@arrow 0359\rightrightarrowsfill@{#1}{#2}%
}
\providecommand*{\xleftleftarrows}[2][]{%
  \ext@arrow 3095\leftleftarrowsfill@{#1}{#2}%
}
\DeclareMathOperator{\rec}{rec}
\definecolor{dark-red}{rgb}{0.5,0.15,0.15}
\definecolor{dark-blue}{rgb}{0.15,0.15,0.6}
\definecolor{dark-green}{rgb}{0.15,0.6,0.15}
\newcommand{\stc}{S^3 \langle 3 \rangle}
\newcommand{\cX}{\cal{X}}
\renewcommand*{\backref}[1]{}
\renewcommand*{\backrefalt}[4]{%
  \ifcase #1 %
No citations.
  \or
(cit. on p. #2).%
  \else
(cit on pp. #2).%
  \fi%
}
\newtheorem{thm}{Theorem}[section]
\newtheorem{cor}[thm]{Corollary}
\newtheorem{prop}[thm]{Proposition}
\newtheorem{lem}[thm]{Lemma}
\newtheorem{thmx}{Theorem}
\theoremstyle{definition}
\newtheorem{defn}[thm]{Definition}
\newtheorem{hyp}[thm]{Hypothesis}
\newtheorem{ex}[thm]{Example}
\theoremstyle{remark}
\newtheorem{rem}[thm]{Remark}
\newtheorem*{thm*}{Theorem}
\let\c@equation\c@thm
\numberwithin{equation}{section}
\DeclareMathOperator{\End}{End}
\DeclareMathOperator{\Vdim}{vdim}
\DeclareMathOperator{\Sol}{Sol}
\DeclareMathOperator{\cA}{\mathcal{A}}
\DeclareMathOperator{\cC}{\mathcal{C}}
\DeclareMathOperator{\gr}{gr}
\DeclareMathOperator{\rank}{rank}
\DeclareMathOperator{\cF}{\mathcal{F}}
\DeclareMathOperator{\cG}{\mathcal{G}}
\DeclareMathOperator{\reg}{Reg}
\DeclareMathOperator{\TrD}{\mathbf{TrDeg}}
\DeclareMathOperator{\prim}{prim}
\DeclareMathOperator{\indec}{indec}
\DeclareMathOperator{\Ext}{Ext}
\DeclareMathOperator{\depth}{depth}
\DeclareMathOperator{\Mod}{Mod}
\DeclareMathOperator{\Ann}{Ann}
\DeclareMathOperator{\res}{res}
\DeclareMathOperator{\im}{Im}
\DeclareMathOperator{\Eq}{Eq}
\DeclareMathOperator{\Nil}{Nil}
\newcommand{\bA}{\mathbf{A}}
\newcommand{\bV}{\mathbf{V}}
\newcommand{\cK}{\mathcal{K}}
\newcommand{\xr}{\xrightarrow}
\newcommand{\Z}{\mathbb{Z}}
\Crefname{figure}{Figure}{Figures}
\Crefname{assu}{Assumption}{Assumptions}
\Crefname{prop}{Proposition}{Propositions}
\Crefname{lem}{Lemma}{Lemmas}
\Crefname{thm}{Theorem}{Theorems}
\Crefname{ex}{Example}{Examples}
\renewcommand{\frak}{\mathfrak}
\DeclareMathOperator{\Inj}{Inj}
\newcommand{\recollement}[5]{
\xymatrix{{#1} \ar[r]|-{#2} & #3 \ar[r]|-{#4} \ar@<1ex>[l]^-{{#2}_!} \ar@<-1ex>[l]_-{{#2}^*} & #5, \ar@<1ex>[l]^-{{#4}!} \ar@<-1ex>[l]_-{{#4}^*}
}}
\newcommand{\cU}{\mathcal{U}}
\newcommand{\F}{\mathbb{F}}
\DeclareMathOperator{\Map}{Map}
\title{The topological nilpotence degree of a Noetherian unstable algebra}
\author{Drew Heard}
\address{Department of Mathematical Sciences, Norwegian University of Science and Technology, Trondheim}
\email{drew.k.heard@ntnu.no}
    \newtheoremstyle{TheoremNum}
        {\topsep}{\topsep}              
        {\itshape}                      
        {}                              
        {\bfseries}                     
        {.}                             
        {.5em}                             
        {\thmname{#1}\thmnote{ \bfseries #3}}
    \theoremstyle{TheoremNum}
\DeclareMathOperator{\Spin}{Spin}
\DeclareMathOperator{\CEss}{CEss}
\DeclareMathOperator{\alg}{alg}
\DeclareMathOperator{\rad}{rad}
\DeclareMathOperator{\Sq}{Sq}
\DeclareMathOperator{\GL}{GL}
\DeclareMathOperator{\SL}{SL}
\date{\today}
\subjclass[2010]{55S10, 20J99, 13C15, 57T05}
\begin{document}

\begin{abstract}
We investigate the topological nilpotence degree, in the sense of Henn--Lannes--Schwartz, of a connected Noetherian unstable algebra $R$. When $R$ is the mod $p$ cohomology ring of a compact Lie group, Kuhn showed how this invariant is controlled by centralizers of elementary abelian $p$-subgroups. By replacing centralizers of elementary abelian $p$-subgroups with components of Lannes' $T$-functor, and utilizing the techniques of unstable algebras over the Steenrod algebra, we are able to generalize Kuhn's result to a large class of connected Noetherian unstable algebras. We show how this generalizes Kuhn's result to more general classes of groups, such as groups of finite virtual cohomological dimension, profinite groups, and Kac--Moody groups. In fact, our results apply much more generally, for example, we establish results for $p$-local compact groups in the sense of Broto--Levi--Oliver, for connected $H$-spaces with Noetherian mod $p$ cohomology, and for the Borel equivariant cohomology of a compact Lie group acting on a manifold. Along the way we establish several results of independent interest. For example, we formulate and prove a version of Carlson's depth conjecture in the case of a Noetherian unstable algebra of minimal depth.
\end{abstract}
\setcounter{tocdepth}{1}
\maketitle
\tableofcontents
\section{Introduction}
\subsection{Motivation and overview}
When $G$ is a compact Lie group, or even just a finite group, the mod $p$ cohomology ring $H_{G}^* \coloneqq H^*(BG;\F_p)$ can be extremely complicated. Nonetheless, the global structure of the ring is better understood. This has its origin in Quillen's work on equivariant cohomology \cite{Quillen1971spectrum}. Quillen introduced the category $\bA_G$ of elementary abelian $p$-subgroups of $G$, with morphisms those group homomorphisms induced by conjugation in $G$.  He then proved that the restriction maps induced a morphism
\[
\begin{tikzcd}
q_1 \colon H_G^* \ar{r} & \displaystyle \varprojlim_{E \in \bA_G} H_E^*,
\end{tikzcd}
\]
which is an $\mathcal{F}$-isomorphism, that is, each element in the kernel of $q_1$ is nilpotent, and for each element $y$ in the inverse limit, there exists an integer $n$ with $y^{p^n}$ in the image of $q_1$. Using this, Quillen showed that the Krull dimension of $H_G^*$ is the maximal rank of an elementary abelian $p$-subgroup of $G$.

The cohomology $H_G^*$ has an action of the Steenrod algebra $\cal{A}$, and is in fact an unstable $\cal{A}$-module (see \Cref{sec:tfunctor}). Quillen's theorem can be restated internally in the category of unstable modules over the Steenrod algebra. In fact, Henn, Lannes, and Schwartz \cite{HennLannesSchwartz1995Localizations} do much more than this. The category of unstable modules $\cal{U}$ has a filtration (the nilpotent filtration)
\[
\cal{U} \supseteq \Nil_1 \supseteq \Nil_2 \supseteq \cdots
\]
first introduced by Schwartz \cite{Schwartz1988La}. In general, the category $\Nil_n$ is the smallest localizing subcategory of $\cal{U}$ containing all $n$-fold suspensions of unstable modules (we refer the reader to \Cref{sec:nilpotent_filtratio} for more details, and further characterizations of $\Nil_n$).

Using the general theory of localization in abelian categories, for any unstable module $M$ over the Steenrod algebra there is an associated localization functor $\lambda_n \colon M \to L_nM$ which is localization away from $\Nil_n$. Quillen's map is precisely localization away from $\Nil_1$ for $M = H_G^*$. Henn, Lannes, and Schwartz introduced the following invariant, which we call the topological nilpotence degree of $M$.
\begin{defn}
Let $M$ be an unstable module, then the topological nilpotence degree of $M$ is
\[
  d_0(M) = \inf \{ k \in \mathbb{N} | \lambda_{k+1}M \text{ is a monomorphism}\}.
\]
\end{defn}
For example, $d_0(H_G^*) = 0$ when the cohomology is detected by elementary abelian subgroups, for example, in the case of the mod 2 cohomology of symmetric groups. We note that if $R$ is a Noetherian unstable algebra, then Henn, Lannes, and Schwartz prove that $d_0(R)$ is a finite number.

In \cite{HennLannesSchwartz1995Localizations} Henn, Lannes, and Schwartz gave a rough upper bound for $d_0(H_G^*(X))$, the mod $p$ Borel-equivariant cohomology of a compact Lie group $G$ acting on a manifold $X$. More recently, the case where $X$ is a point has been considered by Kuhn, who proved the following result \cite{Kuhn2007Primitives,Kuhn2013Nilpotence}. In this, if $G$ is a compact Lie group with maximal central elementary abelian $p$-subgroup $C(G)$, we let $e(G)$ denote the top degree of a generator (with respect to a minimal generating set) of the finitely generated $H_G^*$-module $H_{C(G)}^*$, i.e., the top degree of $\F_p \otimes_{H_G^*}H^*_{C(G)}$. 
\begin{thm}[Kuhn]\label{thm:kuhn}
  Let $G$ be a compact Lie group, then
  \[
d_0(H_G^*) \le \max_{E < G}\{ e(C_G(E)) - \dim(C_G(E))\}.
  \]
\end{thm}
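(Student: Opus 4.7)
The plan is to interleave Lannes' $T$-functor calculus with a depth-theoretic control of $d_0$ on centrally enriched unstable algebras, reducing the global bound to a local bound on each centralizer.

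First, I would use Lannes' theorem to identify, for each elementary abelian $V$,
\[
T_V H_G^* \;\cong\; \prod_{[\rho]} H_{C_G(\rho V)}^*,
\]
where $\rho$ ranges over conjugacy classes of monomorphisms $V \hookrightarrow G$. Next I would exploit the compatibility of $T_V$ with the nilpotent filtration --- the functor is exact, commutes with colimits and tensor products, and preserves each $\Nil_n$ --- together with the fact that the family $\{T_V\}_V$ jointly detects this filtration: an element of $H_G^*$ lies in $\Nil_{k+1}$ iff the same holds after applying every $T_V$. This reduces the task of bounding $d_0(H_G^*)$ to bounding $d_0$ of each factor $H_{C_G(E)}^*$, with the crucial gain that $E$ is now central in $C_G(E)$.

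Second, for each centralizer $G' = C_G(E)$ equipped with its central elementary abelian subgroup, I would establish the local bound $d_0(H_{G'}^*) \le e(G') - \dim G'$. The idea is to exploit the central extension $C(G') \hookrightarrow G' \twoheadrightarrow G'/C(G')$ and the Serre spectral sequence for $BC(G') \to BG' \to B(G'/C(G'))$. The polynomial subalgebra coming from $BC(G')$ sits centrally inside $H^*(BG')$, and the minimal $H_{G'}^*$-module generators of $H^*(BC(G'))$ live in degrees at most $e(G')$; a Poincaré-duality argument on the compact Lie group $G'/C(G')$ (of Lie dimension $\dim G'$) produces the shift by $-\dim G'$. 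One then argues that no class of degree above $e(G') - \dim G'$ can survive the localization $\lambda_{k+1}$, giving the bound on $d_0$.

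The main obstacle, I expect, is the second step: converting a concrete generator-degree bound into a statement about $d_0$. This needs a careful analysis of the nilpotent filtration for an unstable algebra equipped with a central polynomial subring, presumably via an explicit free resolution over the polynomial part together with a spectral sequence identification of $\lambda_{k+1}$; the relationship between depth, minimal generators and the nilpotent filtration is exactly the sort of structural result the present paper sets up via its version of Carlson's depth conjecture. The first reduction, by contrast, is essentially formal once one has Lannes' theorem and the compatibility of $T_V$ with $\Nil_n$; the only delicate point there is that the product decomposition of $T_V$ must be compatible with the nilpotent filtration componentwise, so that $d_0$ of a product may be read off as the maximum of the $d_0$'s of the factors.
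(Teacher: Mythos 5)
Your overall architecture does not match how this theorem is actually proved, and both halves of your plan have a gap at the same point: you work with the full cohomology of the centralizers where the argument only works for their \emph{central essential ideals}. In the first step, reducing $d_0(H_G^*)$ to $\max_E d_0(H^*_{C_G(E)})$ is circular: the trivial subgroup gives back $G$ itself, and more generally $d_0(T_E M)=d_0(M)$ while $H^*_{C_G(E)}$ is a direct summand of $T_E H_G^*$, so the claimed ``joint detection'' of the nilpotent filtration by the family $\{T_V\}$ yields only the tautology $d_0(H_G^*)\le d_0(H_G^*)$. The reduction that actually works (\eqref{eq:kuhn1}, and \Cref{prop:kuhn2.7} in the general setting) is to $\max_E d_0(\CEss(C_G(E)))$, where $\CEss$ is the kernel of the map to the centralizers of subgroups \emph{strictly} containing the center; one then uses the left exact sequence $0\to\CEss(R)\to R\to\prod_{(C,g)\subsetneq(E,f)}T_E(R;f)$ together with $d_0(M)\le\max\{d_0(M'),d_0(M'')\}$, the identification of iterated centralizers (\Cref{lem:center_center}), and an induction on the $p$-central defect, which strictly decreases when passing from $G$ to $C_G(E)$ for $E$ strictly containing the center. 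That strictly decreasing invariant is what makes the recursion terminate; your sketch has no analogue of it.

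In the second step, the local bound you propose, $d_0(H^*_{G'})\le e(G')-\dim(G')$ for every compact Lie group $G'$, is not Kuhn's statement and is false in general; the correct inequality \eqref{eq:kuhn2} is $d_0(\CEss(G'))\le e(G')-\dim(G')$. Note that if your version held for all $G'$ it would apply to $G'=G$ (the centralizer of the trivial subgroup) and the maximum over $E$ in the theorem would be redundant. The two statements genuinely differ: $\CEss(G')$ vanishes unless $\depth(H^*_{G'})$ equals the rank of the center (\Cref{thm:cessnonzero}), and precisely in that non-minimal-depth case the bound on $\CEss$ is vacuous while $d_0(H^*_{G'})$ is carried entirely by proper centralizers. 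The proof of the correct local bound also does not go through the Serre spectral sequence of $BC\to BG'\to B(G'/C)$; it uses the $H_C^*$-comodule structure on $\CEss$, freeness over a Duflot subalgebra, the injectivity of $P_C\CEss\to Q_B\CEss$, the identification $Q_B\CEss(R)=H^0_{\frak m}(Q_BR)$, and Symonds' regularity theorem $\reg(H^*_{G'})\le-\dim(G')$, which is where the $-\dim$ term enters (see \Cref{thm:d0cess}). Your instinct that the delicate point is ``converting a generator-degree bound into a statement about $d_0$'' is right, but the conversion only works after restricting to the central essential ideal.
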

The theorem is actually a combination of several results. Kuhn first defines the \emph{central essential ideal}, $\CEss(G)$, of a compact Lie group as the kernel of the map
\[
\xymatrix{
H_G^* \ar[r] & \displaystyle \prod_{C(G) \lneq E} H^*_{C_G(E)},
}
\]
Here the product is taken over those elementary abelian $p$-subgroups $E$ of $G$ for which $C(G)$ is strictly contained in $E$, and the map is the map induced by the inclusions $C_G(E) \le G$. He then shows that
\begin{equation}\label{eq:kuhn1}
d_0(H_G^*) = \max\{ d_0(\CEss(C_G(E))) \mid E < G\}
\end{equation}
and
\begin{equation}\label{eq:kuhn2}
d_0(\CEss(G)) \le e(G) - \dim(G).
\end{equation}
for any compact Lie group $G$. Combining these two results gives \Cref{thm:kuhn}.

We make the following remarks about this theorem.
\begin{enumerate}
  \item As noted by Kuhn, it suffices in \Cref{thm:kuhn} to only consider those $E$ which contain $C(G)$.
  \item By \cite[Theorem 2.30]{Kuhn2013Nilpotence} the central essential ideal $\CEss(G)$ is non-zero if and only the cohomology $H_G^*$ has depth equal to the rank $c(G)$ of the maximal central elementary abelian $p$-group $C(G)$.
   \item The appearance of $-\dim(G)$ in the theorem comes from Symonds' theorem \cite{Symonds2010CastelnuovoMumford} that the Castelnuovo--Mumford regularity $\reg(H_G^*)$ (see \Cref{sec:regindec}) is less than or equal to $-\dim(G)$.
\end{enumerate}
Using these three remarks, one could restate Kuhn's theorem in the following way:
\[
d_0(H_G^*) \le \underset{\substack{C(G) \le E < G \\ \depth(H_{C_G(E)}^*) = c(C_G(E))}}\max \{ e(C_G(E)) + \reg(H_{C_G(E)}^*)\}.
\]
We state it in this way, as this is closer to the generalization we prove below.
\subsection{Unstable algebras and the topological nilpotence degree}
In the previous section we saw that the topological nilpotence degree of $H_G^*$ can be bounded by invariants coming from the cohomology of elementary abelian $p$-subgroups of $G$. In order to generalize this to an arbitrary unstable Noetherian algebra $R$ we need to explain what plays the role of the centralizer of $R$. For this, we use Lannes' $T$-functor \cite{lannes_ihes}.

We recall in \Cref{sec:tfunctor} that for any pair $(E,f)$ such that $E$ is an elementary abelian group and $f$ is a finite morphism $R \to H_E^*$ of unstable algebras, we can produce a new unstable algebra $T_E(R;f)$, along with a canonical map $\rho = \rho_{R,(E,f)} \colon R \to T_E(R;f)$. If $R = H_G^*$, and $E < G$ is an elementary abelian $p$-subgroup, then the fundamental computation of Lannes is that $T_E(H_G^*;\res_{G,E}^*) \cong H_{C_G(E)}^*$, where $\res_{G,E}^* \colon H_G^* \to H_E^*$ is the induced map, and $\rho \colon H_G^* \to H_{C_G(E)}^*$ is simply the map induced by the inclusion $C_G(E) \to G$. Inspired, by this Dwyer and Wilkerson \cite{DwyerWilkerson1992cohomology} used the components of the $T$-functor to define centrality in a Noetherian unstable algebra. In particular, we say that $(E,f)$ is central if $\rho_{R,(E,f)} \colon R \to T_E(R;f)$ is an isomorphism.

Pairs $(E,f)$ (not necessarily central) as considered above naturally assemble into a category $\bA_R$, known as Rector's category (see \Cref{sec:unstable_algebras}). This category has the property that every endomorphism is an isomorphism, and as such the set of isomorphism classes of objects forms a poset, where
\[
[(E,f)] \le [(V,g)] \quad \text{ if and only if } \quad \Hom_{\bA_R}((E,f),(V,g)) \ne \emptyset
\]
Using work of Dwyer and Wilkerson, we prove the following result. 
\begin{thmx}(\Cref{thm:max_central})
  Let $R$ be a connected Noetherian unstable algebra, then there exists a unique (up to isomorphism) maximal central element $(C,g) \in \bA_R$ with respect to the above poset structure.
\end{thmx}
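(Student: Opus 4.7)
The plan is to establish existence via a rank-maximality argument and then derive uniqueness by showing that any two central pairs admit a common central extension in $\bA_R$.

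For existence, I would first observe that the ranks of the elementary abelian groups $E$ appearing as underlying groups of objects of $\bA_R$ are uniformly bounded by the Krull dimension of $R$, which is finite by the Noetherian hypothesis (a version of Quillen's rank inequality, proved for Noetherian unstable algebras by Dwyer--Wilkerson). Since there is at least one object of $\bA_R$ that is central---namely the trivial pair $(0, \eta)$ with $\eta$ the unit---I can pick a central pair $(C,g)$ whose rank is maximal among the ranks of all central pairs; this is the candidate maximum.

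The crux of the argument is the following join lemma: given any two central pairs $(E_1, f_1)$ and $(E_2, f_2)$ in $\bA_R$, there exists a central pair $(V,h) \in \bA_R$ together with morphisms $(E_i, f_i) \to (V,h)$ for $i = 1,2$. Granting this, I apply it to an arbitrary central $(E,f)$ together with $(C,g)$ to obtain a central $(V,h)$ dominating both. By the rank bound $\rank(V) \ge \rank(C)$, while maximality of $(C,g)$ gives $\rank(V) \le \rank(C)$, hence equality. Because every endomorphism in $\bA_R$ is an isomorphism, any morphism in $\bA_R$ between objects of equal rank is itself an isomorphism, so $(V,h) \cong (C,g)$, and composing yields a morphism $(E,f) \to (C,g)$. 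This proves dominance, and uniqueness of the maximum then follows by symmetry.

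The main obstacle is constructing the join. A natural candidate is $V = E_1 \oplus E_2$ with $h \colon R \to H_V^* \cong H_{E_1}^* \otimes H_{E_2}^*$ built by exploiting the identifications $R \cong T_{E_i}(R;f_i)$ afforded by centrality together with Lannes' adjunction and the compatibility of $T_V$ with the iterate $T_{E_2}\circ T_{E_1}$. The technical work is to show that the resulting $h$ is finite and that $\rho_{R,(V,h)}\colon R \to T_V(R;h)$ is an isomorphism. I would approach this by invoking Dwyer--Wilkerson's characterizations of centrality, which reduce the verification to properties of the underlying map of unstable modules and the behavior of $T$-functors on tensor products. The bookkeeping of the natural transformations involved, and in particular confirming preservation of finiteness along the way, is the technical heart of the argument and where I expect the main difficulty to lie.
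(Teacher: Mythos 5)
Your overall architecture is sound and close to the paper's: both arguments reduce everything to a ``join lemma'' asserting that any two central pairs admit a common central upper bound in $\bA_R$, and your rank-maximality argument for extracting the maximum from the join lemma works (the paper instead uses finiteness of the skeleton of $\bA_R$, but your observation that a morphism in $\bA_R$ between objects of equal rank is an isomorphism is correct, since such a morphism is induced by a monomorphism of elementary abelian groups).

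The genuine gap is in your candidate for the join. The pair $(E_1 \oplus E_2,\, f_1 \boxplus f_2)$ is in general \emph{not} an object of $\bA_R$: the map $f_1 \boxplus f_2 \colon R \to H_{E_1 \oplus E_2}^*$ need not be finite, so your stated plan to ``show that the resulting $h$ is finite'' cannot succeed. A minimal counterexample: take $R = H_C^*$ for $C = \Z/p$ and $(E_1,f_1)=(E_2,f_2)=(C,\id)$; then $f_1\boxplus f_2$ is the map $H_C^* \to H_{C\oplus C}^*$ induced by the multiplication $C \oplus C \to C$, and $H_{C\oplus C}^*$ has Krull dimension $2$ over a ring of dimension $1$, so the morphism is not finite. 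The correct construction (Dwyer--Wilkerson, and \Cref{defn:boxplus} in the paper) is to pass to the quotient $E_1 \circ E_2 := (E_1 \oplus E_2)/\ker(f_1 \boxplus f_2)$ via the functor $\rec \colon \bV_R \to \bA_R$, which by construction lands in $\bA_R$; in the example above this recovers $C$ itself. The second missing ingredient is the proof that this join is central: the key input is that the natural map $T_{E_1}(R;f_1) \to T_{E_1 \circ E_2}(R;\sigma(f_1,f_2))$ induced by $E_1 \to E_1 \circ E_2$ is an isomorphism (\Cref{prop:dwfunctor}), from which centrality of $(E_1\circ E_2,\sigma(f_1,f_2))$ follows by composing with the isomorphism $\rho_{R,(E_1,f_1)}$ (\Cref{cor:central}). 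Your appeal to ``compatibility of $T_V$ with $T_{E_2}\circ T_{E_1}$ and behavior on tensor products'' points in the right direction but does not by itself produce either the finiteness (which requires the quotient) or the centrality of the join.
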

If $R = H_G^*$ for a finite $p$-group $G$ with group-theoretic center $C(G)$, then $C = C(G)$, however this does not hold in general for a compact Lie group. Instead, there is a monomorphism $C(G) \to C$, which need not be an isomorphism in general, see \Cref{ex:centers3} for an example due to Mislin. We refer to a choice of representative for the central element as the center of $R$, and write $(E,f) \subseteq (V,g)$ if $[(E,f)] \le [(V,g)]$. 

We now have the following dictionary between the usual group-theoretic notions and their analogs in the theory of unstable algebras. 
\begin{table}[H]
\begin{tabular}{@{}ll@{}}
\toprule
\textbf{Group theory}                           & \textbf{Unstable algebra}                  \\ \midrule
Group cohomology $H_G^*$                        & Noetherian unstable algebra $R$            \\ 
Quillen category $\bA_G$          & Rector's category $\bA_R$                     \\ 
Cohomology of the centralizer $H^*_{C_G(E)}$    & Component of Lannes $T$-functor $T_E(R;f)$ \\ 
\makecell{Maximal central elementary \\abelian $p$-subgroup, $C(G) <G$}  & Center of $R$, $(C,g) \in \bA_R$                               \\ \bottomrule
\end{tabular}
\end{table}

Inspired by Kuhn's work, the following is the main result of this paper, and is a generalization of \Cref{thm:kuhn} to certain Noetherian connected unstable algebras. We note that the technical hypothesis mentioned in the theorem is always satisfied if $p = 2$ or if $R$ is concentrated in even degrees. Here, if $R$ is an unstable algebra with center $(C,g)$ we let $c(R)$ denote the rank of the $C$, and let $e(R)$ denote the top degree of $\F_p \otimes_{R}T_E(R;f)$. 
\begin{thmx}(\Cref{thm:main_unstable_algebra})\label{thm:thma}
  Let $R$ be a connected Noetherian unstable algebra with center $(C,g)$, and suppose that $T_E(R;f)$ satisfies the assumptions of \Cref{hyp:duflot} for all $(C,g) \subseteq (E,f)$, then
  \[
d_0(R) \le  \underset{\substack{(C,g) \subseteq (E,f) \in \bA_R \\ \depth(T_E(R;f)) = c(T_E(R;f))}}\max \{e(T_E(R;f)) + \reg(T_E(R;f)) \}.
  \]
\end{thmx}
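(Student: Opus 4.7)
The plan is to mirror Kuhn's two-step strategy, replacing the centralizer of an elementary abelian subgroup with a component of Lannes' T-functor and the Quillen category with Rector's category $\bA_R$. Concretely, I would first introduce a central essential ideal $\CEss(R)$ of a connected Noetherian unstable algebra, defined as the kernel of the canonical map $R \to \prod_{(C,g) \subsetneq (E,f)} T_E(R;f)$, where the product ranges over all objects of $\bA_R$ strictly larger than the center supplied by \Cref{thm:max_central}. The theorem then splits into two assertions: a decomposition result $d_0(R) = \max\{\, d_0(\CEss(T_E(R;f))) \mid (C,g) \subseteq (E,f) \in \bA_R \,\}$, and a quantitative bound $d_0(\CEss(R)) \le e(R) + \reg(R)$ valid whenever $\depth(R) = c(R)$. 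Combining them, together with the fact that $\CEss(S)$ must vanish unless $\depth(S) = c(S)$ (the unstable-algebra analog of Kuhn's remark (2)), restricts the outer maximum to exactly the indexing set appearing in the theorem statement.

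For the decomposition, I would argue by induction on the Rector poset, whose well-definedness uses \Cref{thm:max_central} and the general fact that every endomorphism in $\bA_R$ is an isomorphism. The essential inputs are that the T-functor is exact, preserves the unstable algebra structure, and commutes with the nilpotent filtration (so with $\lambda_n$), together with the iteration identity $T_{E'}(T_E(R;f)) \cong T_{E \oplus E'}(R)$ that pairs Rector posets of T-functor components with the upper-set of $(E,f)$ in $\bA_R$. These combine to build a finite filtration of $R$ whose associated graded pieces are the various $\CEss(T_E(R;f))$, reducing the computation of $d_0(R)$ to theirs. Formally this step follows Kuhn's proof of equation (1.1), only the building blocks are replaced.

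For the bound on $d_0(\CEss(R))$, I would exploit the \Cref{hyp:duflot} hypothesis, which provides a polynomial subalgebra generated by a Duflot regular sequence pulled back from the center $C$; under the depth assumption $\depth(R) = c(R)$, the ring $R$ is then Cohen--Macaulay over this subalgebra. A local cohomology / Koszul spectral sequence argument translates the vanishing range of the nilpotent filtration on $\CEss(R)$ into a sum of two contributions: the top degree of $\F_p \otimes_R T_E(R;f)$ (the $e(R)$ term) and the Castelnuovo--Mumford regularity of $R$ (the $\reg(R)$ term, replacing the $-\dim(G)$ of Kuhn's original bound that had come from Symonds' inequality).

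The main obstacle will be this second step, the quantitative bound $d_0(\CEss(R)) \le e(R) + \reg(R)$. Kuhn's analog (1.2) already required delicate input in the compact Lie group case, and here one has to work directly with $\reg(R)$ rather than a geometric dimension, tracking the nilpotent filtration through both the Duflot regular sequence and the indecomposables of the T-functor component simultaneously. By contrast, the decomposition in the first step should follow rather formally from the Rector poset structure and the standard exactness and compatibility properties of $T$.
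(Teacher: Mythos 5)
Your proposal follows essentially the same route as the paper: define $\CEss(R)$ as the kernel of $R \to \prod_{(C,g)\subsetneq (E,f)} T_E(R;f)$, prove the reduction $d_0(R) \le \max\{d_0(\CEss(T_E(R;f)))\}$ by induction (the paper inducts on the $p$-central defect, using exactly the iteration identity $T_V(T_E(R;f);\tilde j)\cong T_V(R;j)$ you cite), and bound $d_0(\CEss(R))$ by $e(R)+\reg(R)$ via the Duflot algebra, freeness of $\CEss(R)$ over it, and a local cohomology degree-shift identifying $e_{\indec}(\CEss(R))$ with $e(R)+a_{c(R)}(R)+c(R)$. You correctly flag the second step as the hard one; the only ingredient you underplay is the Krull-dimension bound $\dim_R(\CEss(R))\le c(R)$ (needed for freeness/finiteness over the Duflot algebra and for the vanishing criterion), which in the absence of Carlson's transfer argument requires Powell's transcendence-degree machinery, and note that the reduction step is proved in the paper only as an inequality, which suffices.
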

\subsection{The central essential ideal of a Noetherian unstable algebra}
The proof of \Cref{thm:thma} is given by proving the analogs of \eqref{eq:kuhn1} and \eqref{eq:kuhn2} for an arbitrary connected Noetherian unstable algebra. To do this, we first define the central essential ideal of a Noetherian unstable algebra $R$ with center $(C,g)$ as the unstable algebra fitting in the left exact sequence
\[
0 \to \CEss(R) \to R \to \prod_{(C,g) \subsetneq (E,f)} T_E(R;f)
\]
where the product is taken over the maps $\rho_{R,(E,f)}$. This does not depend on the choice of representative for the center of $R$.

For $G$ a finite group, Kuhn has proved that the Krull dimension of $\CEss(G)$ is at most the rank of $C$. The proof uses a result about transfers due to Carlson \cite{Carlson1995Depth} that is not available for a general unstable algebra. We instead use $\cal{U}$-technology to prove the following result, which is crucial in the sequel.
\begin{thmx}(\Cref{thm:krulldimension}).\label{thm:thmb}
  Let $R$ be a connected Noetherian unstable algebra with center $(C,g)$, then the Krull dimension of $\CEss(R)$ is at most the rank of $C$.
\end{thmx}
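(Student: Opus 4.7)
The strategy is to apply Lannes' $T$-functor to the defining left-exact sequence of $\CEss(R)$ and to combine the resulting vanishing with the unstable-algebra version of Quillen's stratification of Krull dimension. Concretely, the central claim is that
\[
T_E(\CEss(R);f) = 0 \qquad \text{for every } (E,f) \in \bA_R \text{ with } \operatorname{rank}(E) > c.
\]
Granted this vanishing, the standard stratification result (in the form used by Henn--Lannes--Schwartz and Dwyer--Wilkerson) expresses the Krull dimension of a Noetherian unstable $R$-module $M$ as $\max\{\operatorname{rank}(E) : (E,f) \in \bA_R,\ T_E(M;f) \neq 0\}$, and applying this to $M = \CEss(R)$ gives $\dim \CEss(R) \leq c$ at once.

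To establish the vanishing, I would fix $(E,f) \in \bA_R$ with $\operatorname{rank}(E) > c$, so in particular $E \neq 0$. Using the natural isomorphism $T_{C \oplus E} \cong T_E \circ T_C$ and the centrality isomorphism $T_C(R;g) \cong R$, the map $f$ produces a finite morphism $g \star f \colon R \to H^*(C \oplus E)$ whose restrictions to $C$ and $E$ are $g$ and $f$, and which satisfies $(C,g) \subsetneq (C \oplus E, g \star f) \in \bA_R$. The same decomposition identifies the component $T_{C \oplus E}(R; g \star f)$ with $T_E(R;f)$ and the canonical map $\rho_{R,(C \oplus E, g \star f)}$ with $\rho_{R,(E,f)}$. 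Because $(C \oplus E, g \star f)$ contributes a factor to the product defining $\CEss(R)$, we obtain $\CEss(R) \subseteq \ker \rho_{R,(E,f)}$, and applying the exact functor $T_E$ at the component $f$---together with the standard fact that $T_E(\rho_{R,(E,f)}; f)$ is an isomorphism on the distinguished component of $T_E(T_E(R;f))$---yields $T_E(\CEss(R);f) = 0$, as required.

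The step I expect to be the main obstacle is the careful identification of $T_{C \oplus E}(R; g \star f)$ with $T_E(R;f)$ together with the matching identification of the canonical maps $\rho$. This requires tracing universal properties through iterated $T$-functors while exploiting the centrality isomorphism $T_C(R;g) \cong R$, and is precisely where centrality of $(C,g)$ enters the argument. The remaining ingredients---exactness of $T_E$ on unstable modules, its compatibility with the finite product indexing $\CEss$, the self-identity of $T_E(\rho)$ on the appropriate component, and the Krull-dimension stratification for Noetherian unstable modules---are standard tools from the theory of unstable algebras over the Steenrod algebra, so the technical heart of the argument sits entirely in this one identification.
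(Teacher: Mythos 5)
Your overall architecture is legitimate and, in its final step, coincides with the paper's: both arguments reduce the Krull dimension bound to Powell's identification $\dim_R(M)=\TrD_{R-\cU}(M)$ (\Cref{prop:powell_dim}), i.e.\ to bounding the ranks of the $T$-support of $\CEss(R)$. Where you diverge is in how that bound is obtained. The paper embeds $\CEss(R)$ into the single factor $H_C^*\otimes T_C(R;g)^{\le n}$ by combining the Henn--Lannes--Schwartz injectivity theorem (\Cref{prop:injection_improved}) with the factorization $\eta=\kappa\circ\rho$, and then applies \Cref{prop:supportcalc}; you instead try to prove directly that $T_E(\CEss(R);f)=0$ whenever $\rank(E)>c$. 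That is a genuinely different (and in principle viable) route, but as written it has two gaps.

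First, the pair $(C\oplus E,\,g\star f)$ need not lie in $\bA_R$: the map $f\boxplus g\colon R\to H_{E\oplus C}^*$ is in general \emph{not} finite even when $f$ and $g$ are, because its kernel in the sense of \Cref{rem:kernel} can contain ``diagonal'' elements; the paper flags exactly this point after \Cref{lem:sumuniqueness}. Since the product defining $\CEss(R)$ is indexed by objects of $\bA_R$ strictly containing $(C,g)$, your pair does not a priori contribute a factor, so the inclusion $\CEss(R)\subseteq\ker\rho_{R,(E,f)}$ does not yet follow. This is repairable: pass to $(E\circ C,\sigma(f,g))=\rec(E\oplus C,f\boxplus g)$ as in \Cref{defn:boxplus}; since $\ker(f)=0$ the composite $E\to E\circ C$ is injective, so $\rank(E\circ C)>c$ and $(C,g)\subsetneq(E\circ C,\sigma(f,g))\in\bA_R$, and \Cref{prop:dwfunctor} identifies $\rho_{R,\sigma(f,g)}$ with $\iota\circ\rho_{R,(E,f)}$ for an isomorphism $\iota$, which gives the desired kernel containment. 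Second, and more seriously, the ``standard fact'' that $T_E(\rho_{R,(E,f)};f)$ is injective (an isomorphism onto the distinguished component of $T_E(T_E(R;f))$) is the actual technical heart of your argument and is not supplied. It is an idempotency statement for the centralizer construction which is transparent for $R=H_G^*$ but in general amounts to a naturality upgrade of \Cref{lem:center_center} (taking $V=E$ and $\tilde{j}=h$ as in \Cref{prop:central_factor}, and checking that the resulting isomorphism $T_E(T_E(R;f);h)\cong T_E(R;f)$ is compatible with the maps induced by $\rho$). Without that verification, exactness of $T_E$ does not yield $T_E(\ker\rho_{R,(E,f)};f)=0$. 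The paper's route sidesteps this issue entirely, at the price of invoking the HLS embedding theorem.
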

This theorem is used crucially in the next result, with is the analog of \eqref{eq:kuhn2}. If $R$ is a Noetherian unstable algebra with center $(C,g)$, then the image of $g \colon R \to H_C^*$ is either a polynomial algebra (when $p = 2$) or a polynomial tensor an exterior algebra (when $p >2$). In particular, there always exists a subalgebra $B \subset R$ such that $B \to \im(g)$ is an isomorphism. Borrowing terminology from Kuhn, we call such a $B$ a \emph{Duflot algebra}. The technical hypothesis \Cref{hyp:duflot} mentioned previously is that the Duflot algebra is  polynomial which, as noted, is automatic if $p = 2$ of if $R$ is concentrated in even degrees. Our analog of \eqref{eq:kuhn2} is the following. 
\begin{thmx}(\Cref{thm:d0cess} and \Cref{thm:cessnonzero})\label{thm:thmc}
 Let $R$ be a connected Noetherian unstable algebra at the prime $p$ with center $(C,g)$ satisfying \Cref{hyp:duflot}, then if $\CEss(R) \ne 0$ we have
  \[
d_0(\CEss(R)) \le e(R) + \reg(R).
  \]
  Moreover, $\CEss(R) \ne 0$ if and only if $\depth(R) = \rank(C)$. In this case, $\CEss(R)$ is a Cohen--Macaulay $R$-module of dimension $\rank(C)$.
\end{thmx}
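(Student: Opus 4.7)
The plan is to adapt Kuhn's strategy to the setting of a connected Noetherian unstable algebra $R$, with Lannes' $T$-functor playing the role of centralizers and the Duflot algebra playing the role of the central polynomial subring. Throughout, \Cref{hyp:duflot} guarantees that the Duflot algebra $B \subseteq R$ is a polynomial subalgebra of Krull dimension $c(R) = \rank(C)$, and the expected $\cal{U}$-version of Duflot's theorem (presumably proved earlier in the paper) gives that $R$ is free over $B$; in particular $\depth R \ge c(R)$.

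First I would prove the nonvanishing criterion $\CEss(R) \ne 0 \Leftrightarrow \depth R = c(R)$. For each $(C,g) \subsetneq (E,f)$, the target $T_E(R;f)$ of $\rho_{R,(E,f)}$ carries its own Duflot algebra of rank $\ge c(R)+1$ (since $E$ now lies centrally in $T_E(R;f)$), so $\depth T_E(R;f) \ge c(R)+1$. Feeding the defining left-exact sequence
\[
0 \to \CEss(R) \to R \to \prod_{(C,g) \subsetneq (E,f)} T_E(R;f)
\]
into the long exact sequence of local cohomology at the maximal ideal $\m$ of $R$, one finds that $H^{c(R)}_\m(R)$ injects into $H^{c(R)}_\m(\CEss(R))$, forcing $\CEss(R) = 0$ precisely when $H^{c(R)}_\m(R) = 0$, i.e., when $\depth R > c(R)$.

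For the Cohen--Macaulay and dimension statements, the upper bound $\dim \CEss(R) \le c(R)$ is Theorem B. For the opposite inequality and the CM conclusion, I would use that $\CEss(R)$ is a $B$-submodule of the free $B$-module $R$, and that $B$ lying in the Rector center means its action commutes with every $\rho_{R,(E,f)}$, so $\CEss(R)$ is $B$-stable. Torsion-freeness over the polynomial ring $B$ combined with the depth equality $\depth R = c(R)$ then forces $\CEss(R)$ to be free over $B$ of some positive rank, and therefore Cohen--Macaulay of dimension exactly $c(R)$.

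For the bound $d_0(\CEss(R)) \le e(R) + \reg(R)$, writing $\CEss(R) \cong \bigoplus_i \Sigma^{n_i} B$ as a $B$-module reduces the problem to bounding the top degree $\max n_i$ of a minimal $B$-basis. The inclusion $\F_p \otimes_B \CEss(R) \hookrightarrow \F_p \otimes_B R$, combined with Lannes' identification $\F_p \otimes_B R \cong \F_p \otimes_R T_C(R;g)$ (valid because $(C,g)$ is central), bounds $\max n_i$ by $e(R)$ up to a Castelnuovo--Mumford shift of $\reg(R)$, which is the unstable-algebra avatar of Symonds' regularity theorem. The final assembly uses the summand-wise computation $d_0(\Sigma^n B) = n$ for the Duflot algebra $B$ (which sits in $\Nil_0$ under \Cref{hyp:duflot}), together with additivity of $d_0$ under finite direct sums. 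The main obstacle I anticipate is this last step: making the argument fully rigorous in $\cal{U}$ requires checking that the $B$-freeness of $\CEss(R)$ is compatible with the $\cal{A}$-action well enough that $d_0$ can be computed summand by summand, and translating the classical Castelnuovo--Mumford regularity shift into the correct internal shift in the nilpotent filtration — the place where group-theoretic transfers (the key device in Kuhn's proof) must be replaced entirely by $T$-functor technology.
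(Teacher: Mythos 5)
Your outline has the right ingredients (Duflot algebra, $B$-freeness, indecomposables, regularity), but two of the three main steps contain genuine gaps. First, the nonvanishing criterion: from the short exact sequence $0 \to \CEss(R) \to R \to Q \to 0$ (with $Q$ the image of $R$ in $\prod T_E(R;f)$), the long exact sequence of local cohomology gives a map $H^{c(R)}_{\frak m}(\CEss(R)) \to H^{c(R)}_{\frak m}(R)$, not an injection in the direction you claim, and to conclude $\CEss(R)\ne 0$ from $H^{c(R)}_{\frak m}(R)\ne 0$ you would need to control $H^{c(R)}_{\frak m}(Q)$. Although each $T_E(R;f)$ with $(C,g)\subsetneq (E,f)$ does have depth $\ge c(R)+1$ (that part is correct), $Q$ is only a \emph{submodule} of the product, and vanishing of $H^{c}_{\frak m}$ is not inherited by submodules, so the argument breaks. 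The paper's fix is to pass to $B$-indecomposables: using Totaro-style freeness one gets an exact sequence $0 \to Q_B\CEss(R) \to Q_BR \to \prod Q_BT_E(R;f)$, where the relevant functor is the \emph{left exact} $H^0_{\frak m}$, and one proves $H^0_{\frak m}(Q_BT_E(R;f))=0$ by extending the Duflot regular sequence $f_1,\dots,f_c$ by one element $\ell$ and invoking \Cref{thm:duflot_regular}. This yields $Q_B\CEss(R)=H^0_{\frak m}(Q_BR)$, whose top degree is then computed to be $e(R)+c(R)+a_{c(R)}(R)$ via the Koszul/long-exact-sequence argument for the regular sequence $z_1,\dots,z_c$; nonvanishing of $\CEss(R)$ when $\depth(R)=c(R)$ falls out of this identity.

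Second, the $d_0$ bound cannot be obtained by writing $\CEss(R)\cong\bigoplus_i\Sigma^{n_i}B$ and computing $d_0$ summand by summand: that splitting is only $B$-linear, not $\cal{A}$-linear, so $d_0$ does not decompose along it — you correctly flag this worry, but the missing idea is not a compatibility check; it is a different mechanism entirely. The paper uses the $H_C^*$-comodule structure on $\CEss(R)$: Kuhn's lemma gives $d_0(M)=e_{\prim}(M)$ for an unstable $H_C^*$-comodule with finite-dimensional primitives, and Totaro's lemma gives an injection $P_C\CEss(R)\hookrightarrow Q_B\CEss(R)$, so $d_0(\CEss(R))=e_{\prim}(\CEss(R))\le e_{\indec}(\CEss(R))\le e(R)+\reg(R)$. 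Your proposed identification $\F_p\otimes_BR\cong\F_p\otimes_RT_C(R;g)$ is also not meaningful as stated (the right-hand side is just $\F_p$ since $(C,g)$ is central); the quantity $e(R)$ enters instead as the sum $\sum(|z_i|-1)$ of the degrees of the Duflot generators. The Cohen--Macaulay and dimension assertions in your proposal are essentially correct and match the paper ($B$-freeness plus the Krull dimension bound of Theorem B).
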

The statement that if $\depth(R) = \rank(C)$, then $\CEss(R) \ne 0$ can be considered a form of Carlson's depth conjecture (see \cite[Question 12.5.7]{CarlsonTownsleyValeriElizondoZhang2003Cohomology}) in the case of a Noetherian unstable algebra of minimal depth, see also the discussion in \Cref{sec:regindec}. Indeed, we always have $\depth(R) \ge \rank(C)$ by the author's generalized version of Duflot's theorem \cite{heard_depth}, see also \Cref{cor:duflot} in this paper (Carlson considers the case $R = H_G^*$ for $G$ a finite group).

The proof of \Cref{thm:thma} then follows the same strategy as Kuhn; we show in \Cref{prop:kuhn2.7} that for any connected Noetherian unstable algebra $R$ with center $(C,g)$ we have
  \[
d_0(R) \le \max_{(C,g) \subseteq (E,f) \in \bA_R}\{d_0(\CEss(T_E(R;f))) \}.
  \]
  Combining this with the bound coming from \Cref{thm:thmc} then gives the result. 
\subsection{The topological nilpotence degree for the mod $p$ cohomology of groups}
The components of Lannes $T$-functor have been computed for the mod $p$ cohomology of a large number of classes of groups, not just for compact Lie groups. In all these cases, Rector's category $\bA_{H_G^*}$ can be identified with Quillen's category $\bA_G$ with objects the elementary abelian $p$-subgroups of $G$, and central elements in $\bA_{H_G^*}$ correspond to elementary abelian $p$-subgroups $E < G$ for which $C_G(E) \to E$ is a mod $p$ cohomology isomorphism. Borrowing terminology from Mislin \cite{Mislin1992Cohomologically}, we call such subgroups cohomologically $p$-central. Our results imply that there is (up to isomorphism) a unique maximal cohomologically $p$-central subgroup $C_p(G)$, whose rank may be greater than the rank of the usual group-theoretic center of $G$.

\Cref{thm:thma} then gives rise to the following computation of the topological nilpotence degree of the mod $p$ cohomology of these groups.
\begin{thmx}(\Cref{thm:main_groups})
  Assume we are in one of the following cases:
\begin{enumerate}
  \item $G$ is a compact Lie group.
  \item $G$ is a discrete group for which there exists a mod $p$ acyclic $G$-CW complex with finitely many $G$-cells and finite isotropy groups.
    \item $G$ is a profinite group such that the continuous mod $p$ cohomology $H_G^*$ is finitely generated as an $\F_p$-algebra.
    \item $G$ is a group of finite virtual cohomological dimension such that $H_G^*$ is finite generated as an $\F_p$-algebra.
    \item $G$ is a Kac--Moody group.
  \end{enumerate}
Then, for any prime $p$ we have
  \[
d_0(H_G^*) \le  \underset{\substack{C_p(G) \le E \in \bA_G \\ \depth(H_{C_{{G}}(E)}^*) = c(C_{{G}}(E))}} \max \{e(H_{C_{{G}}(E)}^*) + \reg(H_{C_{{G}}(E)}^*)\}
  \]
  where $c(C_G(E))$ is the rank of the maximal cohomologically $p$-central subgroup of $G$.
\end{thmx}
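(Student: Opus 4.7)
The plan is to reduce each of the five cases to \Cref{thm:thma} by checking its hypotheses for the unstable algebra $R = H_G^*$. Concretely, I need to verify that (a) $H_G^*$ is a connected Noetherian unstable algebra over the Steenrod algebra; (b) Rector's category $\bA_{H_G^*}$ is equivalent to Quillen's category $\bA_G$, with a pair $(E,f) \in \bA_{H_G^*}$ corresponding to the elementary abelian $p$-subgroup $E < G$ together with the restriction map $f = \res_{G,E}^*$; (c) for every such $E$, the canonical map $\rho \colon H_G^* \to T_E(H_G^*;\res_{G,E}^*)$ is isomorphic to the restriction $H_G^* \to H^*_{C_G(E)}$ induced by inclusion; and (d) each component $T_E(H_G^*;\res_{G,E}^*) \cong H^*_{C_G(E)}$ satisfies \Cref{hyp:duflot}. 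Once these are in place, the maximal central element of $\bA_{H_G^*}$ produced by \Cref{thm:max_central} corresponds under the dictionary to the maximal cohomologically $p$-central elementary abelian subgroup $C_p(G)$, and the conclusion of \Cref{thm:thma} becomes exactly the stated inequality.

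Properties (a)--(c) are by now established for each of the five classes by the existing literature. Case~(i) of compact Lie groups is classical: Venkov--Quillen for Noetherianity and Lannes \cite{lannes_ihes} for the $T$-functor computation. Cases~(ii) and (iv) are due to Henn, who extends this formalism to discrete groups admitting a finite mod~$p$ acyclic $G$-CW model and to groups of finite virtual cohomological dimension, respectively. Case~(iii) is the continuous-cohomology analog for profinite groups with finitely generated cohomology, and case~(v) uses the theory of classifying spaces of Kac--Moody groups, where the $T$-functor has been computed in terms of centralizers of elementary abelian $p$-toral subgroups. In each case, the centralizer $C_G(E)$ is again of the same type as $G$, so the formalism applies uniformly to every object appearing in the bound.

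For (d), the Duflot subalgebra of $H^*_{C_G(E)}$ is the image of the restriction map to the cohomology of a maximal cohomologically $p$-central subgroup of $C_G(E)$. At $p = 2$ this image is automatically polynomial and \Cref{hyp:duflot} is vacuous; at odd primes one uses the fact that the cohomology of an elementary abelian $p$-group is a polynomial algebra tensored with an exterior algebra to identify the Duflot subalgebra as its polynomial part, which verifies the hypothesis in each listed class.

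The main obstacle is not a single hard step, but rather the careful bookkeeping required to collate properties (a)--(d) across five rather different group-theoretic settings: in case~(iii) one must work throughout with continuous cohomology; in case~(v) with a suitable classifying space of the Kac--Moody group rather than the discrete group itself; and in case~(ii) with the finiteness constraints built into the $G$-CW model. Once the dictionary has been set up correctly in each case, however, the theorem is a direct translation of \Cref{thm:thma} into the language of elementary abelian $p$-subgroups.
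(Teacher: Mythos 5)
Your reduction of cases (i)--(v) to \Cref{thm:main_unstable_algebra} via the identification $\bA_{H_G^*} \simeq \bA_G$ and $T_E(H_G^*;\res_{G,E}^*) \cong H^*_{C_G(E)}$ is exactly the route the paper takes (this is packaged as \Cref{thm:group_props}). The gap is in your step (d). The Duflot algebra of $R$ is by definition a subalgebra $B \subseteq R$ mapping isomorphically onto the \emph{entire} image $K = \im(R \to H_C^*)$, and by \Cref{cor:Borel} at odd primes $K$ has the form $\F_p[y_1^{p^{j_1}},\ldots,y_b^{p^{j_b}},y_{b+1},\ldots,y_c] \otimes \Lambda(x_{b+1},\ldots,x_c)$. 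You cannot simply ``identify the Duflot subalgebra as its polynomial part'': \Cref{hyp:duflot} asks that $B \cong K$ itself be polynomial, which fails whenever $b < c$, i.e., whenever exterior classes of $H_C^*$ survive in the image. This happens already for $G = \Z/p \times L$ at odd $p$, so your verification of the hypothesis at odd primes does not go through, and since the theorem is asserted for all primes with no Duflot assumption, this is exactly the point that needs an argument.

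The paper closes this gap with an observation of Kuhn. Since $H_G^1 \cong \Hom(G,\Z/p)$ (continuous homomorphisms in the profinite case), the exterior generators of $\im(H_G^* \to H_{C_p(G)}^*)$ correspond precisely to homomorphisms $C_p(G) \to \Z/p$ extending over $G$, and $c-b$ equals the rank of the largest elementary abelian subgroup splitting off $G$ as a direct factor. Writing $G = L \times (\Z/p)^d$, one checks that $d_0$, $e$, $e_{\prim}$ and $e_{\indec}$ are unchanged in passing from $H_G^*$ to $H_L^*$, and $L$ has polynomial Duflot algebra by construction; the same reduction applies to each centralizer $C_G(E)$ appearing in the bound. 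With this replacement for your step (d), the rest of your argument (including the appeal to Symonds for the regularity bound in the compact Lie case) matches the paper's proof.
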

Of course, by including additional summands, one can rewrite this as
  \[
d_0(H_G^*) \le \max_{E < G}\{ e(H^*_{C_G(E)}) + \reg(H^*_{C_G(E))})\}
  \]
  to give a result analogous to \Cref{thm:kuhn}.

   We have similar results in the case of the mod $p$ cohomology of $p$-local compact groups \cite{BrotoLeviOliver2007Discrete}, see \Cref{sec:homotopical}.
     \begin{ex}
  In \Cref{ex:gl2}, we compute that $1 \le d_0(H^*_{\GL_2(\Z_3)}) \le 2$ when $p = 3$. Similarly, in \Cref{ex:s2} we compute that $d_0(H^*_{S_2}) = 2$ at the prime 3, where $S_2$ is the Morava stabilizer group which features prominently in the chromatic approach to stable homotopy theory. \qed
\end{ex}
Finally, in an appendix, we show that a slight variation of our methods shows the following.
\begin{thmx}(\Cref{thm:borel_appendix})
   Let $G$ be a compact Lie group, $X$ a manifold, and suppose that the Duflot algebra for $H_{C_G(E)}^*(X^E)$ is polynomial for all $C(G;X) \le E$, then
  \[
d_0(H_G^*(X)) \le \max_{C(G,X) \le E < G}\{e(C_G(E),X^E) + \dim(X^E) - \dim(C_G(E)) \}
  \]
\end{thmx}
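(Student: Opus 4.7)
The plan is to deduce this from Theorem A applied to the connected Noetherian unstable algebra $R = H_G^*(X)$. First I would verify the hypotheses and translate all the invariants of Theorem A into the group-theoretic language. Lannes' original computation, extended from $BG$ to the Borel construction $EG\times_G X$, identifies the relevant components of the $T$-functor: for any elementary abelian $p$-subgroup $E < G$ and any morphism $f\colon H_G^*(X)\to H_E^*$ arising from a group homomorphism $E\to G$ (together with a choice of homotopy fixed point data for the action on $X$), one has
\[
T_E(H_G^*(X);f)\;\cong\;H_{C_G(E)}^*(X^E).
\]
In particular, Rector's category $\bA_R$ is equivalent to the analog of Quillen's category whose objects are pairs consisting of an elementary abelian $p$-subgroup $E<G$ together with a component of $X^E$, and the center of $R$ (in the sense of \Cref{thm:max_central}) is precisely the maximal central elementary abelian subgroup $C(G;X)$ singled out in the statement. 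The Duflot hypothesis in the theorem is exactly the hypothesis \Cref{hyp:duflot} required in Theorem A.

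With these identifications, Theorem A yields
\[
d_0(H_G^*(X)) \;\le\; \max_{\substack{C(G;X)\le E<G \\ \depth = c}} \bigl\{ e(H_{C_G(E)}^*(X^E)) + \reg(H_{C_G(E)}^*(X^E)) \bigr\},
\]
where the max is over pairs satisfying the depth$=$rank-of-center condition. Since dropping the depth constraint only enlarges the set of indices over which we take the maximum, this bound remains valid when we range over all $E$ with $C(G;X)\le E<G$.

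The second main step is to bound the regularity term by $\dim(X^E) - \dim(C_G(E))$. This is a manifold-with-action generalization of Symonds' theorem $\reg(H_G^*)\le -\dim(G)$, applied to the compact Lie group $C_G(E)$ acting on the manifold $X^E$ (which is again a manifold by the differentiable slice theorem for compact Lie group actions). The argument follows Symonds' original strategy: reduce to the computation of local cohomology of $H_{C_G(E)}^*(X^E)$ at the maximal ideal via the equivariant local cohomology spectral sequence, and use Poincar\'e duality on $X^E$ together with dualizing-module considerations to read off the shift. Substituting this bound and writing $e(C_G(E),X^E)$ for $e(H_{C_G(E)}^*(X^E))$ produces the desired inequality.

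The main obstacle is the regularity estimate: verifying the equivariant Symonds bound $\reg(H_{C_G(E)}^*(X^E)) \le \dim(X^E) - \dim(C_G(E))$ in the generality of a compact Lie group acting smoothly on a manifold (without properness or orientability assumptions on $X^E$) requires a careful analysis of the local cohomology theorem for Borel equivariant cohomology of manifolds, and is where the bulk of technical work sits. Everything else is bookkeeping: the identification of $T_E$, the passage from Theorem A's depth-restricted maximum to the larger maximum in the statement, and the reinterpretation of the center as $C(G;X)$.
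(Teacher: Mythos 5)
Your overall strategy---reduce to the machinery of the body of the paper---is right in spirit, but the specific reduction you propose is not the one the paper uses, and it contains a genuine gap. You want to apply \Cref{thm:main_unstable_algebra} to $R = H_G^*(X)$ viewed as a connected Noetherian unstable algebra in its own right. For that you must identify Rector's category $\bA_{H_G^*(X)}$, show that the center of this unstable algebra in the sense of \Cref{thm:max_central} is realized by $C(G,X)$, and match the components $T_E(H_G^*(X);f)$ of the \emph{algebra} with the terms $H^*_{C_G(E)}(X^E)$ in the statement. None of this is established in your sketch, and the last point fails as stated: the components of Lannes' $T$-functor applied to the algebra $H_G^*(X)$ are indexed by $\cK$-maps $H_G^*(X)\to H_E^*$, which correspond roughly to pairs consisting of a subgroup $E$ \emph{and a component of $X^E$}; each such component sees only a piece of $H^*_{C_G(E)}(X^E)$, and neither $H_G^*(X)$ nor $H^*_{C_G(E)}(X^E)$ need be connected, so the theory of centers from the body of the paper does not apply verbatim. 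The paper avoids all of this by keeping $R=H_G^*$ and treating $M=H_G^*(X)$ as an object of $R_{fg}-\cU$: then $T_E(M;\res_{G,E}^*)\cong H^*_{C_G(E)}(X^E)$ on the nose (the whole fixed-point set), the ``center'' is taken to be the group-theoretic maximal central subgroup of $G$ acting trivially on $X$ (no appeal to \Cref{thm:max_central} for the algebra $H_G^*(X)$ is needed), and the arguments for $\CEss$, the Krull dimension bound, and the induction of \Cref{prop:kuhn2.7} are rerun in this module setting.

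Separately, you misallocate the difficulty: the regularity estimate $\reg(H^*_{C_G(E)}(X^E)) \le \dim(X^E) - \dim(C_G(E))$, which you flag as where ``the bulk of technical work sits,'' is exactly Symonds' Theorem 0.1 applied to the compact Lie group $C_G(E)$ acting on the manifold $X^E$; the paper simply cites it. The real work in this appendix is the translation of the arguments of the body of the paper to the module setting, which your proposal glosses over.
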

\subsection*{Notation}
The following is some of the notation used in this paper.
\medskip

\begin{tabular}{l|l}
$\cal{U}$ & The category of unstable modules over the Steenrod algebra (\Cref{sec:unstable_algebras}) \\
$\cal{K}$ & The category of unstable algebras over the Steenrod algebra  (\Cref{sec:unstable_algebras}) \\
$R$ & Generic unstable algebra (\Cref{sec:unstable_algebras}) \\
$E$ & Elementary abelian $p$-group \\
$\bA_R$ & Rector's category associated to a Noetherian unstable algebra $R$ (\Cref{sec:unstable_algebras}) \\
$(E,f)$ & Element of Rector's category $\bA_R$ (\Cref{sec:unstable_algebras}) \\
$T_E$ & Lannes' $T$-functor (\Cref{sec:tfunctor})\\
$d_0M$ & Topological nilpotence degree of an unstable module (\Cref{sec:nilpotent_filtratio})\\
$\CEss(R)$ & The central essential ideal of a Noetherian unstable algebra (\Cref{sec:cess}) \\
$P_CM$ & The module of primitives for a comodule (\Cref{sec:prim_indec})  \\
$Q_BM$ & The space of indecomposables for a $B$-module $M$ (\Cref{sec:prim_indec}) \\
$\reg(M)$ & The regularity of a module $M$ (\Cref{sec:regindec}) \\
$\cal{F}$& Fusion system associated to a discrete $p$-toral group $S$ (\Cref{sec:homotopical}) \\
$\cal{F}^e$ & Full subcategory of $\cal{F}$ consisting of fully centralized  \\
&  elementary abelian $p$-subgroups of $S$ (\Cref{sec:homotopical}) \\
$ H_{\frak m}^i(M)$ & The local cohomology of a module $M$ (\Cref{sec:appendix}) \\
\end{tabular}
\subsection*{Conventions}
We will always write $H_G^*(X)$ for the mod $p$ $G$-equivariant cohomology of a space $X$. In particular, taking $X$ to be a point, then $H_G^*$ denotes the group cohomology of $G$. For a space $X$ we will always write $H^*(X)$ for the mod $p$ cohomology of $X$; thus $H_G^* = H^*(BG)$. If $R$ is an augmented $\F_p$-algebra we will write $\epsilon_R \colon R \to \F_p$ for the canonical map; in the case of $R=H^*(X)$, we will often abbreviate this to $\epsilon_X$, or even $\epsilon_G$ if $X = BG$ .
\subsection*{Acknowledgements}
We are grateful to Nick Kuhn both for enlightening conversations, and for the papers \cite{Kuhn2007Primitives,Kuhn2013Nilpotence} from which this work is directly inspired. We also thank Hans--Werner Henn, Niko Naumann, and Burt Totaro for helpful conversations. The author was supported by the `SFB 1085 Higher Invariants' at Universit\"at Regensburg. We are indebted to the referee, whose suggestions, simplifications, and corrections have led to a much improved version of this paper. Revisions of this paper were done while the author was supported by grant number TMS2020TMT02 from the Trond Mohn Foundation.
\section{Noetherian unstable modules, unstable algebras, and Lannes' \texorpdfstring{$T$}{T}-functor}
We being with a review of the theory of unstable modules, unstable algebras, and Lannes' $T$-functor. We introduce the fundamental category $\bA_R$, also known as Rector's category, of a Noetherian unstable algebra $R$. Finally, we review Schwartz's nilpotent filtration of the category of unstable modules.
\subsection{Unstable modules, unstable algebras and Rector's category}\label{sec:unstable_algebras}
Much of this section is well-known, and a useful reference is \cite{schwartz_book}. We first start with the definition of the categories of unstable modules and unstable algebras over the mod $p$ Steenrod algebra. We let $\cal{A}$ denote the mod $p$ Steenrod algebra, for which we assume the reader is familiar with.
\begin{defn}
  An unstable $\cal{A}$-module $M$ is a graded $\cal{A}$-module such that for all $x \in M$
  \begin{enumerate}
    \item $\Sq^ix = 0$ for $i > |x|$, if $p = 2$;
    \item $\beta^eP^ix = 0$ for all $2i+e > |x|$, if $p$ is odd and $e \in \{ 0, 1\}$.
  \end{enumerate}
  We let $\cal{U} \subset \Mod_{\cal{A}}$ denote the full subcategory of graded $\cal{A}$-modules whose objects are unstable $\cal{A}$-modules.
  \end{defn}
We observe that if $M \in \cal{U}$, then $M$ is trivial in negative degrees. If $M^0 \cong \F_p$, then we say the $M$ is \emph{connected}. The category of unstable modules has a suspension functor $\Sigma \colon \cal{U} \to \cal{U}$: given an $\cal{A}$-module $M$, we define $(\Sigma M)^n \cong M^{n-1}$, with $\cal{A}$-module structure given by $\theta(\Sigma m) = (-1)^{|\theta|}\Sigma \theta (m)$ for all $m \in M, \theta \in \cal{A}$.

The mod $p$ cohomology of a space $H^*(X)$ is always an unstable module. In fact, it also has an algebra structure satisfying certain properties, which leads to the following definition.
\begin{defn}
  An unstable $\cal{A}$-algebra $R$ is an unstable $\cal{A}$-module, together with maps $\mu \colon R \otimes R \to R$ and $\eta \colon \F_p \to R$ which determine a commutative, unital, $\F_p$-algebra structure on $R$ and such that the Cartan formula holds (equivalently, $\phi$ is $\cal{A}$-linear) and
  \begin{equation}
\begin{split}\label{eq:unstable}
  \Sq^nx = x^2 & \text{ if } p = 2 \text{ and } n = |x|, \\
  P^nx = x^p & \text{ if } p > 2 \text{ and } 2n = |x|.
\end{split}
\end{equation}
  We let $\cal{K}$ denote the category of unstable algebras over $\cal{A}$. This is the category with objects unstable algebras, and morphisms degree preserving maps which are both $\cal{A}$-linear and maps of graded algebras.

  Finally, we say that $R$ is a Noetherian unstable algebra if $R$ is finitely generated as an algebra.
\end{defn}
\begin{ex}\label{ex:elemn_abelian}
  The mod-$p$ cohomology of an elementary abelian $p$-group $E$ of rank $n$ is of fundamental importance in the theory of unstable algebras over the Steenrod algebra. We recall that
  \[
H_E^* \cong \F_2[x_1,\ldots,x_n]
  \]
  with $|x_i| = 1$ when $p = 2$, and
  \[
H_E^* \cong \F_p[\beta(y_1),\ldots,\beta(y_n)] \otimes \Lambda_{\F_p}(y_1,\ldots,y_n)
  \]
  where $|y_i| = 1$ and $\beta$ denotes the Bockstein homomorphism associated to the sequence $0 \to \Z/p \to \Z/p^2 \to \Z/p \to 0$. In particular, $H_E^*$ is a Gorenstein ring of dimension $n$. Its importance comes from the fact that it is an injective object in the category $\cal{U}$, see \cite{Carlsson1983G,Miller1984Sullivan,LannesZarati1986Sur}.

Finally, we note that the group homomorphism $E \times E \to E$ given by multiplication induces a homomorphism $H_E^* \to H^*_{E \times E} \cong H_E^* \otimes H_E^*$, making $H_E^*$ into a primitively generated Hopf algebra.
  \qed
\end{ex}
Given an unstable algebra $R$, we can also define a category $R-\cU$, whose objects are unstable $\cA$-modules $M$ together with $\cal{A}$-linear structure maps $R \otimes M \to M$ which make $M$ into an $R$-module, and whose morphisms are the $\cal{A}$-linear maps which are also $R$-linear. The full subcategory of $R-\cU$ consisting of the finitely generated $R$-modules will be denoted $R_{fg}-\cU$.
\begin{ex}\label{ex:borel}
  Let $G$ be a compact Lie group and $X$ a manifold, then the Borel equivariant cohomology $H_G^*(X)$ is an object of $R_{fg}-\cU$ for $R = H_G^*$, see \cite{Quillen1971spectrum}.
\end{ex}
The following categories, first studied by Rector \cite{Rector1984Noetherian}, will play a crucial role in the sequel.
\begin{defn}
  Let $R$ be a Noetherian unstable algebra, then the category $\bV_R$ is the category with objects $(E,f)$ where $E$ is an elementary abelian $p$-group, and $f \colon R \to H_E^*$ is a homomorphism of unstable algebras. A morphism $\alpha \colon (E,f) \to (V,g)$ is a morphism $\alpha^* \colon H_V^* \to H_E^*$ of unstable algebras (equivalently, a group homomorphism $\alpha \colon E \to V$) such that the diagram
  \[
\begin{tikzcd}
                              & R \arrow[ld, "f"'] \arrow[rd, "g"] &       \\
H_E^*  &                                    &\arrow[ll, "\alpha^*"]  H_V^*
\end{tikzcd}
  \]
  commutes.

  Rector's category $\bA_R$ is the full subcategory of $\bV_R$ consisting of those $(E,f)$ where $f \colon R \to H_E^*$ is a \emph{finite} morphism, i.e., $H_E^*$ is a finitely generated $R$-module via $f$.
\end{defn}
We observe that if $\alpha \colon (E,f) \to (V,g)$ is a morphism in $\bA_R$, then $\alpha^* \colon H_E^* \to H_V^*$ necessarily arises form a monomorphism $E \to V$ of elementary abelian $p$-groups. We have the following properties of $\bA_R$, where we recall that a Noetherian unstable algebra always has finite Krull dimension.
\begin{prop}\label{prop:rector_props}
  Let $R$ be a Noetherian unstable algebra of Krull dimension $d$.
  \begin{enumerate}
    \item The category $\bA_R$ has a finite skeleton.
    \item For each $(E,f) \in \bA_R$ we have $\rank(E) \le d$. In fact,
    \[
d = \max\{ \rank(E) \mid (E,f) \in \bA_R \}.
    \]
  \end{enumerate}
\end{prop}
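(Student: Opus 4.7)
The plan is to establish (2) first and then derive (1) from it. For the inequality $\rank(E) \le d$ in (2), I would factor a finite morphism $f \colon R \to H_E^*$ as $R \twoheadrightarrow \im(f) \hookrightarrow H_E^*$. The inclusion $\im(f) \hookrightarrow H_E^*$ is still finite, so by the standard going-up result in commutative algebra we have $\dim \im(f) = \dim H_E^* = \rank(E)$ (the last equality follows from the description of $H_E^*$ in \Cref{ex:elemn_abelian}). The surjection from $R$ then forces $\rank(E) = \dim \im(f) \le \dim R = d$. The reverse equality $d = \max\{\rank(E) : (E,f) \in \bA_R\}$ is the generalization of Quillen's theorem to Noetherian unstable algebras, originally due to Rector and later refined by Henn--Lannes--Schwartz. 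I would prove it by invoking the existence of an $\cF$-isomorphism $R \to \varprojlim_{(E,f) \in \bA_R} H_E^*$, from which the equality of Krull dimensions follows since $\cF$-isomorphisms preserve Krull dimension and the limit has dimension equal to the maximum rank appearing.

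For (1), I would argue as follows. Every morphism $\alpha \colon (E,f) \to (V,g)$ in $\bA_R$ is induced by a monomorphism $E \hookrightarrow V$ of elementary abelian $p$-groups, so an isomorphism in $\bA_R$ corresponds to a group isomorphism $\alpha \colon E \xrightarrow{\sim} V$ intertwining $f$ and $g$. By (2) the rank of any $E$ occurring in $\bA_R$ is bounded by $d$, so up to abstract isomorphism there are only finitely many possible $E$. Fixing a representative $E$ of each rank, it suffices to show that $\Hom_{\cK}(R, H_E^*)$ is finite, since then the quotient by the finite group $\Aut(E) = \GL(E)$ is also finite. Because $R$ is Noetherian, it is generated as an $\F_p$-algebra by finitely many elements $r_1, \ldots, r_k$ lying in bounded degrees. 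Since $H_E^*$ is a finitely generated graded-commutative $\F_p$-algebra, each graded piece $(H_E^*)^n$ is a finite-dimensional $\F_p$-vector space, hence a finite set. A morphism of unstable algebras $R \to H_E^*$ is determined by the images $f(r_i) \in (H_E^*)^{|r_i|}$, each of which lies in a finite set; thus $\Hom_{\cK}(R, H_E^*)$ is finite.

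The principal difficulty is the substantive half of (2), namely $d \le \max\{\rank(E)\}$, which requires actually producing a finite morphism $f \colon R \to H_E^*$ with $\rank(E) = d$. This is not formal and rests on the Quillen-type stratification theorem for Noetherian unstable algebras. In contrast, the bound $\rank(E) \le d$ and the finiteness statement (1) are essentially formal consequences of Noetherianity together with the elementary fact that $H_E^*$ is degreewise finite; I therefore expect the writeup to devote its attention almost entirely to the Quillen stratification input, possibly by direct citation rather than reproof.
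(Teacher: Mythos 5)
Your proposal is correct, and for the substantive content it rests on exactly the same input as the paper: the paper's entire proof is a citation to Rector (Proposition 2.3(1) for the finite skeleton, and the $\cal{F}$-isomorphism theorem of \cite{Rector1984Noetherian}, extended to odd $p$ by Broto--Zarati, for the dimension equality), and you likewise defer the hard inequality $d \le \max\{\rank(E)\}$ to Rector's theorem. What you add beyond the paper is a self-contained treatment of the formal parts: the bound $\rank(E)\le d$ via factoring $f$ through its image and applying going-up to the finite extension $\im(f)\hookrightarrow H_E^*$, and the finite skeleton via finiteness of $\Hom_{\cK}(R,H_E^*)$ (finitely many algebra generators of $R$ landing in the finite graded pieces of $H_E^*$) together with the finiteness of $\GL(E)$ and of the morphism sets between elementary abelian groups. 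These arguments are sound; the only step you gloss is why the inverse limit in Rector's theorem has Krull dimension $\max\{\rank(E)\}$ (one embeds it in the finite product $\prod H_E^*$ over a skeleton and uses that each $H_E^*$ is integral over the image of $R$), but this is exactly the "algebraic consequence" the paper also leaves to the reference.
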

\begin{proof}
  Part (1) is due to Rector \cite[Proposition 2.3(1)]{Rector1984Noetherian}, while (2) is an algebraic consequence of Rector's $\cal{F}$-isomorphism theorem \cite[Theorem 1.4]{Rector1984Noetherian}, as extended to the case $p >2 $ by Broto and Zarati \cite{BrotoZarati1988Nillocalization}.
\end{proof}
\begin{rem}\label{rem:kernel}
  Given a pair $(E,f) \in \bV_R$, choosing an element $e \in E$ is equivalent to giving a homomorphism $\chi_e \colon \Z/p \to E$ with $\chi_e(1) = e$. Let $f_e \colon R \to H_{\Z/p}^*$ denote the composite $R \xr{f} H_E^* \xr{\chi_e^*} H_{\Z/p}^*$. Then, the kernel of $f$, denoted $\ker(f)$, is the set consisting of all $e \in E$ with the property that $f_e$ is trivial above dimension 0 \cite[Definition 4.3]{DwyerWilkerson1992cohomology}. By \cite[Proposition 4.4]{DwyerWilkerson1992cohomology}, the pair $(E,f) \in \bA_R$ (that is, the morphism $f \colon R \to H_E^*$ is finite) if and only if $\ker(f) = \{ 0 \}$.

  Moreover, if $R$ is connected and Noetherian, then for any pair $(E,f) \in \mathbf{V}_R$, $\ker(f)$ is a subgroup of $E$, and $f \colon R \to H_E^*$ extends uniquely to a map $\tilde f \colon R \to H_{E/\ker(f)}^*$ such that the pair $(E/\ker(f),\tilde f)$ is in $\bA_R$ \cite[Proposition 4.8]{DwyerWilkerson1992cohomology}. Here, `extends' means that the evident diagram
\[
\begin{tikzcd}
  &R \arrow[swap]{dl}{f} \arrow{dr}{\tilde f}&\\
  H_E^*  && \arrow{ll} H_{E/\ker(f)}^*
\end{tikzcd}
\]
commutes. This construction is functorial; the assignment $(E,f) \mapsto (E/\ker(f),\tilde f)$ defines a functor $\text{rec} \colon \bV_R \to \bA_R$, see \cite[Section 4.6]{Henn1996Commutative} for further discussion.
\end{rem}
\begin{rem}\label{rem:endvd-sets}
  An extension of the work of Rector to the case of unstable algebras of finite transcendence degree $d$ is given by Henn, Lannes, and Schwartz in \cite[Part II]{HennLannesSchwartz1993categories}. Let $V_d = (\Z/p)^d$, considered as a profinite right $\End V_d$-set i.e., a profinite set with a continuous right action of the monoid $\End V_d$. Let $\cal{PS}-\End V_d$ denote the category whose objects are profinite right $\End V_d$-sets, and whose morphisms are maps of profinite sets respecting the $\End V_d$-action, and let $\cal{K}_d$ denote the category of unstable algebras of transcendence degree $d$. In \cite[Theorem II.2.4]{HennLannesSchwartz1993categories} Henn, Lannes, and Schwartz prove that the functor
  \[
s_d \colon \cal{K}_d \to (\cal{PS}-\End V_d)^{\text{op}}, \quad R \mapsto \Hom_{\cal{K}}(R,H_{V_d}^*)
  \]
  induces an equivalence of categories $\cal{K}_d/\Nil_1 \to (\cal{PS}-\End V_d)^{\text{op}}$, where the inverse equivalence is induced by the functor
  \[
b_d \colon (\cal{PS}-\End V_d)^{\text{op}} \to \cal{K}_d, \quad S \mapsto \Hom_{\cal{PS}-\End V_d}(S,H_{V_d}^*).
  \]
  Here, the category $\cal{K}/\Nil_1$ is the quotient category of $\cal{K}$ given by inverting all the $\cal{F}$-isomorphisms. In particular, the natural map $R \to (b_d \circ s_d)(R)$ is an $\cal{F}$-isomorphism for all unstable algebras $R \in \cal{K}_d$.

  Moreover, if $S$ is a Noetherian $\End V_d$-set in the sense of \cite[Definition 5.8]{HennLannesSchwartz1993categories}, then $b_d(S)$ is a Noetherian unstable algebra, and conversely if $R$ is a Noetherian unstable algebra, then $s_d(R)$ is a Noetherian $\End V_d$-set \cite[Theorem 7.1]{HennLannesSchwartz1993categories}. Moreover, to such an $S$, one can associate a category $\cal{R}(S)$ which, in the case where $S = s_d(R)$ for a Noetherian unstable algebra $R$, is Rector's category $\bA_R$, see the remark on page 1097 of \cite{HennLannesSchwartz1993categories}.  Finally, Henn, Lannes, and Schwartz define the notion of the kernel of an element of an $\End V_d$-set, see \cite[Section 5.2]{HennLannesSchwartz1993categories}. If $R$ is a connected Noetherian unstable algebra, and $(E,f) \in \bA_R$, then $f$ is an element of $s_d(R)$, and the kernel $\ker(f)$ agrees with that considered in \Cref{rem:kernel}.
\end{rem}
\subsection{Lannes' \texorpdfstring{$T$}{T}-functor}\label{sec:tfunctor}
In this section we review Lannes' $T$-functor, and some standard properties of it. This section overlaps with \cite[Section 2]{heard_depth}.

We recall that Lannes' $T$-functor $T_E$ is left adjoint to $ - \otimes H_E^*$ on the category of unstable modules, i.e., there is an isomorphism
\[
\Hom_{\cal{U}}(T_EM,N) \cong \Hom_{\cal{U}}(M,H_E^* \otimes N),
\]
for $M,N \in \cal{U}$. Although it is relativity elementary to see that such a functor exists (for example, by the adjoint functor theorem), the following results of Lannes \cite{lannes_ihes} are far more surprising.
\begin{thm}[Lannes]
  The functor $T_E \colon \cal{U} \to \cal{U}$ is exact, and commutes with tensor products. Moreover, it restricts to a functor $T_E \colon \cal{K} \to \cal{K}$.
\end{thm}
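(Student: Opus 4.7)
The plan is to lean on the foundational fact that $H_E^*$ is an injective object of $\cal{U}$ (Carlsson--Miller--Lannes--Zarati, as recalled in \Cref{ex:elemn_abelian}). Since $T_E$ is by construction a left adjoint, it automatically preserves all colimits and in particular is right exact, so the content of the theorem lies in (i) left exactness, (ii) compatibility with the tensor product, and (iii) preservation of the unstable algebra structure.

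For (i), I would show that the right adjoint $N \mapsto H_E^* \otimes N$ sends injective objects to injective objects; by adjunction this forces $T_E$ to preserve monomorphisms, hence to be left exact. The category $\cal{U}$ admits a set of injective cogenerators built from the cohomology rings $H_V^*$ of elementary abelian $p$-groups $V$ (together with their Brown--Gitler type summands in the Lannes--Schwartz classification of reduced injectives), and the Künneth identification
\[
H_E^* \otimes H_V^* \cong H^*_{E \oplus V}
\]
shows this family is closed under $H_E^* \otimes -$. Since tensor product in $\cal{U}$ is exact and commutes with direct sums and direct summands, it follows that $H_E^* \otimes I$ is injective for every injective $I$ of $\cal{U}$.

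For (ii), the group multiplication $E \times E \to E$ equips $H_E^*$ with the cocommutative primitively generated Hopf algebra structure of \Cref{ex:elemn_abelian}. The tensor product of adjunction units $M \otimes N \to (H_E^* \otimes T_E M) \otimes (H_E^* \otimes T_E N)$, followed by the cup product $H_E^* \otimes H_E^* \to H_E^*$, is the adjoint of a natural comparison map
\[
\tau_{M,N} \colon T_E(M \otimes N) \to T_E M \otimes T_E N.
\]
Both bifunctors $(M,N) \mapsto T_E(M \otimes N)$ and $(M,N) \mapsto T_E M \otimes T_E N$ commute with colimits in each variable and, by (i), are exact in each variable; therefore it suffices to check that $\tau$ is an isomorphism on a generating family of injective cogenerators, i.e.\ for $M = H_V^*$ and $N = H_W^*$, where it reduces via the Künneth identification to a direct computation using the Hopf algebra structure on $H_E^*$.

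For (iii), given $R \in \cal{K}$ I define the multiplication on $T_E R$ as the composite
\[
T_E R \otimes T_E R \xleftarrow{(\tau_{R,R})^{-1}} T_E(R \otimes R) \xr{T_E \mu} T_E R
\]
using the isomorphism of (ii), with the unit induced from the canonical identification $T_E(\F_p) \cong \F_p$ (immediate from the adjunction). Associativity, commutativity, and the unstability identity \eqref{eq:unstable} propagate through $T_E$ by naturality of $\tau$ and functoriality of $T_E$. The main obstacle in the whole program is controlling injectivity under the tensor product in (i); once that is secured, both (ii) and (iii) follow by essentially formal adjoint nonsense.
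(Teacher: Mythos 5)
The paper does not prove this statement: it is quoted as a theorem of Lannes with a citation to \cite{lannes_ihes} and used as a black box, so there is no internal argument to compare yours against. Your sketch does follow the architecture of the standard published proof (left exactness via the right adjoint preserving injectives; a comparison map $\tau$ built from the units and the cup product; transport of the algebra structure along $\tau$), and the formal adjunction steps — including the deduction ``$-\otimes H_E^*$ preserves injectives $\Rightarrow$ $T_E$ preserves monomorphisms'' and the construction of the multiplication on $T_ER$ in (iii) — are correct. But two steps hide genuine gaps. The first is the reduction in (ii): ``both bifunctors commute with colimits and are exact in each variable, so it suffices to check $\tau$ on injective cogenerators'' is not a valid inference. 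Colimit preservation reduces a natural transformation to the \emph{projective} generators $F(n)$ (via free presentations and right exactness), and that route stalls because $F(m)\otimes F(n)$ is not free and $T_E(F(m)\otimes F(n))$ is not directly computable; dually, reducing to injective cogenerators requires left exactness \emph{together with} preservation of the infinite products occurring in injective copresentations, and $T_E$, being a left adjoint, has no reason to commute with products. The standard repair is to first reduce to finitely generated $M,N$ by writing each module as the filtered colimit of its finitely generated submodules, and then invoke the structure theory of injectives in $\cal{U}$ (the injective hull of a finitely generated unstable module is a \emph{finite} direct sum of the indecomposables $L\otimes J(n)$) to obtain finite copresentations, after which the four lemma applies. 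Some version of this must be said.

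The second gap is that what you call ``a direct computation using the Hopf algebra structure on $H_E^*$'' is where essentially all of the depth of the theorem sits. The cogenerating injectives are not the $H_V^*$ alone but the modules $H_V^*\otimes J(n)$ with $J(n)$ Brown--Gitler; their injectivity — already needed in your step (i), since the K\"unneth isomorphism only handles the reduced factor $H_{E\oplus V}^*$ and \Cref{ex:elemn_abelian} only covers $n=0$ — is the Lannes--Zarati theorem \cite{LannesZarati1986Sur}, and evaluating $\tau$ on them requires computing $T_E$ of $H_W^*\otimes J(n)$, which rests on the Adams--Gunawardena--Miller identification of $\Hom_{\cal{U}}(H_W^*,H_E^*)$ with $\F_p[\Hom(E,W)]$. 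Neither of these is formal adjoint nonsense; they are the actual content being cited when the paper calls the result ``surprising.'' A small red herring: the primitively generated Hopf comultiplication on $H_E^*$ induced by the group multiplication $E\times E\to E$, which you highlight at the start of (ii), is not what enters the definition of $\tau$ — the map $H_E^*\otimes H_E^*\to H_E^*$ you actually use is the cup product, induced by the diagonal of $E$.
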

For any unstable algebra $R$, we write $T^0_ER$ for the $\F_p$-vector space of degree 0 elements of $T_ER$. By \eqref{eq:unstable} this is a $p$-Boolean algebra, i.e., a commutative, unital, $\F_p$-algebra in which $x^p = x$ for any element $x$.

Given a $\cal{K}$-morphism $f \colon R \to H_E^*$, the adjoint is a map $T_ER \to \F_p$. Since $\F_p$ is concentrated in degree 0, we get a map $T^0_ER \to \F_p$. We can then define
\[
T_E(R;f) = T_ER \otimes_{T^0_ER} \F_p(f),
\]
where $\F_p(f)$ denotes $\F_p$ with the $T^0_ER$-module structure coming from the above map. If $R$ is Noetherian, then the $T$-functor decomposes as a finite direct sum of unstable algebras (see for example the discussion around (2.6) of \cite{heard_depth})
\[
T_E(R) = \bigoplus_{f \in \Hom_{\cal{K}}(R,H_E^*)}T_E(R;f).
\]
The components $T_E(R;f)$ are better behaved than $T_E(R)$ itself, in the sense that if $R$ is connected, then so are the $T_E(R;f)$. If $M \in R-\cU$, then we also define
\[
T_E(M;f) = T_EM \otimes_{T_E^0R} \F_p(f).
\]
The following is \cite[Lemma 3.1]{DwyerWilkerson1992cohomology}.
\begin{lem}\label{lem:dwyer_wilerson3.1}
	Let $(E,f) \in \bV_R$, then the set $\Hom_{\cal{K}}(T_E(R;f),S)$ is naturally isomorphic to the set of $\cal{K}$-maps $g \colon R \to H_E^* \otimes S$ making the diagram
	\[
\begin{tikzcd}
	R \arrow{r}{g} \arrow{d}[swap]{f} & H_E^* \otimes S \arrow{d}{1 \otimes \epsilon_S} \\
	H_E^* \otimes \F_p \arrow{r}[swap]{1 \otimes \xi_S} & H_E^* \otimes S^0
\end{tikzcd}
	\]
	commute, where $\epsilon_S \colon S \to S^0$ is projection onto the degree 0 component, and $\xi_S \colon \F_p \to S^0$ is the unit inclusion.
\end{lem}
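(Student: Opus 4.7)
The plan is to combine the basic $T_E \dashv (H_E^* \otimes -)$ adjunction with the defining universal property of the tensor product $T_E(R;f) = T_ER \otimes_{T_E^0R} \F_p(f)$, and then to translate the resulting factorization condition on maps $T_ER \to S$ into the stated diagrammatic condition on the corresponding maps $R \to H_E^* \otimes S$.

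First I would unfold the right-hand side. A $\cal{K}$-map $T_E(R;f) \to S$ is the same as a $\cal{K}$-map $\phi \colon T_ER \to S$ that is $T_E^0R$-linear, where $S$ is regarded as a $T_E^0R$-module via the composite
\[
T_E^0R \xr{\bar f} \F_p \xr{\xi_S} S^0 \hookrightarrow S,
\]
with $\bar f$ the restriction to degree $0$ of the $\cal{K}$-map $\tilde f \colon T_ER \to \F_p$ adjoint to $f$. Because $\phi$ is already a ring map, this $T_E^0R$-linearity condition is equivalent to the pointwise equality $\phi|_{T_E^0R} = \xi_S \circ \bar f$. Since $S^0$ is concentrated in degree $0$ and is itself a $p$-Boolean algebra (the relation $P^0x = x^p$ in degree $0$ gives $x^p = x$), a $\cal{K}$-map $T_ER \to S^0$ is determined by its restriction to $T_E^0R$; hence the condition $\phi|_{T_E^0R} = \xi_S \bar f$ can be rewritten as the equality of the two $\cal{K}$-maps $T_ER \to S^0$ given by $\epsilon_S \circ \phi$ and $\xi_S \circ \tilde f$.

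Next I would transport this equality across the adjunction. Writing $g \colon R \to H_E^* \otimes S$ for the map corresponding to $\phi$, naturality of the adjunction in the target variable identifies $\epsilon_S \circ \phi$ with $(1 \otimes \epsilon_S) \circ g$ and $\xi_S \circ \tilde f$ with $(1 \otimes \xi_S) \circ f$ (the latter using that $f$ itself is, under the adjunction, the map $R \to H_E^* \cong H_E^* \otimes \F_p$ adjoint to $\tilde f$). Therefore the condition that $\phi$ descends to $T_E(R;f)$ is exactly the commutativity of the square
\[
\begin{tikzcd}
R \arrow{r}{g} \arrow{d}[swap]{f} & H_E^* \otimes S \arrow{d}{1 \otimes \epsilon_S} \\
H_E^* \otimes \F_p \arrow{r}[swap]{1 \otimes \xi_S} & H_E^* \otimes S^0.
\end{tikzcd}
\]

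Finally I would check naturality in $S$, which follows formally from naturality of the $T_E$-adjunction and of the projection--inclusion pair $(\epsilon_S, \xi_S)$. The main (minor) obstacle is the bookkeeping in the second step, in particular making sure that the $T_E^0R$-linearity requirement imposed by the tensor product is captured by a condition involving only the degree-$0$ truncations $\epsilon_S$ and $\xi_S$; once one notices that $S^0$ is $p$-Boolean so that every $\cal{K}$-map $T_ER \to S^0$ factors through $T_E^0R$, the translation through the adjunction is routine.
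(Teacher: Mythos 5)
Your argument is correct. The paper does not prove this lemma itself --- it is quoted verbatim from Dwyer--Wilkerson \cite[Lemma 3.1]{DwyerWilkerson1992cohomology} --- and your proof is exactly the expected one: unwind $T_E(R;f)=T_ER\otimes_{T_E^0R}\F_p(f)$ to see that a $\cal{K}$-map out of it is a $\cal{K}$-map $\phi\colon T_ER\to S$ with $\phi|_{T_E^0R}=\xi_S\circ\bar f$, rewrite that as $\epsilon_S\circ\phi=\xi_S\circ\tilde f$, and transport across the adjunction using naturality in the target. The only cosmetic point is that the $p$-Booleanness of $S^0$ is not actually needed in your second step: a degree-preserving map $T_ER\to S^0$ is determined by its restriction to $T_E^0R$ simply because $S^0$ is concentrated in degree $0$.
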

Given a morphism $\phi \colon T_E(R;f) \to S$ in $\cal{K}$ as in the previous lemma, we write $\phi^{\#}$ for the corresponding map $R \to H_E^* \otimes S$, and call this the adjoint of $\phi$. Likewise, given a map $g \colon R \to H_E^* \otimes S$ satisfying the conditions of the lemma, we call the corresponding map $T_E(R;f) \to H_E^*$ the adjoint of $g$.

We will need  the following maps, where $\epsilon_E \colon H_E^* \to \F_p$ is the canonical map.
\begin{defn} Let $R$ be a unstable algebra, and $(E,f) \in \bV_R$. We define maps:
\begin{enumerate}
	\item $\eta_{R,(E,f)} \colon R \to H_E^* \otimes T_E(R;f)$ as the adjoint of $\text{id} \colon T_E(R;f) \to T_E(R;f)$.
	\item $\rho_{R,(E,f)} \colon R \to T_E(R;f)$ as the composite map $(\epsilon_{E} \otimes 1 )\circ \eta_{R,(E,f)}$.
	\item $\kappa_{R,(E,f)} \colon T_E(R;f) \to H_E^* \otimes T_E(R;f)$ as the adjoint to the composite
	\[
R \xr{\eta_{R,(E,f)}} H_E^* \otimes T_E(R;f) \xr{\Delta \otimes 1} H_E^* \otimes H_E^* \otimes T_E(R;f).
	\]
\end{enumerate}
\end{defn}
As shown in \cite[Section 1.13]{HennLannesSchwartz1995Localizations} for each $E$, the map $\kappa_{R,(E,f)}$ gives $T_E(R;f)$ the structure of a $H_E^*$-comodule.

Note that any map $g \colon T_E(R;f) \to S$ can be written as $T_E(R;f) \xr{\text{id}} T_E(R;f) \xr{g} S$, and taking adjoints we see that $g^{\#} \colon R \to H_E^* \otimes S$ is isomorphic to the composite $(1 \otimes g) \circ \eta_{R,(E,f)}$. This gives the following, which is the component-wise version of \cite[Lemma 2.3]{heard_depth}.
\begin{lem}\label{lem:commdia}
	For any map $g \colon T_E(R;f) \to S$ the diagram
\[
\begin{tikzcd}
	R \arrow{r}{\rho_{R,(E,f)}} \arrow[swap]{d}{g^{\#}} & T_E(R;f) \arrow{d}{g} \\
	H_E^* \otimes S \arrow[swap]{r}{\epsilon_E \otimes 1} & S,
\end{tikzcd}
\]
commutes.
\end{lem}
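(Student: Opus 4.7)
The plan is to unwind the definitions of $\rho_{R,(E,f)}$ and of the adjoint $g^{\#}$, after which commutativity of the square will reduce to a trivial identity about tensor products with the augmentation $\epsilon_E$. The key observation is already flagged in the paragraph immediately preceding the lemma: writing $g = g \circ \mathrm{id}_{T_E(R;f)}$ and applying naturality of the adjunction $T_E \dashv H_E^* \otimes -$ in the second variable (together with its compatibility with the component decomposition used to define $T_E(R;f)$ from $T_E(R)$ via \Cref{lem:dwyer_wilerson3.1}), the adjoint of $g$ can be identified with the composite
\[
R \xr{\eta_{R,(E,f)}} H_E^* \otimes T_E(R;f) \xr{1_{H_E^*} \otimes g} H_E^* \otimes S,
\]
that is, $g^{\#} = (1 \otimes g) \circ \eta_{R,(E,f)}$.

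First I would substitute this expression for $g^{\#}$ into the lower-left leg of the square in the statement, giving $(\epsilon_E \otimes 1_S) \circ g^{\#} = (\epsilon_E \otimes 1_S) \circ (1_{H_E^*} \otimes g) \circ \eta_{R,(E,f)}$. Next I would expand the upper-right leg using the definition $\rho_{R,(E,f)} = (\epsilon_E \otimes 1_{T_E(R;f)}) \circ \eta_{R,(E,f)}$, obtaining $g \circ \rho_{R,(E,f)} = g \circ (\epsilon_E \otimes 1_{T_E(R;f)}) \circ \eta_{R,(E,f)}$. With both legs now written as composites starting from the common map $\eta_{R,(E,f)}$, commutativity is reduced to verifying the purely formal identity
\[
(\epsilon_E \otimes 1_S) \circ (1_{H_E^*} \otimes g) = g \circ (\epsilon_E \otimes 1_{T_E(R;f)})
\]
of morphisms $H_E^* \otimes T_E(R;f) \to S$, and both composites coincide with $\epsilon_E \otimes g$ after the canonical identification $\F_p \otimes (-) \cong (-)$; this is just the interchange law for the symmetric monoidal structure.

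There is no serious obstacle: the entire argument is a short diagram chase, and the only step requiring any care is the identification $g^{\#} = (1 \otimes g) \circ \eta_{R,(E,f)}$, which is the component-level analogue of the $T_E$-version recorded as \cite[Lemma 2.3]{heard_depth}. I would therefore present the proof as two short displayed computations followed by the one-line tensor-interchange identity, explicitly citing the paragraph above the lemma for the formula for $g^{\#}$.
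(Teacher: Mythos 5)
Your proposal is correct and follows exactly the paper's own argument: both identify $g^{\#} = (1 \otimes g) \circ \eta_{R,(E,f)}$ from the paragraph preceding the lemma and then conclude via the interchange $(\epsilon_E \otimes 1) \circ (1 \otimes g) = g \circ (\epsilon_E \otimes 1)$. No differences worth noting.
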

\begin{proof}
	As noted, $g^{\#}$ factors as the composite $(1 \otimes g)\circ \eta_{R,(E,f)}$. It follows that
\[
\begin{split}
(\epsilon_E \otimes 1)\circ g^{\#} &\cong (\epsilon_E \otimes 1)\circ (1 \otimes g)\circ \eta_{R,(E,f)}\\
&\cong   g \circ (\epsilon_E \otimes 1)\circ \eta_{R,(E,f)}\\
& \cong g \circ \rho_{R,(E,f)}
\end{split}
\]
as required.
\end{proof}
The next result follows immediately from \Cref{lem:commdia} and the definitions of the maps involved.
\begin{cor}\label{lem:comm}
	We have $ \kappa_{R,(E,f)} \circ \rho_{R,(E,f)} \cong \eta_{R,(E,f)}$.
\end{cor}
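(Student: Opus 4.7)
The plan is to apply the preceding Lemma~\ref{lem:commdia} directly with the choice $g = \kappa_{R,(E,f)} \colon T_E(R;f) \to H_E^* \otimes T_E(R;f)$, and then use the counit axiom for the Hopf algebra $H_E^*$ to identify the resulting composite with $\eta_{R,(E,f)}$.

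First I would unwind what Lemma~\ref{lem:commdia} gives in this case. Taking $S = H_E^* \otimes T_E(R;f)$ and $g = \kappa_{R,(E,f)}$, the lemma yields
\[
\kappa_{R,(E,f)} \circ \rho_{R,(E,f)} \;=\; (\epsilon_E \otimes 1_S) \circ \kappa_{R,(E,f)}^{\#},
\]
where $\kappa_{R,(E,f)}^{\#} \colon R \to H_E^* \otimes S$ is the adjoint of $\kappa_{R,(E,f)}$. By the very definition of $\kappa_{R,(E,f)}$, its adjoint is
\[
\kappa_{R,(E,f)}^{\#} \;\cong\; (\Delta \otimes 1_{T_E(R;f)}) \circ \eta_{R,(E,f)},
\]
with $\Delta \colon H_E^* \to H_E^* \otimes H_E^*$ the Hopf-algebra coproduct recalled in \Cref{ex:elemn_abelian}.

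Substituting, the composite becomes
\[
\bigl((\epsilon_E \otimes 1_{H_E^*}) \circ \Delta\bigr) \otimes 1_{T_E(R;f)} \;\; \text{applied to}\;\; \eta_{R,(E,f)},
\]
and the counit axiom for the Hopf algebra $H_E^*$ says exactly $(\epsilon_E \otimes 1_{H_E^*}) \circ \Delta = \mathrm{id}_{H_E^*}$. Hence the prefactor collapses to the identity and we recover $\eta_{R,(E,f)}$, as desired.

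There is essentially no obstacle here: the result is a formal consequence of (a) the definition of $\kappa_{R,(E,f)}$ as the adjoint of $(\Delta \otimes 1)\circ \eta_{R,(E,f)}$, (b) Lemma~\ref{lem:commdia} applied to $g = \kappa_{R,(E,f)}$, and (c) the counit law for $H_E^*$. The only minor bookkeeping is keeping track of which tensor factor $\epsilon_E$ is applied to; this is why I would be careful to label the identity maps with their respective objects when writing the argument out in full.
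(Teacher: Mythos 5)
Your argument is correct and is exactly the proof the paper leaves implicit (the paper just says the corollary "follows immediately from \Cref{lem:commdia} and the definitions of the maps involved"): you apply \Cref{lem:commdia} with $g=\kappa_{R,(E,f)}$, use that $\kappa_{R,(E,f)}^{\#}=(\Delta\otimes 1)\circ\eta_{R,(E,f)}$ by definition, and conclude via the counit law $(\epsilon_E\otimes 1)\circ\Delta=\mathrm{id}$. No issues.
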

The assignment $(E,f) \mapsto T_E(M;f)$ extends to a functor $\bV_R \to R-\cal{U}$; in fact, if $R$ is Noetherian, and $M \in R_{fg}-\cal{U}$, then using \cite[Corollary 1.12]{Henn1996Commutative} we even obtain a functor $\bA_R \to R_{fg}-\cal{U}$. Given a morphism $\alpha \colon (E,f) \to (V,g) \in \bA_R$, we will write $T_{\alpha}(g) \colon T_E(R;f) \to T_V(R;g)$ for the induced map. By naturality, we deduce the following.
\begin{lem}\label{lem:central_comm}
	For any morphism $\alpha \colon (E,f) \to (V,g) \in \bA_R$, there is a commutative diagram
	\[
	\begin{tikzcd}
R \arrow[r, "{\eta_{R,(E,f)}}"] \arrow[d, "{\eta_{R,(V,g)}}"'] & H_E^* \otimes T_E(R;f) \arrow[d, "1 \otimes T_{\alpha}(g)"] \\
H_V^* \otimes T_V(R;g) \arrow[r, "\alpha^* \otimes 1"']    & H_E^* \otimes T_V(R;g)
\end{tikzcd}
	\]
\end{lem}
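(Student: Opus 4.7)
The lemma is an instance of the \emph{mate} correspondence for adjunctions, so the plan is to reduce it to this general formalism and then tidy up the componentwise decomposition. Given $\alpha \colon E \to V$, the induced map $\alpha^* \colon H_V^* \to H_E^*$ defines a natural transformation $\alpha^* \otimes 1 \colon - \otimes H_V^* \Rightarrow - \otimes H_E^*$ between right adjoints, and under the adjunctions $T_E \dashv (- \otimes H_E^*)$ and $T_V \dashv (- \otimes H_V^*)$ its mate is a natural transformation $T_E \Rightarrow T_V$. By the general formalism of mates, for every $M \in \cal{U}$ the diagram
\[
\begin{tikzcd}
M \arrow[r, "\eta_{M,E}"] \arrow[d, "\eta_{M,V}"'] & H_E^* \otimes T_E M \arrow[d, "1 \otimes T_{\alpha}"] \\
H_V^* \otimes T_V M \arrow[r, "\alpha^* \otimes 1"'] & H_E^* \otimes T_V M
\end{tikzcd}
\]
commutes; taking $M = R$ gives a commutative square that is essentially the statement before restricting to components.

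The second step is to pass to components, which I would carry out by arguing that $T_\alpha$ is compatible with the splittings $T_E R = \bigoplus_{f'} T_E(R; f')$ and $T_V R = \bigoplus_{g'} T_V(R; g')$. The natural transformation $T_\alpha$ intertwines the $T^0_E R$- and $T^0_V R$-module structures through the canonical map $T^0_V R \to T^0_E R$ induced by $\alpha^*$, and tracing through one sees that the summand $T_E(R; f')$ is carried into $T_V(R; g')$ precisely when $f' = \alpha^* \circ g'$. Since $\alpha$ being a morphism $(E,f) \to (V,g)$ in $\bA_R$ encodes the equation $\alpha^* \circ g = f$, this ensures that the $(E,f)$-component is sent into the $(V,g)$-component, and by definition this restricted map is what the paper calls $T_\alpha(g)$. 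Restricting both legs of the mate square to the $T_V(R; g)$-summand then produces the diagram of the lemma. The functor-theoretic setup here is exactly what is provided by \cite[Corollary 1.12]{Henn1996Commutative}, invoked earlier in the paper for the functor $(E,f) \mapsto T_E(-;f)$.

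If one prefers to bypass the mate formalism, an equivalent route is to work directly through Lemma~\ref{lem:dwyer_wilerson3.1}. The top-right composite $(1 \otimes T_\alpha(g)) \circ \eta_{R,(E,f)}$ is, by construction, the adjoint of $T_\alpha(g)$ in the sense of that lemma. One then checks that $(\alpha^* \otimes 1) \circ \eta_{R,(V,g)}$ also satisfies the compatibility condition of Lemma~\ref{lem:dwyer_wilerson3.1}: applying $1 \otimes \epsilon_{T_V(R;g)}$ and using the condition for $\eta_{R,(V,g)}$ reduces this to the identity $\alpha^* \circ g = f$, which holds by hypothesis on $\alpha$. Hence this composite is the adjoint of a well-defined map $T_E(R;f) \to T_V(R;g)$, which naturality of the $T$-adjunction identifies with $T_\alpha(g)$. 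The main obstacle I anticipate in either approach is not the naturality assertion itself, which is formal, but the bookkeeping required to track how the componentwise splitting of $T_\bullet R$ interacts with $T_\alpha$; once that is in place the commutativity is immediate.
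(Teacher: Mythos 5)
Your proof is correct and is essentially the paper's argument made explicit: the paper simply asserts the square commutes ``by naturality'' of the functor $(E,f)\mapsto T_E(R;f)$, and your mate square for the adjunctions $T_E \dashv (H_E^*\otimes -)$ and $T_V \dashv (H_V^*\otimes -)$, followed by projection onto the $T_V(R;g)$-summand, is exactly that naturality unpacked. The only nitpick is that $T_\alpha$ carries $T_E(R;f')$ into the direct sum of \emph{all} $T_V(R;g')$ with $\alpha^*\circ g'=f'$ (there may be several), but since you define $T_\alpha(g)$ via the projection to the single summand indexed by $g$, this does not affect the argument.
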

	Finally, we have the useful result \cite[Lemma 4.8]{HennLannesSchwartz1995Localizations}.
  \begin{lem}[Henn--Lannes--Schwartz]\label{lem:hls_lemma}
    Let $R$ be a Noetherian unstable algebra, $M \in R_{fg}-\cU$, and $\alpha \colon E \to E'$ an epimorphism. Then for each $f \in \Hom_{\cK}(R,H_{E'}^*)$ the map $\alpha$ induces an isomorphism
    \[
T_E(M;\alpha^*f) \xr{\simeq} T_{E'}(M;f).
    \]
  \end{lem}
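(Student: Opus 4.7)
The plan is to exploit a tensor decomposition afforded by splitting $\alpha$. Since elementary abelian $p$-groups are $\F_p$-vector spaces, the epimorphism $\alpha$ splits; write $K = \ker(\alpha)$ and choose a splitting, so that $E \cong K \oplus E'$ and $H_E^* \cong H_K^* \otimes H_{E'}^*$. The functor $- \otimes H_E^*$ then factors as $(- \otimes H_K^*) \circ (- \otimes H_{E'}^*)$ on $\cU$, and passing to left adjoints yields a natural isomorphism $T_E \cong T_{E'} \circ T_K$, which restricts to the analogous isomorphism on $\cK$ and on $R_{fg}-\cU$.

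Next I would identify the relevant components. Under the splitting, $\alpha^* \colon H_{E'}^* \to H_E^*$ is the unit inclusion $x \mapsto 1 \otimes x$, so $\alpha^* f$ factors as
\[
R \xr{f} H_{E'}^* \xr{1 \otimes -} H_K^* \otimes H_{E'}^*.
\]
Via the bijection $T_E^0 R \cong T_{E'}^0(T_K^0 R)$ coming from the iterated adjunction, this identifies $\alpha^* f$ with the pair $(\epsilon_K \circ \eta_R,\, f)$: the trivial map at the $K$-component and $f$ at the $E'$-component. Equivalently, the decomposition $T_K R = \bigoplus_g T_K(R; g)$ projects $\alpha^* f$ onto the augmentation component $T_K(R; \triv)$.

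The last ingredient is Lannes' standard observation that the augmentation component of $T_K$ is the identity functor: the natural transformation $\mathrm{id} \to T_K$ obtained by taking the left adjoint of $- \otimes \epsilon_K \colon - \otimes H_K^* \to -$ yields a natural $R$-linear isomorphism $T_K(M;\triv) \cong M$ for every $M \in R_{fg}-\cU$. Combining all three steps gives
\[
T_E(M;\alpha^* f) \;\cong\; T_{E'}\bigl(T_K(M;\triv);\, f\bigr) \;\cong\; T_{E'}(M;f),
\]
and tracing the adjunctions shows the resulting isomorphism is the one induced by $\alpha$. I expect the main technical obstacle to be ensuring compatibility with the $R$-module structure at each stage: one must verify that the identification $T_K(R;\triv) \cong R$ is the identity as a map of unstable $R$-algebras, so that the subsequent $f$-localization of $T_{E'}$ agrees with the $R$-module structure on $T_{E'}(M;f)$. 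Once that is confirmed, the argument is complete.
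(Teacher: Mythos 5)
The reduction you set up is sound as far as it goes: the splitting $E \cong K \oplus E'$, the factorization $T_E \cong T_{E'} \circ T_K$, and the identification of the component of $\alpha^* f$ as ``$\triv$ on the $K$-factor, $f$ on the $E'$-factor'' are all correct bookkeeping. But the entire non-formal content of the lemma has been pushed into the assertion that $T_K(M;\triv) \cong M$, and that assertion \emph{is} the lemma in the special case $E' = 0$ (there $T_{E'}(M;f) = M$ and $\alpha^* f = \triv$). Your justification for it --- that the natural transformation $\mathrm{id} \to T_K$ obtained from $-\otimes \epsilon_K$ ``yields'' the isomorphism --- only constructs the comparison map $\rho \colon M \to T_K(M;\triv)$; it gives no reason why this map is an isomorphism, and no purely adjunction-theoretic reason can exist. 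Indeed, your argument nowhere uses that $R$ is Noetherian or that $M$ is finitely generated, yet the claim is false without those hypotheses: take $R = \F_p$ and $M = H_W^*$ for $W \neq 0$; then $T_K^0 R = \F_p$, so $T_K(M;\triv) = T_K M \cong H_W^* \otimes \F_p^{\Hom(K,W)}$, which is strictly larger than $M$. So ``the augmentation component of $T_K$ is the identity functor'' is not a standard formal observation --- it is a genuine theorem about finitely generated modules over Noetherian unstable algebras, and as written your proof is circular.

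For context, the paper does not prove this statement either: it is quoted directly from Henn--Lannes--Schwartz (Lemma 4.8 of their paper), whose argument relies on the finiteness machinery of that paper (Noetherianity of $R_{fg}-\cU$ and the embedding of finitely generated objects into injectives of the form $H_{E''}^* \otimes J(n)$). If you want a self-contained proof along your lines, the missing step is precisely to prove $\rho \colon M \to T_K(M;\triv)$ is an isomorphism for $M \in R_{fg}-\cU$, and that is where those finiteness inputs must enter.
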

  Finally, it is worth pointing out the following result, which is a consequence of \cite[Proposition 2.1.3]{lannes_ihes}.
  \begin{lem}\label{lem:even_degrees_preserve}
     If $R$ is an unstable algebra concentrated in even degrees, then so are $T_E(R)$ and $T_E(R;f)$ for any $(E,f) \in \bA_R$.
   \end{lem}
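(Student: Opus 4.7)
The approach is to first reduce the claim about $T_E(R;f)$ to the claim about $T_E(R)$, and then to cite Lannes' Proposition 2.1.3. For the reduction, I would use the decomposition
\[
T_E(R) = \bigoplus_{f \in \Hom_{\cal{K}}(R, H_E^*)} T_E(R;f)
\]
of unstable algebras already recorded in this section: each summand is a direct summand in $\cal{U}$, so if $T_E(R)$ is concentrated in even degrees, then so is every $T_E(R;f)$. Since $T_E$ restricts to a functor on $\cal{K}$, the algebra version of the statement follows from the module version.

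For the main claim, my plan is to invoke Lannes' Proposition 2.1.3 in \cite{lannes_ihes}, which asserts that the full subcategory $\cal{U}^{\text{ev}} \subset \cal{U}$ of unstable modules concentrated in even degrees is preserved by $T_E$. Conceptually, if $M \in \cal{U}^{\text{ev}}$, then $\beta$ and each $\beta P^i$ must vanish on $M$ for degree reasons, so the $\cal{A}$-module structure on $M$ factors through the quotient $\cal{A}/(\beta)$. Lannes shows that $T_E$ is compatible with this reduction of structure and that its universal property can be realized internally to $\cal{U}^{\text{ev}}$.

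The main subtlety I anticipate is that the target $H_E^* \otimes N$ of the defining adjunction for $T_E$ is in general not concentrated in even degrees, even when $N$ is, because at odd primes $H_E^*$ contains the odd-degree exterior generators $y_i$. The resolution is a parity argument: any $\cal{A}$-linear map from an even-concentrated module into $H_E^* \otimes N$ must respect parity relative to the Bockstein, so the odd part of $H_E^*$ can only interact with odd-degree pieces of $N$; at the level of the universal example this forces $T_E M$ to carry no odd-degree elements. Since working through this parity bookkeeping is precisely what Lannes does, the cleanest plan is to simply refer to his argument for the technical verification.
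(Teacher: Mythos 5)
Your proposal is correct and matches the paper, which gives no argument beyond citing Lannes' Proposition 2.1.3; your reduction of the statement for $T_E(R;f)$ to the one for $T_E(R)$ via the direct sum decomposition (or equivalently via $T_E(R;f) = T_ER \otimes_{T_E^0R}\F_p(f)$ being a quotient in each degree) is the right, if implicit, intermediate step. The parity discussion is a reasonable sketch of why Lannes' result holds, and deferring the technical verification to his proof is exactly what the paper does.
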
 
	\begin{ex}\label{ex:groups}
		A fundamental computation is that of $T_E(H_G^*)$ where $G$ is a compact Lie group, due to Lannes \cite{lannes_unpublished,lannes_ihes}. More specifically, let $E < G$ be an elementary abelian $p$-subgroup, with induced map $\res_{G,E}^* \colon H_G^* \to H_E^*$. The multiplication map $E \times C_E(G) \to G$ induces a morphism $H_G^* \to H_E^* \otimes H_{C_E{(G)}}^*$. The adjoint to this gives rise to an isomorphism
		\[
T_E(H_G^*;\res_{G,E}^*) \cong H^*_{C_G(E)}.
		\]
		Moreover, the maps $\eta_{H_G^*,(E,\res_{G,E}^*)}$, $\rho_{H_G^*,(E,\res_{G,E}^*)}$ and $\kappa_{H_G^*,(E,\res_{G,E}^*)}$ are the maps induced on cohomology by the obvious maps
		\[
		\begin{split}
&E \times C_G(E) \to G \\
&C_G(E) \to G \\
&E \times C_G(E) \to C_G(E).
\end{split}
		\]
		Note that the claims of \Cref{lem:comm,lem:hls_lemma} are clear in this case.

		It follows that $T_E(R;f)$ plays the role of the `centralizer' of the pair $(E,f) \in \bA_R$. We investigate this analogy further in the following sections. \qed
	\end{ex}
  \subsection{The nilpotent filtration of an unstable algebra}\label{sec:nilpotent_filtratio}
In this section, we review Schwartz's nilpotent filtration of the category of unstable modules over the Steenrod algebra, and the associated localization functors of Henn, Lannes, and Schwartz.
We recall that in the previous section we introduced the categories $\cal{U}$ and $\cal{K}$ of unstable modules and unstable algebras over the Steenrod algebra respectively. As noted in the introduction, Schwartz \cite{Schwartz1988La} introduced a natural filtration on $\cal{U}$, known as the nilpotent filtration. We take the following from \cite{HennLannesSchwartz1995Localizations}.
\begin{defn}
Let $M,N$ be unstable modules.
\begin{enumerate}
  \item $M$ is called $n$-nilpotent if and only if every finitely generated submodule admits a filtration such that each filtration quotient is an $n$-fold suspension.
  \item The category $\Nil_n$ is the full subcategory of $\cU$ that contains all $n$-nilpotent modules.
  \item$N$ is called $\Nil_n$-reduced if and only if $\Hom_{\cU}(M,N) = 0$ for all $M \in \Nil_n$, and $\Nil_n$-closed if and only if $\Ext_{\cal{U}}^i(M,N) = 0$ for $i = 0,1$ and all $n$-nilpotent modules $M$.
\end{enumerate}
\end{defn}
Further equivalent conditions for $n$-nilpotent modules, and more information about the nilpotent filtration can be found in \cite[Chapter 6]{schwartz_book}, or the fundamental paper of Henn, Lannes, and Schwartz \cite{HennLannesSchwartz1995Localizations}.

The nilpotent filtration leads to the following definition \cite[Def.~3.5]{HennLannesSchwartz1995Localizations}.
\begin{defn}\label{def:d0}
  Let $M$ be an unstable $\cA$-module, then the topological nilpotence degree of $M$ is
  \[
d_0M \coloneqq \inf \{ k \in \mathbb{N} | M \text{ is } \Nil_{k+1}\text{-reduced} \}.
  \]
\end{defn}
We note that if $R$ is Noetherian, and $M \in R_{fg}-\cU$, then $d_0(M)$ is finite \cite[Theorem 4.3]{HennLannesSchwartz1995Localizations}. In particular, $d_0(R)$ itself is finite.

There are a number of alternative characterizations of the number $d_0$. For example, the subcategories $\Nil_n$ are localizing, and the general theory of localization in abelian categories implies there exists a functor $L_n \colon \cU \to \cU$, and a natural transformation $\lambda_n \colon 1_{\cU} \to L_n$ such that $L_nM$ is $\Nil_n$-closed, and $\lambda_n$ has $n$-nilpotent kernel and cokernel. In this case, we have
\[
d_0M = \inf \{ k \in \mathbb{N} | \lambda_{k+1}M \text{ is a monomorphism}\}.
\]
Further equivalent characterizations can be found in \cite[Definition 3.11]{Kuhn2007Primitives}.  One particular result of interest for us is the following, which is a direct consequence of \cite[Theorem 4.9]{HennLannesSchwartz1995Localizations}.
\begin{prop}\label{prop:injection}
  Let $R$ be a Noetherian unstable algebra, and $M \in R_{fg}-\cU$, then for $n\ge d_0(M)$ there is a monomorphism in $R_{fg}-\cU$:
  \[
  \begin{tikzcd}
  \phi_M \colon M \arrow{r} & \displaystyle\prod_{(E,f) \in \bA_R} H_E^* \otimes T_E(M;f)^{\le n}.
  \end{tikzcd}
  \]
  induced by the product of the maps $\eta_{M,(E,f)}$.
\end{prop}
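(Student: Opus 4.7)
The plan is to derive the proposition as a direct consequence of \cite[Theorem 4.9]{HennLannesSchwartz1995Localizations} together with the localization-theoretic characterization of $d_0(M)$ recorded just above the statement. First I would use the equivalent definition
\[
d_0(M) = \inf\{k \in \mathbb{N} \colon \lambda_{k+1}M \text{ is a monomorphism}\},
\]
where $\lambda_{n+1}\colon M \to L_{n+1}M$ is the unit of localization away from $\Nil_{n+1}$. Under the hypothesis $n \ge d_0(M)$ this instantly gives that $\lambda_{n+1}M$ is injective, so the problem reduces to constructing a monomorphism out of $L_{n+1}M$ into the asserted product and identifying its precomposition with $\lambda_{n+1}$ as $\phi_M$.

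Second, I would invoke \cite[Theorem 4.9]{HennLannesSchwartz1995Localizations}, which supplies an explicit embedding
\[
L_{n+1}M \hookrightarrow \prod_{(E,f) \in \bA_R} H_E^* \otimes T_E(M;f)^{\le n},
\]
whose components are the adjoints of the identities $T_E(M;f) \to T_E(M;f)$, followed by degree-wise truncation at $n$. Composing with $\lambda_{n+1}$ produces a monomorphism whose components are precisely the truncations of the maps $\eta_{M,(E,f)}$, and a short naturality check, using that each $H_E^* \otimes T_E(M;f)$ is $\Nil_{n+1}$-closed (because $H_E^*$ is reduced and $T_E$ is exact), identifies this composite with the map $\phi_M$ of the statement.

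Finally, the product is finite and lies in $R_{fg}-\cU$: finiteness of a skeleton of $\bA_R$ given by \Cref{prop:rector_props}(1) bounds the index set, while Lannes' theorem together with the Noetherian hypothesis ensures each $T_E(M;f)$ is finitely generated over $R$, hence so is its degree-wise truncation. The only genuinely non-formal step, and hence the main obstacle, is verifying that discarding $T_E(M;f)^{>n}$ only introduces an $(n+1)$-nilpotent kernel, so that injectivity is preserved after post-composition with $\lambda_{n+1}$; this is essentially the content of the cited theorem and explains why the truncation appears at level $n$ rather than at a different level.
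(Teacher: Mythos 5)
Your argument is correct and is essentially the paper's: the proposition is stated there precisely as a direct consequence of Henn--Lannes--Schwartz, Theorem 4.9, which is exactly the reduction you carry out (the $\Nil_{n+1}$-localization $L_{n+1}M$ is realized as an equalizer sitting inside the displayed product, so $\phi_M$ factors as $\lambda_{n+1}$ followed by a monomorphism and is itself monic once $n \ge d_0(M)$). One small imprecision: what your naturality check needs, and what is actually true, is that the \emph{truncated} modules $H_E^* \otimes T_E(M;f)^{\le n}$ are $\Nil_{n+1}$-closed (they embed into injectives of the form $H_E^* \otimes J(k)$ with $k \le n$); the untruncated $H_E^* \otimes T_E(M;f)$ need not be, and exactness of $T_E$ is not the relevant point.
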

Here we write $K^{\le n}$ for the quotient of a graded module $K$ by all elements of degree greater than $n$. Note that if $K$ is an unstable module, then so is the quotient $K^{\le n}$.

We also have the following properties of $d_0$, which are a combination of \cite[Proposition 3.6]{HennLannesSchwartz1995Localizations} and \cite[Proposition 3.12]{Kuhn2007Primitives}.
\begin{prop}\label{prop:dproperties}
Let $M$ be an unstable module.
  \begin{enumerate}
    \item If $M$ is concentrated in finitely many degrees, then $d_0(M) \le n$, where $n$ is the top degree in which $M$ is non-zero.
    \item Let $0 \to M' \to M \to M''$ be an exact sequence in $\cU$, then $d_0M' \le d_0M$.
    \item Let $0 \to M' \to M \to M'' \to 0$ be an exact sequence in $\cU$, then $d_0(M) \le \max\{d_0(M'),d_0(M'')\}$.
    \item $d_0(M \otimes M') = d_0(M) + d_0(M')$.
    \item $d_0(T_EM) = d_0(M)$.
    \item If $M \ne 0$, then $d_0(\Sigma^nM) = d_0(M) + n$.
  \end{enumerate}
\end{prop}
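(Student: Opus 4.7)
The plan is to work throughout with the equivalent characterization that $d_0(M) \le k$ if and only if $M$ is $\Nil_{k+1}$-reduced, equivalently $\Hom_{\cal{U}}(N,M) = 0$ for every $N \in \Nil_{k+1}$, or equivalently the natural map $\lambda_{k+1}M$ is a monomorphism. Each item is then verified via this characterization; parts (1)--(3) and (6) are formal, part (5) uses the good behavior of $T_E$ with respect to the nilpotent filtration, and part (4) is the main technical step.

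For (1), every element $x$ of a $(n+1)$-nilpotent module $N$ generates a finitely generated submodule $\cal{A} x$, which by hypothesis admits a filtration with $(n+1)$-fold suspension subquotients, hence is concentrated in degrees $\ge n+1$; a $\cal{U}$-map into a module $M$ that vanishes above degree $n$ must then vanish identically. For (2), a $\Nil_{k+1}$-reduced module has this property inherited by each submodule, since a nonzero map from an $(k+1)$-nilpotent $N$ into $M'$ would compose to a nonzero map into $M$. For (3), left-exactness of $\Hom_{\cal{U}}(N,-)$ applied to the short exact sequence forces vanishing on the middle term whenever it holds on $M'$ and $M''$. For (6), since $\Sigma$ induces an equivalence $\Nil_j \simeq \Nil_{j+1}$ on positively graded modules, $\Sigma^n M$ is $\Nil_{k+n+1}$-reduced if and only if $M$ is $\Nil_{k+1}$-reduced (and the assumption $M \ne 0$ rules out a trivial collapse).

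For (5), I would invoke three facts about $T_E$ from \Cref{sec:tfunctor}: it is exact, commutes with $\Sigma$, and preserves each subcategory $\Nil_n$. These imply that $T_E$ passes to the Serre quotient and commutes with the localization $L_n$, so $L_n(T_E M) \cong T_E(L_n M)$; hence $\lambda_{k+1}(T_E M) \cong T_E(\lambda_{k+1} M)$. Exactness of $T_E$ then transfers monomorphy in one direction, and the reverse direction uses the detection of non-nilpotent content through the injection of \Cref{prop:injection}, which is compatible with $T_E$ by the naturality of $\eta_{M,(E,f)}$. Together these yield the equality $d_0(T_E M) = d_0(M)$.

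The main obstacle is (4). The inequality $d_0(M \otimes M') \le d_0(M) + d_0(M')$ rests on the closure property $\Nil_a \otimes \Nil_b \subseteq \Nil_{a+b}$, which one proves by filtering both factors and observing that the tensor product of an $a$-fold suspension with a $b$-fold suspension is an $(a+b)$-fold suspension; this then propagates to the $\Nil$-reduced setting by dualizing the argument. For the opposite inequality, I would choose a pair $(E,f)$ so that $T_E M$ realizes $d_0(M)$ (using (5) and \Cref{prop:injection}), apply $T_E$ to $M \otimes M'$, exploit the compatibility $T_E(M \otimes M') \cong T_E M \otimes T_E M'$, and produce non-nilpotent content of $M \otimes M'$ in degree $d_0(M) + d_0(M')$ by combining the contents realized in each factor. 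This interaction between the tensor product and the nilpotent filtration, mediated through the $T$-functors, is the technical heart of the proposition and is the single place where the full input of the Henn--Lannes--Schwartz and Kuhn theory is essential.
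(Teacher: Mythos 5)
The paper does not prove this proposition; it is quoted from \cite[Proposition 3.6]{HennLannesSchwartz1995Localizations} and \cite[Proposition 3.12]{Kuhn2007Primitives}, so the only question is whether your arguments are sound. Your proofs of (1)--(3) are correct and are the standard ones: (1) a finitely generated submodule of a $(n+1)$-nilpotent module lives in degrees $\ge n+1$; (2) reducedness passes to submodules; (3) left exactness of $\Hom_{\cU}(N,-)$.

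The remaining parts have genuine gaps. In (6), the assertion that $\Sigma$ induces an equivalence $\Nil_j \simeq \Nil_{j+1}$ is false: the essential image of $\Sigma$ on $\Nil_j$ is far from all of $\Nil_{j+1}$ (already $\Nil_1$ contains nilpotent modules that are not suspensions). The correct argument uses the left adjoint $\Omega$ of $\Sigma$: a map $N \to \Sigma M$ corresponds bijectively to a map $\Omega N \to M$, and $\Omega(\Nil_{j+1}) \subseteq \Nil_j$. In (5), your forward inequality $d_0(T_EM) \le d_0(M)$ via $L_nT_E \cong T_EL_n$ and exactness is fine, but the reverse direction cannot go through \Cref{prop:injection}: that result requires a Noetherian unstable algebra $R$ and $M \in R_{fg}\text{-}\cU$, neither of which is available for a bare unstable module. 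The standard fix is elementary: $H_E^* \cong \F_p \oplus \widetilde H_E^*$ makes $M$ a natural direct summand of $T_EM$, so $d_0(M) \le d_0(T_EM)$ by (2).

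The most serious gap is (4). For the inequality $\le$, the inclusion $\Nil_a \otimes \Nil_b \subseteq \Nil_{a+b}$ is a statement about modules \emph{belonging to} the filtration, whereas $d_0$ measures \emph{reducedness}; there is no formal ``dualization'' turning one into the other, and your sketch does not supply the missing step. For $\ge$, you again invoke \Cref{prop:injection} and a choice of $(E,f)$ ``realizing'' $d_0(M)$, which is unavailable in this generality. The proof in the cited sources runs instead through the canonical nilpotent filtration: every unstable module has a natural filtration $M = \mathrm{nil}_0M \supseteq \mathrm{nil}_1M \supseteq \cdots$ with $\mathrm{nil}_sM/\mathrm{nil}_{s+1}M \cong \Sigma^sR_sM$ for reduced modules $R_sM$, one has $d_0(M) = \sup\{s \mid R_sM \ne 0\}$, and the filtration of a tensor product satisfies $R_s(M \otimes M') \cong \bigoplus_{i+j=s}R_iM \otimes R_jM'$; since a tensor product of nonzero reduced modules is nonzero and reduced, both inequalities follow at once. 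If you want a self-contained proof of (4), this filtration (see \cite[Chapter 6]{schwartz_book}) is the input you need; the $T$-functor plays no role there.
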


The topological nilpotence degree of a Noetherian unstable algebra $R$ is related to algebraic nilpotence in the following way, compare \cite[Corollary 2.6]{Kuhn2013Nilpotence}.
\begin{lem}\label{lem:algebraic_nilpotence}
  Let $R$ be a connected Noetherian unstable algebra, and define $t$ to be $d_0(R)$ for $p = 2$, or $d_0(R) + \dim(R)$ for $p$ odd. Then $t$ is the maximal integer $d$ such that $\rad(R)^d \ne 0$. In particular, for $s > t$, the product of any $s$ nilpotent elements in $R$ is zero.
\end{lem}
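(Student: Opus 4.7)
The plan is to prove the ``in particular'' clause, which bounds the maximal $d$ with $\rad(R)^d \ne 0$ from above by $t$; the matching lower bound, exhibiting a specific nonzero $t$-fold product of nilpotents, can then be verified by inspecting the top degrees of the nilradical together with the defining property of $d_0$.

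At the prime $p = 2$, the essential observation is that the nilradical $\rad(R)$ is a sub-$\cU$-module of $R$. Iterating the Cartan formula together with the unstable relation $\Sq^{|x|}x = x^2$ yields $(\Sq^i x)^{2^k} = \Sq^{2^k i}(x^{2^k})$, so if $x$ is nilpotent then so is $\Sq^i x$. Given a product $y = x_1 \cdots x_s$ of nilpotent elements, I would consider the sub-$\cU$-module $\langle y \rangle \subseteq R$ generated by $y$. By the previous observation, $\langle y \rangle \subseteq \rad(R) \subseteq \Nil_1$, and since Steenrod operations do not decrease degree, $\langle y \rangle$ is concentrated in degrees $\ge |y|$. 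Combining these two facts with the standard characterization of the nilpotent filtration---an unstable module in $\Nil_1$ which is concentrated in degrees $\ge n$ lies in $\Nil_n$ (since for any $x$ of degree $\ge n$ the iterates $\Sq_0^k x$ have degree $\ge n\cdot 2^k$ and eventually vanish)---places $\langle y \rangle$ in $\Nil_{|y|}$. Hence $d_0(\langle y \rangle) \ge |y|$, and applying \Cref{prop:dproperties}(2) to the inclusion $\langle y \rangle \subseteq R$ yields $d_0(R) \ge |y| \ge s$, using that each $|x_i| \ge 1$ by connectedness of $R$. Choosing $s > t = d_0(R)$ forces $y = 0$.

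For odd $p$ the analysis is more delicate because $\rad(R)$ is no longer closed under Steenrod operations: the Bockstein $\beta$ can carry an odd-degree nilpotent element to an even-degree non-nilpotent one, as happens in $H^*(B\Z/p;\F_p)$ where $\beta y_i = x_i$. The additional term $+\dim(R)$ in the definition of $t$ compensates for exactly this phenomenon, essentially by bounding, via the Krull dimension of the polynomial part of $R$, the number of Bockstein applications that can be made before leaving the nilradical. With this refinement, an entirely parallel $\Nil$-filtration argument yields the desired bound on $s$.

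I expect the main obstacle to be twofold: first, verifying precisely the characterization of $\Nil_n$ used at $p = 2$, which requires unwinding the localizing-subcategory definition (this can be traced back to Schwartz's analysis of the nilpotent filtration); and second, making the odd-primary argument completely rigorous, in particular pinning down exactly how $\dim(R)$ enters as a correction term through the Bockstein/Krull-dimension interaction.
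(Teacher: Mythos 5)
Your $p=2$ argument breaks at its key step: the claim that an unstable module lying in $\Nil_1$ and concentrated in degrees $\ge n$ must lie in $\Nil_n$ is false --- the nilpotent filtration is not detected by connectivity. A counterexample: let $N = \tilde H^*(B\Z/2) \subset \F_2[x]$ be the augmentation ideal of $H^*_{\Z/2}$. Then $\Sigma N$ is a suspension, hence lies in $\Nil_1$, and is concentrated in degrees $\ge 2$; but $N$ is reduced, so $d_0(\Sigma N) = d_0(N)+1 = 1$ by \Cref{prop:dproperties}(6), whereas any nonzero module in $\Nil_2$ has $d_0 \ge 2$. Your parenthetical justification (the iterates of $x \mapsto \Sq^{|x|}x$ eventually vanish) only certifies membership in $\Nil_1$; landing in $\Nil_n$ requires strictly more than a lower bound on degrees. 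Hence the inequality $d_0(\langle y\rangle) \ge |y|$ is unjustified and the $p=2$ case collapses. (Your overall strategy --- exhibiting a nonzero submodule of $R$ lying in $\Nil_s$ --- could in principle be salvaged by invoking multiplicativity of the nilpotent filtration of an unstable algebra, i.e., that the product of submodules lying in $\Nil_a$ and $\Nil_b$ lies in $\Nil_{a+b}$, together with the identification of $\rad(R)$ with the largest nilpotent submodule at $p=2$; but that is a substantive input you would have to prove or cite, and it is not the degree argument you gave.) The odd-primary case is not an argument at all: the assertion that $\dim(R)$ ``compensates for Bockstein applications'' carries no proof, and it does not reflect how $\dim(R)$ actually enters.

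The paper's proof runs in the opposite direction and avoids the nilpotent filtration of submodules entirely: by \Cref{prop:injection}, $R$ embeds into $\prod_{(E,f)} H_E^* \otimes T_E(R;f)^{\le d_0(R)}$, so it suffices to bound the length of nonzero products of nilpotents in the target. In the truncation $T_E(R;f)^{\le d_0(R)}$ any product of more than $d_0(R)$ positive-degree elements vanishes for degree reasons, while $H_E^*$ is reduced when $p=2$ and contributes at most $\rank(E) \le \dim(R)$ exterior factors when $p$ is odd (using \Cref{prop:rector_props}(2)); this is precisely where the $+\dim(R)$ comes from. Note also that the paper's proof, like your plan, only establishes the upper bound $\rad(R)^d \ne 0 \Rightarrow d \le t$, which is all that the ``in particular'' clause uses; the matching lower bound you defer is not addressed there either.
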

\begin{proof}
  Let $d^{\alg}(R)$ be the maximal $d$ such that $\rad(R)^d \ne 0$, so that our claim is $d^{\alg}(R) \le t$. It is clear that
  \[
d^{\alg}(H_E^* \otimes T_E(R;f)^{\le d}) \le \begin{cases}
  d^{\alg}(T_E(R;f)^{\le d}) & \text{ if } p = 2\\
  d^{\alg}(T_E(R;f)^{\le d}) + \rank(E) & \text{ if } p > 2.\end{cases}
  \]
  It then follows from \Cref{prop:injection} that
  \[
d^{\alg}(R) \le \begin{cases}
  \max\limits_{(E,f) \in \bA_R}\{d^{\alg}(T_E(R;f)^{\le d_0(R)})\} \le d_0(R) & \text{ if } p = 2\\
\max\limits_{(E,f) \in \bA_R}\{d^{\alg}(T_E(R;f)^{\le d_0(R)}) +\rank(E) \} \le d_0(R) +\dim(R) & \text{ if } p > 2.
\end{cases}
  \]
  Here we have used that $\rank(E) \le \dim(R)$ for each $(E,f) \in \bA_R$, see \Cref{prop:rector_props}(2). It follows that $d^{\alg}(R) \le t$ as claimed.
\end{proof}
\begin{rem}[The case of an odd prime]\label{rem:odd_prime}
  The cohomology of elementary abelian $p$-groups (\Cref{ex:elemn_abelian}) shows already one significant difference between working at $p = 2$ or working at an odd prime, namely the presence of the exterior classes. Many of the fundamental results of unstable algebras therefore have slightly different forms in the case of odd primes. One way to deal with these problems is to work with the full subcategory  $\cU' \subseteq \cU$ consisting of unstable modules which are non-trivial only in even degrees. There is an obvious forgetful functor $\cal{O} \colon \cU' \to \cU$ which has a right adjoint $\widetilde{ \cal{O}} \colon \cU \to \cU'$, see \cite{LannesZarati1986Sur} for example, which is the largest submodule of $M$ that is concentrated in even degrees, or even more explicitly
  \[
\widetilde{O}M = \bigcap_{\theta \in \cal{A}} \ker(\beta \theta \colon M^{\text{ev}} \to M).
  \] Similarly, we have the category $\cal{K}'$ of unstable algebras concentrated in even degrees. At certain points it will be convenient for us to assume that our unstable algebra  comes from $\cal{K'}$ (considered naturally as an object in $\cal{K}$) when $p$ is odd.

\end{rem}
  \section{The center of a Noetherian unstable algebra}
  In this section, following Dwyer and Wilkerson, we study  central objects of a Noetherian unstable algebra with respect to the objects of $\bA_R$. The main new result, given here as \Cref{thm:max_central}, is that up to isomorphism there is a maximal such element with respect to a natural poset structure on $\bA_R$. We also prove that for each central object $(E,f) \in \bA_R$, the unstable algebra $R$ naturally obtains the structure of a $H_E^*$-comodule, which will be crucial for the calculation of $d_0(R)$.
	\subsection{Central objects of a Noetherian unstable algebra}
	Throughout this section we assume that $R$ is a connected Noetherian unstable algebra. It would suffice to assume that the module of indecomposables $Q(R)$ is locally finite, i.e., every element of $Q(R)$ is contained in a finite $\cA$-submodule, however we have no need for this greater generality.

  We observe from \Cref{ex:groups} that if $E \le G$ is a central elementary abelian $p$-subgroup of a compact Lie group, then the map $\rho_{H_G^*,(E,\res_{G,E}^*)} \colon H_G^* \to T_E(H_G^*;\res_{G,E}^*)$ is an isomorphism. Based on  this is natural to make the following definition.
	\begin{defn}[Dwyer--Wilkerson]
		Let $R$ be a connected Noetherian unstable algebra, then a pair $(E,f) \in \bA_R$ is called central if $\rho_{R,(E,f)} \colon R \to T_E(R;f)$ is an isomorphism.
	\end{defn}
As noted, given a central elementary abelian $p$-subgroup $E$ of a compact Lie group $G$, the pair $(E,\res_{G,E}^*)$ is then central inside $\bA_{H_G^*}$. We will see later the converse is true if $G$ is a finite $p$-group, but not in general.

A useful criteria for recognizing central objects is given in \cite[Proposition 3.4]{DwyerWilkerson1992cohomology}.
  \begin{prop}[Dwyer--Wilkerson]\label{prop:central_char}
    A pair $(E,f) \in \bA_R$ is central if and only if there exists a $\cK$-map $R \to H_E^* \otimes R$ which, when composed with the projections $H_E^* \otimes R \to R$ and $H_E^* \otimes R \to H_E^*$ gives, respectively, the identity map of $R$ and the map $f$.
  \end{prop}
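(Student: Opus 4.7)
The plan is to use the adjunction in \Cref{lem:dwyer_wilerson3.1} together with the commutativity in \Cref{lem:commdia} to reduce the statement to a calculation with $\cal{K}$-maps.

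For the $(\Rightarrow)$ direction, suppose $\rho := \rho_{R,(E,f)}$ is an isomorphism and set
\[
g := (1 \otimes \rho^{-1}) \circ \eta_{R,(E,f)} \colon R \longrightarrow H_E^* \otimes R.
\]
Then \Cref{lem:commdia} applied to $\id_{T_E(R;f)}$ gives $(\epsilon_E \otimes 1) \circ \eta_{R,(E,f)} = \rho$, so $(\epsilon_E \otimes 1) \circ g = \id_R$. Since $\rho$ is an augmentation-preserving isomorphism of connected unstable algebras, $\epsilon_R \circ \rho^{-1} = \epsilon_{T_E(R;f)}$, and the characterization of $\eta_{R,(E,f)}$ as the adjoint of $\id_{T_E(R;f)}$ (\Cref{lem:dwyer_wilerson3.1}) yields $(1 \otimes \epsilon_R) \circ g = (1 \otimes \epsilon_{T_E(R;f)}) \circ \eta_{R,(E,f)} = f$.

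For the $(\Leftarrow)$ direction, suppose such a $\cal{K}$-map $g$ is given. The identity $(1 \otimes \epsilon_R) \circ g = f$ is precisely the commutative square of \Cref{lem:dwyer_wilerson3.1} with $S = R$ (since $R^0 = \F_p$ and $\xi_R$ is the identity), so $g$ is the adjoint of a unique $\cal{K}$-map $\tilde g \colon T_E(R;f) \to R$, i.e.\ $g = (1 \otimes \tilde g) \circ \eta_{R,(E,f)}$. Applying \Cref{lem:commdia} to $\tilde g$ now gives $\tilde g \circ \rho = (\epsilon_E \otimes 1) \circ g = \id_R$, so $\tilde g$ is a $\cal{K}$-retraction of $\rho$.

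The main obstacle is then to upgrade $\tilde g$ to a two-sided inverse of $\rho$, that is, to show $\rho \circ \tilde g = \id_{T_E(R;f)}$. Both $\id_{T_E(R;f)}$ and $\rho \circ \tilde g$ are $\cal{K}$-endomorphisms of $T_E(R;f)$, and by \Cref{lem:dwyer_wilerson3.1} with $S = T_E(R;f)$ their adjoints are respectively $\eta_{R,(E,f)}$ and $(1 \otimes \rho) \circ g$. Both adjoint maps $R \to H_E^* \otimes T_E(R;f)$ agree after $(\epsilon_E \otimes 1)$ (giving $\rho$) and after $(1 \otimes \epsilon_{T_E(R;f)})$ (giving $f$), but these marginals alone do not force equality into a tensor product. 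My plan for closing this gap is to invoke the $H_E^*$-coaction $\kappa_{R,(E,f)}$: by \Cref{lem:comm} one has $\eta_{R,(E,f)} = \kappa_{R,(E,f)} \circ \rho$, and I would argue by direct computation that $(1 \otimes \rho) \circ g$ also factors as $\kappa_{R,(E,f)} \circ \rho$, using the $\cal{K}$-algebra structure of $g$ together with the universal property of $T_E(R;f)$; once established, this identification forces $\rho \circ \tilde g = \id$ and completes the argument.
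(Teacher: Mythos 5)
The paper itself does not prove this proposition; it quotes it from Dwyer--Wilkerson (their Proposition~3.4), so there is no in-paper argument to measure yours against. Your forward direction is correct and complete: with $\rho$ an isomorphism of connected unstable algebras, $g=(1\otimes\rho^{-1})\circ\eta_{R,(E,f)}$ has the two required projections, exactly as you compute. In the converse direction you correctly recognize $g$ as the adjoint of a unique $\tilde g\colon T_E(R;f)\to R$ and deduce $\tilde g\circ\rho=\id_R$ from \Cref{lem:commdia}. But this only exhibits $\rho$ as a split monomorphism, and a split monomorphism which is a finite map of Noetherian unstable algebras need not be surjective (consider $R\to R\oplus K$ for a nonzero ideal summand $K$), so the remaining step $\rho\circ\tilde g=\id_{T_E(R;f)}$ carries all the content of this direction.

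Your plan for that step is circular. You want to show that the adjoint $(1\otimes\rho)\circ g$ of $\rho\circ\tilde g$ equals $\eta_{R,(E,f)}=\kappa_{R,(E,f)}\circ\rho$. However, $g=(1\otimes\tilde g)\circ\eta_{R,(E,f)}=(1\otimes\tilde g)\circ\kappa_{R,(E,f)}\circ\rho$, so $(1\otimes\rho)\circ g=(1\otimes\rho\tilde g)\circ\kappa_{R,(E,f)}\circ\rho$, and proving this equals $\kappa_{R,(E,f)}\circ\rho$ is exactly the assertion that $\rho\tilde g$ acts as the identity on the image of $\eta_{R,(E,f)}$ --- which, since the adjunction of \Cref{lem:dwyer_wilerson3.1} is a bijection, is equivalent to the claim $\rho\tilde g=\id$ you are trying to establish. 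The hypotheses give no coassociativity for $g$, and as you note the two marginal conditions do not determine a map into a tensor product, so no ``direct computation'' from the algebra structure of $g$ alone will close this. The genuinely new ingredient in Dwyer--Wilkerson's proof is to apply $T_E$ to $g$ itself and do a component analysis: one uses $T_E(H_E^*\otimes R)\cong T_E(H_E^*)\otimes T_E(R)$ together with Lannes' computation $T_E(H_E^*)\cong\bigoplus_{\alpha\in\End(E)}H_E^*$ and the identification of the trivial component $T_E(R;\mathrm{triv})\cong R$ (a case of \Cref{lem:hls_lemma}), and both conditions on $g$ are needed to track which components of $T_E(R)$ land where; this is what produces a two-sided inverse to $\rho$. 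As written, your converse direction is incomplete.
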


We now prove some basic facts about central objects, all of which are analogous to standard statements about central subgroups of groups.  We begin with the following result, which is an algebraic analog of the fact that if $E$ is an elementary abelian $p$-subgroup of a group $G$, then $E$ is always a central subgroup of $C_G(E)$.
\begin{prop}\label{prop:central_factor}
    Given $(E,f) \in \bA_R$, there is a $\cal{K}$-map $h \colon T_E(R;f) \to H_E^*$ factoring the map $f \colon R \to H_E^*$. Moreover, the pair $(E,h)$ is central in $\bA_{T_E(R;f)}$.
  \end{prop}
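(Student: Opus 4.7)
The plan is to construct $h$ directly from the Hopf algebra structure on $H_E^*$ via the $T$-functor adjunction (\Cref{lem:dwyer_wilerson3.1}), then exhibit the centrality of $(E,h)$ through the comodule structure map $\kappa_{R,(E,f)}$.

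First I would construct $h$ as follows. Let $\Delta \colon H_E^* \to H_E^* \otimes H_E^*$ be the comultiplication from \Cref{ex:elemn_abelian}, and consider the $\cK$-map $g \coloneqq \Delta \circ f \colon R \to H_E^* \otimes H_E^*$. To apply \Cref{lem:dwyer_wilerson3.1} with $S = H_E^*$, I must verify that $(1 \otimes \epsilon_{H_E^*}) \circ g = (1 \otimes \xi_{H_E^*}) \circ f$; after identifying $H_E^* \otimes \F_p$ with $H_E^*$, both sides equal $f$ by the counit axiom $(1 \otimes \epsilon)\Delta = \id$ for the Hopf algebra $H_E^*$. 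This yields a unique $\cK$-map $h \colon T_E(R;f) \to H_E^*$ with $h^{\#} = \Delta \circ f$.

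Next I would show $h$ factors $f$. By \Cref{lem:commdia},
\[
h \circ \rho_{R,(E,f)} = (\epsilon_E \otimes 1) \circ h^{\#} = (\epsilon_E \otimes 1) \circ \Delta \circ f = f,
\]
using the other counit identity $(\epsilon \otimes 1)\Delta = \id$. Since $f$ is finite, $H_E^*$ is finitely generated over $R$, hence over $T_E(R;f)$ via $h$, so $h$ is finite and $(E,h) \in \bA_{T_E(R;f)}$.

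For centrality, I would verify the criterion of \Cref{prop:central_char} using the comodule structure map $\kappa \coloneqq \kappa_{R,(E,f)} \colon T_E(R;f) \to H_E^* \otimes T_E(R;f)$. Composing with the projection $\epsilon_E \otimes 1$ gives the identity by the counit axiom for the $H_E^*$-comodule $T_E(R;f)$ (established after the definition of $\kappa$). It remains to identify the other projection $(1 \otimes \epsilon_{T_E(R;f)}) \circ \kappa$ with $h$, which I would do by computing adjoints: using $\kappa^{\#} = (\Delta \otimes 1) \circ \eta_{R,(E,f)}$ and the fact that $(1 \otimes \epsilon_{T_E(R;f)}) \circ \eta_{R,(E,f)} = f$ (the adjoint of the augmentation of $T_E(R;f)$, which is defined because $R$ connected implies $T_E(R;f)$ connected), one obtains
\[
\bigl((1 \otimes \epsilon_{T_E(R;f)}) \circ \kappa\bigr)^{\#} = (\Delta \otimes \epsilon_{T_E(R;f)}) \circ \eta_{R,(E,f)} = \Delta \circ f = h^{\#},
\]
and the adjunction bijection forces the two maps to coincide. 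The main technical point is this last adjoint manipulation, but it reduces to unwinding the definition of $\kappa$ together with counit axioms; once this is in hand, centrality of $(E,h)$ follows immediately from \Cref{prop:central_char}.
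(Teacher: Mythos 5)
Your proof is correct and follows essentially the same route as the paper: the paper simply \emph{defines} $h$ as $(1 \otimes \epsilon_{T_E(R;f)}) \circ \kappa_{R,(E,f)}$, checks $h \circ \rho_{R,(E,f)} = f$ via $\eta = \kappa \circ \rho$, and then invokes the counit identity $(\epsilon_E \otimes 1)\circ\kappa = \id$ together with \Cref{prop:central_char}, exactly as you do. Your only variation is to introduce $h$ as the adjoint of $\Delta \circ f$ and then verify it coincides with $(1 \otimes \epsilon_{T_E(R;f)}) \circ \kappa_{R,(E,f)}$ by the adjoint computation, which is a correct but equivalent repackaging of the same argument.
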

  \begin{proof}
We define the map $h$ as the composite
\[
\xymatrix@C=4em{ T_E(R;f) \ar[r]^-{\kappa_{R,(E,f)}} & H_E^* \otimes T_E(R;f) \ar[r]^-{1 \otimes \epsilon_{T_E(R;f)}} & H_E^*}
\]
To see that this factors the map $f$, note that
\[
\begin{split}
f & \cong(1 \otimes \epsilon_{T_E(R;f)}) \circ \eta_{R,(E,f)} \\
&\cong (1 \otimes \epsilon_{T_E(R;f)}) \circ \kappa_{R,(E,f)} \circ \rho_{R,(E,f)} \\
&= h \circ \rho_{R,(E,f)},
\end{split}
\]
where we have used \Cref{lem:comm}. Finally, because $f$ and $\rho_{R,(E,f)}$ are finite morphisms (the latter by \cite[Corollary 1.12]{Henn1996Commutative}, for example), so is $h$. The composite $(\epsilon_{E} \otimes 1) \circ \kappa_{R,(E,f)} \colon T_E(R;f) \to T_E(R;f)$ is the identity, and therefore $(E,h) \in \bA_{T_E(R;f)}$ is central by \Cref{prop:central_char}.
\end{proof}
\begin{rem}
  The morphism $h \colon T_E(R;f) \to H_E^*$ is in fact a morphism of $H_E^*$-comodules. In fact, unwinding the definitions of the maps involved, this is nothing other than the statement of coassociativity for the $H_E^*$-comodule $T_E(R;f$). 
\end{rem}
We have the following behavior with respect to tensor products.
\begin{lem}\label{lem:tp_center}
  Suppose $R_1$ and $R_2$ are Noetherian connected unstable algebras, and $(E_i,f_i) \in \bA_{R_i}$ is central for $i=1,2$, then $(E_1 \oplus E_2,f)$ is central in $\bA_{R_1 \otimes R_2}$, where $f \colon R_1 \otimes R_2 \to H_{E_1 \oplus E_2}^*$ is the composite $R_1 \otimes R_2 \xr{f_1 \otimes f_2} H_{E_1}^* \otimes H_{E_2}^* \cong H_{E_1 \oplus E_2}^*$.
\end{lem}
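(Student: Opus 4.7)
The plan is to verify the centrality of $(E_1 \oplus E_2, f)$ by constructing a witness for the Dwyer--Wilkerson characterization (\Cref{prop:central_char}). First I would check that $(E_1 \oplus E_2, f)$ genuinely belongs to $\bA_{R_1 \otimes R_2}$, i.e., that $f$ is a finite morphism. Since each $f_i \colon R_i \to H_{E_i}^*$ is finite, each $H_{E_i}^*$ is finitely generated over $R_i$, and tensoring gives $H_{E_1 \oplus E_2}^* \cong H_{E_1}^* \otimes H_{E_2}^*$ as a finitely generated $R_1 \otimes R_2$-module via $f_1 \otimes f_2 = f$.

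Next, because each $(E_i, f_i)$ is central in $\bA_{R_i}$, \Cref{prop:central_char} supplies $\cK$-morphisms $\phi_i \colon R_i \to H_{E_i}^* \otimes R_i$ such that the projection to $R_i$ recovers the identity and the projection to $H_{E_i}^*$ recovers $f_i$. I would then define a candidate $\cK$-morphism by the composite
\[
\phi \colon R_1 \otimes R_2 \xrightarrow{\phi_1 \otimes \phi_2} (H_{E_1}^* \otimes R_1) \otimes (H_{E_2}^* \otimes R_2) \xrightarrow{\simeq} H_{E_1 \oplus E_2}^* \otimes (R_1 \otimes R_2),
\]
where the second map swaps the middle tensor factors and uses the Künneth-type $\cK$-isomorphism $H_{E_1}^* \otimes H_{E_2}^* \cong H_{E_1 \oplus E_2}^*$. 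Since both of these rearrangement maps are $\cK$-isomorphisms (they are instances of the symmetric monoidal structure on $\cK$ applied to unstable algebras), $\phi$ is a morphism in $\cK$.

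Finally, I would verify the two projection conditions from \Cref{prop:central_char}. Composing $\phi$ with the projection $H_{E_1 \oplus E_2}^* \otimes R_1 \otimes R_2 \to R_1 \otimes R_2$ unpacks to $(\pi_{R_1} \circ \phi_1) \otimes (\pi_{R_2} \circ \phi_2) = \id_{R_1} \otimes \id_{R_2}$, which is the identity. Composing with the projection to $H_{E_1 \oplus E_2}^*$ unpacks to $(\pi_{H_{E_1}^*} \circ \phi_1) \otimes (\pi_{H_{E_2}^*} \circ \phi_2) = f_1 \otimes f_2 = f$, as required. There is no serious obstacle here; the only point requiring care is bookkeeping the swap isomorphism so that the codomain is correctly identified with $H_{E_1 \oplus E_2}^* \otimes (R_1 \otimes R_2)$, but this is purely formal.
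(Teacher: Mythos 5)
Your proposal is correct and coincides with the second argument the paper itself gives for this lemma: tensoring the Dwyer--Wilkerson witness maps $\phi_1 \otimes \phi_2$ and checking the two projection conditions of \Cref{prop:central_char}. (The paper's first-listed argument instead uses that the $T$-functor commutes with tensor products, but your route is explicitly noted there as an alternative, so no comparison is needed.)
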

\begin{proof}
  This is an almost immediate consequence of the fact that the $T$-functor commutes with tensor-products; there is a natural isomorphism $T_{E_1 \oplus E_2}(R_1 \otimes R_2;f) \cong T_{E_1}(R_1;f_1) \otimes  T_E(R_2;f_2)$, and under this isomorphism $\rho_{R_1 \otimes R_2,(E_1 \oplus E_2,f)}$ corresponds to $\rho_{R_1,(E_1,f_1)} \otimes \rho_{R_2,(E_2,f_2)}$. Alternatively, if $\phi_i \colon R \to H_{E_i}^* \otimes R_i$ is the $\cal{K}$-map arising via \Cref{prop:central_char}, then the $\cK$-map $\phi_1 \otimes \phi_2 \colon R_1 \otimes R_2 \to H_{E_1 \oplus E_2}^* \otimes R_1 \otimes R_2$ satisfies the conditions of \Cref{prop:central_char}, and shows that $(E_1 \oplus E_2,f)$ is central.
\end{proof}

The next two lemmas are due to Dwyer--Wilkerson \cite[Lemma 4.5 and Lemma 4.6]{DwyerWilkerson1992cohomology}; the first is an immediate consequence of \Cref{prop:central_char}.
\begin{lem}[Dwyer--Wilkerson]\label{lem:subgroups_of_central}
  Let $(C,g)$ be in $\bA_R$, and assume that $(C,g)$ is central. If $C'$ is a subgroup of $C$, then $(C',g')$ is central where $g \colon R \xr{g} H_C^* \xr{\iota^*} H_{C'}^*$, and $\iota \colon C' \to C$ is the inclusion.
\end{lem}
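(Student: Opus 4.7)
The plan is to apply the characterization of centrality from Proposition~\ref{prop:central_char}: namely, $(C,g)$ being central gives us a $\cK$-map $\phi\colon R \to H_C^* \otimes R$ whose two projections recover $\mathrm{id}_R$ and $g$, and I will push this structure along $\iota^*$ to obtain the analogous map for $(C',g')$.

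First I would verify that $(C',g')$ actually lies in $\bA_R$, i.e.\ that $g' = \iota^* \circ g$ is a finite morphism. The inclusion $\iota \colon C' \hookrightarrow C$ of elementary abelian $p$-groups induces a surjection $\iota^* \colon H_C^* \twoheadrightarrow H_{C'}^*$, and this map is $R$-linear once $H_{C'}^*$ is given the $R$-module structure coming from $g'$. Since $H_C^*$ is finitely generated over $R$ via the finite morphism $g$, the quotient $H_{C'}^*$ is finitely generated over $R$ via $g'$, so $(C',g') \in \bA_R$.

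Now, using a $\cK$-map $\phi\colon R \to H_C^* \otimes R$ provided by Proposition~\ref{prop:central_char} applied to $(C,g)$, I would define
\[
\phi' \;=\; (\iota^* \otimes 1) \circ \phi \colon R \longrightarrow H_{C'}^* \otimes R,
\]
which is a $\cK$-map as a composition of $\cK$-maps. The two required identities then follow by direct diagram-chase. For the first projection, using $\epsilon_{C'} \circ \iota^* = \epsilon_C$ (which holds because both are the unique augmentation on the relevant polynomial/exterior algebra and $\iota^*$ preserves the augmentation),
\[
(\epsilon_{C'} \otimes 1)\circ \phi' \;=\; (\epsilon_C \otimes 1)\circ \phi \;=\; \mathrm{id}_R.
\]
For the second,
\[
(1 \otimes \epsilon_R)\circ \phi' \;=\; \iota^* \circ (1 \otimes \epsilon_R)\circ \phi \;=\; \iota^* \circ g \;=\; g'.
\]
Applying Proposition~\ref{prop:central_char} in the reverse direction to $\phi'$ concludes that $(C',g')$ is central.

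There is no real obstacle here: the lemma is a formal consequence of the characterization of centrality via sections of the augmentation, together with the elementary observation that $\iota^*$ is surjective and preserves augmentations. The only point where a little care is needed is confirming that $(C',g')$ is in $\bA_R$ before invoking the characterization, and this is handled by the finiteness argument above.
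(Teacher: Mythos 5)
Your proof is correct and follows exactly the route the paper intends: the paper simply asserts this lemma is "an immediate consequence of \Cref{prop:central_char}", and your argument — pushing the structure map $\phi$ forward along $\iota^*\otimes 1$ and checking the two projection identities, after first verifying finiteness of $g'$ via surjectivity of $\iota^*$ — is precisely that immediate consequence, carried out in full.
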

\begin{rem}\label{rem:dw_sum_construction}
  Let $(E,f) \in \bV_R$ and $(C,g) \in \bA_R$ with $(C,g)$ central. Observe that, by adjunction, a map $R \to H_E^* \otimes H_C^*$ can be specified by giving a $\cal{K}$-map $T_C(R;g) \to H_E^*$, or equivalently, using that $(C,g)$ is central, a $\cal{K}$-map $R \to H_E^*$. We denote by $f \boxplus g\colon R \to H_E^* \otimes H_C^*$ the map corresponding to $f$, so that $(E \oplus C, f \boxplus g) \in \bV_R$. Dwyer and Wilkerson show the following. 
\end{rem}
\begin{lem}[Dwyer--Wilkerson]\label{lem:sumuniqueness}
	Let $(E,f) \in \bV_R$ and $(C,g) \in \bA_R$ with $(C,g)$ central. Then $(E \oplus C,f \boxplus g)$ is the unique pair in $\bV_R$ which restricts to $f$ (resp.~$g$) along the summand inclusion $E \to E \oplus C$ (resp.~$C \to E \oplus C$).
\end{lem}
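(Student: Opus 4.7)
The plan is to recast the data of a $\cK$-map $h\colon R\to H_{E\oplus C}^*$ restricting to $g$ on $C$ as an adjoint map $T_C(R;g)\to H_E^*$, and then to use centrality of $(C,g)$ to identify the latter with a $\cK$-map $R\to H_E^*$. The construction of $f\boxplus g$ in \Cref{rem:dw_sum_construction} will be precisely the inverse of this composite bijection evaluated at $f$, yielding uniqueness.

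First, I would apply \Cref{lem:dwyer_wilerson3.1} with the input pair taken to be $(C,g)$ and target $S=H_E^*$. Because $(H_E^*)^0=\F_p$, the commutativity condition of that lemma reduces to requiring of a $\cK$-map $h\colon R\to H_C^*\otimes H_E^*$ precisely that $(1\otimes\epsilon_{H_E^*})\circ h=g$, after identifying $H_C^*\otimes\F_p\cong H_C^*$. Under the isomorphism $H_C^*\otimes H_E^*\cong H_{E\oplus C}^*$, this is exactly the condition that the restriction of $h$ along the summand inclusion $C\hookrightarrow E\oplus C$ equals $g$. Hence \Cref{lem:dwyer_wilerson3.1} furnishes a natural bijection
\[
\{h\colon R\to H_{E\oplus C}^*\text{ in }\cK\mid h|_C=g\}\;\longleftrightarrow\;\Hom_{\cK}(T_C(R;g),H_E^*),\qquad h\leftrightarrow\tilde h.
\]

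Second, I would invoke the hypothesis that $(C,g)$ is central, which states that $\rho_{R,(C,g)}\colon R\to T_C(R;g)$ is an isomorphism, so that precomposition with $\rho_{R,(C,g)}$ gives a further bijection with $\Hom_{\cK}(R,H_E^*)$. Applied to $\tilde h$, \Cref{lem:commdia} yields $\tilde h\circ\rho_{R,(C,g)}=(\epsilon_C\otimes 1)\circ\tilde h^{\#}=(\epsilon_C\otimes 1)\circ h$, since the adjoint $\tilde h^{\#}$ is $h$ by construction. But $(\epsilon_C\otimes 1)\circ h$ is precisely the restriction of $h$ along $E\hookrightarrow E\oplus C$. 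Composing the two bijections therefore produces the bijection
\[
\{h\colon R\to H_{E\oplus C}^*\text{ in }\cK\mid h|_C=g\}\;\xrightarrow{\;\sim\;}\;\Hom_{\cK}(R,H_E^*),\qquad h\mapsto h|_E.
\]

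Finally, comparing with \Cref{rem:dw_sum_construction}, the map $f\boxplus g$ is by definition the unique preimage of $f$ under this bijection, which is exactly the asserted uniqueness. The proof is essentially just a chain of adjunctions; the main delicate point is bookkeeping, namely tracking the asymmetric roles played by $E$ and $C$, an asymmetry forced precisely by the hypothesis that it is $(C,g)$, not $(E,f)$, that is central.
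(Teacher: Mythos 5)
Your proof is correct. The paper gives no proof of this lemma (it simply cites Dwyer--Wilkerson), but your argument is exactly the natural inversion of the construction in \Cref{rem:dw_sum_construction}: the two bijections you compose — the adjunction of \Cref{lem:dwyer_wilerson3.1} applied to $(C,g)$ with target $H_E^*$, and precomposition with the isomorphism $\rho_{R,(C,g)}$ — are precisely those used there to define $f \boxplus g$, and the one substantive verification, that the composite bijection sends $h$ to $h|_E$, is correctly carried out via \Cref{lem:commdia}.
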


We observe that it is not necessarily the case that $(E \oplus C,f \boxplus g) \in \bA_R$, i.e., the map $f \boxplus g \colon R \to H_{E \oplus C}^*$ is not necessarily finite.  As discussed in \Cref{rem:kernel}, there is a functor $\rec \colon \bV_R \to \bA_R$, given by $(V,j) \mapsto (V/\ker(j),\tilde j)$ and applying this to the construction in \Cref{lem:sumuniqueness} leads to the following definition.
\begin{defn}\label{defn:boxplus}
	Let $(E,f)$ and $(C,g)$ be objects of $\bA_R$, and assume that $(C,g)$ is central, then we let $(E \circ C,\sigma(f,g)) \coloneqq \rec(E \oplus C, f \boxplus g)$ be the object in $\bA_R$ corresponding to $(E \oplus C, f \boxplus g) \in \bV_R$, i.e., $E \circ C = E \oplus C/\ker(f \boxplus g)$.
\end{defn}
As a diagram, we can represent this as
\[
\begin{tikzcd}
H_E^* & R \arrow[r, "g"] \arrow[l, "f"'] \arrow[d, "{\sigma(f,g)}"] & H_C^* \\
      & H_{E \circ C}^* \arrow[d, "q^*"]                          &       \\
      & H^*_{E \oplus C} \arrow[luu] \arrow[ruu]                    &
\end{tikzcd}
\]
where $q^*$ is induced by $q \colon E \oplus C \to E \circ C$, and the composite $q^* \circ \sigma(f,g) \cong f \boxplus g$. Note that the natural maps $E \to E \circ C$, and $C \to E \circ C$, induce maps $T_E(R;f) \to T_{E \circ C}(R;\sigma(f,g))$ and $T_C(R;f) \to T_{E \circ C}(R;\sigma(f,g))$.

\subsection{The poset of central objects}
Observe that the category $\bA_R$ has the property that every endomorphism is an isomorphism. Such a category is called an $EI$-category (see \cite{Luck1989Transformation}), and the set of isomorphism classes of objects is partially ordered by the relation
\[
[(E,f)] \le [(V,g)] \quad \text{if} \quad \Hom_{\bA_R}((E,f),(V,g)) \ne \emptyset.
\]
Recall that this implies that there exists a monomorphism $\iota \colon E \hookrightarrow V$ such that the diagram
\[
\begin{tikzcd}
      & R \arrow[ld, "f"'] \arrow[rd, "g"] &                  \\
H_E^* &                                    & H_V^* \arrow[ll,"\iota^*"]
\end{tikzcd}
\]
We will write $(E,f) \subseteq (V,g)$ if $[(E,f)] \le (V,g)]$.

Consider the full subcategory $\bA_R^{\mathrm{central}} \subset \bA_R$ consisting of the central objects. This inherits the partial order from $\bA_R$. We shall show that,  with respect to this partial order, $\bA_R^{\mathrm{central}}$ has, up to isomorphism, a unique maximal element, i.e., there is, up to isomorphism, a unique maximal central object in $\bA_R$. To do this, we briefly recall the definition of an under category.
\begin{defn}
	Given $(E,f) \in \bA_R$, the under category $(E,f) \downarrow \bA_R$ is the category with objects pairs $(\alpha,(V,g))$ where $\alpha \colon (E,f) \to (V,g)$ is a morphism in $\bA_R$, and a morphism $(\alpha,(V,g)) \to (\alpha',(W,h))$ is a morphism $f \colon (V,g) \to (W,h)$ in $\bA_R$ such that the diagram
	\[
	\begin{tikzcd}
& (E,f) \arrow[swap]{dl}{\alpha} \arrow{dr}{\alpha'}&  \\
(V,g) \arrow[swap]{rr}{f} && (W,h)
\end{tikzcd}
	\]
	commutes.
	\end{defn}
	A crucial observation is the following, which is shown in the proof of Proposition 4.10 of \cite{DwyerWilkerson1992cohomology}.
\begin{prop}[Dwyer--Wilkerson]\label{prop:dwfunctor}
	Let $(C,g)$ be central, then for any $(E,f) \in \bA_R$ the assignment $(E,f) \mapsto (E \circ C,\sigma(f,g))$ defines a functor $\sigma \colon \bA_R \to (C,g) \downarrow \bA_R$. Moreover, the natural map
	\[
\iota \colon T_E(R;f) \to T_{E \circ C}(R;\sigma(f,g))
	\]
	induced by $E \to E \circ C$ is an isomorphism.
\end{prop}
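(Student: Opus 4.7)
The plan is to handle the two claims separately: first establish functoriality of $\sigma$ by showing that any morphism $\alpha\colon (E,f)\to(E',f')$ in $\bA_R$ descends across the quotients defining $E\circ C$ and $E'\circ C$, and second construct the isomorphism $\iota$ by factoring the natural map $E\to E\circ C$ through $E\oplus C$ and identifying the two halves as isomorphisms coming, respectively, from centrality of $(C,g)$ and from \Cref{lem:hls_lemma}.

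\emph{Functoriality.} Given $\alpha\colon(E,f)\to(E',f')$ in $\bA_R$, I would look at the group homomorphism $\alpha\oplus\id_C\colon E\oplus C\to E'\oplus C$ and first verify that $(\alpha\oplus\id_C)^*(f'\boxplus g)=f\boxplus g$. This is immediate from the uniqueness in \Cref{lem:sumuniqueness}, since the left-hand side is an element of $\bV_R$ which restricts to $\alpha^*f'=f$ along the summand inclusion $E\hookrightarrow E\oplus C$ and to $g$ along $C\hookrightarrow E\oplus C$. Consequently $\alpha\oplus\id_C$ carries $\ker(f\boxplus g)$ into $\ker(f'\boxplus g)$ and descends to a homomorphism $\bar\alpha\colon E\circ C\to E'\circ C$ with $\bar\alpha^*\sigma(f',g)=\sigma(f,g)$, i.e., a morphism in $\bA_R$. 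The assignment $\alpha\mapsto\bar\alpha$ is visibly functorial, and compatibility with the structure maps from $(C,g)$ is by construction, so one obtains the functor $\sigma\colon\bA_R\to(C,g)\downarrow\bA_R$.

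\emph{The isomorphism.} I would factor the natural map as $E\xrightarrow{\iota_E} E\oplus C\xrightarrow{q} E\circ C$, with $\iota_E$ the summand inclusion and $q$ the quotient, so that $\iota$ factors as
\[
T_E(R;f)\longrightarrow T_{E\oplus C}(R;f\boxplus g)\longrightarrow T_{E\circ C}(R;\sigma(f,g)).
\]
The second arrow is an isomorphism by \Cref{lem:hls_lemma} applied to the epimorphism $q$, using $q^*\sigma(f,g)=f\boxplus g$ from \Cref{defn:boxplus}. For the first arrow, I would exploit the natural equivalence $T_{E\oplus C}\cong T_E\circ T_C$ arising from $H_{E\oplus C}^*\cong H_E^*\otimes H_C^*$, and decompose componentwise via \Cref{lem:dwyer_wilerson3.1} to identify
\[
T_{E\oplus C}(R;f\boxplus g)\;\cong\;T_E\bigl(T_C(R;g);\,f^{\sharp}\bigr),
\]
where $f^{\sharp}\colon T_C(R;g)\to H_E^*$ is the $\cK$-adjoint of $f\boxplus g$. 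Unwinding \Cref{rem:dw_sum_construction} shows $f^{\sharp}=f\circ\rho_{R,(C,g)}^{-1}$. Since $(C,g)$ is central, $\rho_{R,(C,g)}$ is an isomorphism, and applying $T_E$ to it transports the $f^{\sharp}$-component to the component indexed by $f^{\sharp}\circ\rho_{R,(C,g)}=f$, producing an isomorphism $T_E(T_C(R;g);f^{\sharp})\cong T_E(R;f)$.

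\emph{Main obstacle.} The principal technical difficulty is checking that the composite isomorphism built from these two adjunction manipulations actually agrees with the geometrically defined map $\iota$ induced by $E\to E\circ C$; both maps are plausible candidates for the naturally-arising comparison, so establishing they coincide is not purely formal. I would verify this by tracing $\eta_{R,(E,f)}$, $\rho_{R,(E,f)}$, and $\kappa_{R,(E,f)}$ through the factorization using \Cref{lem:commdia} and the naturality square of \Cref{lem:central_comm}; once this compatibility is in hand, both claims of the proposition follow.
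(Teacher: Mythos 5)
The paper does not prove this proposition itself; it is quoted from the proof of Proposition 4.10 of Dwyer--Wilkerson, and your argument is essentially a correct reconstruction of that argument: functoriality via the uniqueness clause of \Cref{lem:sumuniqueness} applied to $\alpha\oplus\id_C$, and the isomorphism via the factorization through $T_{E\oplus C}(R;f\boxplus g)$, with \Cref{lem:hls_lemma} handling the quotient $q$ and centrality of $(C,g)$ handling the inclusion. The compatibility check you flag as the main obstacle is genuine but routine: under the natural decomposition $T_{E\oplus C}\cong T_E\circ T_C$, the map induced by $\iota_E=\id_E\oplus(0\to C)$ is $T_E$ applied to the map $R=T_0(R)\to T_C(R)$ induced by $\epsilon_C$, whose component at $g$ is by definition $\rho_{R,(C,g)}$; so the induced map is $T_E(\rho_{R,(C,g)})$ on the relevant components and is therefore the isomorphism you constructed.
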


\begin{cor}\label{cor:central}
	If $(C,g)$ and $(E,f)$ are central in $\bA_R$, then so is $(E \circ C,\sigma(f,g))$.
\end{cor}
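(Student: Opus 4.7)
The plan is to deduce this directly from \Cref{prop:dwfunctor} by using the centrality of $(E,f)$ via a naturality argument. First, I would apply \Cref{prop:dwfunctor} using only the hypothesis that $(C,g)$ is central. This yields a canonical morphism $\alpha \colon (E,f) \to (E \circ C, \sigma(f,g))$ in $\bA_R$ induced by the inclusion $E \hookrightarrow E \circ C$, together with the statement that the resulting map
\[
\iota \coloneqq T_\alpha(\sigma(f,g)) \colon T_E(R;f) \xrightarrow{\;\cong\;} T_{E \circ C}(R;\sigma(f,g))
\]
is an isomorphism. Note that this step uses only the centrality of $(C,g)$; the centrality of $(E,f)$ has not yet been used.

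Next, I would invoke naturality of $\rho$ with respect to morphisms in $\bA_R$. This can be extracted from \Cref{lem:central_comm}: applying the augmentation $\epsilon_E \otimes 1$ to both vertical arrows in the right-hand column of the naturality square, and using that $\epsilon_E \circ \alpha^* = \epsilon_{E \circ C}$ since $\alpha^*$ is dual to a group homomorphism, one obtains the commutative triangle
\[
\begin{tikzcd}
R \arrow[r, "{\rho_{R,(E,f)}}"] \arrow[dr, "{\rho_{R,(E \circ C,\sigma(f,g))}}"'] & T_E(R;f) \arrow[d, "\iota"] \\
 & T_{E \circ C}(R;\sigma(f,g)).
\end{tikzcd}
\]

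Finally, by the hypothesis that $(E,f)$ is central, $\rho_{R,(E,f)}$ is an isomorphism. Combined with the isomorphism $\iota$ from the first step, the diagonal $\rho_{R,(E \circ C, \sigma(f,g))}$ is a composition of isomorphisms and hence itself an isomorphism, which by definition shows that $(E \circ C, \sigma(f,g))$ is central. The essential content has already been packaged into \Cref{prop:dwfunctor}, so there is no serious obstacle: the only new observation is that naturality of $\rho$ identifies $\iota \circ \rho_{R,(E,f)}$ with $\rho_{R,(E \circ C, \sigma(f,g))}$, after which the two centrality hypotheses combine cleanly.
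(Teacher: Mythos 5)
Your proof is correct and follows essentially the same route as the paper: invoke \Cref{prop:dwfunctor} to see that $\iota$ is an isomorphism, identify $\rho_{R,(E\circ C,\sigma(f,g))}$ with $\iota\circ\rho_{R,(E,f)}$ by naturality (the paper phrases this as the map $\{e\}\to E\circ C$ factoring through $E$), and conclude from centrality of $(E,f)$. Your derivation of the commuting triangle from \Cref{lem:central_comm} is just a more explicit version of the same observation.
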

\begin{proof}
	By the previous proposition $\iota \colon T_E(R;f) \to T_{E \circ C}(R;\sigma(f,g))$ is an isomorphism. Centrality of $T_E(R;f)$ implies that $\rho_{R,(E,f)} \colon R \to T_E(R;f)$ is an isomorphism, and hence so is the composite $\iota \circ \rho_{R,(E,f)}$. Observe that $\rho_{R,\sigma(f,g)} \cong \iota \circ \rho_{R,(E,f)}$. This is clear because the map $\{ e \} \to E \circ C$ factors through $\{ e \} \to E$. It follows that $\rho_{R,\sigma(f,g))} \colon R \to T_{E \circ C}(R;\sigma(f,g))$ is an isomorphism, and $(E \circ C, \sigma(f,g)) \in \bA_R$ is central.
\end{proof}
\begin{thm}\label{thm:max_central}
With respect to the poset structure above, there exists a unique (up to isomorphism) maximal central element $(C,g) \in \bA_R$.
\end{thm}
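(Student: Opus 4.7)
The plan is to prove existence by a rank maximization argument, and uniqueness by the EI-structure of $\bA_R$. Since $\bA_R$ has a finite skeleton (\Cref{prop:rector_props}(1)) and every $(E,f) \in \bA_R$ satisfies $\rank(E) \le \dim R$, I would first choose a central object $(C,g)$ with $\rank(C)$ maximal among all central objects of $\bA_R$ (the trivial pair $(0, \epsilon_R)$ is central, so this set is non-empty).

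To show $(C,g)$ is maximal in the poset, I would let $(E,f)$ be any central object and form $(E \circ C, \sigma(f,g)) \in \bA_R$ as in \Cref{defn:boxplus}. By \Cref{cor:central} this pair is again central. The key point is that the natural maps $C \to E \circ C$ and $E \to E \circ C$ (induced by the summand inclusions into $E \oplus C$ composed with the quotient by $\ker(f \boxplus g)$) are monomorphisms: indeed, the restriction of $f \boxplus g$ to $C$ is $g$ and to $E$ is $f$, and since $(E,f),(C,g) \in \bA_R$ both $f$ and $g$ are finite morphisms, so $\ker(f) = \ker(g) = 0$ by \Cref{rem:kernel}, forcing $E \cap \ker(f \boxplus g) = C \cap \ker(f \boxplus g) = 0$. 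This exhibits morphisms $(C,g) \to (E \circ C, \sigma(f,g))$ and $(E,f) \to (E \circ C, \sigma(f,g))$ in $\bA_R$. Maximality of $\rank(C)$ then forces $\rank(E \circ C) = \rank(C)$, and since a monomorphism between finite elementary abelian $p$-groups of equal rank is an isomorphism, the map $C \to E \circ C$ is an isomorphism of groups, hence an isomorphism in $\bA_R$. Composing the inverse with $(E,f) \to (E \circ C, \sigma(f,g))$ yields a morphism $(E,f) \to (C,g)$, so $[(E,f)] \le [(C,g)]$.

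Uniqueness follows from the fact that $\bA_R$ is an $EI$-category. If $(C_1, g_1)$ and $(C_2, g_2)$ are two maximal central elements, then by the previous paragraph $[(C_1,g_1)] \le [(C_2,g_2)]$ and $[(C_2,g_2)] \le [(C_1,g_1)]$, so there exist morphisms $\alpha \colon (C_1,g_1) \to (C_2,g_2)$ and $\beta \colon (C_2,g_2) \to (C_1,g_1)$. The composite $\beta \alpha$ is an endomorphism of $(C_1, g_1)$ and hence an isomorphism, and similarly for $\alpha \beta$, so $\alpha$ is an isomorphism in $\bA_R$ and $(C_1, g_1) \cong (C_2, g_2)$.

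The main obstacle will be the embedding step, where one must verify carefully that the inclusions $E \hookrightarrow E \circ C$ and $C \hookrightarrow E \circ C$ are in fact injective; this hinges on the translation between finiteness of $f, g$ and triviality of their kernels via the Dwyer--Wilkerson criterion recalled in \Cref{rem:kernel}, together with the fact (implicit in the construction of \Cref{lem:sumuniqueness}) that $f \boxplus g$ restricts along the summand inclusions to $f$ and $g$ respectively. Once this is in hand, the rest is the standard EI-category argument.
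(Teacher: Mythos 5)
Your argument is correct and rests on the same key input as the paper's proof, namely that Dwyer--Wilkerson's construction $(E\circ C,\sigma(f,g))$ is again central (\Cref{cor:central}) and receives morphisms from both $(E,f)$ and $(C,g)$ (\Cref{prop:dwfunctor}), so that any two central objects have a common central upper bound. The paper deduces uniqueness directly from this directedness together with the finiteness of the skeleton, whereas you route through a rank count and the injectivity of $C \to E\circ C$; this is a cosmetic variation (and your verification of that injectivity via \Cref{rem:kernel} is a correct expansion of what \Cref{prop:dwfunctor} already asserts implicitly).
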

\begin{proof}
By \Cref{prop:rector_props}(1) there are only finitely many isomorphism classes of objects in $\bA_R$ and hence $\bA_R^{\mathrm{central}}$. It follows that there exist maximal isomorphism classes of central objects. We now show that there is a unique such isomorphism class. To that end, suppose we are given two central objects $(E,f)$ and $(V,g)$ in $\bA_R$. By \Cref{prop:dwfunctor} the pair $(E \circ V,\sigma(f,g)) \in (E,f) \downarrow \bA_R$ and by symmetry $(E \circ V,\sigma(f,g)) \in (V,g) \downarrow \bA_R$. In particular, we have $(E,f) \subseteq (E \circ V,\sigma(f,g)) \supseteq (V,g)$. Moreover, by \Cref{cor:central} $(E \circ V,\sigma(f,g))$ is central in $\bA_R$. This implies the result.
\end{proof}
\begin{defn}\label{defn:center}
	Let $R$ be a Noetherian unstable algebra, then the center $(C,g) \in \bA_R$ is a choice of representative for the isomorphism class of the maximal central object with respect to the poset structure on $\bA_R$. We say that rank of the center is the rank of the elementary abelian $p$-group $C$.
\end{defn}
\begin{ex}[Mislin]\label{ex:centers3}
	The following example, due to Mislin \cite{Mislin1992Cohomologically}, shows that if $R = H_G^*$ for a compact Lie group, then the center of $H_G^*$ need not agree with the maximal central elementary abelian $p$-subgroup of $G$. Let $G = \Sigma_3$ and work at the prime 2, then the inclusion $C_2 \to \Sigma_3$ of a 2-Sylow subgroup induces an isomorphism $H_{\Sigma_3}^* \cong H_{C_2}^*$. Moreover, we have $T_{C_2}(H_{\Sigma_3}^*;\res^*_{\Sigma_3,C_2}) \cong H_{C_{\Sigma_3}(C_2)}^* \cong  H_{C_2}^*$. In particular, the map $H_{\Sigma_3}^* \to T_{C_2}(H_{\Sigma_3}^*;\res^*_{\Sigma_3,C_2})$ is an isomorphism. This shows that the pair $(C_2,\res_{\Sigma_3,C_2}^*)$ is central in $H_{\Sigma_3}^*$. In fact, $(C_2,\res_{\Sigma_3,C_2}^*)$ is the center of $H_{\Sigma_3}^*$. Note that $\Sigma_3$ actually has 3 conjugate elementary abelian subgroups of order $2$, and that $\Sigma_3$ has trivial group theoretic center.

  Mislin shows more generally that, at the prime $p$, the center of ${H_G^*}$ is equal to the maximal elementary abelian $p$-subgroup of the center of $G/\cal{O}_{p'}(G)$, where $\cal{O}_{p'}(G)$ denote the largest $p'$-normal subgroup of $G$ for $p'$ a prime not equal to $p$. Thus, if $G$ is a finite $p$-group, then the center of $H_G^*$ is equal to the maximal central elementary abelian $p$-subgroup of $G$, but not in general. In particular, if $G$ is a finite $p$-group, then the center of $H_G^*$ is always non-trivial.   \qed
\end{ex}
\begin{ex}[Modular invariant theory]
  Let $V$ be a finite-dimensional $\F_p$-vector space, $G$ a finite group such that $p$ divides the order of $G$, and $\rho \colon G \to GL_n(V)$ a faithful modular representation. We let $\F[V]$ denote the graded algebra of polynomial functions on $V$ with generators in degree 2, which is a graded $\F_p$-algebra with a unique action of the Steenrod algebra. These operations commute with the action of $G$, and define an action on $\F[V]^G$, see \cite[Section 5]{DwyerWilkerson1998Kahler} or \cite[Chapter 8]{NeuselSmith2002Invariant} for more details. Moreover, $\F[V]^G$ is a finitely-generated $\F_p$-algebra, see for example, \cite[Corollary 2.1.5]{NeuselSmith2002Invariant}. In other words, $\F[V]^G$ defines a connected Noetherian unstable algebra.

  Let $i \colon U \hookrightarrow V$ be the inclusion of an $\F_p$-linear subspace $U$, then we can define a morphism $f_U \colon \F[V]^G \hookrightarrow \F[V] \xr{i^*} \F[U] \to H_U^*$, which is a $\cal{K}$-map. In fact every morphism in $\bA_{\F[V]^G}$ arises this way; Rector's category $\bA_{\F[V]^G}$ is equivalent to the category of pairs $(U,f_U)$ such that $H_U^*$ is a finite $\F[V]^G$-module via $f_U$. This can be deduced from the proof of Theorem 1.1 of \cite{DwyerWilkerson1998Kahler}. Moreover, Dwyer and Wilkerson also prove that
  \[
T_U(\F[V]^G;f_U) \cong \F[V]^{G_U}
  \]
  where $G_U \le G$ is the pointwise stabilizer of $U$, i.e., $G_U = \{ g \in G \mid g \cdot u = u \}$.

  Let $V^G = \{v \in V \mid g \cdot v = v \}$ denote the $G$-invariant subspace $V^G \subseteq V$. It follows from the discussion above that the center of $F[V]^G$ is $(V^G,f_{V^G})$. Note that because the representation is faithful, we have $\dim_{\F_p}(V^G) < \dim_{\F_p}(V)$, giving an upper bound for the rank of the center.  Moreover, if $G$ is a $p$-group, then $V^G \ne 0$, so that the center is non-trivial in this case. This is a direct analogue of the fact that (non-trivial) $p$-groups have non-trivial centers, and hence, in light of the previous example, that the unstable algebra $H_G^*$ always has non-trivial center when $G$ is a $p$-group.  \qed
\end{ex}
\begin{ex}(Noetherian $H$-spaces)\label{ex:hspaces}
  Suppose $X$ is a connected $H$-space with Noetherian mod $p$-cohomology. The mod $p$ cohomology is given by
\begin{equation}\label{eq:hspace}
H^*(X) \cong \F_2[x_1,\ldots,x_r] \otimes \frac{\F_2[y_1,\ldots,y_s]}{(y_1^{2^{a_1}},\ldots,y_s^{2^{a_s}})},
\end{equation}
when $p = 2 $ see, for example, \cite[Equation (5)]{BrotoCrespo1999spaces} and
\begin{equation}\label{eq:hspace_odd}
H^*(X;\F_p) \cong \frac{\F_p[y_1,\ldots,y_s]}{(y_1^2,\ldots,y_s^2)} \otimes \F_p[\beta y_1,\ldots, \beta y_k,x_{k+1},\ldots,x_n] \otimes \frac{\F_p[z_1,\ldots,z_t]}{(z_1^{p^{a_1}},\ldots,z_t^{p^{a_t}})},
\end{equation}
when $p$ is odd, where $\beta$ is the Bockstein \cite[Corollary 2.7]{Crespo}. Note that in both cases the generators cannot take arbitrary degrees, see \cite[Theorem 1.6]{BrotoCrespo1999spaces} when $p = 2$ and \cite[Proposition 2.8]{Crespo} when $p$ is odd.

We claim that the center of $H^*(X)$ has rank equal to the Krull dimension of $H^*(X)$ (note that this is the maximal possible rank by \Cref{prop:rector_props}). Indeed, in both cases there exists a map of Hopf algebras $f \colon H^*(X) \to H_E^*$ where $E$ is elementary abelian of rank equal to the Krull dimension of $H^*(X)$ - when $p = 2$ this is \cite[Theorem 2.2]{BrotoCrespo1999spaces} and when $p$ is odd this is \cite[Theorem 2.6]{Crespo}. This map is in fact the localization away from $\Nil_1$ of $H^*(X)$ and hence these morphisms are finite \cite[Corollary 4.10]{HennLannesSchwartz1995Localizations}. That they are central then follows from \cite[Theorem 3.2 and Lemma 4.5]{DwyerWilkerson1990Spaces}, see also the proof of Theorem 2.3 of \cite{BrotoCrespo1999spaces}.
  \qed
\end{ex}

We now show that the center is well-behaved under tensor products of unstable algebras.
\begin{lem}\label{lem:center_of_tensor}
  Suppose $R_1$ and $R_2$ are Noetherian connected unstable algebras with center $(C_i,g_i) \in \bA_{R_i}$  for $i=1,2$, then $(C_1 \oplus C_2,g)$ is the center of $\bA_{R_1 \otimes R_2}$, where $g \colon R_1 \otimes R_2 \to H_{C_1 \oplus C_2}^*$ is the composite $C_1 \otimes C_2 \xr{f_1 \otimes f_2} H_{C_1}^* \otimes H_{C_2}^* \cong H_{C_1 \oplus C_2}^*$.
\end{lem}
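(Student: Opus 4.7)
The plan is to combine \Cref{lem:tp_center}, which already shows $(C_1 \oplus C_2, g)$ is central in $\bA_{R_1 \otimes R_2}$ (and $g$ is finite since each $g_i$ is), with a uniqueness argument showing no strictly larger central object exists. Let $(E,f) \in \bA_{R_1 \otimes R_2}$ be an arbitrary central object; the goal is to construct a monomorphism $\iota \colon E \hookrightarrow C_1 \oplus C_2$ with $\iota^* \circ g = f$.

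First, using that $H_E^*$ is commutative, I decompose $f = \mu_{H_E^*} \circ (f_1 \otimes f_2)$, where $f_i \colon R_i \to H_E^*$ is obtained by precomposition with the natural inclusion $R_i \to R_1 \otimes R_2$. Since $T_E$ commutes with tensor products, there is a natural identification
\[
T_E(R_1 \otimes R_2;\,f)\; \cong\; T_E(R_1;f_1)\otimes T_E(R_2;f_2),
\]
and an unwinding of the definitions of $\eta$ and $\rho$ shows that $\rho_{R_1 \otimes R_2,(E,f)}$ is identified with $\rho_{R_1,(E,f_1)} \otimes \rho_{R_2,(E,f_2)}$. Centrality of $(E,f)$ therefore says $\rho_1 \otimes \rho_2$ is an isomorphism of connected graded $\F_p$-algebras; one then checks the elementary fact that this forces both $\rho_i$ to be isomorphisms (surjectivity of $\rho_1$ via $\id \otimes \epsilon$ applied to a preimage of $s_1 \otimes 1$, injectivity since $r_1 \otimes 1 = 0$ forces $r_1 = 0$ when $R_2 \ne 0$).

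Now $(E,f_i) \in \bV_{R_i}$ need not lie in $\bA_{R_i}$, so I apply the functor $\rec$ from \Cref{rem:kernel} to obtain $(C_i',\tilde f_i) \coloneqq (E/\ker f_i,\tilde f_i) \in \bA_{R_i}$. By \Cref{lem:hls_lemma} the quotient $\pi_i \colon E \twoheadrightarrow C_i'$ induces an isomorphism $T_E(R_i;f_i) \cong T_{C_i'}(R_i;\tilde f_i)$ compatible with the $\rho$ maps, so each $(C_i',\tilde f_i)$ is central in $\bA_{R_i}$. By uniqueness of the center (\Cref{thm:max_central}) there exist monomorphisms $\kappa_i \colon C_i' \hookrightarrow C_i$ with $\kappa_i^* \circ g_i = \tilde f_i$.

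Finally, define $\iota \colon E \to C_1 \oplus C_2$ by $e \mapsto (\kappa_1 \pi_1(e),\kappa_2 \pi_2(e))$. Its kernel is $\ker f_1 \cap \ker f_2 \subseteq \ker f$, which is trivial since $(E,f) \in \bA_{R_1 \otimes R_2}$ (again by \Cref{rem:kernel}), so $\iota$ is injective. A short diagram chase using $f = \mu_{H_E^*} \circ (f_1 \otimes f_2)$, $f_i = \pi_i^* \tilde f_i$, and $\tilde f_i = \kappa_i^* g_i$ gives $\iota^* \circ g = f$, so $(E,f) \subseteq (C_1 \oplus C_2, g)$ as required. The main bookkeeping issue is the compatibility of $\rho$ with the identification in \Cref{lem:hls_lemma}, which should follow from the universal property of $T_E(-;f)$.
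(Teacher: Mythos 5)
Your proof is correct, but it reaches maximality by a genuinely different mechanism than the paper. The paper argues by contradiction: it supposes $(C_1\oplus C_2,g)\subsetneq (V,j)$ with $(V,j)$ central, identifies each $C_i$ with its image in $V$, and produces central pairs $(V_i,j_i)\in\bA_{R_i}$ with $(C_i,g_i)\subseteq (V_i,j_i)$ and at least one containment strict, contradicting maximality of the $(C_i,g_i)$. You instead argue directly: for an arbitrary central $(E,f)$ you split $f$ through the coproduct decomposition $\Hom_{\cK}(R_1\otimes R_2,H_E^*)\cong \Hom_{\cK}(R_1,H_E^*)\times\Hom_{\cK}(R_2,H_E^*)$, use monoidality of $T_E$ to identify $\rho_{R_1\otimes R_2,(E,f)}$ with $\rho_{R_1,(E,f_1)}\otimes\rho_{R_2,(E,f_2)}$ (the same identification underlying \Cref{lem:tp_center}), deduce that each factor is an isomorphism, and then pass through $\rec$ and \Cref{lem:hls_lemma} to land in $\bA_{R_i}$ before invoking \Cref{thm:max_central}. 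The payoff of your route is that it makes explicit the step the paper leaves implicit --- how one actually produces the auxiliary central objects of $\bA_{R_i}$ --- via the kernel formalism of \Cref{rem:kernel}, and it exhibits the comparison monomorphism $\iota\colon E\hookrightarrow C_1\oplus C_2$ concretely (injectivity from $\ker f_1\cap\ker f_2\subseteq\ker f=0$ is a nice touch). The cost is the extra bookkeeping you flag at the end: the compatibility of the isomorphism of \Cref{lem:hls_lemma} with the $\rho$'s does hold, because both $\rho$'s are restriction along the trivial subgroup and $\{e\}\to E/\ker f_i$ factors through $E$, exactly as in the proof of \Cref{cor:central}, so your argument is complete.
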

\begin{proof}
  By \Cref{lem:tp_center} $(C_1 \oplus C_2,g) \in \bA_{R_1 \otimes R_2}$ is central, so it remains to show that it is maximal. Suppose then that $(C_1 \oplus C_2,g) \subsetneq (V,j)$ for some $(V,j) \in \bA_{R_1 \otimes R_2}$. In particular, we have a monomorphism $\alpha \colon C_1 \oplus C_2 \to V$. The composition $C_1 \xhookrightarrow{\iota_1} C_1 \oplus C_2 \xr{\alpha} V$ is a monomorphism, and so we may identify $C_1$ with $\iota_1\alpha(C_1) \subset V$ and choose a complement $V_1$ of $C_1$, and similar for $C_2$. We can then produce $(V_i,j_i) \in \bA_{R_i}$ for $i = 1,2$ such that $(C_1,g_1) \subseteq (V_1,j_1)$ and $(C_2,g_2) \subseteq (V_2,j_2)$. From centrality of $(V,j)$ we easily see that $(V_i,j_i) \in \bA_{R_i}$ is central. We now observe that either $(C_1,g_1) \subsetneq (V_1,j_1)$ or $(C_2,g_2) \subsetneq (V_2,j_2)$ for otherwise we could not have $(C_1 \oplus C_2,g) \subsetneq (V,j)$. This contradicts the assumption that $(C_i,g_i)$ is the center of $R_i$, and so $(C_1 \oplus C_2,g)$ is maximal, and hence is the center of $\bA_{R_1 \otimes R_2}$.
\end{proof}
\begin{ex}
  We now consider three unstable algebras which we shall see have trivial center, as suggested by the referee.
  \begin{enumerate}
    \item Consider the square-zero extension $R = \F_2[x] \oplus \Sigma \F_2$ where $|x| =1$, or equivalently $R = H_{\Z/2}^* \oplus \Sigma \F_2$.  Let $f \colon R \to H_{\Z/2}^*$ denote the projection map. Since $T_{\Z/2}(-;f)$ is exact, we must compute $T_{\Z/2}(\F_2[x];f)$ and $T_{\Z/2}(\Sigma \F_2;f)$. Both are well-known from Lannes' computations: $T_{\Z/2}(\F_2[x],f) \cong \F_2[x]$, while $T_{\Z/2}(\Sigma \F_2;f) \cong \Sigma T_{\Z/2}(\F_2;f)$ is trivial. In this case, the map $\rho_{R,(\Z/2,f)} \colon R \to T_{\Z/2}(R;f)$ is not an isomorphism, so $(\Z/2,f) \in \bA_R$ is not central. This is in fact the only non-trivial element in $\bA_R$, so we conclude that the center is trivial.

    An alternative way to see that the center must be trivial is to note that the depth (see \Cref{sec:appendix}) of $R$ is 0. By \Cref{cor:duflot} the depth of $R$ is always at least the rank of the center; in particular, if $\depth(R) = 0$, then the center must be trivial.
    \item Let $R$ denote the sub-algebra of $\F_2[x]$ omitting the class of degree 1. In this case, the category $\bA_R$ contains only the trivial subgroup (because $R$ has no class in degree 1), and therefore $R$ has trivial center.
    \item Let $\overline \F_2[x]$ denote the augmentation ideal of $\F_2[x]$, then consider the unstable algebra $R = \F_2 \oplus \overline{ \F_2[x]}^{\oplus 2}$. There are two maps $R \to H_{\Z/2}^*$ given by the projection onto $\overline{\F_2[x]}$ and the inclusion map. A similar argument to (1) shows that these maps cannot be central, and so $R$ has trivial center.
  \end{enumerate}
\end{ex}
\subsection{Hopf algebras and comodules}
One of the key properties of $H_G^*$ used by Kuhn is that for a central elementary abelian subgroup $C$, $H_G^*$ is a $H_C^*$-comodule, and moreover the restriction map $H_G^* \to H_C^*$ is a morphism of $H_C^*$-comodules. A similar result occurs for general unstable algebras.
\begin{prop}\label{prop:restriction_comodule}
	Let $R$ be a connected Noetherian unstable algebra and $(V,j) \in \bA_R^{\mathrm{central}}$, then $R$ is a $H_V^*$-comodule, and $j \colon R \to H_V^*$ is a morphism of $H_V^*$-comodules. In particular, the image of $j$ is a sub-Hopf algebra of $H_V^*$.
\end{prop}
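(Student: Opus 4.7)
The plan is to transport the canonical $H_V^*$-comodule structure on $T_V(R;j)$, given by $\kappa_{R,(V,j)}$, to $R$ using centrality, and then to recognize that $j$ factors through a map that is already known to be a comodule morphism.

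First I would use that $(V,j)$ is central to conclude $\rho = \rho_{R,(V,j)} \colon R \to T_V(R;j)$ is an isomorphism, and define
\[
\psi_R \colon R \xr{\rho} T_V(R;j) \xr{\kappa_{R,(V,j)}} H_V^* \otimes T_V(R;j) \xr{1 \otimes \rho^{-1}} H_V^* \otimes R.
\]
Coassociativity and counitality for $\psi_R$ are inherited from $\kappa$ since $\rho$ is an isomorphism of unstable algebras. (Counitality can also be read off directly from \Cref{lem:comm}, which gives $(\epsilon_V \otimes 1) \circ \kappa \circ \rho = \rho$.) This equips $R$ with an $H_V^*$-comodule structure.

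Next, to show $j$ is a morphism of $H_V^*$-comodules, I would use \Cref{prop:central_factor}, which produces a $\cK$-map $h \colon T_V(R;j) \to H_V^*$ with $j = h \circ \rho$; as observed in the remark following that proposition, $h$ is an $H_V^*$-comodule map by coassociativity of $\kappa$. Then the calculation
\[
(1 \otimes j) \circ \psi_R = (1 \otimes h) \circ \kappa_{R,(V,j)} \circ \rho = \Delta \circ h \circ \rho = \Delta \circ j
\]
(using that $h$ is a comodule map for the middle equality) gives what is wanted.

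Finally, for the statement about $\im(j)$ being a sub-Hopf algebra of $H_V^*$: the comodule-map property of $j$ together with $R$ being an $H_V^*$-comodule via $\psi_R$ yields $\Delta(\im(j)) \subseteq H_V^* \otimes \im(j)$; applying the twist and using cocommutativity of $H_V^*$ (recall $V$ is abelian, so the multiplication on $H_V^*$ arising from $V \times V \to V$ makes $H_V^*$ a cocommutative Hopf algebra, cf.\ \Cref{ex:elemn_abelian}) yields the symmetric inclusion, so by freeness over $\F_p$ we get $\Delta(\im(j)) \subseteq \im(j) \otimes \im(j)$. Combined with $j$ being a $\cK$-map, this makes $\im(j)$ a sub-bialgebra, and since $H_V^*$ is primitively generated, closure under the antipode (which acts as $-1$ on the primitive generators) is automatic. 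The main thing to be careful about is the identification of the comodule structures and the intersection argument at the end; none of the individual steps is deep, but the bookkeeping through the several adjoints deserves attention.
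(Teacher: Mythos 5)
Your proposal is correct and follows essentially the same route as the paper: transport the comodule structure $\kappa_{R,(V,j)}$ along the isomorphism $\rho_{R,(V,j)}$, verify that $j$ is a comodule map via the factorization $j = h \circ \rho$ and coassociativity (the paper does this as a diagram chase that amounts to the same identity), and deduce the sub-Hopf-algebra claim from the comodule-algebra property together with (co)commutativity of $H_V^*$, exactly as in the Broto--Henn argument the paper cites. Your explicit twist-and-intersect step and the remark on the antipode just spell out details the paper leaves to the reference.
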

\begin{proof}
We recall that $\kappa_{R,(V,j)} \colon T_V(R;j) \to H_V^* \otimes T_V(R;j)$ makes $T_V(R;j)$ into a $H_V^*$-comodule; since $\rho_{R,(V,j)}$ is central, it follows that $R$ is also a $H_V^*$-comodule.

 That $j \colon R \to H_V^*$ is a morphism of $H_V^*$-comodules follows from a diagram chase. Indeed, the diagram
 \[
\begin{tikzcd}[column sep=0.76in]
R \arrow[r, "\Psi_{R,(V,j)}"] \arrow[d, "\Psi_{R,(V,j)}"']     & H_V^* \otimes R \arrow[r, "1 \otimes \epsilon_R"] \arrow[d, "\Delta \otimes 1"] & H_V^* \arrow[d, "\Delta"] \\
H_V^* \otimes R \arrow[r, "1 \otimes \Psi_{R,(V,j)}"'] & H_V^* \otimes H_V^* \otimes R \arrow[r, "1 \otimes 1 \otimes \epsilon_R"']      & H_V^* \otimes H_V^*.
\end{tikzcd}
\]
commutes, and the top composite is $j$, while the bottom composite is $1 \otimes j$. Here $\Psi_{R,(V,j)}$ denotes the comodule structure map.

To see that the image $K$ of $j$ is a sub-Hopf algebra of $H_V^*$ follows as in the proof of Theorem 1.2 of \cite{BrotoHenn1993Some}. We recall their argument here: The map $j$ is a morphism of $H_V^*$-comodule algebras, and therefore $K$ is a sub-comodule algebra of $H_V^*$. Because $H_V^*$ is a commutative Hopf algebra, the restriction of the diagonal in $H_V^*$ to $K$ has image in $K \otimes K$, and hence $K$ is a sub-Hopf algebra of $H_V^*$.
\end{proof}
We have the following corollary which, as noted in \cite[Remark 1.3]{BrotoHenn1993Some}, follows from the Borel structure theorem \cite[Theorem 7.11]{MilnorMoore1965structure}.
\begin{cor}\label{cor:Borel}
There is a basis $x_1,\ldots,x_c$ for $H_V^1$ such that
\begin{equation}\label{eq:structure}
K = \begin{cases}
  \F_2[x_1^{2^{j_1}},\ldots,x_c^{2^{j_c}}] &\text{ if } p=2\\
  \F_p[y_1^{p^{j_1}},\ldots,y_b^{p^{j_b}},y_{b+1},\ldots,y_c] \otimes \Lambda(x_{b+1},\ldots,x_c) &
  \text{ if } p \text{ is odd,}
\end{cases}
\end{equation}
for some natural numbers $j_1 \ge j_2 \ge \cdots $, and where $y_i = \beta x_i$ for $\beta$ the Bockstein homomorphism.
\end{cor}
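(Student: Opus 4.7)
The plan is to apply the Borel structure theorem (Milnor--Moore Theorem 7.11, as cited in the text) to the connected commutative sub-Hopf algebra $K \subseteq H_V^*$ produced by \Cref{prop:restriction_comodule}. This yields a decomposition of Hopf algebras $K \cong \bigotimes_i K_i$ in which each $K_i$ is monogenic on a primitive generator, and it remains to identify the possible factors together with their generators using the explicit structure of $H_V^*$ from \Cref{ex:elemn_abelian}.

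First, I would determine which monogenic factors can occur. For $p = 2$, the algebra $H_V^* = \F_2[x_1, \ldots, x_c]$ is polynomial and contains no nonzero nilpotents, hence neither does $K$; each $K_i$ is therefore polynomial, $K_i \cong \F_2[\xi_i]$. For $p$ odd, the nilpotents of $H_V^*$ all lie in the exterior part and have odd total degree, so every odd-degree monogenic factor of $K$ must be exterior $\Lambda(\eta_i)$ (since an odd-degree element of a graded commutative algebra in characteristic $\neq 2$ squares to zero), while every even-degree factor must be polynomial $\F_p[\xi_i]$.

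Next, I would identify the generators using the primitive structure. Each $\xi_i$ (or $\eta_i$) is primitive in $K$ and hence in $H_V^*$. The primitives of $H_V^*$ are classical: for $p = 2$ they form the $\F_2$-span of the $2^j$-th powers of a basis of $V^* := H_V^1$ as $j \ge 0$ varies; for $p$ odd they consist of $V^*$ in degree $1$ together with the $p^j$-th powers of elements of $\beta V^*$ for $j \ge 0$. For $p = 2$, I would consider the ascending filtration $F_0 \subseteq F_1 \subseteq \cdots$ of $V^*$ given by $F_j := \{v \in V^* : v^{2^j} \in K\}$, pick a basis $x_1, \ldots, x_c$ of $V^*$ refining this filtration, and set $j_a := \min\{j \ge 0 : x_a \in F_j\}$ (with $j_a = \infty$ when no such $j$ exists, in which case $x_a$ contributes no generator). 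Then the sub-Hopf algebra $A \subseteq K$ generated by the elements $x_a^{2^{j_a}}$ with $j_a < \infty$ has the same primitive subspace as $K$ in every degree, and comparison with the Borel decomposition forces $A = K$; reordering yields the stated monotonicity. For $p$ odd, the argument runs in parallel on each parity component, producing even-degree generators $(\beta y_a)^{p^{j_a}}$ in a suitable basis $y_1, \ldots, y_c$ of $V^*$ together with odd-degree generators from among the $y_a$ themselves.

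The main obstacle is the bookkeeping needed to verify that the algebra generators produced by the Borel decomposition can indeed be taken in the basis-compatible form of the statement, equivalently that every sub-Hopf algebra of $H_V^*$ is primitively generated with the restricted structure claimed. This reduces to analysing the interaction between primitives, indecomposables, and Frobenius on the polynomial part of $H_V^*$ over the perfect field $\F_p$.
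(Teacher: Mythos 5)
Your route is the same as the paper's: the paper gives no argument beyond observing that $K$ is a connected sub-Hopf algebra of $H_V^*$ (\Cref{prop:restriction_comodule}) and then citing the Borel structure theorem via Broto--Henn's Remark 1.3, so your plan of applying Borel and then pinning down the generators is exactly the intended proof, just with the outsourced step made explicit. Your identification of the factor types (no nilpotents at $p=2$, parity considerations at odd $p$), your computation of $P(H_V^*)$, and the filtration $F_j$ are all correct, and the finiteness of $g \colon R \to H_V^*$ (recall $(V,j) \in \bA_R$, so $H_V^*$ is a finite $K$-module) rules out your $j_a = \infty$ case and forces exactly $c$ generators as in the statement.

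The one step that is not yet an argument is the assertion that the Borel generators $\xi_i$ are primitive, and its stand-in, ``$P(A) = P(K)$ together with the Borel decomposition forces $A = K$.'' The Borel theorem is only an \emph{algebra} isomorphism onto a tensor product of monogenic Hopf algebras; it says nothing about primitivity of the generators, and for a general connected commutative Hopf algebra over $\F_p$ a sub-Hopf algebra with the same primitives can be proper (e.g.\ $\F_2[x^2] \subset \F_2[x]$ shows that equal Krull dimension and polynomiality are not enough, and the dual Steenrod algebra shows commutative connected Hopf algebras need not be primitively generated). What you actually need is the classification of sub-Hopf algebras of $H_V^*$ specifically --- which is precisely the content of the Broto--Henn remark the paper cites, so your argument as written is circling the quoted result rather than proving it. To close it, run the standard lowest-degree argument: $K$ is free over $A$, so if $A \neq K$ the quotient Hopf algebra $K \otimes_A \F_p$ has a lowest positive degree $m$, a lift $k \in K^m$ has $\bar\Delta(k) \in (\bar A \otimes \bar A)^m$, and one checks directly in $\F_p[x_1,\dots,x_c]$ (using that $\bar\Delta$ on $H_V^*$ is the ``binomial'' coproduct) that such a $k$ differs from an element of $A^m$ by a primitive, hence lies in $A$ --- a contradiction. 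You correctly flag this as the main obstacle; it is the only genuine gap, and it is the same gap the paper leaves to the literature.
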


Now suppose we are given $(V,j)$ in $\bA^{\mathrm{central}}_R$ and a non-trivial homomorphism $\alpha \colon (V,j) \to (E,f)$ in $\bA_R$; in particular, there is a monomorphism $\alpha \colon V \hookrightarrow E$, and $(E,f)$ need not be central. As discussed previously, $T_V(R;j)$ is a $H_V^*$-comodule via $\kappa_{R,(V,j)}$ and $T_E(R;f)$ is a $H_E^*$-comodule via $\kappa_{R,(E,f)}$. Moreover, if we compose with the coalgebra morphism $\alpha^* \colon H_E^* \to H_V^*$, then $T_E(R;f)$ becomes a $H_V^*$-comodule via $\alpha^* \circ \kappa_{R,(E,f)}$, and moreover $T_{\alpha}(f) \colon T_V(R;j) \to T_E(R;f)$ is a morphism of $H_V^*$-comodules, see the discussion (before passing to components) on the bottom of page 30 of \cite{HennLannesSchwartz1995Localizations}. In particular, the following diagram commutes:
\[
\begin{tikzcd}
T_V(R;j) \arrow[r, "T_{\alpha}(f)"] \arrow[d, "{\kappa_{R,(V,j)}}"'] & T_E(R;f) \arrow[d, "{\alpha^*\circ \kappa_{E,(R,f)}}"] \\
H_V^* \otimes T_V(R;j) \arrow[r, "1 \otimes T_{\alpha}(f)"']         & H_V^* \otimes T_E(R;f)
\end{tikzcd}
\]
 This leads to the following result.
\begin{lem}\label{lem:comodulemorphism}
	Suppose $R$ is a connected Noetherian unstable algebra, $(V,j)$ is central, and that $(V,j) \subseteq (E,f)$ (so that there is a non-trivial homomorphism $\alpha \colon (V,j) \to (E,f)$ in $\bA_R$). With the comodule structures as described above, $\rho_{R,(E,f)} \colon R \to T_E(R,f)$ is a morphism of $H_V^*$-comodules.
\end{lem}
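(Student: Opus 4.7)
The plan is to reduce this to a naturality square already recorded in the text. Since $(V,j)$ is central, the map $\rho_{R,(V,j)} \colon R \to T_V(R;j)$ is an isomorphism, and the $H_V^*$-comodule structure on $R$ is transported from the canonical structure $\kappa_{R,(V,j)}$ on $T_V(R;j)$; equivalently, by \Cref{lem:comm} the structure map on $R$ is
\[
\Psi_R = (1 \otimes \rho_{R,(V,j)}^{-1}) \circ \eta_{R,(V,j)}.
\]
On the other side, the $H_V^*$-comodule structure on $T_E(R;f)$ is $(\alpha^* \otimes 1) \circ \kappa_{R,(E,f)}$. So what I need to check is the identity
\[
(\alpha^* \otimes 1) \circ \kappa_{R,(E,f)} \circ \rho_{R,(E,f)}
\;=\;
(1 \otimes \rho_{R,(E,f)}) \circ \Psi_R.
\]

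First, I would rewrite the left-hand side using \Cref{lem:comm}: the composite $\kappa_{R,(E,f)} \circ \rho_{R,(E,f)}$ is $\eta_{R,(E,f)}$, so the left-hand side equals $(\alpha^* \otimes 1) \circ \eta_{R,(E,f)}$. Now apply \Cref{lem:central_comm} to the morphism $\alpha \colon (V,j) \to (E,f)$ in $\bA_R$; this yields
\[
(\alpha^* \otimes 1) \circ \eta_{R,(E,f)} \;=\; (1 \otimes T_\alpha(f)) \circ \eta_{R,(V,j)}.
\]

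Next, I would identify $T_\alpha(f)$ concretely. Naturality of $\rho$ with respect to $\alpha$ gives $T_\alpha(f) \circ \rho_{R,(V,j)} = \rho_{R,(E,f)}$, and since $\rho_{R,(V,j)}$ is invertible by centrality, $T_\alpha(f) = \rho_{R,(E,f)} \circ \rho_{R,(V,j)}^{-1}$. Substituting,
\[
(1 \otimes T_\alpha(f)) \circ \eta_{R,(V,j)}
\;=\;
(1 \otimes \rho_{R,(E,f)}) \circ (1 \otimes \rho_{R,(V,j)}^{-1}) \circ \eta_{R,(V,j)}
\;=\;
(1 \otimes \rho_{R,(E,f)}) \circ \Psi_R,
\]
which is exactly the right-hand side of the desired identity.

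There is no real obstacle here beyond bookkeeping: the statement is a formal consequence of the fact that $\rho$ and $\eta$ are natural in the pair $(E,f) \in \bA_R$, together with $\kappa \circ \rho = \eta$ and the centrality hypothesis on $(V,j)$, which upgrades the inner $\rho_{R,(V,j)}$ to an isomorphism used to identify the transported comodule structure on $R$. The one subtlety worth being careful about is making sure the $H_V^*$-comodule structure on $T_E(R;f)$ one compares against is the one obtained by pushing $\kappa_{R,(E,f)}$ forward along $\alpha^*$, and not the intrinsic $H_E^*$-structure; this is precisely the structure recalled before the statement of the lemma, so the argument closes.
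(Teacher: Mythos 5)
Your argument is correct, and it is essentially the same computation as the paper's, just packaged as an equation chase rather than a three-square diagram. Both proofs reduce everything to the relation $\kappa\circ\rho=\eta$ (\Cref{lem:comm}) and the naturality of $\eta$ (\Cref{lem:central_comm}). The one genuine difference is that the paper's middle square invokes the external fact (from Henn--Lannes--Schwartz) that $T_{\alpha}(f)$ is a morphism of $H_V^*$-comodules, whereas you bypass this entirely by applying $\kappa_{R,(E,f)}\circ\rho_{R,(E,f)}=\eta_{R,(E,f)}$ on the $(E,f)$ side; this makes your version self-contained relative to the lemmas already stated in the text, which is a mild improvement. The only step you assert without justification is the ``naturality of $\rho$,'' i.e.\ $T_{\alpha}(f)\circ\rho_{R,(V,j)}=\rho_{R,(E,f)}$; this is exactly the paper's top square, and you should note that it follows in one line from \Cref{lem:central_comm} (or \Cref{lem:commdia}) together with $\epsilon_V\circ\alpha^*=\epsilon_E$, since $\rho$ is by definition $(\epsilon\otimes 1)\circ\eta$. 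With that remark added, the proof is complete.
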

\begin{proof}
	By definition of the comodule structures, we must show that the diagram
	\[
\begin{tikzcd}
R \arrow[d, "\rho_{R,(C,g)}"'] \arrow{d}{\cong} \arrow[r, "\rho_{R,(E,f)}"]                                     & T_E(R;f) \arrow[d,equal]                                      \\
T_V(R;j) \arrow[r, "T_{\alpha}(f)"] \arrow[d, "{\kappa_{R,(V,j)}}"']                         & T_E(R;f) \arrow[d, "{\alpha^\ast\circ \kappa_{R,(E,f)}}"] \\
H_V^* \otimes T_V(R;g) \arrow[r, "1 \otimes T_{\alpha}(f)"]                                 & H_V^* \otimes T_E(R;f)                                   \\
H_V^* \otimes R \arrow[u, "{1 \otimes \rho_{R,(V,j)}}"] \arrow{u}[swap]{\cong} \arrow[r, "1 \otimes \rho_{R,(E,f)}"'] & H_V^* \otimes T_E(R;f) \arrow[u,equal]
\end{tikzcd}
\]
commutes. To see that the top and bottom square commute, we use \Cref{lem:commdia,lem:central_comm} and the definition of $T_{\alpha}(f)$ to see that there are isomorphisms
  \[
  \begin{split}
  T_{\alpha}(f) \circ \rho_{R,(V,j)} & \cong (\epsilon_V \otimes 1) \circ (1 \otimes T_{\alpha}(f)) \circ \eta_{R,(V,j)}\\
  & \cong (\epsilon_V \otimes 1) \circ (\alpha^* \otimes 1) \circ \eta_{R,(E,f)}\\
  & \cong (\epsilon_E \otimes 1) \circ \eta_{R,(E,f)} = \rho_{R,(E,f)}. \\
  \end{split}
  \]
 Finally, the middle square commutes by the fact that $T_{\alpha}(f)$ is a morphism of $H_V^*$-comodules. Thus, the diagram commutes as claimed.
\end{proof}
We also require the following technical lemma. We are grateful to the referee for simplifying the proof. We recall our usual notation: if $(E,f) \in \bA_R$, then there exists $h \colon T_E(R;f) \to H_E^*$ with $(E,h) \in \bA^{\mathrm{central}}_{T_E(R;f)}$ (\Cref{prop:central_factor}). 
\begin{lem}\label{lem:technical}
  Let $R$ be a Noetherian unstable algebra and suppose  $(E,f) \in \bA_R$. Suppose furthermore that $(E,h) \subseteq (V,\widetilde j)$ for $(V,\widetilde j) \in \bA_{T_E(R;f)}$. Then, $\widetilde{j}^{\#} \colon R \to H_E^* \otimes H_V^* \cong H_{E \oplus V}^*$ is equivalent to the composite 
  \[
R \xrightarrow{\rho_{R,(E,f)}} T_E(R;f) \xr{\widetilde j} H_V^* \xr{\mu^\ast} H_{E \oplus V}^*
  \]
  where the last map is induced by $\mu \colon E \oplus V \to V$ sending $(e,v) \mapsto \iota(e) + v$, where $\iota \colon E \to V$ denotes the inclusion.
\end{lem}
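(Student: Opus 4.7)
The plan is to reduce the claim to showing that $\widetilde j \colon T_E(R;f) \to H_V^*$ is a morphism of $H_E^*$-comodules for suitable structures, and then verify this by factoring $\widetilde j$ through the intermediate algebra $T_V(T_E(R;f);\widetilde j)$, applying \Cref{lem:comodulemorphism} and \Cref{prop:restriction_comodule} inside $\bA_{T_E(R;f)}$.

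First I would unwind the adjoint. By definition, $\widetilde{j}^{\#} = (1 \otimes \widetilde j) \circ \eta_{R,(E,f)}$, and by \Cref{lem:comm} this equals $(1 \otimes \widetilde j) \circ \kappa_{R,(E,f)} \circ \rho_{R,(E,f)}$. On the other hand, $\mu$ factors as $E \oplus V \xrightarrow{\iota \oplus 1} V \oplus V \xrightarrow{+} V$, and since the sum on $V$ dualizes to $\Delta_V$ we have $\mu^* = (\iota^* \otimes 1) \circ \Delta_V$. Thus the desired identity is equivalent to
\[
(1 \otimes \widetilde j) \circ \kappa_{R,(E,f)} = (\iota^* \otimes 1) \circ \Delta_V \circ \widetilde j,
\]
that is, $\widetilde j$ is a morphism of $H_E^*$-comodules where $T_E(R;f)$ carries its canonical structure $\kappa_{R,(E,f)}$ and $H_V^*$ carries the structure $(\iota^* \otimes 1)\Delta_V = \mu^*$.

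To prove this, I would apply \Cref{prop:central_factor} inside $\bA_{T_E(R;f)}$ to the element $(V,\widetilde j)$: this produces a $\cK$-map $\widetilde h \colon T_V(T_E(R;f);\widetilde j) \to H_V^*$ with $(V,\widetilde h)$ central in $\bA_{T_V(T_E(R;f);\widetilde j)}$ and satisfying $\widetilde j = \widetilde h \circ \rho_{T_E(R;f),(V,\widetilde j)}$. It then suffices to exhibit each factor as a morphism of $H_E^*$-comodules, where $T_V(T_E(R;f);\widetilde j)$ receives the intermediate structure $\iota^* \circ \kappa_{T_E(R;f),(V,\widetilde j)}$. For $\rho_{T_E(R;f),(V,\widetilde j)}$, this is the content of \Cref{lem:comodulemorphism} applied inside $\bA_{T_E(R;f)}$ with the central element $(E,h)$ (central by \Cref{prop:central_factor}) and the monomorphism $\iota \colon (E,h) \to (V,\widetilde j)$. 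For $\widetilde h$, apply \Cref{prop:restriction_comodule} to $T_V(T_E(R;f);\widetilde j)$ with its central element $(V,\widetilde h)$ to conclude that $\widetilde h$ is a morphism of $H_V^*$-comodules, and then post-compose with $\iota^*$ to convert to $H_E^*$-comodules; this gives exactly the intermediate and target structures required above.

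The main obstacle is bookkeeping, specifically confirming that the $H_E^*$-comodule structure on $T_E(R;f)$ induced by centrality of $(E,h) \in \bA_{T_E(R;f)}$ coincides with its canonical structure $\kappa_{R,(E,f)}$. This coincidence is precisely the coassociativity observation recorded in the remark following \Cref{prop:central_factor}. Once it is verified, composing the two $H_E^*$-comodule morphisms above exhibits $\widetilde j$ as a morphism of $H_E^*$-comodules, and post-composing with $\rho_{R,(E,f)}$ yields the claimed identity.
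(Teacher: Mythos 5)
Your argument is correct, but it takes a genuinely different route from the paper's. The paper stays inside the Dwyer--Wilkerson sum construction: since $(E,h)$ is central in $\bA_{T_E(R;f)}$, \Cref{lem:sumuniqueness} and the explicit description in \Cref{rem:dw_sum_construction} produce $\widetilde j \boxplus h \colon T_E(R;f)\to H^*_{E\oplus V}$, and the uniqueness clause identifies this map both with $\mu^*\circ\widetilde j$ (both restrict to $\widetilde j$ on the $V$-summand and to $h$ on the $E$-summand) and, after precomposition with $\rho_{R,(E,f)}$, with $\widetilde j^{\#}$. You instead observe that $\widetilde j^{\#}=(1\otimes\widetilde j)\circ\kappa_{R,(E,f)}\circ\rho_{R,(E,f)}$ and $\mu^*=(\iota^*\otimes 1)\circ\Delta_V$, so it suffices (this direction is all you need; the statement you prove is a priori stronger than the lemma, which only records the identity after precomposition with $\rho_{R,(E,f)}$) to show $\widetilde j$ is a morphism of $H_E^*$-comodules from $(T_E(R;f),\kappa_{R,(E,f)})$ to $(H_V^*,\mu^*)$, and you get this by factoring $\widetilde j=\widetilde h\circ\rho_{T_E(R;f),(V,\widetilde j)}$ and quoting \Cref{lem:comodulemorphism} and \Cref{prop:restriction_comodule} one level up. This trades the uniqueness argument for the comodule machinery of Section 3, and it is a perfectly viable alternative.

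One caveat: your argument rests on two structure identifications, not one. You need (i) that the $H_E^*$-comodule structure $T_E(R;f)$ acquires from centrality of $(E,h)$ (namely $\kappa_{T_E(R;f),(E,h)}$ transported along the isomorphism $\rho_{T_E(R;f),(E,h)}$) agrees with $\kappa_{R,(E,f)}$, and (ii) the analogous agreement on $T_V(T_E(R;f);\widetilde j)$ between the structure coming from centrality of $(V,\widetilde h)$ and the canonical structure $\kappa_{T_E(R;f),(V,\widetilde j)}$, without which the target structure of your first factor does not match the source structure of your second. You flag only (i), and the remark after \Cref{prop:central_factor} does not establish it — that remark says $h$ is a comodule morphism, which is a different assertion. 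Both identifications are nonetheless true, by a short adjunction argument: since $(1\otimes\epsilon_{T_E(R;f)})\circ\kappa_{R,(E,f)}=h$, \Cref{lem:dwyer_wilerson3.1} exhibits $\kappa_{R,(E,f)}$ as the adjoint of a map $\psi\colon T_E(T_E(R;f);h)\to T_E(R;f)$, and \Cref{lem:commdia} gives $\psi\circ\rho_{T_E(R;f),(E,h)}=(\epsilon_E\otimes 1)\circ\kappa_{R,(E,f)}=\id$, so $\psi=\rho_{T_E(R;f),(E,h)}^{-1}$; on the other hand $\kappa_{T_E(R;f),(E,h)}\circ\rho_{T_E(R;f),(E,h)}=\eta_{T_E(R;f),(E,h)}$ is the adjoint of the identity, which is exactly (i), and the same argument one level up gives (ii). With these supplied, your composition does produce $(1\otimes\widetilde j)\circ\kappa_{R,(E,f)}=\mu^*\circ\widetilde j$ and hence the lemma.
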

\begin{proof}
Because $(E,h) \in \bA^{\mathrm{central}}_{T_E(R;f)}$ we can apply \Cref{lem:sumuniqueness} to define $(\widetilde j \oplus h) \colon T_E(R;f) \to H^*_{E \oplus V}$. Moreover, the explicit construction given in \Cref{rem:dw_sum_construction}, along with the uniqueness part of \Cref{lem:sumuniqueness}, show that the following diagram commutes: 
\[
\begin{tikzcd}
R \arrow[d, "{\rho_{R,(E,f)}}"'] \arrow[rd, "\widetilde j^{\#}"]       &                  \\
T_E(R;F) \arrow[d, "\widetilde j"'] \arrow[r, "\widetilde j \oplus h"] & H^*_{E \oplus V} \\
H_V^* \arrow[ru, "\mu^*"']                                             &                 
\end{tikzcd}
\]
Therefore $\widetilde{j}^{\#} = (\widetilde j \oplus h) \circ \rho_{E,(E,f)} = \mu^* \circ \widetilde j \circ \rho_{R,(E,f)}$, as claimed.  
\end{proof}
This technical lemma is used in the following, which is a $T$-functor version of the observation that if $G$ is a group and $E$ and $V$  are elementary abelian $p$-subgroups of $G$, with $\mathcal{Z}(C_G(E)) < V < C_G(E)$, then $C_{C_G(E)}(V) \cong C_G(V)$, where $\mathcal{Z}(-)$ denotes the maximal central elementary abelian $p$-subgroup of a group.
\begin{prop}\label{lem:center_center}
  With assumptions as in the previous lemma, we have
  \[
T_V(T_E(R;f);\tilde j) \cong T_V(R; j)
  \]
  where $ j = \rho_{R,(E,f)}\circ \tilde j$.
\end{prop}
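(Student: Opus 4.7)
The plan is to identify $T_V(T_E(R;f);\widetilde j)$ with a component of $T_{E \oplus V}(R)$ via the natural isomorphism $T_V \circ T_E \cong T_{E \oplus V}$ (which follows from composing the two adjunctions), and then appeal to the Henn--Lannes--Schwartz lemma (\Cref{lem:hls_lemma}) to recognize that this component is exactly $T_V(R;j)$. The essential input is \Cref{lem:technical}, which provides an explicit description of the $T_E$-adjoint $\widetilde j^{\#}\colon R \to H_E^* \otimes H_V^*$ of $\widetilde j$.

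I will first establish the component-wise formula
\[
T_V(T_E(R;f);\widetilde j) \cong T_{E \oplus V}(R;\widetilde j^{\#}).
\]
For any $S \in \cK$, \Cref{lem:dwyer_wilerson3.1} expresses $\Hom_{\cK}(T_V(T_E(R;f);\widetilde j), S)$ as the set of $\cK$-maps $\widetilde g \colon T_E(R;f) \to H_V^* \otimes S$ satisfying $(1 \otimes \epsilon_S)\widetilde g = (1 \otimes \xi_S)\widetilde j$. A second application of \Cref{lem:dwyer_wilerson3.1} converts this to the set of $\cK$-maps $g' \colon R \to H_E^* \otimes H_V^* \otimes S$ with $(1 \otimes 1 \otimes \epsilon_S) g' = (1 \otimes 1 \otimes \xi_S)\widetilde j^{\#}$; the usual $T_E$-adjunction condition relating $g'$ to $f$ is automatic because $(1 \otimes \epsilon_{H_V^*})\widetilde j^{\#} = f$. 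By Yoneda this is precisely the universal property of $T_{E \oplus V}(R;\widetilde j^{\#})$, giving the desired isomorphism.

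To finish, I invoke \Cref{lem:technical} to identify $\widetilde j^{\#} = \mu^* \circ j$, where $\mu \colon E \oplus V \to V$, $(e,v) \mapsto \iota(e) + v$, is an epimorphism. \Cref{lem:hls_lemma}, applied with $\alpha = \mu$, $E' = V$, and the map $j$, then yields $T_{E \oplus V}(R;\mu^* j) \cong T_V(R;j)$. Chaining the three isomorphisms produces the claim. The most delicate point is the component-wise bookkeeping in the first step: one must verify that, under the identification $T_V \circ T_E \cong T_{E \oplus V}$, the $(f, \widetilde j)$-summand on the left hand side corresponds exactly to the $\widetilde j^{\#}$-summand on the right. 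Once this is in place, Steps 2 and 3 are immediate applications of the cited lemmas.
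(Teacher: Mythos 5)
Your proposal is correct and follows essentially the same route as the paper: the paper's proof is exactly the chain $T_V(T_E(R;f);\tilde j) \cong T_{E \oplus V}(R;\tilde j^{\#}) \cong T_{E\oplus V}(R;\mu^*\circ j) \cong T_V(R;j)$, using \Cref{lem:technical} and then \Cref{lem:hls_lemma} applied to the epimorphism $\mu$. The only difference is that the paper cites Dwyer--Wilkerson (Proposition 3.3 of their paper) for the component-wise composition formula $T_V(T_E(R;f);\tilde j)\cong T_{E\oplus V}(R;\tilde j^{\#})$, whereas you re-derive it from the adjunction and Yoneda, which is a valid but redundant elaboration of the same step.
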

\begin{rem}The situation of the proposition is displayed in the following diagram:
  \[
\xymatrix{
  H_E^*& \ar[l]_-{\iota^*}   H_V^* & \\
  & T_E(R;f) \ar[ul]^{h} \ar[u]_{\tilde j} & \ar[l]^-{\rho_{R,(E,f)}} R  \ar[ul]_j
}
\]
\end{rem}
\begin{proof}
Applying \cite[Proposition 3.3]{DwyerWilkerson1992cohomology} and \Cref{lem:technical} there are isomorphisms
 \[
T_V(T_E(R;f);\tilde j) \cong T_{E \oplus V}(R;\tilde j^{\#}) \cong T_{E \oplus V}(R; \mu^{\ast} \circ j).
 \]
 Since $\mu$ is an epimorphism we have
 \[
T_{E \oplus V}(R;\mu^{\ast}\circ j) \cong T_V(R;j)
 \]
 by \Cref{lem:hls_lemma}. Combining these isomorphisms gives the result.
\end{proof}
\subsection{Central elements and the nilpotence degree}
The goal of this subsection is to improve the result \Cref{prop:injection} in the case $M = R$; more specifically, to prove that we only need to consider those $(E,f) \in \bA_R$ for which $(C,g) \subseteq (E,f)$. The proof will be based on the corresponding result for finite groups, due to Kuhn \cite[Theorem 4.4]{Kuhn2007Primitives}.

To begin, we recall that given a category $\cC$ the twisted arrow category $\cC^{\#}$ is the category whose objects are the morphisms of $\cC$, and a morphism from $f \colon C \to D$ to $f' \colon C' \to D'$ is a pair of morphisms $u \colon  C \rightarrow C'$, $v \colon D' \rightarrow D$ in $\cC$ such that the following diagram commutes:
  \[  \xymatrix { C \ar [r]^-{f} \ar [d]_{u} &  D \\ C' \ar [r]_-{f'} &  D'. \ar [u]_{v} } \]
The work of Henn--Lannes--Schwartz \cite{HennLannesSchwartz1995Localizations} discussed briefly in \Cref{sec:nilpotent_filtratio}, can be rephrased in terms of the twisted arrow category of $\bA_R$. In particular, following the discussion in \cite[(1.17.4)]{HennLannesSchwartz1995Localizations} the fundamental result \cite[Theorem 4.9]{HennLannesSchwartz1995Localizations} can equivalently be given as the statement that for $R$ a Noetherian unstable algebra and $M \in R-\cU$ there is a morphism
\begin{equation}\label{eq:limits_tac}
\begin{split}
M \to &\lim_{\alpha : (E,f) \to (E',f')}\left[ \Eq\left\{ H_{E}^* \otimes (T_{E'}(M;f'))^{\le n} \xrightrightarrows[\nu(\alpha)]{\mu(\alpha)} H_{E}^* \otimes (H_E^* \otimes (T_{E'}(M;f')))^{\le n} \right\} \right]
\end{split}
\end{equation}
which is localization away from $\Nil_n$. Here the limit is taken over the twisted arrow category $\bA_R^{\#}$, and $\Eq$ denotes the equalizer. The maps $\mu(\alpha)$ and $\nu(\alpha)$ are defined in \cite[(1.16.2)]{HennLannesSchwartz1995Localizations}; it will not prove important in what follows to have an explicit description of them, so we omit it.

Now let $R$ be a connected Noetherian unstable algebra with center $(C,g)$ and recall that Dwyer--Wilkerson have constructed a functor $\sigma \colon \bA_R \to (C,g) \downarrow {\bA_R}$, see \Cref{prop:dwfunctor}. Generalizing a result of Kuhn about finite groups \cite[Theorem 4.4]{Kuhn2007Primitives}, we now show the following.
\begin{thm}
  In the case $M = R$ of \eqref{eq:limits_tac}, the limit can be taken over the category $((C,g) \downarrow \bA_R)^{\#}$.
\end{thm}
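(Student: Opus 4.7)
The plan is to exploit the functor $\sigma \colon \bA_R \to (C,g) \downarrow \bA_R$ of \Cref{prop:dwfunctor} together with the natural isomorphism $T_E(R;f) \xrightarrow{\cong} T_{E \circ C}(R;\sigma(f,g))$ it provides. Writing $U \colon (C,g) \downarrow \bA_R \to \bA_R$ for the forgetful functor, both $\sigma$ and $U$ induce functors $\sigma^{\#}$ and $U^{\#}$ on twisted arrow categories. Denote by $X$ the diagram on $\bA_R^{\#}$ appearing in \eqref{eq:limits_tac} with $M=R$, and by $X'$ the analogous diagram on $((C,g) \downarrow \bA_R)^{\#}$. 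The isomorphism from \Cref{prop:dwfunctor} upgrades to a natural isomorphism $X \cong X' \circ \sigma^{\#}$, so $X$ is (up to iso) the pullback along $\sigma^{\#}$ of $X'$.

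To compare the two limits, the core task is to establish that $\sigma$ is left adjoint to $U$. Given $\beta \colon (E,f) \to (V,j)$ in $\bA_R$ and $\alpha \colon (C,g) \to (V,j)$, the pair $(\beta,\alpha)$ determines a morphism $(E \oplus C, f \boxplus g) \to (V,j)$ in $\bV_R$ (by \Cref{lem:sumuniqueness}), which via the $\rec$ functor of \Cref{rem:kernel} descends uniquely to a morphism $(E \circ C, \sigma(f,g)) \to (V,j)$ in $\bA_R$ compatible with $\alpha$; this gives the required natural bijection. The unit $\eta_{(E,f)} \colon (E,f) \to (E \circ C, \sigma(f,g))$ is the inclusion $E \hookrightarrow E \circ C$, which is a monomorphism in $\bA_R$ because the finiteness of $f$ forces $\ker(f \boxplus g) \cap (E \oplus 0) = 0$. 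Moreover, the counit is an isomorphism: for $(\alpha,(V,j)) \in (C,g) \downarrow \bA_R$, one identifies $\ker(j \boxplus g) \subset V \oplus C$ with the diagonal $\{(-\alpha(c),c) : c \in C\}$, using that $j$ is a morphism of $H_C^*$-comodules per \Cref{prop:restriction_comodule}; the sum map $V \oplus C \to V$ then identifies $V \circ C$ with $V$ and the canonical $(C,g)$-structure with $\alpha$.

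Given the adjunction with invertible counit, there are two natural maps between the limits: restriction along $U^{\#}$ from $\lim_{\bA_R^{\#}} X$ to $\lim_{((C,g) \downarrow \bA_R)^{\#}} X'$, and, in the opposite direction, pullback along $\sigma^{\#}$ combined with the natural iso $X \cong X' \circ \sigma^{\#}$. The counit being iso gives $\sigma^{\#} \circ U^{\#} \cong \mathrm{id}$ on $((C,g) \downarrow \bA_R)^{\#}$, while \Cref{prop:dwfunctor} provides the identification $X \cong X \circ U^{\#} \circ \sigma^{\#}$; these two identities verify that the maps are mutually inverse.

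The main obstacle is the careful bookkeeping of the adjunction, in particular identifying $\ker(j \boxplus g)$ with the diagonal subgroup, which requires tracking the precise form of $j \boxplus g$ via the comodule structure of \Cref{prop:restriction_comodule} and the expression $j \boxplus g = (1 \otimes j) \circ \kappa_{R,(C,g)}$. Once these structural facts are in place, the isomorphism of limits is essentially formal, as the essential content is already encoded in the iso of \Cref{prop:dwfunctor}.
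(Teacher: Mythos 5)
There is a genuine gap, and it occurs at the very first step. You assert that the isomorphism of \Cref{prop:dwfunctor} ``upgrades to a natural isomorphism $X \cong X' \circ \sigma^{\#}$''. It does not. The value of the diagram in \eqref{eq:limits_tac} at an object $\alpha \colon (E,f) \to (E',f')$ of $\bA_R^{\#}$ is an equalizer of maps out of $H_E^* \otimes (T_{E'}(R;f'))^{\le n}$: the tensor factor $H_E^*$ is governed by the \emph{source} of $\alpha$ and only the $T$-functor factor by the target. Applying $\sigma^{\#}$ replaces the source $(E,f)$ by $(E\circ C,\sigma(f,g))$, so $X'(\sigma^{\#}\alpha)$ involves $H_{E\circ C}^*$ rather than $H_E^*$. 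Whenever $(C,g) \not\subseteq (E,f)$ these are polynomial (tensor exterior) algebras of different Krull dimension, so no isomorphism exists --- already for $\alpha$ the identity of the trivial pair one would be comparing $\F_p$ with $H_C^*$. \Cref{prop:dwfunctor} only identifies the $T$-functor factor. This is exactly why the argument cannot be purely formal: the correct intermediate object is $\alpha_C \colon (E,f) \to (C\circ E',\sigma(f',g))$, which centralizes the target but \emph{not} the source, sitting in a zig-zag $\alpha \leftarrow \alpha_C \rightarrow \sigma(\alpha)$ in $\bA_R^{\#}$ (there is in general no direct morphism between $\alpha$ and $\sigma(\alpha)$ in the twisted arrow category, since that would require a monomorphism $E'\circ C \to E'$ or $E \circ C \to E$). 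One then checks that the two building blocks of the diagram, $\alpha \mapsto H_E^*$ and $\alpha \mapsto T_{E'}(R;f')$, each invert the leg $\alpha_C \to \alpha$ --- the first trivially because the source is unchanged, the second by \Cref{prop:dwfunctor} --- and invokes Kuhn's limit-comparison lemma.

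The formal endgame fails for a related reason: restriction along $U^{\#}$ is not an isomorphism of limits for arbitrary diagrams on $\bA_R^{\#}$, because $U^{\#}$ is not an initial functor. For an object $\alpha$ whose source $(E,f)$ does not contain $(C,g)$, the comma category $U^{\#} \downarrow \alpha$ is empty, since morphisms in $\bA_R$ come from group monomorphisms and nothing under $(C,g)$ can map to $(E,f)$. So the statement genuinely depends on the specific shape of the diagram in \eqref{eq:limits_tac} and cannot follow from the adjunction alone. That said, your observation that $\sigma$ is left adjoint to the forgetful functor with invertible counit is correct and is a clean repackaging of the Dwyer--Wilkerson construction; it simply does not descend to the twisted arrow categories in the way your argument needs.
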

\begin{proof}
  The proof will be the essentially the same as Kuhn's, just translated into the language of unstable algebras. To that end, let $\alpha \colon (E,f) \to (E',f')$ be a morphism in $\bA_R$, and let $\alpha_{C}$ denote the composite morphism in $\bA_R$
  \[
\alpha_C \colon (E,f) \xr{\alpha} (E',f') \to (C \circ E',\sigma(f',g)).
  \]
Let us now define morphisms $f_{\alpha} \colon \alpha_C \to \alpha$ and $g_{\alpha} \colon \alpha_C \to \sigma(\alpha)$ in $\bA_R^{\#}$ via the following commutative diagram in $\bA_R$:
\[
\begin{tikzcd}
{(E,f)} \arrow[r, Rightarrow,no head] \arrow[d, "\alpha"'] & {(E,f)} \arrow[r] \arrow[d, "\alpha_C"']                  & {(C \circ E,\sigma(f,g))} \arrow[d, "\sigma(\alpha)"'] \\
{(E',f')} \arrow[r]                                           & {(C \circ E',\sigma(f',g)))} \arrow[r, Rightarrow,no head] & {(C \circ E',\sigma(f',g))}
\end{tikzcd}
\]
Now \cite[Lemma 4.5]{Kuhn2007Primitives} goes through with an essentially unchanged proof: for any contravariant functor $F\colon \bA_R \to \Mod_{\F_p}$ such that for all $\alpha \colon (E,f) \to (E',f')$, $F(f_{\alpha}) \colon F(\alpha) \to F(\alpha_C)$ is an isomorphism, the canonical map
\[
\Xi \colon \lim_{\alpha \in \bA_R^{\#}} F(\alpha) \to \lim_{\alpha \in ((C,g) \downarrow \bA_R)^{\#}}F(\alpha)
\]
is an isomorphism. This applies in particular to $F(\alpha) = H_{E}^*$ and $F(\alpha) = T_{E'}(R;f')$; the first is clear, and the latter follows from \Cref{prop:dwfunctor}.  As with \cite[Theorem 4.4]{Kuhn2007Primitives} this completes the proof, as the limit in \eqref{eq:limits_tac} is built from these two examples by constructions that preserve isomorphisms.
\end{proof}
Using \cite[(1.17.4)]{HennLannesSchwartz1995Localizations} again, we can improve on their Theorem 4.9.
\begin{thm}
  Let $R$ be a connected Noetherian unstable algebra with center $(C,g)$, then the morphism
  \[
R \to \Eq\left\{ \prod_{(E,f)} H_E^* \otimes (T_E(R;f))^{\le n } \xrightrightarrows[\mu]{\nu}  \prod_{\alpha : (E',f') \to (E'',f'')} H_{E'}^* \otimes (H_{E'}^* \otimes T_{E''}(R;f'')))^{\le n}\right\}
  \]
  induced by the maps $\eta_{R,(E,f)}$ is localization away from $\Nil_n$ (the products in this formula are taken over all objects of $(C,g) \downarrow \bA_R$ resp.~over all morphisms of $(C,g) \downarrow \bA_R$).
\end{thm}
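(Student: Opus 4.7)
The plan is short: the preceding theorem tells us that for $M = R$ the limit in \eqref{eq:limits_tac} can be restricted from the twisted arrow category $\bA_R^{\#}$ to $((C,g) \downarrow \bA_R)^{\#}$, and the stated equalizer is simply the rewriting of this restricted limit via the same identification \cite[(1.17.4)]{HennLannesSchwartz1995Localizations} that converts a limit of inner equalizers indexed by a twisted arrow category into a single equalizer of two large products. The goal of the proof is to carry out this unpackaging now over the smaller indexing category.

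Concretely, I would proceed in four steps. First, since limits commute with equalizers, pull the $\Eq$ in \eqref{eq:limits_tac} outside the limit to obtain $\lambda_n R$ as an equalizer between $\lim_\alpha H_E^* \otimes T_{E'}(R;f')^{\le n}$ and $\lim_\alpha H_E^* \otimes (H_E^* \otimes T_{E'}(R;f'))^{\le n}$, both limits now running over $((C,g) \downarrow \bA_R)^{\#}$. Second, observe that each such twisted arrow limit is an end: the functor $(E,f) \mapsto H_E^*$ is contravariantly functorial on $\bA_R$ while $(E',f') \mapsto T_{E'}(R;f')^{\le n}$, as well as $(E',f') \mapsto (H_{E'}^* \otimes T_{E'}(R;f'))^{\le n}$, is covariantly functorial, so the integrand depends contravariantly on the source of $\alpha$ and covariantly on its target. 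Third, express each end as the standard equalizer between a product over objects of $(C,g) \downarrow \bA_R$ and a product over morphisms. Fourth, collapse the resulting nested equalizer into a single one: the first factor becomes $\prod_{(E,f)} H_E^* \otimes T_E(R;f)^{\le n}$ and the second factor becomes $\prod_{\alpha \colon (E',f') \to (E'',f'')} H_{E'}^* \otimes (H_{E'}^* \otimes T_{E''}(R;f''))^{\le n}$, as required.

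The main point requiring attention is the identification of the two arrows $\mu$ and $\nu$ in the resulting equalizer. These arise by combining the $\mu(\alpha)$, $\nu(\alpha)$ of \eqref{eq:limits_tac} with the covariant transport $T_\alpha(f'')$ and contravariant transport $\alpha^*$ coming from the end description, together with the unit maps $\eta_{R,(E,f)}$. Verifying that these combine into the maps appearing in \cite[(1.17.4)]{HennLannesSchwartz1995Localizations} is a routine diagram chase assembling \Cref{lem:commdia} and \Cref{lem:central_comm}; once this bookkeeping is done the theorem is established.
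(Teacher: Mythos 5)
Your proposal is correct and follows essentially the same route as the paper: the paper's proof consists precisely of invoking the preceding theorem (restriction of the limit in \eqref{eq:limits_tac} to $((C,g)\downarrow \bA_R)^{\#}$) together with the rewriting of a twisted-arrow-category limit of equalizers as a single equalizer of products over objects and morphisms, as in \cite[(1.17.4)]{HennLannesSchwartz1995Localizations}. Your four steps simply make that unpackaging explicit, so there is nothing to add.
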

Finally, we have the following important corollary, which is the promised improvement of \Cref{prop:injection}.
\begin{cor}\label{prop:injection_improved}
  Let $R$ be a connected Noetherian unstable algebra with center $(C,g)$, then for $n \ge d_0(R)$ there is a monomorphism in $R_{fg}-\cU$
  \[
  \begin{tikzcd}
\phi'_R \colon R \arrow{r} & \displaystyle\prod_{(C,g) \subseteq (E,f) \in \bA_R} H_E^* \otimes T_E(R;f)^{\le n}.
\end{tikzcd}
  \]
  induced by the product of the maps $\eta_{R,(E,f)}$.
\end{cor}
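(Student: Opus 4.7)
The plan is to deduce this corollary directly from the theorem that immediately precedes it, which already identifies the localization of $R$ away from $\Nil_n$ with a map into an equalizer indexed over the twisted arrow category of $(C,g)\downarrow \bA_R$. First I would invoke the definition of $d_0$: for $n\ge d_0(R)$ the relevant localization map $\lambda\colon R\to L_n R$ is a monomorphism, since by definition $d_0(R)$ bounds the values of $k$ for which $\lambda_{k+1}R$ fails to be injective. The previous theorem realizes $\lambda$ explicitly as the map
\[
R \to \Eq\!\left\{\prod_{(E,f)} H_E^* \otimes (T_E(R;f))^{\le n} \xrightrightarrows[\nu]{\mu} \prod_{\alpha\colon (E',f')\to (E'',f'')} H_{E'}^* \otimes (H_{E'}^*\otimes T_{E''}(R;f''))^{\le n}\right\}
\]
where the products are indexed over objects (respectively morphisms) of $(C,g)\downarrow \bA_R$, and the individual components of the map are precisely the unit maps $\eta_{R,(E,f)}$.

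Next I would compose this monomorphism with the canonical inclusion of the equalizer into the ambient product $\prod_{(E,f)\in (C,g)\downarrow \bA_R} H_E^* \otimes T_E(R;f)^{\le n}$. The inclusion of an equalizer into the product over which it is taken is always a monomorphism in $R_{fg}$--$\cU$, and monomorphisms compose, so the resulting map $R\to \prod H_E^*\otimes T_E(R;f)^{\le n}$ is a monomorphism. Its components agree with the maps $\eta_{R,(E,f)}$ by construction.

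Finally, I would match the indexing: by definition of the under-category, an object $(E,f)\in (C,g)\downarrow \bA_R$ is an object $(E,f)\in \bA_R$ equipped with a morphism $(C,g)\to (E,f)$ in $\bA_R$, which is precisely the defining condition $(C,g)\subseteq (E,f)$ in the poset structure from Definition~\ref{defn:boxplus}ff. Replacing $(C,g)\downarrow \bA_R$ with a skeleton (which exists by \Cref{prop:rector_props}(1)) does not change the product up to isomorphism, and gives the indexing used in the statement. This produces the desired monomorphism $\phi'_R$.

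I do not expect a genuine obstacle here, since the core content has been established by the preceding theorem; the only mild subtlety is verifying that passing from the equalizer to the ambient product and from the under-category to a skeleton both preserve the relevant monomorphism structure and do not alter the description of the components as $\eta_{R,(E,f)}$, both of which are formal.
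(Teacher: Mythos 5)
Your argument is correct and is exactly the route the paper intends: the corollary is stated as an immediate consequence of the preceding theorem, obtained by composing the localization map (a monomorphism for $n\ge d_0(R)$) with the inclusion of the equalizer into the ambient product over objects of $(C,g)\downarrow\bA_R$. The only detail worth noting is that distinct morphisms $(C,g)\to(E,f)$ give distinct objects of the under-category, so the product there may repeat factors; this is harmless because the corresponding components are all the same map $\eta_{R,(E,f)}$, so the monomorphism descends to the product indexed by the $(E,f)$ with $(C,g)\subseteq(E,f)$.
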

  \section{The topological nilpotence degree of the central essential ideal}
  In this section we introduce the central essential ideal $\CEss(R)$ of a connected Noetherian algebra $R$, following the definition of Kuhn for compact Lie groups. We give an upper bound for $d_0(\CEss(R))$, and prove that $\CEss(R)$ is non-zero if and only if the depth of $R$ is equal to the rank of the center of $R$ (\Cref{defn:center}).
\subsection{The central essential ideal}\label{sec:cess}
We recall that in \cite{Kuhn2013Nilpotence} Kuhn defines the central essential ideal for a compact Lie group $G$ to be the kernel of the map
\[
\xymatrix{
H_G^* \ar[r] & \displaystyle \prod_{C(G) \lneq E} H^*_{C_G(E)},
}
\]
where the product is taken over those elementary abelian $p$-subgroups of $G$ strictly containing the maximal central subgroup $C(G)$. The analog for a general unstable algebra $R$ replaces $H_G^*$ with $R$ and $H_{C_G(E)}^*$ with components of the $T$-functor.
\begin{defn}
	Let $R$ be a connected Noetherian unstable algebra with center $(C,g) \in \bA_R$, then the central essential ideal $\CEss(R)$ is defined by
	\[
\xymatrix{
0 \ar[r] & \CEss(R) \ar[r] & R \ar[rr]^-{\prod \rho_{R,(E,f)}} && \displaystyle\prod_{(C,g) \subsetneq (E,f) \in \bA_R} T_E(R,f).
}
	\]
 Note that $\CEss(R)$ is independent of the choice of representative for the center. Moreover, by replacing $\bA_R$ by a choice of skeleton if necessary, we can assume this product is finite (see \Cref{prop:rector_props}).
\end{defn}
\begin{lem}\label{lem:cesscomodule}
	$\CEss(R)$ is a sub-$H_C^*$-comodule of $R$.
\end{lem}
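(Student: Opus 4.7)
The plan is to promote the defining left exact sequence of $\CEss(R)$ to a sequence of $H_C^\ast$-comodules, so that the subcomodule property follows simply because kernels of comodule maps are subcomodules.

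First, by \Cref{prop:restriction_comodule} applied to the center $(C,g)$, the unstable algebra $R$ carries a canonical $H_C^\ast$-comodule structure coming from $\kappa_{R,(C,g)}$ via the isomorphism $\rho_{R,(C,g)}\colon R \xrightarrow{\cong} T_C(R;g)$. Similarly, for each $(E,f) \in \bA_R$, the map $\kappa_{R,(E,f)}$ makes $T_E(R;f)$ into an $H_E^\ast$-comodule; whenever $(C,g) \subseteq (E,f)$, composing with the coalgebra map $\alpha^\ast \colon H_E^\ast \to H_C^\ast$ (induced by a monomorphism $\alpha\colon C \hookrightarrow E$ witnessing this relation) gives $T_E(R;f)$ an $H_C^\ast$-comodule structure.

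Second, \Cref{lem:comodulemorphism} states precisely that under these structures, the canonical map $\rho_{R,(E,f)}\colon R \to T_E(R;f)$ is a morphism of $H_C^\ast$-comodules whenever $(C,g) \subseteq (E,f)$; in particular this applies to every factor appearing in the defining product for $\CEss(R)$, i.e.~those with $(C,g) \subsetneq (E,f)$. Since the category of $H_C^\ast$-comodules is abelian and closed under products (the product being finite after replacing $\bA_R$ by a skeleton via \Cref{prop:rector_props}(1)), the product map
\[
\prod \rho_{R,(E,f)} \colon R \longrightarrow \prod_{(C,g)\subsetneq(E,f)} T_E(R;f)
\]
is a morphism of $H_C^\ast$-comodules, and its kernel $\CEss(R)$ is therefore a sub-$H_C^\ast$-comodule of $R$.

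No real obstacle is anticipated; everything reduces to an application of \Cref{prop:restriction_comodule} and \Cref{lem:comodulemorphism}, together with the standard fact that over a field the underlying kernel of a comodule map inherits a subcomodule structure (which follows since $H_C^\ast \otimes -$ is exact, so the comodule structure map $\Psi_R\colon R \to H_C^\ast \otimes R$ restricted to $\CEss(R)$ lands in $H_C^\ast \otimes \CEss(R) = \bigcap \ker(1 \otimes \rho_{R,(E,f)})$ as required).
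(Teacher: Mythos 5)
Your proof is correct and takes the same route as the paper, which simply cites \Cref{lem:comodulemorphism} to conclude that the defining map $\prod \rho_{R,(E,f)}$ is a morphism of $H_C^*$-comodules and hence its kernel is a subcomodule; you have merely spelled out the details (the comodule structures from \Cref{prop:restriction_comodule}, the finiteness of the product, and the exactness of $H_C^* \otimes -$) that the paper leaves implicit.
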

\begin{proof}
	This is a consequence of \Cref{lem:comodulemorphism}.
\end{proof}
The main result of this section is the following. We refer the reader to \Cref{sec:appendix} for a brief discussion on the basic commutative algebra needed in this section, in particular, for the definition of the depth and dimension of an $R$-module.
\begin{thm}\label{thm:krulldimension}
	Let $R$ be a connected Noetherian unstable algebra with center $(C,g) \in \bA_R$. Let $c(R)$ be the rank of $C$, then the Krull dimension of the $R$-module $\CEss(R)$ is at most $c(R)$.
\end{thm}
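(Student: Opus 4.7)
The plan is to show that $T_E(\CEss R;f) = 0$ for every $(E,f) \in \bA_R$ with $\rank(E) > c(R)$, and then invoke the dimension formula
\[
\dim_R M = \max\{\rank(E) : T_E(M;f) \ne 0 \text{ for some }(E,f) \in \bA_R\}
\]
valid for $M \in R_{fg}-\cU$ (a module-level consequence of the $\cF$-isomorphism theorem underlying \Cref{prop:rector_props}) to read off $\dim_R(\CEss R) \le c(R)$.

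The core step is the case $(C,g) \subsetneq (E,f)$. Here $(E,f)$ appears as an index in the product defining $\CEss R$, so projecting onto that factor gives an exact sequence
\[
0 \to \CEss R \to R \xr{\rho_{R,(E,f)}} T_E(R;f).
\]
Since $T_E$ is exact and $T_E^0 R$ is $p$-Boolean, the component-wise functor $T_E(-;f)$ is exact; applying it yields an exact sequence
\[
0 \to T_E(\CEss R;f) \to T_E(R;f) \xr{\phi} T_E(T_E(R;f);h),
\]
where $h\colon T_E(R;f)\to H_E^*$ is the canonical central character of \Cref{prop:central_factor}, satisfying $h\circ\rho_{R,(E,f)}=f$. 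Naturality of $\rho$ for the $\cK$-map $\rho_{R,(E,f)}$ with compatible character $h$ gives $\phi \circ \rho_{R,(E,f)} = \rho_{T_E(R;f),(E,h)} \circ \rho_{R,(E,f)}$, and a uniqueness argument (either via the adjunction of \Cref{lem:dwyer_wilerson3.1}, or by noting that both maps are $H_E^*$-comodule morphisms in the sense of \Cref{lem:comodulemorphism}) upgrades this to $\phi = \rho_{T_E(R;f),(E,h)}$. Since $(E,h)$ is central in $\bA_{T_E(R;f)}$ by \Cref{prop:central_factor}, the latter is an isomorphism, forcing $T_E(\CEss R;f) = \ker\phi = 0$.

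To reduce an arbitrary $(E,f)$ with $\rank(E) > c(R)$, I would apply the construction of \Cref{defn:boxplus} to form $(E\circ C, \sigma(f,g))$, which contains $(C,g)$ tautologically. Because $\ker(f) = 0$ for $(E,f) \in \bA_R$ and $(f\boxplus g)|_E = f$ by \Cref{lem:sumuniqueness}, one has $\ker(f\boxplus g) \cap E = 0$, so $E \hookrightarrow E\circ C$ is injective and $\rank(E\circ C) \ge \rank(E) > \rank(C)$, giving $(C,g)\subsetneq(E\circ C, \sigma(f,g))$. The isomorphism of \Cref{prop:dwfunctor} extends to $R$-modules (being induced by the natural map $T_E \to T_{E\circ C}$), so $T_E(\CEss R;f) \cong T_{E\circ C}(\CEss R;\sigma(f,g))$ vanishes by the first step.

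The main obstacle is the identification of $\phi$ with $\rho_{T_E(R;f),(E,h)}$ in the core step: the naturality square delivers only equality after precomposition with $\rho_{R,(E,f)}$, and promoting this to full equality requires either the $T_E$-adjunction or the comodule structures, both of which are available but demand some care. A secondary, milder issue is the extension of \Cref{prop:dwfunctor} to modules, which I expect to be routine from its adjunction-theoretic proof.
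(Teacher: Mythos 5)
Your reduction step contains the serious gap, and it is the one you dismiss as ``routine.'' You need $T_E(\CEss R;f) \cong T_{E\circ C}(\CEss R;\sigma(f,g))$, i.e.\ a module-level version of \Cref{prop:dwfunctor}. But the natural map $M \to T_C(M;g)$ is \emph{not} an isomorphism for a general $M \in R_{fg}-\cU$ even when $(C,g)$ is central: take $R = H_G^*$, $C$ a central elementary abelian subgroup of a compact Lie group $G$, and $M = H_G^*(X)$ for a $G$-manifold $X$; then $T_C(M;\res^*) \cong H_G^*(X^C)$, which differs from $M$ unless $C$ acts trivially on $X$ (this is exactly why \Cref{sec:borel} has to work with the maximal central subgroup acting trivially on $X$, rather than with $C(G)$). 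Since the proof of \Cref{prop:dwfunctor} factors through the identification $T_{E\oplus C}(-;f\boxplus g)\cong T_E(T_C(-;g);\cdots)$ together with $T_C(R;g)\cong R$, it does not extend to modules by the adjunction-theoretic argument you have in mind. You would need a separate argument that the specific submodule $\CEss(R)\subseteq R$ is preserved (the injection $\CEss R \to T_C(\CEss R;g)\hookrightarrow T_C(R;g)\cong R$ is clear from exactness, but surjectivity onto $T_C(\CEss R;g)$ is not), and without the reduction your core step only controls $(E,f)$ with $(C,g)\subseteq(E,f)$, which does not bound $\TrD_{R-\cU}(\CEss R)$ over all of $\bA_R$. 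Your core step also rests on the unproved identification $T_E(\rho_{R,(E,f)})=\rho_{T_E(R;f),(E,h)}$; you correctly flag that naturality only gives equality after precomposition with $\rho_{R,(E,f)}$, which is not an epimorphism, so this too remains open (though I believe it is true and provable via $T_E(T_E(R;f);h)\cong T_{E\oplus E}(R;\cdot)\cong T_E(R;f)$ as in \Cref{lem:center_center}).

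For comparison, the paper sidesteps both issues by working one level up: it first proves (\Cref{prop:injection_improved}, via the Dwyer--Wilkerson functor $\sigma$ applied to the \emph{algebra} $R$, where \Cref{prop:dwfunctor} does apply) that $R$ embeds into $\prod_{(C,g)\subseteq(E,f)} H_E^*\otimes T_E(R;f)^{\le n}$, observes that $\CEss(R)$ dies in every factor with $(C,g)\subsetneq(E,f)$ and hence embeds into the single factor $H_C^*\otimes T_C(R;g)^{\le n}$, and then bounds $\TrD_{R-\cU}$ of that factor by $\rank(C)$ using \Cref{prop:supportcalc} and monotonicity of $\TrD$ under monomorphisms, before invoking Powell's theorem. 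Your endgame (Powell's $\TrD_{R-\cU}=\dim_R$) is the same, but the paper never needs to compute $T_E(\CEss R;f)$ itself, which is what forces you into the problematic module-level statements.
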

The proof will require some preliminary results. We recall the following definitions, due to Henn \cite{Henn1996Commutative} and Powell \cite{powell}.

\begin{defn}
  Let $R$ be a Noetherian unstable algebra, and $M \in R-\cal{U}$.
  \begin{enumerate}
   	\item (Henn) The $T$-support of $M$ is
  \[
T-\supp(M) = \{ (E,f) \in \bA_R \mid T_E(M;f) \ne 0 \}.
 \]

 \item (Powell) The $R-\cal{U}$ transcendence degree of $M$ is
 \[
\TrD_{R-\cU}(M) = \sup\{\rank( E) \mid (E,f) \in T-\supp(M)\}.
 \]
   \end{enumerate}
\end{defn}
The following result justifies the terminology of the $R-\cU$ transcendence degree, see \cite[Proposition 7.2.6]{powell}.
\begin{prop}[Powell]\label{prop:powell_dim}
	Let $M \in R_{fg}-\cU$, then
	\[
\TrD_{R-\cU}(M) = \dim_R(M).
	\]
\end{prop}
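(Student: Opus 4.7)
The plan is to prove the two inequalities $\TrD_{R-\cU}(M) \le \dim_R(M)$ and $\TrD_{R-\cU}(M) \ge \dim_R(M)$ separately, using in each case the basic compatibility between Lannes' $T$-functor and the $R$-module structure together with Rector's $\mathcal{F}$-isomorphism theorem (\Cref{prop:rector_props}(2)).

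For the inequality $\TrD_{R-\cU}(M) \le \dim_R(M)$, I would start from a pair $(E,f) \in T\text{-}\supp(M)$ and aim to show that $\rank(E) \le \dim_R(M)$. Since $T_E$ is exact and commutes with tensor products, $T_E(M;f)$ carries a natural structure of finitely generated $T_E(R;f)$-module (using that $M$ is finitely generated over $R$ and \cite[Corollary 1.12]{Henn1996Commutative}). Viewing this as an $R$-module via $\rho_{R,(E,f)}$, functoriality forces $\Ann_R(M)$ to annihilate $T_E(M;f)$. On the other hand, the factorization $f = h \circ \rho_{R,(E,f)}$ of \Cref{prop:central_factor}, together with the fact that $h\colon T_E(R;f) \to H_E^*$ is finite, makes $T_E(M;f)$ into a finitely generated $H_E^*$-module, hence an $R$-module of Krull dimension exactly $\rank(E)$ whenever it is nonzero. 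Combining, $\rank(E) = \dim_R T_E(M;f) \le \dim(R/\Ann_R(M)) = \dim_R(M)$.

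For the inequality $\TrD_{R-\cU}(M) \ge \dim_R(M)$, set $d = \dim_R(M)$ and choose a minimal prime $\fp$ of $\Ann_R(M)$ with $\dim(R/\fp) = d$. The plan is to produce an element $(E,f) \in \bA_R$ of rank $d$ with $f$ factoring through $R/\fp$, and then to verify $T_E(M;f) \ne 0$. For the first step, I would apply Rector's $\mathcal{F}$-isomorphism theorem to the Noetherian unstable algebra obtained from $R/\sqrt{\fp}$ (passing, if $p$ is odd, through $\widetilde{\mathcal{O}}$ to handle exterior classes as in \Cref{rem:odd_prime}) to produce $(E,f) \in \bA_{R/\sqrt{\fp}}$ with $\rank(E) = d$ by \Cref{prop:rector_props}(2). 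For the second step, choose $x \in M$ with $\Ann_R(x) = \fp$, yielding an injection $R/\fp \hookrightarrow M$; by exactness of $T_E$, it suffices to show $T_E(R/\fp;f) \ne 0$, which follows because $f$ factors through $R/\fp \to H_E^*$ and $T_E(-; f)$ detects finite maps (via the adjunction and the unit $R/\fp \to H_E^* \otimes T_E(R/\fp;f)$ being nonzero after composing with $1 \otimes \epsilon$).

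The main obstacle I anticipate is in the $\ge$ direction: lifting a general minimal prime $\fp$ of $\Ann_R(M)$ to an honest pair $(E,f) \in \bA_R$ of the predicted rank, which is exactly the content of the $\mathcal{F}$-isomorphism theorem but must be applied carefully because $R/\fp$ need not itself be an unstable subalgebra of $R$, and because nilpotents and (at odd primes) exterior classes obstruct a direct application. The fix is standard but technical: replace $R/\fp$ by $R/\sqrt{\fp}$ (or its even part), and use that the $T$-functor is insensitive to nilpotent modules when testing nontriviality, so one may work throughout modulo $\Nil_1$. With this care, the $\mathcal{F}$-isomorphism theorem of Rector and Broto--Zarati supplies the required $(E,f)$, and the rest of the argument reduces to bookkeeping with the exactness and multiplicativity of $T_E$.
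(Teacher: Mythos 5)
Your plan for the inequality $\TrD_{R-\cU}(M) \ge \dim_R(M)$ --- the hard direction --- has a gap that I do not see how to close along the route you propose. Having chosen $\fp$ minimal over $\Ann_R(M)$ and $x \in M$ with $\Ann_R(x) = \fp$ (which is fine: minimal primes of the support are associated primes, and in the graded setting are realized by homogeneous elements), you want to use the resulting injection $R/\fp \hookrightarrow M$ together with exactness of $T_E$. But this injection is only a map of $R$-modules: the cyclic submodule $Rx \subseteq M$ is not in general closed under the Steenrod action, so $R/\fp \to M$ is not a morphism in $R-\cU$ and $T_E$ cannot be applied to it. Replacing $Rx$ by the smallest sub-object of $M$ in $R-\cU$ containing $x$ only reduces the problem to an isomorphic one (that sub-object still has dimension $d$ but is no longer $R/\fp$), so the reduction is circular. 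This is precisely where the paper's proof invests its real work: it embeds $\overline M$ (the induced module over $\overline R = R/\Ann_R(M)$) into a finite sum of Henn's injectives $I_{(E_i,f_i)}(a_i) = H_{E_i}^* \otimes J_{T_{E_i}(\overline R;f_i)}(a_i)$, localizes at the image of the top Dickson invariant coming from a Bourguiba--Zarati embedding $D_{n,s} \to \overline R$, and invokes Powell's lemma that $I_{(E,f)}(a)[\omega^{-1}] \ne 0$ forces $\rank(E) = n$; the representability property $\Hom_{R-\cU}(\overline M, I_{(E,f)}(a)) \cong (T_E(\overline M;f)^a)^*$ then produces the required element of the $T$-support. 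None of this machinery is replaceable by the cyclic-submodule argument.

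There is also a smaller but genuine error in your $\le$ direction. Finiteness of $h \colon T_E(R;f) \to H_E^*$ means $H_E^*$ is a finitely generated $T_E(R;f)$-module; it does not endow $T_E(M;f)$ with an $H_E^*$-module structure, and the claim that $\dim_R T_E(M;f) = \rank(E)$ whenever $T_E(M;f) \ne 0$ is false (for $(E,f)$ central one has $T_E(M;f) \cong M$, of dimension $\dim_R(M)$, which can exceed $\rank(E)$). The correct and short argument, which is the one in the paper, is that $T_E(M;f) \ne 0$ forces $f$ to factor through $\overline R = R/\Ann_R(M)$ (by base change for the $T$-functor), so $(E,\bar f) \in \bA_{\overline R}$ and hence $\rank(E) \le \dim(\overline R) = \dim_R(M)$ by \Cref{prop:rector_props}(2). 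Your observation that $\Ann_R(M)$ annihilates $T_E(M;f)$ is the right starting point; it just needs to be routed through $\bA_{\overline R}$ rather than through a dimension count on $T_E(M;f)$ itself.
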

The proof relies on the existence of Brown--Gitler modules $J_R(n)$ in the category $R-\cU$ (see \cite[Section 1.5]{Henn1996Commutative}), which represent the functor $M \mapsto (M^n)^*$, where $()^*$ is the vector space dual. Given $(E,f) \in \bA_R$, we define an injective object $I_{(E,f)}(n)$ in $R-\cU$ as $H_E^* \otimes J_{T_E(R;f)}(n)$ \cite[Proposition 1.6]{Henn1996Commutative}. In fact, if $R$ is a Noetherian unstable algebra, then $I_{(E,f)}(n)$ is even injective in $R_{fg}-\cU$. From the definitions (see also \cite[Lemma 6.1.7]{powell}) we have
\begin{equation}\label{eq:tfunctorinjec}
\Hom_{R-\cU}(M,I_{(E,f)}(n)) \cong (T_E(M;f)^n)^*.
\end{equation}
We now present the proof of \Cref{prop:powell_dim}.
\begin{proof}(Powell)
Since Powell's work is not published, we sketch Powell's proof here. To that end, let $\overline R = R/\Ann_R(M)$, which is a Noetherian unstable algebra (note that the annihilator ideal is closed under the action of the Steenrod algebra) such that $\alpha \colon R \to \overline R$ is a morphism of unstable algebras, and let $\overline M \in \overline{R}_{fg}-\cal{U}$ denote the object obtained by inducing $M$ along the morphism $\alpha$, so that $M \cong \alpha^* \overline{M}$. Standard base change results about Lannes' $T$-functor show that $\TrD_{R-\cU}(M) = \TrD_{\overline{R}-\cU}(\overline M)$ see \cite[Proposition 7.2.2(1)]{powell} (if the reader prefers a published reference, this is also easily deduced from the formulas on page 1756 of \cite{NotbohmRay2010DavisJanuszkiewicz}).

  Now $\dim_R(M) = \dim(\overline R) = \max\{ \rank(E) \mid (E,f) \in \bA_{\overline R}\} \ge \TrD_{\overline R-\cal{U}}(\overline M)$ by \Cref{prop:rector_props}, which gives one inequality.

  For the reverse inequality, we recall that the Dickson invariants are defined by
  \[
D_n = (H^*_{(\Z/2)^n})^{\GL_n(\Z/2)} \text{ for } p = 2
  \]
  and
  \[
D_n = (P_n)^{\GL_n(\Z/p)} \text{ for } p > 2
  \]
  where $P_n$ is the subalgebra of $H^*_{(\Z/p)^n}$ generated by $\beta H^1_{(\Z/p)^n}$. As is well known, $D_n \cong \F_p[c_1,\ldots,c_n]$. Then, for $s$ another non-negative integer, one lets $D_{n,s}$ denote the subalgebra of $D_n$ whose elements are the $p^s$-th powers of elements $D_n$, which naturally obtains an action of the Steenrod algebra. Specifically, $D_{n,s} \cong \F_p[c_1^{p^s},\ldots,c_n^{p^s}]$. Suppose now that $\dim(\overline R) = n$, then by \cite[Appendix A]{BourguibaZarati1997Depth} there exists a natural number $s$ and a monomorphism of unstable algebras $\iota \colon D_{n,s} \to \overline R$ for which $\overline{R}$ is a finitely-generated $D_{n,s}$-module. We let $\omega_{\iota}$ denote the image of the top Dickson invariant $c_n^{p^s}$. Because $M$ is Noetherian, the localization $\overline M[\omega_{\iota}^{-1}]$ is non-trivial.

  By \cite[Theorem 1.9]{Henn1996Commutative} there exists an embedding in $\overline R - \cU$
  \[
\overline M \hookrightarrow \bigoplus_{i \in \cal{I}} I_{(E_i,f_i)}(a_i)
  \]
  where each component is non-trivial, and $(E_i,f_i) \in \bA_{\overline R}$, so that, in particular by \Cref{prop:rector_props}(2), $\rank(E_i) \le n$. Using exactness of localizations, there exists an $i \in \cal{I}$ for which $I_{(E_i,f_i)}(a_i)[\omega_{\iota}^{-1}] \ne 0$. By \cite[Lemma 7.1.4]{powell} we have $\rank(E_i) = n$. By \eqref{eq:tfunctorinjec} $T_{E_i}(\overline M;f_i) \ne 0$, and hence $T-\supp(\overline M) \ge n = \dim_R(M)$. In particular, $\TrD_{\overline R-\cal{U}}(\overline M) \ge \dim_R(M)$, as required.
\end{proof}
We will need the following computation, which is an almost immediate consequence of \cite[Lemma 3.6]{Henn1996Commutative}. The proof is given in \cite[Proposition 3.14]{heard_depth}.
\begin{prop}\label{prop:supportcalc}
Let $R$ be a Noetherian unstable algebra and $M \in R_{fg}-\cal{U}$, then $$\TrD_{R-\cal{U}}(H_E^* \otimes T_E(M;f)^{\le n}) \le \rank(E).$$
 \end{prop}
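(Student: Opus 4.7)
I will show $\dim_R N \le \rank(E)$ for $N := H_E^* \otimes T_E(M;f)^{\le n}$, then conclude via Powell's theorem (\Cref{prop:powell_dim}). The key observation is that $T_E(M;f)^{\le n}$ is finite-dimensional over $\F_p$, so the only source of Krull dimension in $N$ is the $H_E^*$ factor; the work is in identifying this Krull dimension correctly with respect to the R-structure, which goes through $\eta_{R,(E,f)}$ rather than through $f$ itself.

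Since $M \in R_{fg}-\cU$ and $R$ is Noetherian, $T_E(M;f)$ is finitely generated over $T_E(R;f)$, hence each graded piece is finite-dimensional over $\F_p$. Thus $V := T_E(M;f)^{\le n}$ is a finite-dimensional $\F_p$-vector space, and as a $T_E(R;f)$-module it is annihilated by $T_E(R;f)^{>n}$. Since $T_E(R;f)$ is a connected Noetherian unstable algebra, its augmentation ideal $\m := T_E(R;f)^{>0}$ satisfies $\m^{n+1} \subseteq T_E(R;f)^{>n}$, giving $\m \subseteq \sqrt{\Ann_{T_E(R;f)}(V)}$. Setting $S := H_E^* \otimes T_E(R;f)$, this yields $1 \otimes \m \subseteq \sqrt{\Ann_S N}$, so $S/\sqrt{\Ann_S N}$ is a quotient $H_E^*/\bar I$ of $S/(1 \otimes \m) \cong H_E^*$.

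The R-module structure on $N$ factors through $\eta_{R,(E,f)}: R \to S$. Applying \Cref{lem:dwyer_wilerson3.1} to the identity map $T_E(R;f) \to T_E(R;f)$, and noting that $T_E^0(R;f) = \F_p$ (so the unit $\xi_{T_E(R;f)}$ is the identity), yields the identity $(1 \otimes \epsilon_{T_E(R;f)}) \circ \eta_{R,(E,f)} = f$. Consequently, the composite $R \xrightarrow{\eta} S \twoheadrightarrow S/(1 \otimes \m) \cong H_E^*$ is exactly $f$, and so
\[
\sqrt{\Ann_R N} \;=\; \eta_{R,(E,f)}^{-1}(\sqrt{\Ann_S N}) \;=\; f^{-1}(\bar I).
\]
This gives an inclusion $R/f^{-1}(\bar I) \hookrightarrow H_E^*/\bar I$ via $f$, which is a finite ring extension because $f$ is finite by hypothesis $(E,f) \in \bA_R$. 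Therefore
\[
\dim_R N \;=\; \dim\bigl(R/\sqrt{\Ann_R N}\bigr) \;=\; \dim\bigl(R/f^{-1}(\bar I)\bigr) \;=\; \dim(H_E^*/\bar I) \;\le\; \dim H_E^* \;=\; \rank(E),
\]
and Powell's theorem gives $\TrD_{R-\cU}(N) = \dim_R N \le \rank(E)$.

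The main obstacle is that the R-action on $N$ does \emph{not} simply factor through $f: R \to H_E^*$ -- it genuinely uses both tensor factors of $H_E^* \otimes T_E(R;f)$, and the map $\eta_{R,(E,f)}$ itself is typically far from finite (e.g.\ for $R = H_E^*$ with $f = \id$ it is the diagonal). The identity $(1 \otimes \epsilon) \circ \eta = f$ is precisely what makes the Krull-dimension comparison tractable: after passing to the radical of the annihilator, the only ambient ring map that matters is the finite one $f$, which then controls everything.
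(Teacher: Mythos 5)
Your argument is correct, but it is not the paper's route. The paper treats this proposition as "an almost immediate consequence" of Henn's Lemma 3.6, i.e.\ a direct computation of the $T$-support of $H_E^* \otimes T_E(M;f)^{\le n}$: one identifies the components $T_{E'}(H_E^*\otimes K;f')$ for $K$ bounded above and checks that they vanish unless $E'$ maps monomorphically to $E$, so that every pair in the $T$-support has $\rank(E')\le\rank(E)$, which is the definition of $\TrD_{R-\cU}$. You instead pass through \Cref{prop:powell_dim} to convert the statement into a bound on $\dim_R N$, and then do pure commutative algebra: the truncation kills $T_E(R;f)^{>0}$ up to radical, the identity $(1\otimes\epsilon_{T_E(R;f)})\circ\eta_{R,(E,f)}=f$ (which is exactly the compatibility square of \Cref{lem:dwyer_wilerson3.1}, and is also how \Cref{prop:central_factor} factors $f$) collapses the ambient ring to $H_E^*$, and finiteness of $f$ finishes the dimension count. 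All the steps check out, including $\sqrt{\eta^{-1}(J)}=\eta^{-1}(\sqrt J)$ and the connectivity $T_E^0(R;f)=\F_p$. What your route buys is self-containedness: it uses only results already stated in the paper, with no appeal to Henn's explicit $T$-functor decomposition. What it costs is generality and one extra hypothesis: $\TrD_{R-\cU}$ is defined for arbitrary objects of $R-\cU$ and the support computation needs no finiteness, whereas invoking \Cref{prop:powell_dim} requires $H_E^*\otimes T_E(M;f)^{\le n}\in R_{fg}-\cU$ (i.e.\ that $\eta_{R,(E,f)}$ is a finite morphism); this holds and is used implicitly elsewhere in the paper (e.g.\ in \Cref{prop:injection}), but you should cite it rather than leave it tacit. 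The paper's approach also yields slightly more information, namely which pairs $(E',f')$ actually occur in the $T$-support, not just a bound on their ranks.
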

 Finally, we also need the following result, also due to Powell \cite[Proposition 7.3.1]{powell}. The proof is also given in \cite[Proposition 3.17]{heard_depth}.
 \begin{prop}[Powell]\label{prop:powell_mono}
   Let $M$ be non-trivial and $ M \hookrightarrow N$ a monomorphism in $R_{fg}-\cU$, then
   \[
\TrD_{R-\cU}(M) \ge \depth_R(N).
   \]
 \end{prop}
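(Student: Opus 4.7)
The plan is to combine the previous proposition identifying $\TrD_{R-\cU}(M)$ with $\dim_R(M)$ and the classical commutative-algebra relationship between depth and associated primes. By \Cref{prop:powell_dim} it suffices to prove the purely ring-theoretic statement
\[
\dim_R(M) \ge \depth_R(N).
\]
Note that this reduction is legitimate because both $M$ and $N$ lie in $R_{fg}-\cU$, so $\TrD_{R-\cU}(M)=\dim_R(M)$ is available; and $\depth_R(N)$ is computed with respect to the augmentation ideal $\bar R$ of the connected unstable algebra $R$, exactly the setting in which the cited results of Henn and Powell are formulated.

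The key tool is the theory of associated primes in $R-\cU$, which behaves as in ordinary commutative algebra because the annihilator of any element of an unstable $R$-module is closed under the Steenrod action (a standard observation going back to Henn \cite{Henn1996Commutative}; see also the remarks in the proof of \Cref{prop:powell_dim}). First I would record that since $M\hookrightarrow N$ is a monomorphism we have the inclusion $\Ass_R(M)\subseteq \Ass_R(N)$, and since $M\ne 0$ the set $\Ass_R(M)$ is nonempty. Pick any $\fp\in\Ass_R(M)$. By definition there exists a nonzero $m\in M$ with $\Ann_R(m)=\fp$, hence an injection $R/\fp\hookrightarrow M$. This immediately gives
\[
\dim(R/\fp)\le \dim_R(M).
\]

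Next I would use the standard depth/associated-prime inequality: for any finitely generated module $N$ over a Noetherian ring $R$ and any $\fp\in\Ass_R(N)$, one has
\[
\depth_R(N)\le \dim(R/\fp).
\]
Applying this to our chosen $\fp\in\Ass_R(M)\subseteq \Ass_R(N)$ and chaining the two inequalities yields
\[
\depth_R(N)\le \dim(R/\fp)\le \dim_R(M)=\TrD_{R-\cU}(M),
\]
which is the claim.

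The only nontrivial point I expect to need care with is the identification of associated primes in the unstable setting: one must verify that when computing $\Ass_R(M)$ for $M\in R_{fg}-\cU$ it is immaterial whether one works in $R-\cU$ or in the underlying category of $R$-modules, so that the standard commutative-algebra inequality $\depth_R(N)\le\dim(R/\fp)$ for $\fp\in\Ass_R(N)$ applies directly. This is automatic once one knows that annihilator ideals of elements of unstable modules are $\cA$-invariant, which is the same input already used in the proof of \Cref{prop:powell_dim}. Everything else is bookkeeping.
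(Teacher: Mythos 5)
Your argument is correct. The reduction via \Cref{prop:powell_dim} is legitimate ($M\in R_{fg}-\cU$ by hypothesis, so $\TrD_{R-\cU}(M)=\dim_R(M)$), and the remaining chain $\depth_R(N)\le\dim(R/\fp)\le\dim_R(M)$ for a homogeneous $\fp\in\Ass_R(M)\subseteq\Ass_R(N)$ is standard commutative algebra of graded connected Noetherian algebras; the hypotheses $M\ne 0$ (so $\Ass_R(M)\ne\emptyset$) and $N$ finitely generated (so $\depth_R N\le\dim R/\fp$ for every associated prime of $N$) are exactly what is needed. Note that the paper does not reproduce a proof of this proposition at all: it cites Powell's unpublished notes and \cite[Proposition 3.17]{heard_depth}, where the argument is carried out with the same $\cU$-technology as the surrounding results (Henn's embeddings into injectives of the form $H_E^*\otimes J(n)$ and the $T$-support). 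Your route is genuinely different in flavour: once \Cref{prop:powell_dim} is granted, the statement becomes the purely module-theoretic fact that a nonzero submodule of $N$ has dimension at least $\depth_R(N)$, and no further unstable-module input is required. Your closing worry about ``$\Ass$ in $R-\cU$ versus $\Ass$ of the underlying module'' is in fact a non-issue: associated primes, depth and dimension are all invariants of the underlying graded $R$-module, so the $\cA$-invariance of annihilators plays no role in your argument (it is only needed in the proof of \Cref{prop:powell_dim} itself, which you are quoting as a black box). The one point worth stating explicitly, since the paper's appendix glosses over it too, is that at odd primes $R$ is only graded-commutative, so the classical statements are applied after observing that odd-degree elements are nilpotent and hence invisible to $\Spec$, $\Ass$, depth and dimension.
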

 With these preparations, we can now prove \Cref{thm:krulldimension}.
 \begin{proof}[Proof of \Cref{thm:krulldimension}]
If $\CEss(R) = 0$ then the result is clear, thus we can assume that $\CEss(R) \ne 0$. By \Cref{prop:injection_improved} we can find $n$ large enough so that
	\[
\xymatrix{\lambda \colon R \ar[r]& \displaystyle \prod_{(C,g) \subseteq (E,f)}H_E^* \otimes T_E(R;f)^{\le n}}
	\]
	is a monomorphism in $R_{fg}-\cU$.
	 We factor $\lambda$ as a product $\lambda = \lambda_{>c} \times \lambda'$ where
\[
\xymatrix{\lambda_{> c} \colon R \ar[r]& \displaystyle \prod_{(C,g) \subsetneq (E,f)}H_E^* \otimes T_E(R;f)^{\le n}}
\]
and
\[
\xymatrix{\lambda' \colon R \ar[r]& H_C^* \otimes T_C(R;g)^{\le n}}
\]
Recall that $\eta_{R,(E,f)} \cong \kappa_{R,(E,f)} \circ \rho_{R,(E,f)}$ so that we can factor $\lambda_{>c}$:
\[
\begin{tikzcd}
R \arrow[rr, "\lambda_{>c}"] \arrow[rd, "\rho_{>c}"'] &                                                      & {\prod_{(C,g) \subsetneq (E,f)} H_E^* \otimes T_E(R;f)^{\le n}} \\
                                                      & {\prod_{(C,g) \subsetneq (E,f)} T_E(R;f)} \arrow[ru] &
\end{tikzcd}
\]
where $\rho_{>c}$ is the product of the maps $\rho_{R;(E,f)}$ for $(C,g) \subsetneq (E,f)$. In particular, $\CEss(R) = \ker(\rho_{>c})$. The factorization shows that $\CEss(R)$ is contained in the kernel of $\lambda_{> c}$, and since $\lambda$ is injective, we deduce that the restriction of $\lambda'$ to $\CEss(R) \subset R$ is injective. We deduce that $\TrD_{R-\cU}(\CEss(R)) \le \TrD_{R-\cU}(H_C^* \otimes T_C(R;f)^{<n}) \le c(R)$, where the last inequality uses \Cref{prop:supportcalc}. By \Cref{prop:powell_dim} we have $\TrD_{R}(\CEss(R)) =\dim_R(\CEss(R)) \le c(R)$, as claimed.
  \end{proof}
For the following, we recall that if $R$ is a connected Noetherian unstable algebra with center $(C,g)$ then the depth of $R$ is always at least equal to the rank of $C$, see \Cref{cor:duflot}.
  \begin{cor}\label{cor:carlson}
  	Let $R$ be a connected Noetherian algebra with center $(C,g)$ and let $c(R) = \rank(C)$. If $\CEss(R) \ne 0$, then $\depth(R) = c(R)$, and $\dim_R(\CEss(R)) = c(R)$.
  \end{cor}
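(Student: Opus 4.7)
The plan is to combine the dimension bound from \Cref{thm:krulldimension}, the Duflot-type depth bound from \Cref{cor:duflot}, and Powell's monomorphism inequality (\Cref{prop:powell_mono}) into a single chain of inequalities that forces equality.

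First I would apply \Cref{prop:powell_mono} to the inclusion $\CEss(R) \hookrightarrow R$ in $R_{fg}-\cU$. Since $\CEss(R)$ is non-trivial by assumption, this yields
\[
\TrD_{R-\cU}(\CEss(R)) \ge \depth_R(R) = \depth(R).
\]
Next I would translate the transcendence degree into Krull dimension via \Cref{prop:powell_dim}, getting $\TrD_{R-\cU}(\CEss(R)) = \dim_R(\CEss(R))$, and then invoke \Cref{thm:krulldimension} to obtain
\[
\depth(R) \le \dim_R(\CEss(R)) \le c(R).
\]

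Finally I would combine this with the lower bound $\depth(R) \ge c(R)$ coming from the generalized Duflot theorem (\Cref{cor:duflot}). The two inequalities squeeze together to give $\depth(R) = c(R)$, and simultaneously $c(R) \le \dim_R(\CEss(R)) \le c(R)$, so $\dim_R(\CEss(R)) = c(R)$ as well.

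There is no real obstacle here: the entire proof is essentially a bookkeeping argument that relies on three nontrivial inputs already established in the paper (the Duflot lower bound on depth, Powell's identification of transcendence degree with Krull dimension, and his monomorphism inequality), together with the dimension bound $\dim_R(\CEss(R)) \le c(R)$ just proved in \Cref{thm:krulldimension}. The only thing to be careful about is checking that $\CEss(R) \hookrightarrow R$ really is a monomorphism in $R_{fg}-\cU$ (which is immediate from the defining left exact sequence, together with Noetherianity of $R$ so that $\CEss(R)$ is finitely generated), and that $\depth_R(R)$ agrees with the classical depth of the ring $R$.
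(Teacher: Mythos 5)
Your argument is correct and is essentially identical to the paper's own proof: both combine \Cref{cor:duflot}, \Cref{prop:powell_mono} applied to the monomorphism $\CEss(R) \hookrightarrow R$, and the bound $\TrD_{R-\cU}(\CEss(R)) = \dim_R(\CEss(R)) \le c(R)$ from \Cref{prop:powell_dim} and \Cref{thm:krulldimension} into the chain $c(R) \le \depth(R) \le \TrD_{R-\cU}(\CEss(R)) \le c(R)$. Nothing further is needed.
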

  \begin{proof}
  	Assume that $\CEss(R) \ne 0$, then by \Cref{cor:duflot}, the previous result, and \Cref{prop:powell_mono} we have
  	\[
c(R) \le \depth(R) \le \TrD_{R-\cU}(\CEss(R)) \le c(R).
  	\]
  	Thus, $\depth(R) = \TrD_{R-\cU}(\CEss(R)) = \dim_R(\CEss(R)) = c(R)$.
  \end{proof}
We will prove the converse in \Cref{thm:cessnonzero}.
  \subsection{Primitives and indecomposables}\label{sec:prim_indec}
Let $R$ be a connected Noetherian unstable algebra with center $(C,g)$.\footnote{We allow the case where the center is trivial. In this case a $H_C^*$-comodule is simply an $\F_p$-module.} Since $(C,g) \in \bA_R$ the morphism $g$ is finite, and so the following is well defined.
\begin{defn}
	Let $e(R)$ denote the maximum degree of a generator (with respect to a minimal generating set) for $H_C^\ast$ as a $R$-module, or equivalently the top nonzero degree of the finite dimensional Hopf algebra $H_C^* \otimes_R \F_p$.
	\end{defn}
We recall from \Cref{prop:restriction_comodule,cor:Borel} that $g \colon R \to H_C^*$ is a morphism of $H_C^*$-comodules, and that there is a basis $x_1,\ldots,x_c$ for $H_C^1$ such that
\begin{equation}
\im(g) = \begin{cases}
  \F_2[x_1^{2^{j_1}},\ldots,x_c^{2^{j_c}}] &\text{ if } p=2\\
  \F_p[y_1^{p^{j_1}},\ldots,y_b^{p^{j_b}},y_{b+1},\ldots,y_c] \otimes \Lambda(x_{b+1},\ldots,x_c) &
  \text{ if } p \text{ is odd,}
\end{cases}
\end{equation}
for some natural numbers $j_1 \ge j_2 \ge \cdots $, and where $y_i = \beta x_i$ for $\beta$ the Bockstein homomorphism. We then have
  \[
e(R) = \sum_{i=1}^c (a_i-1).
  \]
  where
  \[
a_i = \begin{cases}
  2^{j_i} & p = 2\\
  2p^{j_i} & p \text{ odd, and } 1 \le i \le b\\
  1 & \text{otherwise.}
\end{cases}
  \]

	In order to proceed, we need one more definition, due to Kuhn \cite[Definition 2.15]{Kuhn2013Nilpotence}.
	\begin{defn}
		A Duflot algebra of $R$ is a subalgebra $B \subseteq R$ that maps isomorphically to $K = \im(R \to H_{C}^*)$.
	\end{defn}
Since the image $K$ is a free graded-commutative algebra over $\F_p$, such Duflot algebras always exist (as the natural epimorphism $R \to K$ always splits).

Given a Noetherian unstable algebra $R$, we fix a Duflot algebra $B \subseteq R$.
\begin{defn}
	If $M$  is a graded $B$-module, then the space of indecomposables is
	\[
Q_BM \myeq M \otimes_B \F_p = M/B^{>0}M.
	\]
	We let $e_{\text{indec}}(M)$ be its largest nonzero degree, or $-\infty$ if $M = 0$.
\end{defn}
As shown in \Cref{lem:cesscomodule}, $\CEss(R)$ is a sub $H_C^*$-comodule of $R$. Moreover, it is an unstable module, as it is the kernel of a morphism of unstable modules, and the comodule structure map is a morphism of unstable modules. Comodules with this additional structure are called unstable $H_C^*$-comodules in \cite{Kuhn2013Nilpotence}.
\begin{defn}
 	Let $M$ be an unstable $H_C^*$-comodule, then the modules of primitives is
 	\[
P_CM = \{ x \in M \colon \Psi_M= 1 \otimes x\} = \Eq\{ M \xrightrightarrows[i]{\Psi_M} H_C^* \otimes M\},
 	\]
 	where $\Psi_M \colon M \to H_C^* \otimes M$ is the comodule structure map and $i(x) = 1 \otimes x$

 	We let $e_{\prim}(\CEss(R))$ denote the supremum of the degrees in which $P_C(\CEss(R))$ is non-zero, with the convention that this is $-\infty$ if $\CEss(R) = 0$.
 \end{defn}
 \begin{rem}
 	If $R$ has trivial center, then the constructions still make sense, where $e(R) = 0$, and $Q_B\CEss(R) \cong P_C\CEss(R) \cong \CEss(R)$.
 \end{rem}
\begin{rem}
   Note that $Q_BM$ will not necessarily be an unstable module because $B$ is not necessarily closed under Steenrod operations. On the other hand $P_CM$ is always an unstable module.
 \end{rem}

The following lemma is proved by Totaro for Chow rings of finite $p$-groups \cite[Lemma 12.10]{Totaro2014Group} - the proof goes through essentially without change here.
\begin{lem}\label{lem:freeinjective}
  Let $R$ be a Noetherian unstable algebra, with center $(C,g)$. Let $M$ be a non-negatively graded $R$-module that is also a $H_{C}^*$-comodule, such that the morphism $R \otimes M \to M$ is a morphism of $H_{C}^*$-comodules. Let $B \subseteq R$ be a Duflot algebra. Then,
  \begin{enumerate}
      \item $M$ is a free $B$-module
      \item The composite $P_CM \hookrightarrow M \twoheadrightarrow Q_BM$ is injective.
  \end{enumerate}
\end{lem}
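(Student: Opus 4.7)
The plan is to follow Totaro's strategy from Lemma 12.10 of \cite{Totaro2014Group}, adapted to the unstable-algebra setting. The two parts are established essentially simultaneously: one constructs a $B$-basis of $M$ whose elements all lie in $P_CM$, and this single construction witnesses both the freeness of $M$ as a $B$-module and the injectivity of $P_CM \hookrightarrow M \twoheadrightarrow Q_BM$. The crucial structural input is the explicit description of $K = \im(g)$ from \Cref{cor:Borel}: $K$ is a polynomial (tensor exterior when $p$ is odd) Hopf subalgebra of $H_C^*$ whose generators are \emph{primitive} in $H_C^*$. Via the isomorphism $g|_B \colon B \xrightarrow{\cong} K$, this rigid description controls the interaction between the $B$-action on $M$ and the $H_C^*$-coaction $\Psi_M$.

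First I would produce primitive lifts. Fix a homogeneous $\F_p$-basis $\{\bar e_\alpha\}$ of $Q_B M$ and construct, by induction on degree, lifts $e_\alpha \in P_CM$. Given any homogeneous lift $\widetilde e \in M$ of $\bar e_\alpha$, the defect $\Psi_M(\widetilde e) - 1 \otimes \widetilde e$ lies in $H_C^{>0} \otimes M^{< |\bar e_\alpha|}$. The inductive hypothesis, together with the fact that the $R$-action on $M$ is a morphism of $H_C^*$-comodules (so that the coaction on $B \cdot M^{<|\bar e_\alpha|}$ can be written explicitly using the already-constructed lower-degree primitive lifts), allows one to absorb the defect by subtracting a suitable element of $B^{>0}M$ from $\widetilde e$, leaving a primitive representative of $\bar e_\alpha$.

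Next I would show that $\{e_\alpha\}$ is a $B$-basis of $M$, which gives part (1). The generating property is immediate from Nakayama's lemma applied to the connected graded algebra $B$ and the non-negatively graded $B$-module $M$. For linear independence, suppose $\sum b_\alpha e_\alpha = 0$ nontrivially. Applying $\Psi_M$ and using that each $e_\alpha$ is primitive, one obtains $\sum \Psi_R(b_\alpha)(1 \otimes e_\alpha) = 0$. The relation $(1 \otimes g)\Psi_R = \Delta_{H_C^*}\circ g$, combined with the primitivity of the generators of $K$, pins down a ``leading-term'' projection which, applied to the displayed equation, yields $\sum g(b_\alpha) \otimes e_\alpha = 0$ in $H_C^* \otimes M$. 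Injectivity of $g|_B$ and the $\F_p$-linear independence of the $e_\alpha$ then force each $b_\alpha = 0$, a contradiction. Part (2) is now immediate: if $m \in P_CM$ maps to zero in $Q_BM$, writing $m = \sum b_\alpha e_\alpha$ in this basis forces each $b_\alpha \in B^{>0}$, and the same leading-term argument applied to $\Psi_M(m) = 1 \otimes m$ gives $b_\alpha = 0$ and hence $m = 0$.

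The main obstacle is the bookkeeping for $\Psi_R(b)$ when $b \in B$. While $(1 \otimes g)\Psi_R(b)$ takes the clean shape $g(b) \otimes 1 + 1 \otimes g(b)$ when $g(b)$ is a primitive generator of $K$, the coaction $\Psi_R(b)$ itself lives in $H_C^* \otimes R$ rather than $H_C^* \otimes B$, and a priori carries extra contributions from $H_C^* \otimes \ker(g)$. Using the graded vector-space splitting $R = B \oplus \ker(g)$, coassociativity of $\Psi_R$, and tracking the internal grading carefully, one checks that these extra contributions sit in strictly lower degrees of the second factor and hence do not obstruct either the inductive construction of primitive lifts or the leading-term extraction used in both the linear-independence and the injectivity arguments.
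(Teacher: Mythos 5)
There is a genuine gap at the very first step: the primitive lifts you want do not exist in general, and your construction would in fact prove the stronger (and false) statement that $P_CM \to Q_BM$ is \emph{surjective}. The issue is that $M$ is only a module over $B \cong K = \im(g)$, which is in general a proper sub-Hopf algebra of $H_C^*$; $M$ is not a Hopf module over $H_C^*$ itself, so the mechanism that would produce a basis of primitives (the fundamental theorem of Hopf modules, $M \cong H_C^* \otimes P_CM$) is not available. Concretely, take $p=2$ and $R = \F_2[x^2] \subset \F_2[x] = H_{\Z/2}^*$ with $g$ the inclusion (this is $H^*_{SO(2)}$ restricting to its central $\Z/2$); the pair $(\Z/2,g)$ is central by \Cref{prop:central_char}, since $\Delta$ restricts to a $\cK$-map $\F_2[x^2] \to \F_2[x] \otimes \F_2[x^2]$, and $B = R$, $K = \F_2[x^2]$. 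Let $M = H_C^* = \F_2[x]$ with the $R$-action by multiplication and coaction $\Psi_M = \Delta$; all hypotheses of the lemma hold. Then $Q_BM = \F_2\{1,\bar x\}$, but $\Delta(x) = x\otimes 1 + 1 \otimes x$ and $x$ is the only nonzero element of $M^1$, so $\bar x$ has no primitive representative: the defect $x \otimes 1$ does lie in $H_C^{>0} \otimes M^0$ as you say, but $(B^{>0}M)^1 = 0$, so there is nothing to subtract. Indeed $P_CM = \F_2$ is concentrated in degree $0$, so $P_CM \to Q_BM$ is injective but far from surjective. Your ``absorption'' step tacitly assumes every such defect is a coboundary for the coaction, which is exactly what fails for a proper sub-Hopf algebra, and the same phantom primitive basis is what your arguments for both the linear independence in (1) and for (2) rest on.

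For comparison, the paper's proof (following Totaro) does not attempt a direct basis construction. It filters $M$ by the powers $M_i = L^iM$ of $L = \ker(g\colon R \to H_C^*)$, observes using \Cref{prop:restriction_comodule} and the compatibility hypothesis that each $M_i$ is a sub-$H_C^*$-comodule, and notes that each $\gr_i M$ is then a module over the Hopf algebra $K$ itself with compatible $H_C^*$-coaction; Kuhn's relative Hopf module lemma \cite[Lemma 5.2]{Kuhn2007Primitives} applies to each graded piece, and freeness of $M$ follows because the filtration is separated. If you wish to salvage a direct argument, you could prove (1) by lifting a basis of $Q_BM$ \emph{arbitrarily} and running your leading-term argument (this part is standard), but (2) would then need a genuinely different proof, which is precisely the content of Kuhn's lemma.
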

\begin{proof}
  We follow Totaro \cite[Lemma 12.10]{Totaro2014Group}. To that end, let $L = \ker(g \colon R \to H_C^*)$, and let $M_i = L^iM \subset M$ for $i \ge 0$, where $L^i$ denotes the $\F_p$-linear span of all products of $i$ elements of the ideal $L$. This gives a filtration of $M$ by $R$-modules.

  By \Cref{prop:restriction_comodule} $g \colon R \to H_C^*$ is a morphism of $H_C^*$-comodules, and hence $L$ is a sub-$H_C^*$-comodule of $R$. Let $\Psi_M \colon M \to H_C^* \otimes M$ denote the $H_C^*$-comodule structure map for $M$, and $\Psi_{R}$ the corresponding comodule structure map for $R$. By assumption there is a commutative diagram
  \[
\begin{tikzcd}
  R \otimes M \arrow{r} \arrow[swap]{d}{\Psi_{R} \otimes \Psi_M} & M \arrow{d}{\Psi_M}\\
  (H_{C}^* \otimes R) \otimes  (H_C^* \otimes M) \arrow{r} & H_C^* \otimes M
\end{tikzcd}
  \]
  Since $L$ is a sub-$H_C^*$-comodule, this diagram implies that $LM \subset M$ is a $H_C^*$-comodule. By induction we see that $M_i$ is a sub-$H_C^*$-comodule for all $i \ge 0$.

  For each $i \ge 0$ it follows that $\gr_iM = M_i/M_{i+1}$ is a $H_C^*$-comodule. It is also a module over $K = \im(R \xr{g} H_C^*)$, which we have seen is a sub-Hopf algebra of $H_C^*$. By assumption, the $H_C^*$-comodule structure and the $R$-module structure are compatible. Applying a lemma of Kuhn \cite[Lemma 5.2]{Kuhn2007Primitives} we deduce that $\gr_iM$ is a free $B$-module, and the composite $P_C(\gr_iM) \hookrightarrow \gr_iM \twoheadrightarrow Q_B(\gr_iM)$ is injective, for each $i \ge 0$. The filtration of $M$ given by the $M_i$ is separated, and so the fact that each  $\gr_iM$ is a free $B$-module implies that $M$ is also a free $B$-module.
\end{proof}
\begin{thm}\label{thm:pcompactcess} Let $R$ be a connected Noetherian unstable algebra with center $(C,g)$, and fix a Duflot algebra $B$ of $R$.
\begin{enumerate}
	\item $\CEss(R)$ is a finitely-generated free $B$-module, i.e., a Cohen--Macaulay module.
	\item The composite $P_C \CEss(R) \hookrightarrow \CEss(R) \twoheadrightarrow Q_B \CEss(R)$ is monic.
  \item There is an exact sequence
\[
\begin{tikzcd}
0 \arrow{r} & Q_B\CEss(R) \arrow{r}&  Q_BR \arrow{r} & \displaystyle \prod_{(C,g) \subsetneq (E,f)} Q_BT_E(R,f)
\end{tikzcd}
\]
\end{enumerate}
\begin{proof}
	Everything in (1) and (2) except for the claim that $\CEss(R)$ is finitely-generated is a consequence of the previous lemma with $M = \CEss(R)$. Now, $B$ has Krull dimension equal to the rank of $C$, namely $c(R)$. Since we know $\CEss(R)$ is a free $B$-module, it suffices to check that the Krull dimension of $\CEss(R)$ is at most $c(R)$, which is \Cref{thm:krulldimension}.

  For (3), consider the left exact sequence
	\[
\begin{tikzcd}[column sep=0.5  in]
0 \arrow{r} & \CEss(R) \arrow{r} & R \arrow{r}{\prod \rho_{R,(E,f)}} & \displaystyle\prod_{(C,g) \subsetneq (E,f)} T_E(R,f)
\end{tikzcd}
	\]
Note that these are $R$-modules and $H_C^*$-comodules in a compatible way. We can apply \Cref{lem:freeinjective} to the images and cokernels of these maps to deduce that they are free $B$-modules. It follows that the maps split, and we have an exact sequence as claimed.
\end{proof}
\end{thm}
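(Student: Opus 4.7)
The plan is to deduce all three assertions from a single technical tool, namely \Cref{lem:freeinjective}, which turns a compatible $(R, H_C^\ast)$-bimodule structure into freeness over the Duflot algebra $B$ together with the injectivity of $P_C \hookrightarrow Q_B$. For parts (1) and (2) I would apply this lemma directly to $M = \CEss(R)$. The module $\CEss(R)$ is a non-negatively graded $R$-module as an ideal of $R$, is an $H_C^\ast$-subcomodule of $R$ by \Cref{lem:cesscomodule}, and the $R$-action is a morphism of $H_C^\ast$-comodules because the multiplication on $R$ is a comodule-algebra structure: indeed, \Cref{prop:restriction_comodule} tells us $g\colon R \to H_C^\ast$ makes $R$ into a comodule algebra, so the restricted action of $R$ on the subcomodule $\CEss(R)$ is a morphism of comodules. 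Invoking \Cref{lem:freeinjective} then immediately yields the freeness assertion underlying (1) and the whole of (2).

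To complete (1) it remains to establish finite generation over $B$. Since $B$ is a polynomial (or polynomial tensor exterior) algebra of Krull dimension $c(R)$ and $\CEss(R)$ is already known to be a free $B$-module, the claim reduces to bounding the rank. This is where \Cref{thm:krulldimension} enters: it gives $\dim_R \CEss(R) \le c(R) = \dim B$, and combining this with freeness over $B$ forces finite generation. The resulting module is then a finitely generated free module over a regular (polynomial or polynomial-tensor-exterior) ring, which is precisely a Cohen--Macaulay module of dimension $c(R)$.

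For part (3), my plan is to propagate freeness over $B$ across the defining sequence. The left exact sequence for $\CEss(R)$ gives a short exact sequence
\[
0 \to \CEss(R) \to R \to I \to 0,
\]
where $I$ is the image of $\prod \rho_{R,(E,f)}$ in $\prod_{(C,g) \subsetneq (E,f)} T_E(R;f)$. Each of the four objects $\CEss(R)$, $R$, $I$ and the target product carries compatible $R$-module and $H_C^\ast$-comodule structures: for the individual $T_E(R;f)$ with $(C,g) \subseteq (E,f)$ the comodule structure is obtained by composing $\kappa_{R,(E,f)}$ with $\alpha^\ast \colon H_E^\ast \to H_C^\ast$, and the structure maps $\rho_{R,(E,f)}$ are morphisms of $H_C^\ast$-comodules by \Cref{lem:comodulemorphism}. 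Applying \Cref{lem:freeinjective} to each term shows that images and cokernels are all free over $B$, so that both the short exact sequence above and $0 \to I \to \prod T_E(R;f)$ split as sequences of $B$-modules. Applying $Q_B = - \otimes_B \F_p$ then preserves these exact sequences, yielding the stated three-term exact sequence.

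The main technical point to watch is the compatibility-of-structures verification at each appearance of \Cref{lem:freeinjective}, especially for the image $I$ and the cokernel of $I \hookrightarrow \prod T_E(R;f)$; once this is in hand the rest is formal. A secondary subtlety is the finite-generation step in (1): one must be careful that \emph{freeness over $B$ combined with the Krull-dimension bound from \Cref{thm:krulldimension}} rules out an infinite-rank free $B$-module, and here the fact that $\CEss(R)$ is a submodule of the Noetherian $R$-module $R$ is what makes the argument go through.
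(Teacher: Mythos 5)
Your proposal is correct and follows essentially the same route as the paper: parts (1) and (2) via \Cref{lem:freeinjective} applied to $M=\CEss(R)$, finite generation from freeness over $B$ together with the dimension bound of \Cref{thm:krulldimension}, and part (3) by applying \Cref{lem:freeinjective} to the images and cokernels in the defining left exact sequence to get $B$-module splittings that survive $Q_B$. Your extra care with the compatibility checks and with why the dimension bound rules out infinite rank (using that $\CEss(R)$ is finitely generated over the Noetherian ring $R$) only makes the argument more explicit than the paper's.
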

\begin{cor}\label{cor:d0cess}
	If $\CEss(R) \ne 0$, then we have $d_0(\CEss(R)) = e_{\prim}(\CEss(R))$. In general, $e_{\prim}(\CEss(R)) \le e_{\indec}(\CEss(R)) < \infty$.
\end{cor}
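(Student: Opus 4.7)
The second sentence of the corollary is immediate from \Cref{thm:pcompactcess}: part~(2) provides a monomorphism $P_C\CEss(R) \hookrightarrow Q_B\CEss(R)$, which gives $e_{\prim}(\CEss(R)) \le e_{\indec}(\CEss(R))$, and part~(1) asserts that $\CEss(R)$ is finitely generated as a $B$-module, so $Q_B\CEss(R) = \CEss(R) \otimes_B \F_p$ is finite-dimensional over $\F_p$, whence $e_{\indec}(\CEss(R)) < \infty$.

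Now assume $\CEss(R) \ne 0$ and set $e \coloneqq e_{\prim}(\CEss(R))$. For the lower bound $d_0(\CEss(R)) \ge e$, observe that $P_C\CEss(R)$ is an unstable submodule of $\CEss(R)$, since the primitive condition is Steenrod-equivariant. It is nonzero: because $H_C^*$ is graded-connected, any nonzero element $m$ in the bottom nonzero degree of $\CEss(R)$ forces $\Psi(m) = 1 \otimes m$ for purely degree-theoretic reasons (the other potential summands of $\Psi(m)$ would have $H_C^*$-degree positive and $\CEss(R)$-degree below that of $m$, hence vanish). By the finiteness established above, $P_C\CEss(R)$ is therefore a nonzero finite-dimensional unstable module with top degree $e$. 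A nonzero element $y$ in degree $e$ generates an unstable submodule isomorphic to $\Sigma^e \F_p$, because every positive-degree Steenrod operation on $y$ lands above degree $e$ and must therefore vanish. Combining the inclusion $\Sigma^e \F_p \hookrightarrow \CEss(R)$ with \Cref{prop:dproperties}(2) and~(6) yields $d_0(\CEss(R)) \ge d_0(\Sigma^e \F_p) = e$, and the same reasoning applied to $P_C\CEss(R)$ gives $d_0(P_C\CEss(R)) = e$.

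For the reverse inequality $d_0(\CEss(R)) \le e$, the strategy is to construct a monomorphism of unstable modules
\[
\iota \colon \CEss(R) \hookrightarrow H_C^* \otimes P_C\CEss(R).
\]
Granted such an $\iota$, \Cref{prop:dproperties}(2) and~(4) will give
\[
d_0(\CEss(R)) \le d_0(H_C^*) + d_0(P_C\CEss(R)) = 0 + e,
\]
where $d_0(H_C^*) = 0$ since $H_C^*$ is itself $\Nil_1$-reduced. The construction of $\iota$ is the main obstacle, and is where the full structural content of \Cref{thm:pcompactcess} and \Cref{lem:freeinjective} is required: $\CEss(R)$ is a free $B$-module of finite rank whose module structure is compatible with the $H_C^*$-coaction, the map $g$ identifies $B$ with the sub-Hopf algebra $K \subseteq H_C^*$, and $P_C\CEss(R)$ sits monomorphically inside $Q_B\CEss(R)$. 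I plan to adapt Kuhn's argument for compact Lie groups \cite[Sec.~2]{Kuhn2013Nilpotence}: choose a $B$-module splitting of the surjection $\CEss(R) \twoheadrightarrow Q_B\CEss(R)$ that is compatible with the inclusion $P_C\CEss(R) \hookrightarrow Q_B\CEss(R)$, compose the coaction $\Psi$ with the resulting retraction onto $P_C\CEss(R)$, and verify injectivity of the composite by exploiting the $B$-linearity of $\Psi$ together with freeness of $\CEss(R)$ over $B$; this ultimately reduces to the general principle that a module which is free over a sub-Hopf algebra is automatically cofree as a comodule over it.
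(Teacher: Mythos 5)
Your treatment of the second sentence (via \Cref{thm:pcompactcess}(1) and (2)) coincides with the paper's, and your lower bound $d_0(\CEss(R)) \ge e_{\prim}(\CEss(R))$ is a correct, self-contained argument: the bottom-degree class is primitive for degree reasons, a top-degree primitive generates a copy of $\Sigma^e\F_p$, and \Cref{prop:dproperties}(2),(6) do the rest. The problem is the reverse inequality, which is the substantive half of the statement, and there your argument is not a proof but a plan. Everything rests on the existence of a monomorphism of unstable modules $\CEss(R)\hookrightarrow H_C^*\otimes P_C\CEss(R)$, and you explicitly defer its construction (``the main obstacle'', ``I plan to adapt Kuhn's argument''). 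The ``general principle'' you invoke to close the argument --- that a module free over a sub-Hopf algebra is automatically cofree as a comodule --- is not a citable fact in the form stated: the fundamental theorem of Hopf modules gives $M\cong K\otimes P_KM$ when $M$ is simultaneously a module \emph{and} comodule over the same Hopf algebra $K$, whereas here $\CEss(R)$ is a module over $B\cong K=\im(g)$ but a comodule over the typically larger Hopf algebra $H_C^*$, and there is no Hopf algebra projection $H_C^*\to K$ along which to corestrict. Nor is it clear that composing the coaction with an arbitrary $B$-linear retraction onto $P_C\CEss(R)$ yields a map that is injective, or even one whose target is $H_C^*\otimes P_C\CEss(R)$ in a way compatible with the unstable structure. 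As it stands, the key inequality is unproven.

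The paper avoids this entirely: it quotes \cite[Lemma 2.11]{Kuhn2013Nilpotence}, which asserts $d_0(M)=e_{\prim}(M)$ for \emph{any} nonzero unstable $H_C^*$-comodule $M$ with $P_CM$ finite-dimensional, with no freeness or cofreeness hypothesis. The only thing left to check is finite-dimensionality of $P_C\CEss(R)$, which follows from the monomorphism $P_C\CEss(R)\hookrightarrow Q_B\CEss(R)$ of \Cref{thm:pcompactcess}(2) together with finite generation of $\CEss(R)$ over $B$ from \Cref{thm:pcompactcess}(1) --- exactly the finiteness argument you already gave for the second sentence. To repair your write-up, either cite Kuhn's lemma at this point (in which case your explicit lower-bound argument becomes redundant, as it is contained in that lemma), or actually carry out the construction of $\iota$ with a genuine proof of injectivity; the latter amounts to reproving a nontrivial structural result about relative $(K,H_C^*)$-Hopf modules and should not be waved through as a ``general principle''.
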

\begin{proof}
	The stated inequality is an immediate consequence of \Cref{thm:pcompactcess}(2) applied to $M = \CEss(R)$.

  For the first claim we use \cite[Lemma 2.11]{Kuhn2013Nilpotence} in which Kuhn proves that any unstable $H_C^*$-comodule with $P_CM$ finite-dimensional has the property that $d_0M = e_{\prim}(M)$ (under our conventions this is only true if $M \ne 0$). Using \Cref{thm:pcompactcess}(2) again, we see that if $Q_A\CEss(R)$ is finite-dimensional, then so is $P_C\CEss(R)$. But it is clear that if $\CEss(R)$ is a finitely-generated $B$-module, then $Q_B \CEss(R)$ is finite-dimensional, and this is a consequence of \Cref{thm:pcompactcess}(1). The first part of the corollary then follows by applying Kuhn's lemma to $M = \CEss(R)$.
\end{proof}
\subsection{Regularity and \texorpdfstring{$e_{\indec}(\CEss(R))$}{eindecCEssR}}\label{sec:regindec}

We now give the following version of \cite[Proposition 2.27]{Kuhn2013Nilpotence}. This proposition is the first point of the paper we need to make some assumptions on the Duflot algebra. In particular, we make the following hypothesis, which is in effect for the rest of this section.
\begin{hyp}\label{hyp:duflot}
$R$ is a connected Noetherian unstable algebra whose Duflot algebra $B$ is polynomial.
\end{hyp}
\begin{rem}
This always holds if $p = 2$, or if $R$ is concentrated in even degrees (i.e., $R$ is naturally an element of $\cal{U}'$, see \Cref{rem:odd_prime}).
\end{rem}
For the following, we let $\frak m = R^{> 0}$ denote the maximal homogeneous ideal of $R$, and let
\[
H_{\frak m}^0(M) = \{ x \in M \mid \text{ there exists } n \in \mathbb{N} \text{ with } m^{\frak n}x = 0\}.
\]
denote the $\frak m$-torsion functor for an $R$-module $M$ (see \Cref{sec:appendix}). This has right derived functors, $H_{\frak m}^i(M)$, the $i$-th local cohomology of $M$. We note that since $R$ and $M$ are graded, so are these local cohomology modules, although we will usually suppress the internal grading. 
\begin{prop}\label{prop:esslocalcohom}
Let $R$ be a connected Noetherian unstable algebra with Duflot algebra $B$, then
  \[
  Q_B\CEss(R) = H_{\frak m}^0(Q_B\CEss(R)) = H_{\frak m}^0(Q_BR).
  \]
\end{prop}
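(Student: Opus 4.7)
The plan is to prove the two equalities separately, reducing the second to a vanishing statement via the exact sequence in \Cref{thm:pcompactcess}(3).

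For the first equality $Q_B\CEss(R) = H^0_{\frak m}(Q_B\CEss(R))$, I would observe that by \Cref{thm:pcompactcess}(1), $\CEss(R)$ is a finitely-generated free $B$-module, so $Q_B\CEss(R)$ is a finite-dimensional $\F_p$-vector space. Since both $R$ and $T_E(R;f)$ are connected, $\rho_{R,(E,f)}$ is the identity in degree~$0$ for every $(E,f)\in\bA_R$, so $\CEss(R)$ is concentrated in positive degrees and hence so is $Q_B\CEss(R)$. The ideal $\frak m = R^{>0}$ then acts nilpotently on the finite-dimensional graded $R$-module $Q_B\CEss(R)$, which yields the first equality immediately.

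For the second equality, I apply the left-exact functor $H^0_{\frak m}(-)$ to the exact sequence of \Cref{thm:pcompactcess}(3). This gives an injection $H^0_{\frak m}(Q_B\CEss(R))\hookrightarrow H^0_{\frak m}(Q_BR)$, whose cokernel embeds into $H^0_{\frak m}\!\left(\prod_{(C,g)\subsetneq(E,f)} Q_BT_E(R;f)\right)$. Since $H^0_{\frak m}$ vanishes on a product whenever it vanishes on every factor, it suffices to prove $H^0_{\frak m}(Q_BT_E(R;f)) = 0$ for each $(C,g)\subsetneq(E,f)$.

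To establish this vanishing, I would apply \Cref{lem:freeinjective} to $M = T_E(R;f)$. The structure map $\kappa_{R,(E,f)}$ composed with the Hopf algebra morphism $H_E^*\to H_C^*$ induced by the inclusion $C\hookrightarrow E$ makes $T_E(R;f)$ into an $H_C^*$-comodule algebra. By \Cref{lem:comodulemorphism}, $\rho_{R,(E,f)}$ is a morphism of $H_C^*$-comodules, and consequently the $R$-action on $T_E(R;f)$ is a morphism of $H_C^*$-comodules. \Cref{lem:freeinjective}(1) then yields that $T_E(R;f)$ is a free $B$-module, so the polynomial generators $x_1,\ldots,x_c$ of $B$ form a regular sequence on $T_E(R;f)$.

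Finally, by \Cref{prop:central_factor} the pair $(E,h)$ is central in $\bA_{T_E(R;f)}$, so the rank of the center of $T_E(R;f)$ is at least $\rank(E)$, and Duflot's theorem (\Cref{cor:duflot}) yields $\depth T_E(R;f)\ge\rank(E)>c$. Since $\rho_{R,(E,f)}$ is finite, $\frak mT_E(R;f)$ is $T_E(R;f)^{>0}$-primary, so depth computations agree for the two ideals, and the regular sequence $x_1,\ldots,x_c$ can be extended inside $\frak m$ by at least one further element that becomes regular on $Q_BT_E(R;f) = T_E(R;f)/(x_1,\ldots,x_c)T_E(R;f)$. This forces $H^0_{\frak m}(Q_BT_E(R;f))=0$, completing the proof. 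The main technical point is to verify that the $R$-action on $T_E(R;f)$ respects the $H_C^*$-comodule structure, so that \Cref{lem:freeinjective} can be applied; this reduces to a diagram chase involving \Cref{lem:comodulemorphism} and the standard compatibility of $\kappa_{R,(E,f)}$ with the multiplication on $T_E(R;f)$.
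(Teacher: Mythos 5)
Your proof is correct, and for the key vanishing step it takes a genuinely different route from the paper's. Both arguments reduce, via the exact sequence of \Cref{thm:pcompactcess}(3), to showing $H^0_{\frak m}(Q_BT_E(R;f))=0$ for $(C,g)\subsetneq(E,f)$, and the first equality is handled identically (finite-dimensionality of $Q_B\CEss(R)$; note that in the $p$-central defect $0$ case $\CEss(R)=R$ is not concentrated in positive degrees, but your nilpotence argument only needs finite-dimensionality, so this is harmless). For the vanishing, the paper applies \Cref{lem:freeinjective} to $H_E^*$, uses that $Q_BH_E^*$ is Cohen--Macaulay of positive dimension $r-c$ to extract an explicit element $\ell\in\frak m$ extending the regular sequence on $H_E^*$, and then transfers the extended sequence to $T_E(R;f)$ via the regular-sequence lifting result \Cref{thm:duflot_regular} applied to the central pair $(E,h)$. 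You instead apply \Cref{lem:freeinjective} directly to $M=T_E(R;f)$ (the required compatibility of the $R$-action with the $H_C^*$-comodule structure does hold, since $\kappa_{R,(E,f)}$ is an algebra map and \Cref{lem:comodulemorphism} makes $\rho_{R,(E,f)}$ a comodule morphism), obtain the regular sequence $x_1,\ldots,x_c$ on $T_E(R;f)$ from freeness over $B$ via \Cref{lem:depth_regular}, and then use only the depth inequality $\depth(T_E(R;f))\ge\rank(E)>c$ from \Cref{prop:central_factor} and \Cref{cor:duflot} together with \Cref{lem:fgdepth} and the standard fact that a length-$c$ regular sequence in $\frak m$ drops depth by exactly $c$. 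This avoids the detour through the Cohen--Macaulayness of $Q_BH_E^*$ and the full strength of \Cref{thm:duflot_regular}, at the price of verifying the comodule-compatibility hypothesis of \Cref{lem:freeinjective} for $T_E(R;f)$ rather than for $H_E^*$; both verifications are of the same nature, so your version is a legitimate and arguably more streamlined alternative.
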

\begin{proof}
  The first equality follows because $Q_B\CEss(R)$ is finite-dimensional. For the second, consider the left exact sequence of \Cref{thm:pcompactcess}(3):
\[
\begin{tikzcd}
0 \arrow{r} & Q_B\CEss(R) \arrow{r}&  Q_BR \arrow{r} & \displaystyle \prod_{(C,g) \subsetneq (E,f)} Q_BT_E(R,f)
\end{tikzcd}
\]
This gives a left exact sequence
\[
\begin{tikzcd}
0 \arrow{r} & H^0_{\frak m}Q_B\CEss(R) \arrow{r}&  H^0_{\frak m}Q_BR \arrow{r} & \displaystyle \prod_{(C,g) \subsetneq (E,f)} H^0_{\frak m}Q_BT_E(R,f)
\end{tikzcd}
\]
Thus we must show that whenever $(C,g) \subsetneq (E,f)$, we have $ H^0_{\frak m}Q_BT_E(R,f) = 0$. Fix such a pair $(E,f)$.

By assumption, the Duflot algebra $B$ is polynomial, say $B \cong \F_p[f_1,\ldots,f_c]$, and moreover by \Cref{lem:freeinjective} $H_E^*$ is a free $B$-module, so that the sequence $f_1,\ldots,f_c$ is regular by \Cref{lem:depth_regular}. The cohomology $H_E^*$ is Cohen--Macaulay, and hence $Q_BH_E^*$ is also Cohen--Macaulay, of Krull dimension $r - c$, where $r$ is the $p$-rank of $E$ (as the quotient of a Cohen--Macaulay ring by a regular sequence is still Cohen--Macaulay, see \cite[Theorem 2.1.3]{BrunsHerzog1993CohenMacaulay}). We note that $r>c$ by the assumption that $(C,g) \subsetneq (E,f)$. It follows that $\depth(Q_BH_E^*) = r-c > 0$.

Because $H_E^*$ is a finitely generated $R$-module via $f$ so is the quotient ring $Q_BH_E^*$. It follows from \Cref{lem:fgdepth} that
\[
r - c = \depth(Q_BH_E^*) =  \depth_R(Q_BH_E^*).
\]
In particular, by \Cref{lem:depth_regular} there exists elements $y_i \in \frak m = R^{>0}$ such that $Q_BH_E^*$ is a finitely generated free module over the graded polynomial subring $S \cong k[y_1,\ldots,y_{r-c}] \subseteq R$.

 Since $S$ has dimension $r - c>0$, we can find a non-zero element $\ell$ with positive degree which is a non-zero divisor on $Q_BH_E^*$. It follows that the sequence $f_1,\ldots,f_c,\ell \in R$ restricts to a regular sequence in $H_E^*$.  By \Cref{prop:central_factor} there exists a $h \colon T_E(R;f) \to H_E^*$ such that $(E,h)$ is central in $T_E(R,f)$, and so \Cref{thm:duflot_regular} applies to show that the sequence $f_1,\ldots,f_c,\ell$ is regular in $T_E(R;f)$. It follows that $\ell \in \frak m$ restricts to a non-zero divisor on $Q_BT_E(R;f)$, and so by \cite[Lemma 2.1.1(i)]{BrodmannSharp2013Local} $H_{\frak m}^0Q_BT_E(R;f) = 0$ , as required.
\end{proof}
\begin{rem}
  The proof actually shows that, even when \Cref{hyp:duflot} does not hold, we still have $Q_B\CEss(R) = H_{\frak m}^0(Q_B\CEss(R)) \subseteq H_{\frak m}^0(Q_BR)$.
\end{rem}

Given an $R$-module $M$, we let $a_i(R,M)$ be the maximum degree of a non-zero element of $H^i_{\frak m}(M)$ (with the convention that this is $\infty$ if unbounded, or $-\infty$ if $H_{\frak m}^i(M) = 0$). The Castelnuovo--Mumford regularity of $M$ is defined as
\[
\reg(R,M) = \sup_i \{a_i(R,M)+i\},
\]
 see \cite{Symonds2010CastelnuovoMumford} for the basic properties of regularity. If $R = M$, then we write $\reg(R)$ and $a_i(R)$. The main result of this section is the following. We remind the reader we work under \Cref{hyp:duflot}. 
\begin{thm}\label{thm:d0cess}
  Let $R$ be a connected Noetherian unstable algebra, then $e_{\indec}(\CEss(R)) \le e(R) + \reg(R)$, and hence if $\CEss(R) \ne 0$, we have
  \[
d_0(\CEss(R)) \le e(R) + \reg(R).
  \]
  More specifically, we have
  \[
  \begin{split}
e_{\indec}(\CEss(R)) &= e(R)  + a_{c(R)}(R) + c(R) \le e(R) + \reg(R).
\end{split}
  \]
\end{thm}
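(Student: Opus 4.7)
The plan is to identify $e_{\indec}(\CEss(R))$ with the top nonzero degree of the local cohomology module $H^0_{\frak m}(Q_B R)$ and then to compute this degree by exploiting the fact that $Q_B R = R/(f_1,\ldots,f_c)$ is a quotient by a regular sequence on $R$. Specifically, \Cref{prop:esslocalcohom} gives $Q_B\CEss(R) = H^0_{\frak m}(Q_B R)$, so $e_{\indec}(\CEss(R))$ is precisely the top nonzero degree of this module. Under \Cref{hyp:duflot}, $B = \F_p[f_1,\ldots,f_c]$ is polynomial; from the formula for $e(R)$ together with the description of the Duflot algebra in \Cref{cor:Borel} one reads off $|f_i| = a_i$, and hence $\sum_{i=1}^c |f_i| = e(R) + c$, where $c = c(R)$. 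Moreover, \Cref{lem:freeinjective} makes $R$ a free $B$-module, so $f_1,\ldots,f_c$ is an $R$-regular sequence.

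Writing $M_j = R/(f_1,\ldots,f_j)$ (so that $M_0 = R$ and $M_c = Q_B R$), I would prove by induction on $j \in \{0,1,\ldots,c\}$ that the top nonzero degree of $H^{c-j}_{\frak m}(M_j)$ equals $a_c(R) + \sum_{i=1}^j |f_i|$. The base case $j = 0$ is the definition of $a_c(R)$. For the inductive step, \Cref{cor:duflot} ensures $\depth(R) \ge c$, so $\depth(M_j) \ge c - j$ and hence $H^{c-j-1}_{\frak m}(M_j) = 0$. The long exact sequence in local cohomology associated to
\[
0 \to M_j(-|f_{j+1}|) \xrightarrow{f_{j+1}} M_j \to M_{j+1} \to 0
\]
then identifies $H^{c-j-1}_{\frak m}(M_{j+1})$ with the kernel of $f_{j+1}\colon H^{c-j}_{\frak m}(M_j)(-|f_{j+1}|) \to H^{c-j}_{\frak m}(M_j)$.

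The point demanding care, and the main obstacle in the argument, will be to verify that the top nonzero degree strictly \emph{survives} into this kernel rather than getting shaved off. Since $|f_{j+1}| > 0$, the top nonzero degree of the shifted domain $H^{c-j}_{\frak m}(M_j)(-|f_{j+1}|)$ strictly exceeds that of the codomain $H^{c-j}_{\frak m}(M_j)$; any element in that top degree must therefore map into a degree where the codomain vanishes, and so lies in the kernel. This forces the top degrees to match and closes the induction. Setting $j = c$ yields $e_{\indec}(\CEss(R)) = e(R) + a_c(R) + c$. The inequality $a_{c(R)}(R) + c(R) \le \reg(R)$ is immediate from $\reg(R) = \sup_i\{a_i(R) + i\}$, so $e_{\indec}(\CEss(R)) \le e(R) + \reg(R)$. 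Finally, the bound on $d_0(\CEss(R))$ when $\CEss(R) \ne 0$ follows from \Cref{cor:d0cess}, which gives $d_0(\CEss(R)) = e_{\prim}(\CEss(R)) \le e_{\indec}(\CEss(R))$.
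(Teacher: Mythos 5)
Your proposal is correct and follows essentially the same route as the paper: reduce via \Cref{prop:esslocalcohom} to computing the top degree of $H^0_{\frak m}(Q_BR)$, use $\sum|f_i| = e(R)+c(R)$, and track the top degree through the long exact sequences in local cohomology for the regular sequence $f_1,\ldots,f_c$ (the paper delegates this iterated LES computation to Kuhn's Corollary 2.25 and Proposition 2.26, which you have simply written out as an explicit induction), then conclude via \Cref{cor:d0cess}. Your degree-shift argument for why the top class survives into the kernel is exactly the right justification and matches the cited computation.
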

\begin{proof}
We first show that $e_{\indec}(\CEss(R)) = e(R)  + a_{c(R)}(R) + c(R)$. This is the claim that the top non-zero degree of $Q_B\CEss(R)$ is $e(R)  + a_{c(R)}(R) + c(R)$. By \Cref{prop:esslocalcohom} it is equivalent to show that the top non-zero degree of $H_{\frak m}^0(Q_BR)$ is $e(R)  + a_{c(R)}(R) + c(R)$. 

Now, by definition $H_{\frak m}^{c(R),e}(R) = 0$ for all $e > a_{c(R)}(R)$ and $H_{\frak m}^{c(R),a_{c(R)}(R)}(R) \ne 0$ (here we are explicitly writing the internal degree of the local cohomology module). If we choose algebra generators $z_1,\ldots,z_c$ for the Duflot algebra, then $|z_1| + \cdots |z_c| = c(R) + e(R)$, $Q_BR = R/(z_1,\ldots,z_c)$ and the long exact sequence in local cohomology shows that  $H_{\frak m}^{0,e(R)+c(R)+e}(Q_BR) = 0$ for all $e>a_{C(R)}(R)$ and
\[
H_{\frak m}^{0,e(R)+c(R)+a_{c(R)}(R)}(Q_BR) = H_{\frak m}^{c(R),a_{c(R)}(R)}(R) \ne 0,
\]
compare \cite[Corollary 2.25 and Proposition 2.26]{Kuhn2013Nilpotence}. In particular, $e_{\indec}(\CEss(R)) = e(R) + a_{c(R)}(R) + c(R)$, as claimed.  The subsidiary claim about $d_0(\CEss(R))$ is then a consequence of \Cref{cor:d0cess}.
\end{proof}

We now turn to a form of Carlson's depth conjecture \cite{Carlson1995Depth} or \cite[Question 12.5.7]{CarlsonTownsleyValeriElizondoZhang2003Cohomology}. We recall that, for a finite group $G$, Carlson has conjectured that if the product of restriction maps
\[
H_G^* \longrightarrow \prod_{\rank(E) = s}H^*_{C_G(E)}
\]
is injective, then $\depth(H_G^*) \ge s$. This is Carlson's depth conjecture. It is a theorem of Duflot \cite{Duflot1981Depth} that the depth of $H_G^*$ is always equal to at least the $p$-rank of the center of a Sylow $p$-subgroup of $G$. If equality holds, we will say that $H_G^*$ has minimal depth. Suppose that $G$ is a $p$-group and $H_G^*$ has minimal depth, then Carlson's depth conjecture is that $\CEss(H_G^*) \ne 0$, and has been proven in this case by Green \cite{Green2003Carlsons} and Kuhn \cite[Theorem 2.13]{Kuhn2007Primitives}. 

In \cite{heard_depth} the author generalized Duflot's result on depth for connected Noetherian unstable algebras; the depth is always at least $c(R)$, the rank of the center of $R$ (see \Cref{cor:duflot}), and we say that $R$ has minimal depth if $\depth(R) = c(R)$. One can then ask that if $R$ has minimal depth, then is $\CEss(R) \ne 0$? (note that we have already proved the converse of this in \Cref{cor:carlson}). This is part of the content of the following. 
\begin{thm}\label{thm:cessnonzero}
   The central essential ideal $\CEss(R)$ is non-zero if and only if the depth of $R$ is minimal, i.e.,  $\depth (R) = c(R)$. Moreover, in this case $\CEss(R)$ is a Cohen--Macaulay $R$-module of dimension $c(R)$.
 \end{thm}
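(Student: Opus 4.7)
The forward direction, together with the Cohen--Macaulay conclusion, is essentially already in hand. \Cref{cor:carlson} shows that $\CEss(R) \ne 0$ implies $\depth(R) = c(R)$ and $\dim_R(\CEss(R)) = c(R)$, while \Cref{thm:pcompactcess}(1) asserts that $\CEss(R)$ is always a finitely generated free module over the Duflot algebra $B$, hence Cohen--Macaulay of dimension $\dim B = c(R)$.

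For the converse, suppose $\depth(R) = c(R)$; we must show $\CEss(R) \ne 0$. Since $\CEss(R)$ is a free $B$-module by \Cref{thm:pcompactcess}(1), this is equivalent to the statement $Q_B \CEss(R) \ne 0$, and \Cref{prop:esslocalcohom} identifies
\[
Q_B \CEss(R) \;=\; H_{\frak m}^0(Q_B R).
\]
So the task reduces to showing that $H_{\frak m}^0(Q_B R) \ne 0$ under the minimal-depth hypothesis.

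To that end, I would exploit \Cref{hyp:duflot}: write $B \cong \F_p[z_1, \ldots, z_c]$ with $|z_1| + \cdots + |z_c| = e(R) + c(R)$. Because $R$ is a free $B$-module (\Cref{lem:freeinjective} applied to $M = R$), the sequence $(z_1, \ldots, z_c)$ is $R$-regular and $Q_B R = R/(z_1, \ldots, z_c)$. Iterating the degree-preserving long exact sequences in local cohomology arising from the Koszul-type short exact sequences
\[
0 \to R/(z_1, \ldots, z_{i-1}) \xr{z_i} R/(z_1, \ldots, z_{i-1}) \to R/(z_1, \ldots, z_i) \to 0,
\]
exactly as in the proof of \Cref{thm:d0cess}, yields the graded identification
\[
H_{\frak m}^{0,\, e(R)+c(R)+a_{c(R)}(R)}(Q_B R) \;\cong\; H_{\frak m}^{c(R),\, a_{c(R)}(R)}(R).
\]
The assumption $\depth(R) = c(R)$ forces $H_{\frak m}^{c(R)}(R) \ne 0$, so $a_{c(R)}(R)$ is finite and the right-hand side is non-zero by the very definition of $a_{c(R)}(R)$. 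Hence $Q_B \CEss(R) \ne 0$ and therefore $\CEss(R) \ne 0$.

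There is no genuinely hard step here: the cohomological comparison has already been carried out in the proof of \Cref{thm:d0cess}, and the content of the converse direction is simply to re-read that comparison, using the minimal-depth hypothesis to supply the non-vanishing of $H_{\frak m}^{c(R)}(R)$ that drives everything.
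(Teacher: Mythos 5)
Your proposal is correct and follows essentially the same route as the paper: the forward direction and the Cohen--Macaulay claim come from \Cref{cor:carlson} and \Cref{thm:pcompactcess}, and the converse reduces via \Cref{prop:esslocalcohom} and the graded local-cohomology comparison already established in the proof of \Cref{thm:d0cess} to the non-vanishing of $H_{\frak m}^{c(R)}(R)$, which is exactly what the minimal-depth hypothesis supplies through \Cref{prop:depth_local_cohom}. The only cosmetic difference is that the paper invokes the statement of \Cref{thm:d0cess} (that $e_{\indec}(\CEss(R)) = e(R) + a_{c(R)}(R) + c(R)$ is finite) directly rather than re-running the Koszul-type long exact sequences.
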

 \begin{proof}
The only if direction is \Cref{cor:carlson}, so we prove the converse. To this end, suppose that $\depth(R) = c(R)$, so that $H_{\frak m}^{c(R)}(R) \ne 0$ by \Cref{prop:depth_local_cohom}. By \Cref{thm:d0cess} we have $e_{\indec}(\CEss(R)) \ge 0$, and hence $\CEss(R) \ne 0$.

For the second claim, observe that we have
\[
c(R) \le \depth_R(\CEss(R)) \le \dim_R(\CEss(R)) = c(R)\]
 by \Cref{cor:carlson,thm:pcompactcess}.
 \end{proof}
 \section{The topological nilpotence degree of a Noetherian unstable algebra}
 In this section we introduce the $p$-central defect of a Noetherian unstable algebra, which is the analog of $p$-centrality for finite groups. For unstable algebras of $p$-central defect 0, we have an immediate estimate for $d_0(R)$. In general, we use an inductive argument to prove the following, the main result of the paper. 
 \begin{thm}\label{thm:main_unstable_algebra}
  Let $R$ be a connected Noetherian unstable algebra with center $(C,g)$, and suppose that $T_E(R;f)$ satisfies the assumptions of \Cref{hyp:duflot} for all $(C,g) \subseteq (E,f)$, then
  \[
d_0(R) \le  \underset{\substack{(C,g) \subseteq (E,f) \in \bA_R \\ \depth(T_E(R;f)) = c(T_E(R;f))}}\max \{e(T_E(R;f)) + \reg(T_E(R;f)) \}.
  \]
\end{thm}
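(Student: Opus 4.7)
The plan is to combine two previously established results in direct analogy with Kuhn's proof of \Cref{thm:kuhn}. First I will invoke \Cref{prop:kuhn2.7}, which states that for any connected Noetherian unstable algebra $R$ with center $(C,g)$,
\[
d_0(R) \le \max_{(C,g) \subseteq (E,f) \in \bA_R} \{d_0(\CEss(T_E(R;f)))\}.
\]
This localises the problem to estimating $d_0$ of the central essential ideal of each component $T_E(R;f)$ whose indexing pair is a (possibly non-strict) enlargement of the center of $R$.

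Next I will analyse each such component separately. Each $T_E(R;f)$ is itself a connected Noetherian unstable algebra, and its Rector category contains the central object $(E,h)$ supplied by \Cref{prop:central_factor}, so \Cref{thm:max_central} equips $T_E(R;f)$ with a well-defined center of rank $c(T_E(R;f))$. By the standing assumption of the theorem the Duflot algebra of $T_E(R;f)$ is polynomial, so both \Cref{thm:d0cess} and \Cref{thm:cessnonzero} are available. The latter tells us that $\CEss(T_E(R;f)) \ne 0$ precisely when $\depth(T_E(R;f)) = c(T_E(R;f))$, and for those components \Cref{thm:d0cess} gives
\[
d_0(\CEss(T_E(R;f))) \le e(T_E(R;f)) + \reg(T_E(R;f)).
\]
For the remaining components $\CEss(T_E(R;f)) = 0$, and the contribution $d_0(0) = 0$ can be absorbed since it does not exceed the terms retained in the maximum (if there are no retained terms the bound is trivially vacuous).

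Combining these two inputs and restricting the maximum to those $(E,f)$ with $\depth(T_E(R;f)) = c(T_E(R;f))$ yields the stated inequality. The proof is therefore essentially a bookkeeping exercise once \Cref{prop:kuhn2.7}, \Cref{thm:d0cess} and \Cref{thm:cessnonzero} are in hand — the substantive content of the paper lies in those earlier results. The only point requiring care is verifying that the hypotheses of \Cref{thm:d0cess,thm:cessnonzero} transfer from $R$ to each $T_E(R;f)$: connectedness and noetherianity follow from standard properties of Lannes' $T$-functor on components, the existence of a center is guaranteed by \Cref{prop:central_factor} together with \Cref{thm:max_central}, and the polynomial Duflot hypothesis is precisely what the theorem assumes.
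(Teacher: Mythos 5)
Your proposal is correct and is essentially the paper's own proof, which simply combines \Cref{prop:kuhn2.7}, \Cref{thm:d0cess}, and \Cref{thm:cessnonzero} in exactly the way you describe; the bookkeeping you supply (each $T_E(R;f)$ is connected, Noetherian, has a center via \Cref{prop:central_factor} and \Cref{thm:max_central}, and inherits the polynomial Duflot hypothesis by assumption) is the right justification. The only slightly loose point is the treatment of components with $\CEss(T_E(R;f))=0$: their contribution $d_0(0)=0$ is dominated by the retained terms because for any retained $(E,f)$ one has $e(T_E(R;f))+\reg(T_E(R;f))\ge e_{\indec}(\CEss(T_E(R;f)))\ge d_0(\CEss(T_E(R;f)))\ge 0$, and the retained set is never empty since any $(E,f)$ of maximal rank has $T_E(R;f)$ of $p$-central defect $0$, hence of minimal depth.
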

The proof will be given in \Cref{sec:top_degree_proof}.
 \subsection{The \texorpdfstring{$p$}{p}-central defect of a Noetherian unstable algebra}
 \begin{defn}
   Let $R$ be a connected Noetherian unstable algebra with center $(C,g)$. Let $c(R)$ be the rank of $C$, and
   \[
p(R) = \max \{ \rank(E) \mid (E,f) \in \bA_R \}.
   \]
   The $p$-central defect of $R$ is $p(R)- c(R)$.
\end{defn}
\begin{lem}\label{rem:central_defect}
We have
\[
c(R) \le \depth(R) \le \dim(R) = p(R)
\]
If particular, the $p$-central defect is always greater than or equal to zero, and if it is zero, then $\depth(R) = \dim(R) = c(R)$, so that $R$ is a Cohen--Macaulay ring. 
\end{lem}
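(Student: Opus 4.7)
The plan is to string together three already-stated facts, then read off the Cohen--Macaulay consequence. First I would establish the equality $\dim(R) = p(R)$; this is literally the content of \Cref{prop:rector_props}(2), which says the Krull dimension of a Noetherian unstable algebra equals $\max\{\rank(E)\mid (E,f)\in\bA_R\}$. Next I would use the standard commutative-algebra inequality $\depth(R)\le\dim(R)$, which holds for any finitely generated graded algebra over a field with respect to its maximal homogeneous ideal (and is reviewed in the appendix referenced as \Cref{sec:appendix}).

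For the lower bound $c(R)\le\depth(R)$, I would invoke the author's generalized Duflot theorem, recorded in this paper as \Cref{cor:duflot}: for a connected Noetherian unstable algebra $R$ with center $(C,g)$, the depth of $R$ is at least $\rank(C) = c(R)$. Chaining these three gives
\[
c(R)\le\depth(R)\le\dim(R) = p(R),
\]
which is the displayed inequality. In particular $p(R) - c(R)\ge 0$, so the $p$-central defect is nonnegative.

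For the final assertion, suppose the $p$-central defect vanishes, i.e.\ $p(R) = c(R)$. Then the outer terms of the chain coincide, forcing equality throughout, so $\depth(R) = \dim(R) = c(R)$. An $R$-module whose depth equals its Krull dimension is Cohen--Macaulay by definition, so $R$ itself is a Cohen--Macaulay ring. There is no real obstacle here; the whole statement is a formal consequence of \Cref{prop:rector_props}(2), \Cref{cor:duflot}, and the standard fact $\depth\le\dim$, and the main work of actually establishing those bounds has already been done elsewhere in the paper.
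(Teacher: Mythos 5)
Your proof is correct and follows exactly the same route as the paper: $\dim(R)=p(R)$ from \Cref{prop:rector_props}(2), the standard inequality $\depth(R)\le\dim(R)$, and $c(R)\le\depth(R)$ from \Cref{cor:duflot}, with the Cohen--Macaulay conclusion read off when the outer terms coincide. No gaps.
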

\begin{proof}
  We have $c(R) \le \depth(R)$ by the author's generalization of Duflot's theorem (\Cref{cor:duflot}), the inclusion $\depth(R) \le \dim(R)$ always holds, and $\dim(R) = p(R)$ by  \Cref{prop:rector_props}(2). The result of the lemma is then clear. 
\end{proof}
\begin{rem}
  The Cohen--Macaulay defect of $R$ is defined as $\dim(R) =p(R) - \depth(R)$. By Duflot's depth theorem (\Cref{cor:duflot}) $\depth(R) \ge c(R)$, so that the Cohen--Macaulay defect is always less than or equal to the $p$-central defect of $R$.
\end{rem}
\begin{lem}\label{lem:central_defect}
  Let $R$ be a connected Noetherian unstable algebra with center $(C,g)$. If $(C,g) \subseteq (E,f)$, then the $p$-central defect of $T_E(R;f)$ is less than or equal to $R$, with equality if and only if $(C,g) \simeq (E,f)$. 
\end{lem}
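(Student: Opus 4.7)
The plan is to unpack the definition of the $p$-central defect and analyze the two quantities $p(T_E(R;f))$ and $c(T_E(R;f))$ separately against the corresponding invariants of $R$.

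First, I would bound the Krull dimension. Since $(C,g) \subseteq (E,f)$ in particular $(E,f) \in \bA_R$, so the structure map $f \colon R \to H_E^*$ is finite, and by the factorization $f = h \circ \rho_{R,(E,f)}$ from \Cref{prop:central_factor} (together with the finiteness of $\rho_{R,(E,f)}$ recalled there from \cite[Corollary 1.12]{Henn1996Commutative}), the morphism $\rho_{R,(E,f)} \colon R \to T_E(R;f)$ is finite. A finite morphism of graded connected Noetherian algebras cannot increase Krull dimension, hence
\[
p(T_E(R;f)) = \dim(T_E(R;f)) \le \dim(R) = p(R),
\]
where the equalities come from \Cref{prop:rector_props}(2).

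Next I would bound the rank of the center from below. By \Cref{prop:central_factor} there is a central object $(E,h) \in \bA^{\mathrm{central}}_{T_E(R;f)}$; by the maximality characterizing the center (\Cref{thm:max_central}, \Cref{defn:center}) this forces $\rank(E) \le c(T_E(R;f))$. Combined with the monomorphism $C \hookrightarrow E$ implicit in $(C,g) \subseteq (E,f)$, which gives $c(R) = \rank(C) \le \rank(E)$, we obtain the chain
\[
c(R) \le \rank(E) \le c(T_E(R;f)).
\]
Subtracting from the dimension inequality yields
\[
p(T_E(R;f)) - c(T_E(R;f)) \le p(R) - \rank(E) \le p(R) - c(R),
\]
which is the desired inequality.

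For the equality statement, the forward direction is straightforward: if $(C,g) \simeq (E,f)$ in $\bA_R$, then by functoriality of the assignment $(E,f) \mapsto T_E(R;f)$ combined with centrality of $(C,g)$ (so that $\rho_{R,(C,g)}$ is an isomorphism), we conclude $R \cong T_E(R;f)$ as unstable algebras, and in particular they have the same $p$-central defect. Conversely, if the $p$-central defects agree, then equality must hold throughout the chain of inequalities above; in particular the last step gives $\rank(E) = c(R) = \rank(C)$, and since $C \hookrightarrow E$ is a monomorphism of elementary abelian $p$-groups of equal rank it is an isomorphism, and compatibility with the structure maps shows $(C,g) \simeq (E,f)$ in $\bA_R$. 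The only mildly subtle point is the initial finiteness of $\rho_{R,(E,f)}$ needed for the dimension comparison, but this has already been established in \Cref{prop:central_factor}, so no new obstacle arises.
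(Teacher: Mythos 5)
Your proof is correct and follows essentially the same route as the paper: both reduce the claim to $p(T_E(R;f)) \le p(R)$ (via finiteness of $\rho_{R,(E,f)}$) and $c(R) \le \rank(E) \le c(T_E(R;f))$ (via the central pair $(E,h)$ from \Cref{prop:central_factor}), and then subtract. The only cosmetic difference is that the paper obtains the dimension bound by precomposing objects of $\bA_{T_E(R;f)}$ with $\rho_{R,(E,f)}$ rather than invoking Krull dimension directly, and your treatment of the equality case is, if anything, slightly more explicit than the paper's.
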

\begin{proof}
  We first claim that $p(T_E(R;f)) \le p(R)$. Indeed, if $(V,\tilde g) \in \bA_{T_E(R;f)}$, then we can precompose with $\rho_{R,(E,f)} \colon R \to T_E(R;f)$ to get a pair $(V,g) \in \bA_R$ for $g = \tilde g \circ \rho_{R,(E,f)}$. On the other hand, we recall there exists $h \colon T_E(R;f) \to H_E^*$ such that $(E,h)$ is central in $\bA_{T_E(R;f)}$, see \Cref{prop:central_factor}. Thus, $c(R) \le c(T_E(R;f))$, with equality if and only if $(E,f)$ is central in $\bA_R$, in which case $(E,f) \simeq (C,g)$. Combining these two inequalities we see that
  \[
p(T_E(R;f)) - c(T_E(R;f)) \le p(R) - c(R),
  \]
  hence the result.
\end{proof}
Algebras of $p$-central defect 0 have several nice properties, as we now explain. We note that in this case the product in the definition of $\CEss(R)$ is taken oven the empty set, and hence this product trivial algebra, so that in this case $\CEss(R) \cong R$.

 The following characterization of unstable algebras of $p$-central defect 0 was suggested by the referee. 
\begin{lem}
  Let $R$ be a connected Noetherian unstable algebra with center $(C,g)$, then $R$ has $p$-central defect 0 if and only if there is an equivalence of categories $\bA_R \simeq \bA_{H_C^*}$. 
\end{lem}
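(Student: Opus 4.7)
The plan is to construct a comparison functor $G \colon \bA_{H_C^*} \to \bA_R$ from the center data and show that it is an equivalence precisely when $R$ has $p$-central defect zero.

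For the easy direction ($\Leftarrow$), suppose $\bA_R \simeq \bA_{H_C^*}$. A direct inspection shows that isomorphism classes in $\bA_{H_C^*}$ correspond to subgroups of $C$, with maximal rank equal to $\rank(C)$. Since an equivalence of categories preserves the poset of isomorphism classes and its chains, the maximal rank $p(R)$ of an object in $\bA_R$ equals $\rank(C) = c(R)$, so $R$ has $p$-central defect zero.

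For the forward direction ($\Rightarrow$), I define $G(E, \alpha^*) = (E, \alpha^* g)$, identifying finite morphisms $\alpha^* \colon H_C^* \to H_E^*$ with group monomorphisms $\alpha \colon E \hookrightarrow C$; functoriality is immediate. For essential surjectivity, given $(E, f) \in \bA_R$, I form $(E \circ C, \sigma(f, g))$ via centrality of $(C, g)$ as in \Cref{defn:boxplus}. The canonical map $C \hookrightarrow E \oplus C \twoheadrightarrow E \circ C$ is injective since its kernel meets $C$ in $\ker(g) = 0$ by finiteness of $g$. Combined with the bound $\rank(E \circ C) \le p(R) = c(R) = \rank(C)$ coming from the hypothesis, this inclusion is an isomorphism, so $(C, g) \to (E \circ C, \sigma(f, g))$ is an iso in $\bA_R$. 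Composing its inverse with the canonical morphism $(E, f) \to (E \circ C, \sigma(f, g))$ produces an injection $\alpha \colon E \hookrightarrow C$ with $f = \alpha^* g$, so $(E, f) = G(E, \alpha^*)$.

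For full faithfulness, I must verify that $\alpha^* g = (\gamma \phi)^* g$ as $\cK$-maps $R \to H_E^*$ implies $\alpha = \gamma \phi$ as group maps $E \to C$, where $\phi \colon E \to V$ is a morphism in either category. Both sides factor through $K \coloneqq \im(g) \subseteq H_C^*$, which by \Cref{prop:restriction_comodule} and \Cref{cor:Borel} has the Borel form $\F_p[y_i^{p^{j_i}}]_{1 \le i \le b} \otimes \F_p[y_i]_{b < i \le c} \otimes \Lambda(x_i)_{b < i \le c}$ (with the obvious modification at $p = 2$). Combining injectivity of Frobenius on the polynomial subring of $H_E^*$ with injectivity of the Bockstein on $H_E^1$ shows that agreement of $\alpha^*$ and $(\gamma \phi)^*$ on $K$ forces agreement on every degree-one generator of $H_C^*$, hence on all of $H_C^*$. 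Therefore $\alpha = \gamma \phi$. The main obstacle is essential surjectivity, where the $p$-central defect zero hypothesis is used decisively; full faithfulness holds more generally, requiring only centrality of $(C, g)$.
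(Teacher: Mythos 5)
Your proof is correct and follows essentially the same route as the paper: the equivalence is given by precomposition with $g$, and the defect-zero hypothesis enters only through the comparison of ranks. You supply several details the paper leaves implicit --- in particular the use of $(E\circ C,\sigma(f,g))$ to show that every object of $\bA_R$ admits a morphism to $(C,g)$ when the defect is zero, and the Borel form of $\im(g)$ to establish fullness --- and these are exactly the right justifications.
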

\begin{proof}
If $R$ has $p$-central defect greater than 0, then there exists a pair $(E,f) \in \bA_R$ with $(C,g) \subsetneq (E,f)$, and such an $(E,f)$ cannot be equivalent to an object $\bA_{H_C^*}$ as $\rank(C) \lneq \rank(E)$. Now suppose that $R$ has $p$-central defect equal to zero, and let $(E,f) \in \bA_R$. Note that $(E,f) \subseteq (C,g)$ as $(C,g)$ is the unique maximal object of $\bA_R$. In particular, there is a morphism $\alpha \colon H_C^* \to H_E^*$, and clearly the pair $(E,\alpha)$ defines an object of $\bA_{H_C^*}$. This defines a functor $F \colon \bA_R \to \bA_{H_C^*}$.  Conversely, if $(V,j) \in \bA_{H_C^*}$, then $(V,j \circ g) \in \bA_R$, and it is easy to see that this is functorial, and provides an inverse to $F$. 
\end{proof}
\begin{ex}
  Let $G$ be a $p$-group, then we will see in \Cref{thm:group_props} that $\bA_{H_G^*}$ is equivalent to the category $\bA_G$ whose objects are the elementary abelian $p$-subgroups of $G$. Moreover, the center of $H_G^*$ is just the group-theoretic center of $G$. It follows that $G$ has $p$-central defect 0 if and only if the maximal central elementary abelian $p$-subgroup is maximal among all elementary abelian $p$-subgroups of $G$. Such groups are known as $p$-central groups. For example, $G = Q_8$ (the quaternion group of order 8) is 2-central, while $D_8$ (the dihedral group of order 8) is not 2-central; in our terminology, $H_{Q_8}^*$ has 2-central defect 0, and $H_{D_8}^*$ has 2-central defect 1. In terms of the previous lemma, the elementary abelian subgroup lattices of $Q_8$ and $D_8$ are given respectively (up to conjugacy) by
  \[
\begin{tikzpicture}[scale=1.0,sgplattice]
  \node[char] at (2,0) (1) {\gn{C1}{C_1}};
  \node[char] at (2,0.803) (2) {\gn{C2}{C_2}};
  \draw[lin] (1)--(2) ;
  \node [below=1cm, align=flush center,text width=4cm] at (1)
        {
             $Q_8$
        };
\end{tikzpicture}
\begin{tikzpicture}[scale=1.0,sgplattice]
  \node[char] at (2,0) (1) {\gn{C1}{C_1}};
  \node[char] at (2,0.953) (2) {\gn{C2}{C_2}};
  \node at (0.125,0.953) (3) {\gn{C2}{C_2}};
  \node at (3.88,0.953) (4) {\gn{C2}{C_2}};
  \node at (0.125,2.17) (6) {\gn{C2^2}{C_2^2}};
  \node at (3.88,2.17) (7) {\gn{C2^2}{C_2^2}};
  \draw[lin] (1)--(2) (1)--(3) (1)--(4)  (2)--(6) (3)--(6) (2)--(7)
     (4)--(7)     ;
       \node [below=1cm, align=flush center,text width=4cm] at (1)
        {
             $D_8$
        };
\end{tikzpicture}
  \]
  In both cases, the center is isomorphic to $C_2$, however $\bA_{H_{Q_8}^*} \simeq \bA_{H_{C_2}^*}$ while $\bA_{H_{D_8}^*} \not \simeq \bA_{H_{C_2}^*}$. 
\end{ex}
\begin{ex}
  Let $S^3 \langle 3 \rangle$ denote the 3-connected cover of $S^3$, then $\bA_{H^*(S^3\langle 3 \rangle)}$ has a single non-trivial object $(\Z/p,f)$ which is also central, see \cite[Example 3.7]{heard_depth} for the map $f$. Thus, this has $p$-central defect 0.
\end{ex}
For algebras of $p$-central defect 0, we have an immediate estimate for $d_0(R)$.
\begin{prop}\label{cor:central_defect_0}
  Suppose that $R \ne 0$ has $p$-central defect 0 and that \Cref{hyp:duflot} holds, then $\depth(R) = c(R)$ and
  \[
d_0(R) \le e(R) + \reg(R).
  \]
\end{prop}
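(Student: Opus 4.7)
The plan is to observe that $p$-central defect $0$ collapses the central essential ideal onto $R$ itself, after which everything follows from the earlier structural results. Concretely, if $(C,g)$ is the center of $R$ and the $p$-central defect is $0$, then by \Cref{lem:central_defect} there is no pair $(E,f) \in \bA_R$ with $(C,g) \subsetneq (E,f)$. The defining left-exact sequence
\[
0 \to \CEss(R) \to R \to \prod_{(C,g) \subsetneq (E,f)} T_E(R;f)
\]
therefore has its right-hand product indexed over the empty set, which is the terminal object, so the kernel is all of $R$, giving a natural isomorphism $\CEss(R) \cong R$.

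Next, since $R \ne 0$ we conclude $\CEss(R) \ne 0$, and \Cref{thm:cessnonzero} immediately yields $\depth(R) = c(R)$. This takes care of the first assertion. For the bound on $d_0(R)$, I would directly invoke \Cref{thm:d0cess}, which under \Cref{hyp:duflot} gives
\[
d_0(\CEss(R)) \le e(R) + \reg(R).
\]
Substituting $\CEss(R) \cong R$ yields $d_0(R) \le e(R) + \reg(R)$, as required.

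Thus there is essentially no obstacle: the entire content of the proposition is a specialization of \Cref{thm:d0cess} and \Cref{thm:cessnonzero} to the degenerate (and most favorable) case in which the central essential ideal exhausts $R$. The only minor subtlety worth spelling out in the write-up is why the empty product in the definition of $\CEss(R)$ really forces $\CEss(R) = R$ (namely, the empty product in $\cK$ is the terminal unstable algebra and every map from $R$ into it is the zero map, so the kernel is all of $R$), and that the hypothesis $R \ne 0$ is what allows one to feed this into \Cref{thm:cessnonzero}.
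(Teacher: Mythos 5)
Your proof is correct, and the key step for the bound on $d_0(R)$ — namely that $p$-central defect $0$ forces $\CEss(R)\cong R$ (empty product), so that \Cref{thm:d0cess} applies directly — is exactly the paper's argument. The one place you diverge is the claim $\depth(R)=c(R)$: you route it through \Cref{thm:cessnonzero} (equivalently \Cref{cor:carlson}), which rests on the Krull dimension bound \Cref{thm:krulldimension} and Powell's transcendence-degree machinery, whereas the paper gets it for free from the elementary chain $c(R)\le\depth(R)\le\dim(R)=p(R)$ of \Cref{rem:central_defect}, which collapses to equalities when $p(R)=c(R)$. Both are valid and non-circular (those results precede the proposition), but the paper's route is lighter. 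One small citation quibble: \Cref{lem:central_defect} compares the $p$-central defects of $T_E(R;f)$ and $R$ and is not what rules out pairs $(E,f)\supsetneq(C,g)$; the correct justification is that $(C,g)\subsetneq(E,f)$ forces $\rank(E)>\rank(C)=c(R)=p(R)$, contradicting \Cref{prop:rector_props}(2).
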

\begin{proof}
  By \Cref{rem:central_defect} we have $\depth(R) = c(R)$, and then the estimate for $d_0(R)$ is an immediate consequence of the fact that $R \cong \CEss(R)$, and \Cref{thm:d0cess}.
\end{proof}

We note the following behavior of the $p$-central defect under tensor products. We once again thank the referee for the proof that the Krull dimension is additive. 
\begin{lem}
  Let $R_1$ and $R_2$ be connected Noetherian unstable algebras, then the $p$-central defect of $R_1 \otimes R_2$ is equal to the sum of the $p$-central defects of $R_1$ and $R_2$. 
\end{lem}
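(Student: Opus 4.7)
The plan is to show that both the rank of the center $c(\cdot)$ and the quantity $p(\cdot)$ are additive under tensor products of connected Noetherian unstable algebras. Since the $p$-central defect is their difference, additivity of both will immediately yield the lemma:
\[
p(R_1 \otimes R_2) - c(R_1 \otimes R_2) = (p(R_1) - c(R_1)) + (p(R_2) - c(R_2)).
\]

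Additivity of $c$ is immediate from the work already done. By \Cref{lem:center_of_tensor}, if $(C_i,g_i)$ is the center of $R_i$, then the center of $R_1 \otimes R_2$ is $(C_1 \oplus C_2, g)$. Taking ranks on both sides gives $c(R_1 \otimes R_2) = c(R_1) + c(R_2)$.

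For the additivity of $p$, I would use the identification $p(R) = \dim(R)$ from \Cref{prop:rector_props}(2). The claim then reduces to the classical fact from commutative algebra that Krull dimension is additive under tensor product of finitely generated graded algebras over a field, namely $\dim(R_1 \otimes_{\F_p} R_2) = \dim(R_1) + \dim(R_2)$ (which applies since both $R_i$ are finitely generated $\F_p$-algebras by the Noetherian hypothesis). Alternatively, one can supply a direct argument using Rector's category: picking $(E_i, f_i) \in \bA_{R_i}$ with $\rank(E_i) = p(R_i)$, the same tensor-product construction used in the proof of \Cref{lem:tp_center} produces a pair $(E_1 \oplus E_2, f_1 \otimes f_2) \in \bV_{R_1 \otimes R_2}$; one checks that $f_1 \otimes f_2 \colon R_1 \otimes R_2 \to H_{E_1 \oplus E_2}^*$ is finite because each $f_i$ is, so this lies in $\bA_{R_1 \otimes R_2}$, yielding $p(R_1 \otimes R_2) \ge p(R_1) + p(R_2)$. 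The reverse inequality comes from the Krull dimension identity.

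There is no real obstacle here; the only subtle point is the additivity of Krull dimension under tensor products over a field, which is classical and holds since we are working with finitely generated graded $\F_p$-algebras. Combining these two additivity statements completes the argument.
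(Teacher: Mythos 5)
Your proposal is correct, and its overall skeleton matches the paper's: reduce to additivity of $c$ via \Cref{lem:center_of_tensor} and additivity of $p$, and obtain the lower bound $p(R_1 \otimes R_2) \ge p(R_1) + p(R_2)$ by tensoring maximal-rank pairs (exactly the argument in the paper). Where you diverge is the upper bound. The paper either cites Powell's results on transcendence degree of tensor products or gives a direct argument inside Rector's category: for $f \colon R_1 \to H_V^*$ and $g \colon R_2 \to H_V^*$ one checks that $\ker(f \coprod g) = \ker(f) \cap \ker(g)$, so the rank of $V/(\ker f \cap \ker g)$ is bounded by the sum of the ranks of $V/\ker f$ and $V/\ker g$. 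You instead use $p(R) = \dim(R)$ from \Cref{prop:rector_props}(2) and the classical fact that Krull dimension is additive for tensor products of finitely generated algebras over a field. That is legitimate and arguably more elementary, with one small caveat you should acknowledge: the $R_i$ are only graded-commutative, so at odd primes they contain nilpotent odd-degree classes, and the classical additivity statement is for commutative algebras. This is harmless because $\dim$ only depends on the reduced quotient, and over the perfect field $\F_p$ one has $(R_1 \otimes R_2)_{\mathrm{red}} \cong (R_1)_{\mathrm{red}} \otimes (R_2)_{\mathrm{red}}$, after which Noether normalization gives additivity. The trade-off is that your route leans on external commutative algebra, whereas the paper's kernel argument stays entirely within the $\bV_R$/$\bA_R$ formalism and so also records the structural fact about kernels of sums of maps, which is of independent use.
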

\begin{proof}
Using \Cref{lem:center_of_tensor} we reduce to the claim that $p(R_1 \otimes R_2) = p(R_1) + p(R_2)$. This follows directly from work of Powell \cite[Theorem 3 and Theorem 4]{powell_tensor}, along with \Cref{prop:rector_props}(2). As noted by the referee, we can also prove this directly. Firstly, if $(E_1,f_1) \in \bA_{R_1}$ and $(E_2,f_2) \in \bA_{R_2}$, then $(E_1 \otimes E_2,f_1 \otimes f_2) \in \bA_{R_1 \otimes R_2}$. It follows that $p(R_1 \otimes R_2) \ge p(R_1) + p(R_2)$.

For the converse, suppose we are given $f \colon R_1 \to H_V^* \in \bV_{R_1}$ and $g \colon R_2 \to H_V^* \in \bV_{R_2}$, so that $\widetilde f \colon R_1 \to H^*_{V/\ker(f)} \in \bA_R$ and $\widetilde g \colon R_2 \to H^*_{V/\ker(g)} \in \bA_R$. There is a commutative diagram of the form
\[
\begin{tikzcd}
R_1 \oplus R_2 \arrow[r, "\widetilde f \oplus \widetilde g"] \arrow[rd] & H^*_{V/\ker(f)} \oplus H^*_{V/\ker(g)} \arrow[r, hook] \arrow[d, two heads] & H^*_V \oplus H^*_V \arrow[d, "\mu"] \\
                                                                        & H^*_{V/(\ker(f) \cap \ker(g))} \arrow[r, hook]                              & H_V^*                              
\end{tikzcd}
\]
The composite $f \coprod g \colon R_1 \oplus R_2 \to H_V^*$ gives an element of $\bV_{R_1 \oplus R_2}$, and the commutative diagram shows that $\ker(f \coprod g) = \ker(f) \cap \ker(g)$. In particular, the rank of $V/(\ker(f) \cap \ker(g))$ is at most the sum of the ranks of $V/\ker(f)$ and $V/\ker(g)$, so that $p(R_1 \otimes R_2) \le p(R_1) \oplus p(R_2)$. Together, we see that $p(R_1 \otimes R_2) = p(R_1) + p(R_2)$, as claimed. 
\end{proof}
We can therefore construct unstable algebras of arbitrarily high $p$-central defect. For example, $H^*_{D_8^{\times n}}$ has $2$-central defect of exactly $n$. 

The unstable algebra $H^*(\stc)$ considered above is the cohomology of a $H$-space. More generally, if $X$ is a connected  Noetherian $H$-space, we recall from \Cref{ex:hspaces} that the mod 2 cohomology satisfies
\begin{equation}\label{eq:hspaceat2}
H^*(X) \cong \F_2[x_1,\ldots,x_r] \otimes \frac{\F_2[y_1,\ldots,y_s]}{(y_1^{2^{a_1}},\ldots,y_s^{2^{a_s}})}.
\end{equation}
while
\begin{equation}\label{eq:hspace_odd_2}
H^*(X;\F_p) \cong \frac{\F_p[y_1,\ldots,y_s]}{(y_1^2,\ldots,y_s^2)} \otimes \F_p[\beta y_1,\ldots, \beta y_k,x_{k+1},\ldots,x_n] \otimes \frac{\F_p[z_1,\ldots,z_t]}{(z_1^{p^{a_1}},\ldots,z_t^{p^{a_t}})},
\end{equation}
when $p$ is odd.

Both these rings are Gorenstein because they are Cohen--Macaulay, and the quotient by $(x_1,\ldots,x_r)$ for $p = 2$ (or $(\beta y_1,\ldots,x_n)$ for $p$ odd) is a Poincar\'e duality algebra of formal dimension $\sum_{i=1}^s (|y_i|^{2^{a_s-1}})$ for $p = 2$ (or $\sum_{i=1}^s |y_i| + \sum_{i=1}^t |z_i|^{p^{a_t-1}})$ for $p$ odd), see Proposition I.1.4 and the remark on the same page of \cite{MeyerSmith2005Poincare}.
\begin{defn}
  Let $X$ be a connected $H$-space with Noetherian mod 2 cohomology given as in \eqref{eq:hspaceat2}, then the Poincar\'e dimension of $H^*(X)$ is $\sum_{i=1}^s (|y_i|^{2^{a_s-1}})$.
\end{defn}
\begin{prop}\label{prop:broto_crespo}
  Let $X$ be a connected Noetherian $H$-space with cohomology as in \eqref{eq:hspaceat2}.

  If $p = 2$, then the following hold:
  \begin{enumerate}
    \item (Broto--Crespo) There exists a central elementary abelian $2$-subgroup $E$ of rank $r$ and a central morphism $f \colon H^*(X) \to H_{E}^*$. Moreover, there is a basis $u_1,\ldots,u_r$ of $H^1_E$ such that $f(x_i) = u_i^{2^{\beta_i}}$ for $\beta_i \ge 0$ and $i = 1,\ldots,r$.
    \item $H^*(X)$ has $2$-central defect 0.
      \end{enumerate}
If $p$ is odd, then the following hold:
  \begin{enumerate}
    \item (Crespo) There exists a central elementary abelian $p$-subgroup $E$ of rank $n$ and a central morphism $f \colon H^*(X) \to H_{E}^*$. Moreover, there is a basis $u_1,\ldots,u_n$ of $H^1_E$ such that 
    \[
f(y_j) = \begin{cases}
  u_j & 1 \le j \le r \quad r \le k \\
  0 & i > k
\end{cases}
    \]
    and
    \[
f(x_i) = (\beta u_i)^{p^{a_i}}. 
    \]
    \item $H^*(X)$ has $p$-central defect 0.
      \end{enumerate}
\end{prop}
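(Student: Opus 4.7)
The plan is to dispatch part (1) by direct appeal to the constructions of Broto--Crespo and Crespo, and then deduce part (2) as a quick corollary of what was already established in \Cref{ex:hspaces}.

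For part (1), I would invoke \cite[Theorem 2.2]{BrotoCrespo1999spaces} when $p = 2$ and \cite[Theorem 2.6]{Crespo} when $p$ is odd. These results produce the central morphism $f \colon H^*(X) \to H_E^*$ together with the explicit action on the generators given in the statement; centrality was already recalled in \Cref{ex:hspaces}, where I noted that such an $f$ realizes the localization away from $\Nil_1$, is hence finite by \cite[Corollary 4.10]{HennLannesSchwartz1995Localizations}, and is central by \cite[Theorem 3.2 and Lemma 4.5]{DwyerWilkerson1990Spaces}. No new argument is needed to establish (1); it is essentially a compact repackaging of the cited theorems.

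For part (2), the core observation was already embedded in \Cref{ex:hspaces}: the center of $H^*(X)$ has rank equal to the Krull dimension of $H^*(X)$. Reading off the explicit presentations \eqref{eq:hspaceat2} and \eqref{eq:hspace_odd_2}, the Krull dimension is $r$ when $p = 2$ (since the factor $\F_2[y_1,\ldots,y_s]/(y_1^{2^{a_1}},\ldots,y_s^{2^{a_s}})$ is a finite-dimensional $\F_2$-algebra, while $\F_2[x_1,\ldots,x_r]$ contributes $r$), and is $n$ when $p$ is odd (the exterior and truncated polynomial factors are nilpotent, while $\F_p[\beta y_1,\ldots,\beta y_k,x_{k+1},\ldots,x_n]$ contributes $n$). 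By \Cref{prop:rector_props}(2) this Krull dimension equals $p(H^*(X))$, so combined with the rank of the central $(E,f)$ produced in (1) we obtain $c(H^*(X)) \geq \rank(E) = p(H^*(X))$. Together with the universal inequality $c(H^*(X)) \leq p(H^*(X))$ of \Cref{rem:central_defect}, this forces equality, and hence the $p$-central defect vanishes.

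There is no substantial obstacle here: once the central morphism of part (1) is in hand, the rank it realizes already matches the Krull dimension of $H^*(X)$, and (2) is just the observation that this identification of numbers is exactly the vanishing of $p(H^*(X)) - c(H^*(X))$.
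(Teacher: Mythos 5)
Your proposal is correct and follows the same route as the paper: part (1) is a direct citation of \cite[Theorem 2.2]{BrotoCrespo1999spaces} and \cite[Theorem 2.6]{Crespo} with centrality as recalled in \Cref{ex:hspaces}, and part (2) follows because the rank of the central pair produced in (1) already equals the Krull dimension of $H^*(X)$, forcing $c(H^*(X)) = p(H^*(X))$. The paper states this last step more tersely ("$H^*(X)$ has $p$-central defect 0 because $H^*(X)$ has dimension $r$"), but your expanded version is the same argument.
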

\begin{proof}
Let $p = 2$, then the existence of the map $f$ and the description of the image is \cite[Theorem 2.2]{BrotoCrespo1999spaces} (where the map $f$ is denoted $\mu_X$). That $f$ is central has already been discussed in \Cref{ex:hspaces}.  Finally, $H^*(X)$ has $p$-central defect 0 because $H^*(X)$ has dimension $r$.

For $p$ odd, the same argument works using \cite[Theorem 2.6 and Corollary 2.7]{Crespo} (where the map $f$ is denoted $\ell$).  
\end{proof}
\begin{thm}\label{thm:hspace}
Suppose $X$ is a connected $H$-space with Noetherian mod p cohomology. If $X$ satisfies \Cref{hyp:duflot} and has Poincar\'e dimension $d$, then $d_0(H^*(X)) \le d$.
\end{thm}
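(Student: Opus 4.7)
The plan is to combine the structural results on Noetherian $H$-spaces (Proposition~\ref{prop:broto_crespo}) with the general bound of Corollary~\ref{cor:central_defect_0} and the explicit regularity formula from the proof of Theorem~\ref{thm:d0cess}.

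First I would note that by Proposition~\ref{prop:broto_crespo}, $R \coloneqq H^*(X)$ has $p$-central defect zero, and in particular $\CEss(R) = R$. Since $X$ satisfies Hypothesis~\ref{hyp:duflot}, Corollary~\ref{cor:central_defect_0} yields $d_0(R) \le e(R) + \reg(R)$. Because $R$ is Cohen--Macaulay (being a tensor product of a polynomial algebra and a Poincar\'e duality algebra), the local cohomology $H_{\frak m}^i(R)$ vanishes for $i\ne c(R)$, so $\reg(R) = a_{c(R)}(R) + c(R)$. The calculation in the proof of Theorem~\ref{thm:d0cess} then identifies $e(R)+\reg(R) = e_{\indec}(\CEss(R)) = e_{\indec}(R)$, i.e.\ the top nonzero degree of $Q_B R$.

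Next I would identify $Q_B R$ directly from the presentations \eqref{eq:hspace} and \eqref{eq:hspace_odd}. For $p=2$, the explicit formulas of Proposition~\ref{prop:broto_crespo} (together with the grading constraint $|x_i|=2^{\beta_i}$) show that the polynomial factor $\F_2[x_1,\ldots,x_r] \subset H^*(X)$ maps isomorphically onto $\im(g)$ and is therefore a valid choice of Duflot algebra $B$; hence
\[
Q_B R \cong \F_2[y_1,\ldots,y_s]/(y_1^{2^{a_1}},\ldots,y_s^{2^{a_s}}),
\]
whose top nonzero degree is by definition the Poincar\'e dimension $d$. For $p$ odd, the Borel structure theorem (Corollary~\ref{cor:Borel}) says that $\im(g)$ is in general a polynomial$\otimes$exterior algebra; Hypothesis~\ref{hyp:duflot} forces the exterior factor to be trivial, and matching this against the Crespo description in Proposition~\ref{prop:broto_crespo}(1), together with finiteness of $g$ (which pins down the rank of $\im(g)$), forces the parameters $r$ and $k$ to vanish. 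One may then take $B = \F_p[x_1,\ldots,x_n] \subset H^*(X)$, giving
\[
Q_B R \cong \Lambda(y_1,\ldots,y_s) \otimes \F_p[z_1,\ldots,z_t]/(z_1^{p^{a_1}},\ldots,z_t^{p^{a_t}}),
\]
whose top nonzero degree is again $d$.

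The main technical point is the identification of $Q_B R$ in the odd-primary case, where both the Borel structure of $\im(g)$ and the finiteness of $g$ are needed to pin down $r$ and $k$ under Hypothesis~\ref{hyp:duflot}. Combining the steps above then immediately yields $d_0(H^*(X))\le d$, as claimed.
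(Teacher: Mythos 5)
Your proposal is correct and follows essentially the same route as the paper: Proposition~\ref{prop:broto_crespo} gives $p$-central defect $0$, Corollary~\ref{cor:central_defect_0} gives $d_0(R)\le e(R)+\reg(R)$, and the remaining work is to evaluate that sum. The only (cosmetic) difference is that the paper computes $e(R)$ and $\reg(R)$ separately and checks that the correction terms $\sum(|x_i|-1)$ and $\sum(1-|x_i|)$ cancel, whereas you package $e(R)+\reg(R)$ as $e_{\indec}(R)$, the top degree of $Q_BR$, and read off the Poincar\'e dimension directly from the identification of $Q_BR$ with the Poincar\'e duality quotient.
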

\begin{proof}
We first consider the case $p = 2$.  Because $H^*(X)$ is Gorenstein, its local cohomology is concentrated in a single degree, namely in degree $c(H^*(X))$. From the definitions, this implies that $\reg(H^*(X)) = d + \sum_{i=1}^r(1 - |x_i|)$. Because $H^*(X)$ has $2$-central defect 0, we can then use \Cref{cor:central_defect_0} to see that
  \[
d_0(H^*(X)) \le e(H^*(X)) + d + \sum_{i=1}^r(1 - |x_i|).
  \]
  From the description of the image in the previous proposition, we have $\im(f) \cong \F_p[u_1^{2^{\beta_1}},\ldots,u_r^{2^{\beta_{r}}}]$, and so
  \[
e(H^*(X)) = \sum_{i=i}^r(2^{\beta_i}-1)  = \sum_{i=1}^r (|x_i| - 1).
  \]
  Thus,
  \[
d_0(H^*(X)) \le \sum_{i=1}^r (|x_i| - 1) + d + \sum_{i=1}^r(1 - |x_i|) = d,
  \]
  as claimed.

  The argument for $p$ odd is very similar. First we note that because \Cref{hyp:duflot} holds, we must have that $k = 0$ in \eqref{eq:hspace_odd_2}. We have $\reg(H^*(X)) = d + \sum_{i=1}^n(1-|x_i|)$, and by the assumption we can apply \Cref{cor:central_defect_0} to see that 
    \[
d_0(H^*(X)) \le e(H^*(X)) + d + \sum_{i=1}^n(1 - |x_i|).
  \]
 The Duflot algebra is then of the form $\im(f) \cong \F_p[(\beta u_{1})^{p^{a_{1}}},\ldots, (\beta u_{n})^{p^{a_{n}}}]$, and 
 \[
e(H^*(X)) = \sum_{i=1}^n (2p^{a_i}-1) = \sum_{i=1}^n (|x_i|-1).
 \]
 Thus, as above, 
   \[
d_0(H^*(X)) \le \sum_{i=1}^n (|x_i| - 1) + d + \sum_{i=1}^n(1 - |x_n|) = d,
  \]
  as claimed.
  \end{proof}
\begin{rem}
  If $p$ is odd, then we note that \Cref{prop:broto_crespo} shows that $H^*(X)$ does not always satisfy the assumptions of \Cref{hyp:duflot}. 
\end{rem}

 \subsection{The topological nilpotence degree of an unstable algebra}\label{sec:top_degree_proof}

 In this section we prove our main result (\Cref{thm:main_unstable_algebra}), which gives an estimate for $d_0(R)$. We begin with the following.
\begin{prop}\label{prop:kuhn2.7}
  For any connected Noetherian unstable algebra $R$ with center $(C,g)$ we have
  \[
d_0(R) \le \max_{(C,g) \subseteq (E,f) \in \bA_R}\{d_0(\CEss(T_E(R;f))) \}.
  \]
 \end{prop}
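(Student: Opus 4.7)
The plan is to induct on the $p$-central defect $p(R)-c(R)$. For the base case (defect $0$), the only pair $(E,f) \in \bA_R$ with $(C,g) \subseteq (E,f)$ is $(C,g)$ itself, so the empty product in the definition of $\CEss(R)$ forces $\CEss(R) = R$; since also $T_C(R;g) \cong R$ by centrality of $(C,g)$, the right-hand side equals $d_0(R)$ and the bound is tautological.

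For the inductive step, I would work with the short exact sequence
\[
0 \to \CEss(R) \to R \to Q \to 0,
\]
where $Q$ is the image of $R$ in $\prod_{(C,g)\subsetneq (E,f)} T_E(R;f)$. By \Cref{prop:dproperties}(3), $d_0(R) \le \max(d_0(\CEss(R)), d_0(Q))$. The term $d_0(\CEss(R))$ is automatically bounded by the RHS, since it is the contribution of the trivial choice $(E,f) = (C,g)$ (using $T_C(R;g) \cong R$). For $d_0(Q)$, the injection $Q \hookrightarrow \prod T_E(R;f)$ combined with \Cref{prop:dproperties}(2) and finiteness of the product gives
\[
d_0(Q) \le \max_{(C,g)\subsetneq (E,f)} d_0(T_E(R;f)).
\]
By \Cref{lem:central_defect}, each such $T_E(R;f)$ has $p$-central defect strictly less than that of $R$, so the inductive hypothesis applies to bound each $d_0(T_E(R;f))$ by a maximum of $d_0(\CEss(T_V(T_E(R;f);\tilde j)))$ ranging over $(V,\tilde j)$ in $\bA_{T_E(R;f)}$ containing the center $(C_{E,f},g_{E,f})$ of $T_E(R;f)$.

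The bridge back to $\bA_R$ is provided by \Cref{prop:central_factor} together with \Cref{lem:center_center}: since $(E,h) \in \bA^{\mathrm{central}}_{T_E(R;f)}$, we have $(E,h)\subseteq (C_{E,f},g_{E,f}) \subseteq (V,\tilde j)$, hence \Cref{lem:center_center} yields
\[
T_V(T_E(R;f);\tilde j) \cong T_V(R;j), \quad j := \tilde j \circ \rho_{R,(E,f)}.
\]
Using the factorization $f = h \circ \rho_{R,(E,f)}$ from \Cref{prop:central_factor}, one checks that $(C,g) \subseteq (E,f) \subseteq (V,j)$ in $\bA_R$, so each bound on $d_0(T_E(R;f))$ contributes terms that already appear in the RHS maximum of the proposition. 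Combining these estimates closes the induction.

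The main obstacle is threading the induction: one has to identify $\CEss$ of an iterated $T$-functor with $\CEss$ of a single $T$-functor of $R$, and verify that the indexing pair lies in the prescribed subcategory of $\bA_R$ containing the center. Both issues dissolve once one observes that $(E,h)$ is central in $T_E(R;f)$, so any pair containing the center of $T_E(R;f)$ automatically dominates $(E,h)$, triggering the compatibility with \Cref{lem:center_center} and the diagrammatic identity $f = h \circ \rho_{R,(E,f)}$.
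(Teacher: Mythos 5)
Your proposal is correct and follows essentially the same route as the paper: induction on the $p$-central defect, the left exact sequence defining $\CEss(R)$ together with \Cref{prop:dproperties}, the strict drop in defect from \Cref{lem:central_defect}, and the identification $T_V(T_E(R;f);\tilde j)\cong T_V(R;j)$ via \Cref{prop:central_factor} and \Cref{lem:center_center} to re-index over $\bA_R$. The only cosmetic difference is that you pass through the image $Q$ explicitly before applying monotonicity of $d_0$, which the paper does implicitly.
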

\begin{proof}
  Suppose that $R$ has $p$-central defect $d$. The proof will be by induction on $d$. If $d = 0$, then the statement of the proposition is clear (in fact, in this case the inequality is even an equality). Inductively, we assume that the proposition holds for all connected Noetherian unstable algebras of $p$-central defect $0 \le k < d$.

  Choose a pair $(E,f)$ with $(C,g) \subsetneq (E,f)$, and let $(C_E,\tilde g_E)$ denote the center of $T_E(R;f)$. By \Cref{prop:central_factor} there exists $h \colon T_E(R;f) \to H_E^*$ such that $(E,h)$ is central in $\bA_{T_E(R;f)}$ and the following diagram commutes:
\[
\begin{tikzcd}
R \arrow[rr, "{\rho_{R,(E,f)}}"] \arrow[rd, "f"'] &       & T_E(R;f) \arrow[ld, "h"] \\
                                                  & H_E^* &
\end{tikzcd}
\]

   By centrality, we have $(E,h) \subseteq (C_E,\tilde g_E)$, and hence (by composing with $\rho_{R,(E,f)}$) we have $(C,g) \subsetneq (E,f) \subseteq (C_E,g_E)$, where $g_E = \rho_{R,(E,f)} \circ \tilde g_E$. By \Cref{lem:central_defect} the $p$-central defect of $T_E(R;f)$ is less than that of $R$, and in particular, the inductive hypothesis applies to show that
\begin{equation}\label{eq:sub1}
d_0(T_E(R;f)) \le \max \{ d_0(\CEss(T_V(T_E(R;f);\tilde j))) \mid (C_E,\tilde g_E) \subseteq (V,\tilde j) \in \bA_{T_E(R;f)} \}.
\end{equation}
Let $j = \rho_{R,(E,f)} \circ \tilde j$, then the assumptions of \Cref{lem:center_center} are satisfied (since $(E,h) \subseteq (C_E,\widetilde g_E) \subseteq (V,\widetilde j)$), and show that 
\[
T_V(T_E(R;f);\tilde j) \cong T_V(R;j),
\]
 where $(R,j) \in \bA_R$. By \eqref{eq:sub1} we then have
\[
d_0(T_E(R;f)) \le \max \{ d_0(\CEss(T_V(R;j))) \mid (C_E,g_E) \subseteq (V,j) \in \bA_{R} \}.
\]
From the definition of the central essential ideal and \Cref{prop:dproperties}, we have
 \[
d_0(R) \le \max\{d_0(\CEss(R)),d_0(T_E(R;f)) \mid (C,g) \subsetneq (E,f) \}.
 \]
  Combining the previous two equations and observing that $(C,g) \subsetneq (C_E,g_E)$ gives the desired result.
\end{proof}

We now prove \Cref{thm:main_unstable_algebra}. For this we need to assume that $T_E(R;f)$ satisfies the assumptions of \Cref{hyp:duflot} for all $(C,g) \subseteq (E,f)$. We note that this is automatic if $p = 2$, or if $R$ is concentrated in even degrees, as then so is $T_E(R;f)$ by \Cref{lem:even_degrees_preserve}.

\begin{proof}[Proof of \Cref{thm:main_unstable_algebra}]
  Combine \Cref{thm:d0cess,thm:cessnonzero,prop:kuhn2.7}.
\end{proof}

\section{Computations of the topological nilpotence degree}
We finish with examples from group theory, and homotopical group theory, giving results analogous to Kuhn's in the case of compact Lie groups.
\subsection{Group theory}\label{sec:group_examples}
We now focus on unstable algebras of the form $R = H_G^*$ where $G$ is a group. In this case, Rector's category will take a particularly nice form. We will need the following definition.
\begin{defn}
  The Quillen category associated to a group $G$ at the prime $p$ is the category $\bA_G$ with objects elementary abelian $p$-subgroups $E \le G$ and with morphisms $E \to V$ those monomorphisms induced by conjugation in $G$.
\end{defn}
While most of the groups we study should be familiar to the reader, we first explain the class of groups considered by Broto and Kitchloo \cite{BrotoKitchloo2002Classifying}.
\begin{defn}[Broto--Kitchloo]
  Let $\cal{X}$ be a class of compactly generated Hausdorff topological groups, and let $\cal{K}_1(\cX)$ be the new class of groups, such that a compactly generated Hausdorff topological group $G$ belongs to $\cal{K}_1(\cX)$ if and only if there exists a finite $G$-CW complex $X$ with the following two properties:
  \begin{enumerate}
    \item The isotropy subgroups of $X$ belong to the class $\cX$.
    \item For every finite $p$-subgroup $\pi < G$, the fixed point space $X^{\pi}$ is $p$-acyclic.
  \end{enumerate}
\end{defn}
If $\cX$ is the class of compact Lie groups, then Kac--Moody groups are an example of a group in $\cal{K}_1(\cX)$, see \cite[Section 5]{BrotoKitchloo2002Classifying}.

With this we get the following, which is a compendium of results of Quillen \cite{Quillen1971spectrum}, Rector \cite{Rector1984Noetherian}, Lannes \cite{lannes_unpublished,lannes_ihes} Henn \cite{Henn1998Centralizers} and Broto--Kitchloo \cite{BrotoKitchloo2002Classifying}, see \cite[Theorem 4.1 and Theorem 4.8]{heard_depth} for the precise details.
\begin{thm}\label{thm:group_props}
Assume we are in one of the following cases:
\begin{enumerate}
	\item $G$ is a compact Lie group.
	\item $G$ is a discrete group for which there exists a mod $p$ acyclic $G$-CW complex with finitely many $G$-cells and finite isotropy groups.
	 	\item $G$ is a profinite group such that the continuous mod $p$ cohomology $H_G^*$ is Noetherian.
    \item $G$ is a group of finite virtual cohomological dimension such that $H_G^*$ is finite generated as an $\F_p$-algebra.
		\item $G$ is in $\cal{K}_1\cX$ where $\cX$ is the class of compact Lie groups (for example, a Kac--Moody group).
\end{enumerate}
	Then the following hold:
	\begin{enumerate}
		\item The mod $p$ cohomology $H_G^*$ is a Noetherian unstable algebra, and there is an equivalence of categories $\bA_G \simeq \bA_{H_G^*}$ given by associating to $E \le G$ the pair $(E,\res_{G,E}^*)$ where $\res_{G,E}^*$ is the restriction homomorphism $H_G^* \to H_E^*$.
		\item
		There are isomorphisms
		\[
	T_E(H_G^*;\res_{G,E}^*) \cong H_{C_G(E)}^*.
		\]
	\end{enumerate}
\end{thm}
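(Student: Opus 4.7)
The proof is essentially a compilation of existing results from the literature, organized case by case, with the main task being to verify the three assertions — Noetherianity of $H_G^*$, the equivalence $\bA_G \simeq \bA_{H_G^*}$, and the $T$-functor computation — in each of the five settings. My plan would be to first handle the archetypal case (i) of compact Lie groups, which goes back to work of Quillen, Rector, and Lannes, and then explain how each subsequent case reduces to, or generalizes, this one.

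First I would record the Noetherianity of $H_G^*$ in each case: for compact Lie groups this is Venkov--Quillen \cite{Quillen1971spectrum}; in cases (iii) and (iv) it is part of the hypothesis; in case (v) it is the main result of \cite{BrotoKitchloo2002Classifying}; case (ii) follows from a spectral sequence argument using the finite-type $G$-CW model and the finiteness of isotropy cohomology. For assertion (1), the equivalence $\bA_G \simeq \bA_{H_G^*}$, the strategy is the same in every case: given $E \le G$, the restriction map $\res_{G,E}^* \colon H_G^* \to H_E^*$ is finite (by an appropriate form of Quillen stratification), so $(E, \res_{G,E}^*) \in \bA_{H_G^*}$, yielding a functor $\bA_G \to \bA_{H_G^*}$; conversely, Rector's $\cal{F}$-isomorphism theorem \cite{Rector1984Noetherian} (extended to odd primes by Broto--Zarati, and generalized by Henn and Broto--Kitchloo) shows that every finite morphism $R \to H_E^*$ in $\bA_{H_G^*}$ arises uniquely from an elementary abelian $p$-subgroup. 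One also needs to check that the morphism sets match up, which boils down to the fact that two inclusions $E, E' \le G$ give rise to the same map $R \to H_E^*$ if and only if they differ by conjugation in $G$.

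For assertion (2), the plan is to apply Lannes' fundamental calculation. In case (i), multiplication $E \times C_G(E) \to G$ induces $H_G^* \to H_E^* \otimes H_{C_G(E)}^*$ whose adjoint under the $T_E \dashv (H_E^* \otimes -)$ adjunction factors through $T_E(H_G^*; \res_{G,E}^*) \to H_{C_G(E)}^*$; Lannes proved this is an isomorphism \cite{lannes_ihes} (see also \cite{lannes_unpublished}). For case (ii) the same argument works once one has a suitable mapping-space description of $H_{C_G(E)}^*$; for case (iii) this is due to Henn \cite{Henn1998Centralizers}, working with continuous cohomology; case (iv) can be reduced to (ii) via a finite-index normal subgroup; and case (v) is proved by Broto--Kitchloo in \cite{BrotoKitchloo2002Classifying} using the fixed-point space $X^E$.

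The main obstacle — and really the only one that is not formal — is the $T$-functor computation outside of case (i), since beyond compact Lie groups one no longer has a direct space-level model for $BC_G(E)$ as a mapping space out of $BE$. In each of cases (ii)--(v) this is overcome by producing an ad hoc replacement (a fixed-point complex, a profinite mapping space, or a virtual duality argument). Since the statement is explicitly presented as a compendium, my proof would in practice be a careful bookkeeping exercise collating \cite{Quillen1971spectrum, Rector1984Noetherian, lannes_ihes, Henn1998Centralizers, BrotoKitchloo2002Classifying} along with the already-written \cite[Theorem 4.1 and Theorem 4.8]{heard_depth} which assembles exactly this data.
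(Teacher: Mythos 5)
Your proposal is correct and matches the paper exactly: the paper gives no independent proof but presents the theorem as a compendium of the very results you cite (Quillen, Rector, Lannes, Henn, Broto--Kitchloo), deferring the precise assembly to \cite[Theorem 4.1 and Theorem 4.8]{heard_depth}. Your case-by-case bookkeeping, including the identification of the only non-formal point as the $T$-functor computation beyond compact Lie groups, is the intended argument.
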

\begin{defn}[Mislin \cite{Mislin1992Cohomologically}]
	An elementary abelian subgroup $E < G$ is said to be cohomologically $p$-central if $C_G(E) \to G$ is a mod $p$ cohomology equivalence, i.e., $H^*_G \to H_{C_G(E)}^*$ is an isomorphism.
\end{defn}
Under the equivalence of categories $\bA_G \simeq \bA_{H_G^*}$, these are precisely the central elements as considered throughout this paper (compare \Cref{ex:groups} for the case of compact Lie groups). We use the terminology cohomological $p$-central so as to not conflict with the usual group theoretic notion of central elementary abelian $p$-subgroup. The two are related in the following way, where we let $C_p(G)$ denote the maximal cohomologically $p$-central subgroup of $G$ (which is only unique up to conjugacy, see \Cref{thm:max_central}), and $Z(G)[p]$ the maximal central elementary abelian $p$-subgroup in the usual sense.
\begin{lem}[Mislin]
	If $E < G$ is a central elementary abelian $p$-subgroup, then $C_G(E)$ is cohomologically $p$-central. Moreover, there is an injective homomorphism $\phi \colon Z(G)[p] \hookrightarrow C_p(G)$.
\end{lem}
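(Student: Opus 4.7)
The plan is to deduce both assertions from Lannes' computation of the $T$-functor for group cohomology (\Cref{thm:group_props}(2)), combined with the uniqueness of the maximal central object in Rector's category (\Cref{thm:max_central}). The key observation is that group-theoretic centrality forces $C_G(E) = G$, which makes the relevant $\rho$-map essentially the identity, and hence the pair $(E,\res_{G,E}^*)$ automatically becomes a central object of $\bA_{H_G^*}$. Neither part should be difficult; the only care needed is in distinguishing the two notions of ``center'' and tracking the equivalence $\bA_G \simeq \bA_{H_G^*}$.

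For the first claim, I would argue as follows. Because $E$ lies in the group-theoretic center of $G$, every element of $G$ commutes with every element of $E$, so $C_G(E) = G$. Applying \Cref{thm:group_props}(2) then yields an isomorphism
\[
T_E(H_G^*;\res_{G,E}^*) \cong H_{C_G(E)}^* = H_G^*,
\]
and under this identification, $\rho_{H_G^*,(E,\res_{G,E}^*)}$ is induced by the inclusion $C_G(E) = G \hookrightarrow G$, i.e., the identity map on $H_G^*$. In particular $\rho$ is an isomorphism, so by definition $(E,\res_{G,E}^*)$ is a central object of $\bA_{H_G^*}$. Translating back through \Cref{thm:group_props}(1), this says exactly that $E$ is cohomologically $p$-central (and the witnessing map $C_G(E) \to G$ is trivially a mod $p$ cohomology equivalence).

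For the ``moreover'' part, I would invoke the equivalence of categories $\bA_G \simeq \bA_{H_G^*}$ from \Cref{thm:group_props}(1), under which cohomologically $p$-central elementary abelian subgroups of $G$ correspond precisely to central objects of $\bA_{H_G^*}$. By \Cref{thm:max_central} there is a unique (up to isomorphism) maximal such object, and any representative corresponds under the equivalence to $C_p(G)$. By the first part just established, $Z(G)[p]$ is cohomologically $p$-central, so its isomorphism class satisfies $[Z(G)[p]] \le [C_p(G)]$ in the poset structure on isomorphism classes of objects in $\bA_{H_G^*}$. Unfolding the definition of this poset, this inequality is equivalent to the morphism set being non-empty, and any such morphism in $\bA_G$ is by construction a monomorphism of elementary abelian $p$-groups. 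This produces the required injective homomorphism $\phi \colon Z(G)[p] \hookrightarrow C_p(G)$.
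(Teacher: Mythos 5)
Your proof is correct. The first claim is handled exactly as in the paper: group-theoretic centrality gives $C_G(E)=G$, so $H_G^*\to H_{C_G(E)}^*$ is the identity and $E$ is cohomologically $p$-central; the $T$-functor discussion you add is consistent but not needed. For the ``moreover'' part you take a genuinely different (and cleaner) route. The paper follows Mislin's element-wise construction: each $x\in Z(G)[p]$ determines a map $f_x\colon H_G^*\to H_{\Z/p}^*$ whose pair $(\Z/p,f_x)$ is central because $T_{\Z/p}(H_G^*;f_x)\cong H^*_{C_G(\langle x\rangle)}=H_G^*$, one sets $\phi(x)=f_x$, and injectivity is checked by noting that $f_x=f_y$ forces $x$ and $y$ to be conjugate, hence equal since both are central. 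You instead treat $Z(G)[p]$ globally: by the first part it is a central object of $\bA_{H_G^*}$, so by the maximality of $C_p(G)$ there is a morphism $(Z(G)[p],\res)\to(C_p(G),\res)$ in Rector's category, and any such morphism is a monomorphism of elementary abelian $p$-groups. This buys you a one-line injectivity argument, but it leans on the fact that the maximal central object is a genuine maximum (every central object maps to it), which is what the proof of \Cref{thm:max_central} actually establishes via the $(E\circ C,\sigma(f,g))$ construction; it is worth making that dependence explicit rather than citing only the uniqueness statement.
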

\begin{proof}
	The first claim is clear because in this case $C_G(E) \cong G$. The injective homomorphism $\phi$ is constructed exactly as by Mislin \cite{Mislin1992Cohomologically}. We recall his argument now. Let $x \in Z(G)[p]$ be represented by a map $\tilde \phi (x) \colon \Z/p \to G$, and write $f$ for the induced map $f \colon H^*_G \to H_{\Z/p}^*$. The pair $(\Z/p,f)$ is central, because $H_{G}^* \to T_{\Z/p}(H_{G}^*;f)$ corresponds to the map induced by $C_G(\langle x \rangle) \to G$. We then set $\phi(x) = f$. This is clearly injective, because if $\phi(x) = \phi(y)$, then $x$ and $y$ are conjugate in $G$, and hence equal, as they are central.
\end{proof}
\begin{rem}
  If $G$ is a finite $p$-group, then the main result of \cite{Mislin1992Cohomologically} implies that $Z(G)[p] \cong C_p(G)$, however in general $\phi$ is not surjective. A counterexample is given by the group $\Sigma_3$ at $p = 2$, as in \Cref{ex:centers3}. This means that the definition of $\CEss(H_{G}^*)$ does not necessarily agree with Kuhn's definition of $\CEss(G)$. For example, we have $\CEss(H_{\Sigma_3}^*) \cong H^*_{\Sigma_3}$ (i.e., $\CEss(\Sigma_3)$ has $p$-central defect 0), while $\CEss(\Sigma_3)$ is trivial, as it is the kernel of the restriction map $H^*_{\Sigma_3} \to H^*_{C_2}$. Of course, in any case one gets the same result, namely that $d_0(H_{\Sigma_3}^*) = 0$.
\end{rem}
\begin{thm}\label{thm:main_groups}
	Let $G$ be one of the groups considered in \Cref{thm:group_props}, then for any prime $p$ we have
	\[
d_0(H_G^*) \le  \underset{\substack{C_p(G) \le E \in \bA_G \\ \depth(H_{C_{{G}}(E)}^*) = c(C_{{G}}(E))}} \max \{e(H_{C_{{G}}(E)}^*) + \reg(H_{C_{{G}}(E)}^*)\}
	\]
  where $c(C_G(E))$ is the rank of the maximal central cohomologically $p$-central subgroup of $G$.

	Moreover, if $G$ is a compact Lie group, then $\reg(H^*_{C_G(E)}) \le -\dim(C_G(E))$, with equality if $\pi_0(C_G(E))$ is a finite $p$-group.
\end{thm}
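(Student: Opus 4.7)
The strategy is to specialize \Cref{thm:main_unstable_algebra} to $R = H_G^*$ and translate the resulting bound using the dictionary furnished by \Cref{thm:group_props}. First I would invoke the equivalence $\bA_G \simeq \bA_{H_G^*}$ sending $E \le G$ to $(E,\res_{G,E}^*)$, together with the natural isomorphism $T_E(H_G^*;\res_{G,E}^*) \cong H^*_{C_G(E)}$. Under this identification the central objects of $\bA_{H_G^*}$ correspond precisely to the cohomologically $p$-central subgroups of $G$, so the maximal central object represents $C_p(G)$, and the poset condition $(C,g) \subseteq (E,f)$ becomes $C_p(G) \le E$ up to conjugation. Plugging these translations into the statement of \Cref{thm:main_unstable_algebra} matches the indexing set, the depth condition $\depth(T_E(R;f)) = c(T_E(R;f))$, and the quantities $e + \reg$ term-by-term with those appearing in the theorem, producing the displayed inequality.

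The one non-trivial hypothesis to verify is that each $H^*_{C_G(E)}$, with $C_p(G) \le E$, satisfies \Cref{hyp:duflot}. This is automatic at $p = 2$. At odd primes, the Duflot algebra may a priori contain exterior generators coming from the degree-$1$ part of the image in $H^*_{C_p(C_G(E))}$. Following Kuhn's reduction, one factors $C_G(E) = C_0 \times H_1$ where $C_0$ is an elementary abelian $p$-direct-summand (collecting all the degree-$1$ central classes) and $H_1$ has no further $\Z/p$-summand; cohomology splits as $H^*_{C_0} \otimes H^*_{H_1}$, the Duflot algebra factors accordingly as $H^*_{C_0} \otimes B_1$ with $B_1$ polynomial, and the invariants $Q_B$, $e$, $\reg$, $\depth$, and $c$ are all compatible with this tensor splitting. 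Hence the bound for $H^*_{C_G(E)}$ reduces to that of $H^*_{H_1}$, to which \Cref{thm:main_unstable_algebra} applies directly; alternatively, one may pass to $\widetilde{\mathcal O}H^*_{C_G(E)}$ in the sense of \Cref{rem:odd_prime} to work in $\mathcal U'$.

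For the moreover clause, the estimate $\reg(H^*_{C_G(E)}) \le -\dim(C_G(E))$ is Symonds' Castelnuovo--Mumford regularity theorem applied to the compact Lie group $C_G(E)$. When $\pi_0(C_G(E))$ is a finite $p$-group, Benson--Greenlees provide a local cohomology spectral sequence with shift $-\dim(C_G(E))$, converging to the Matlis dual of $H^*_{C_G(E)}$; combined with the non-vanishing of the top local cohomology module $H^{\dim}_{\mathfrak m}(H^*_{C_G(E)})$ and the formula $\reg(R) = \sup_i\{a_i(R)+i\}$, this pins $\reg(H^*_{C_G(E)})$ to exactly $-\dim(C_G(E))$. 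The main obstacle throughout is managing the Duflot reduction in the full generality of \Cref{thm:group_props}(3)--(5) (profinite, VCD, and Kac--Moody groups), where the tensor decomposition of a centralizer is less canonical than in the compact Lie setting; I expect this is handled uniformly by the even-degree truncation $\widetilde{\mathcal O}$, but one must check that the truncation does not degrade $e$, $\reg$, or $d_0$ beyond the stated bound.
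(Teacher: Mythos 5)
Your proposal follows the paper's proof essentially verbatim: specialize \Cref{thm:main_unstable_algebra} via the dictionary of \Cref{thm:group_props}, dispose of \Cref{hyp:duflot} by Kuhn's observation that the exterior part of the Duflot algebra corresponds to $\Z/p$-summands splitting off as direct factors (the paper runs this by noting $H^1_G \cong \Hom(G,\Z/p)$ and writing $G = L \times (\Z/p)^d$ with $d_0$, $e_{\prim}$, $e_{\indec}$ unchanged, rather than splitting each centralizer, but the effect is the same), and cite Symonds for both halves of the regularity statement. The $\widetilde{\mathcal{O}}$ fallback you worry about is unnecessary: the $\Hom(G,\Z/p)$ argument already applies uniformly to the profinite, vcd, and Kac--Moody cases.
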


\begin{proof}
  This will be a consequence of \Cref{thm:main_unstable_algebra,thm:group_props}, but we first explain why we are able to prove this without assuming anything about the Duflot algebra, using an observation of Nick Kuhn.\footnote{We thank Nick Kuhn for a helpful email explaining this.} The point is that for a group we can always assume that the Duflot algebra is polynomial (this has already been observed by Kuhn in the case of compact Lie groups, see \cite[Page 160]{Kuhn2013Nilpotence}).  Indeed, since the action of $G$ on $\F_p$ is trivial $H^1_G \cong \Hom_{\Z}(G,\Z/p)$ (these homomorphisms need be continuous in the case $G$ is a profinite group). In particular, elements in the image of $\res_{G,C_p(G)}^* \colon H_G^1 \to H_{C_p(G)}^1$ are exactly homomorphisms from $C_p(G) \to \Z/p$ that factor through $G$. Recall that the image of $H_G^* \to H_{C_p(G)}$ is the form
  \[
\F_p[y_1^{p^{j_1}},\ldots,y_b^{p^{j_b}},y_{b+1},\ldots,y_c] \otimes \Lambda(x_{b+1},\ldots,x_c)
  \]
Using the observation above it is not hard to see that $c-b$ is the rank of the largest subgroup of $C$ splitting off $G$ as a direct summand (compare the discussion on page 158 of \cite{Kuhn2013Nilpotence}).  Write $G = L \times E$ where $E = (\Z/p)^d$, then one sees that $d_0(H_{L \times E}^*) \cong d_0(H_L^*)$ and similar for $e_{\prim}$ and $e_{\indec}$. Thus, we can assume reduce to the case of the group $L$, which necessarily has polynomial Duflot algebra. Thus, in this case \Cref{thm:main_unstable_algebra} is valid for all primes $p$.

  Finally, the regularity statement is due to Symonds. \cite{Symonds2010CastelnuovoMumford}.
\end{proof}
\begin{rem}
	It is not true that there is always an equality $\reg(H^*_{G}) = -\dim(G)$ for a compact Lie group. For example, if $G = O(2)$ and $p$ is odd, then $\reg(H^*_{O(2)}) = -3$ (apply \cite[Lemma 1.4(2)]{Symonds2010CastelnuovoMumford}, noting that $H^*_{O(2)} \cong \F_p[x_4]$), while $-\dim(O(2)) = -1$.
\end{rem}
\begin{rem}
  Of course, one can restate this theorem as
  \[
d_0(H_G^*) \le  \max_{E < G \in \bA_G} \{e(H_{C_{{G}}(E)}^*) + \reg(H_{C_{{G}}(E)}^*)\}
  \]
  to obtain a result that is analogous to that obtained by Kuhn in the case of compact Lie groups.
\end{rem}
\begin{ex}\label{ex:gl2}
Consider the profinite group $\GL_2(\Z_3)$. This admits a splitting $\GL_2(\Z_3) \cong \Z_3 \times \GL^1_2(\Z_3)$ where $\GL^1_2(\Z/3)$ is the subgroup of $\GL_2(\Z/3)$ which is the preimage of $\Z/2 \subset \Z_3^{\times}$ under the determinant map. Moreover, $H^*_{\GL^1_2(\Z_3)} \cong (H^*_{\SL_2(\Z_3)})^{\Z/2}$. Both of these facts can be found in the proof of Proposition 5.5 of \cite{Henn1998Centralizers}. By \Cref{prop:dproperties} we obtain
\[
d_0(H_{\GL_2(\Z_3)}^*) = d_0(H_{\Z_3}^*) + d_0(H_{\GL^1_2(\Z_3)}^*) \quad \text{ and } \quad d_0(H_{\GL^1_2(\Z_3)}^*) \le d_0(H_{\SL_2(\Z_3)}^*)
\]
Because $H_{\Z_3}^* \cong \Lambda_{\F_3}(e)$ with $|e| = 1$ we can apply \Cref{prop:dproperties} again to obtain $d_0(H_{\Z_3}^*) = 1$.

In order to compute $d_0(H^*_{\SL_2(\Z_3)}$), we observe that this has a single elementary abelian subgroup $\Z/3$ whose centralizer in $\SL_2(\Z_3)$ is isomorphic to $\Z/2 \times \Z/3 \times \Z_3$, so that $H^*_{C_{\Z/3}(\SL_2(\Z_3))} \cong \F_3[y] \otimes \Lambda_{\F_3}(x,e)$, with $|y| = 2$ and $|x| = |e| = 1$, see the discussion after Proposition 5.5 (as well as Theorem 5.2) of \cite{Henn1998Centralizers}. It follows that $H^*_{\SL_2(\Z_3)}$ has trivial 3-cohomological center, and hence that $\CEss(H^*_{\SL_2(\Z_3)})$ is the kernel of the restriction map
\[
\xymatrix{H^*_{\SL_2(\Z_3)} \ar[r]& H^*_{C_{\Z/3}(\SL_2(\Z_3))} \cong \F_3[y] \otimes \Lambda_{\F_3}(x,e).}
\]
By \cite[Proposition 5.6]{Henn1998Centralizers} we deduce that $\CEss(H^*_{\SL_2(\Z_3)})$ is trivial. Since we work at $p = 3$, we have $H^*_{C_{\Z/3}(\SL_2(\Z_3))} \cong H^*_{\Z/3 \times \Z_3}$. This has depth 1, and $c(\Z/3 \times \Z_3) = 1$. Moreover, it is of $3$-central defect 0, so that $\CEss(H^*_{\Z/3 \times \Z_3}) \cong H^*_{\Z/3 \times \Z_3}$. We deduce that
\[
d_0(H^*_{\SL_2(\Z_3)} ) = d_0(H^*_{\Z/3 \times \Z_3})) = d_0(H^*_{\Z/3}) + d_0(H^*_{\Z_3}) = 0 + 1 = 1.
\]
Putting these observations together, we conclude that
\[
1 \le d_0(H^*_{\GL_2(\Z_3)}) \le 2.
\]
\end{ex}
\begin{ex}\label{ex:s2}
Consider the 2nd Morava stabilizer group $S_2$ at the prime 3. This admits a decomposition $S_2 \cong S_2^1 \times \Z_3$ and so
\[
d_0(H_{S_2}^*) = d_0(H_{S_2^1}^*) + d_0(H_{\Z_3}^*) = d_0(H_{S_2^1}^*) + 1.
\]
The group $S_2^1$ has two conjugacy classes of elementary abelian 3-subgroups $E_i$ for $i = 1,2$ with $C_{S_2^1}(E_i) \cong \Z/3 \times \Z_3$ in both cases. We note that both $S_2^1$ and $C_{S_2^1}(E_i)$ are 3-adic Lie groups. We also observe that $H_{S_2^1}^*$ has trivial 3-cohomological center, and hence $\CEss(H_{S_2^1}^*)$ is the kernel of the product of restriction maps
\[
\xymatrix{H_{S_2^1}^* \ar[r] & \prod_i H^*_{C_{S_2^1}(E_i)} \cong \prod_{i = 1}^2 \F_3[y_i] \otimes \Lambda_{\F_3}[x_i,e_i].}
\]
By \cite[Proposition 4.3]{Henn1998Centralizers} we deduce that $\CEss(H_{S_2^1}^*)$ is trivial. One then deduces from \Cref{prop:dproperties} that
$d_0(H^*_{S_2^1}) \le d_0(H^*_{C_{S_2^1}(E_1)})$ (of course, one can use either $E_1$ or $E_2$ here). However, the $T$-functor computations show that $H^*_{C_{S_2^1}(E_1)}$ is a summand of $T_{E_1}(H^*_{S_2^1})$ and so $d_0(H^*_{S_2^1}) \ge d_0(H^*_{C_{S_2^1}(E_1)})$. Thus, $d_0(H^*_{S_2^1}) = d_0(H^*_{C_{S_2^1}(E_1)}) = d_0(H_{\Z/3 \times \Z_3}^*) = 1$. We deduce that $d_0(H_{S_2}^*) = 2$.
\end{ex}
\subsection{Homotopical groups}\label{sec:homotopical}
We now move onto the case of homotopical groups, namely the $p$-local finite and compact groups of Broto, Levi, and Oliver \cite{BrotoLeviOliver2003homotopy,BrotoLeviOliver2007Discrete}. Once we have set up the right language, the results take essentially the same form as for ordinary groups. The canonical references for both $p$-local finite and compact groups are the aforementioned papers of Broto, Levi, and Oliver, however the reader may also find the survey article \cite{BrotoLeviOliver2004theory} valuable.

To begin, we recall the definition of the fusion system $\cal{F}_p(G)$ associated to a finite group $G$. This is a category whose objects are the $p$-subgroups of $G$, and where
\[
\Hom_{\cal{F}_p(G)} =   \Hom_G(P,Q) \coloneqq \{ \alpha \in \Hom(P,Q) \mid \alpha = c_x, \text{ for some } x \in G \}.
\]
i.e., $\alpha$ is a homomorphism induced by conjugation in $G$. To this one can associate another category, the centric linking system $\cal{L}_p^c(G)$. Then, by \cite[Proposition 1.1]{BrotoLeviOliver2003Homotopy2} there is a homotopy equivalence $|\cal{L}_p^c(G)|^{\wedge}_p \simeq BG^{\wedge}_p$.

The idea of $p$-local finite groups is to begin with a finite $p$-group $S$, and try and mimic the constructions above. Thus, a fusion system $\cal{F}$ associated to $S$ is a category whose objects are subgroups of $S$, and whose morphism sets $\Hom_{\cal{F}}(P,Q)$ satisfy the following conditions:
\begin{enumerate}
  \item $\Hom_S(P,Q) \subseteq \Hom_{\cal{F}}(P,Q) \subseteq \Inj(P,Q) \text{ for all } P,Q \le S$.
  \item Every morphism in $\cal{F}$ factors as an isomorphism in $\cal{F}$ followed by an inclusion.
\end{enumerate}
This is not quite enough; Broto, Levi, and Oliver additionally require that the fusion system is \emph{saturated}, see \cite[Definition 1.2]{BrotoLeviOliver2003homotopy}. A centric linking system $\cal{L}$ associated to $\cal{F}$ is another category whose objects are a certain subset of $S$. The centric linking system contains the additional data to associate a classifying space to the fusion system $\cal{F}$.

A \emph{$p$-local finite group} is a triple $\cal{G} = (S,\cal{F},\cal{L})$ where $S$ is a finite $p$-group, $\cal{F}$ is a saturated fusion system over $S$, and $\cal{L}$ is a centric linking system associated to $\cal{F}$. The classifying space of $\cal{G}$ is defined as $B\cal{G} = |\cal{L}|^{\wedge}_p$, the $p$-completed nerve of the category $\cal{L}$. We write $H_{\cal{G}}^* \coloneqq H^*(B\cal{G})$ for the mod $p$ cohomology of $\cal{G}$.

If instead of a finite $p$-group we begin with a discrete $p$-toral group $S$ - that is a group that contains a normal subgroup $T \cong (\Z/p^{\infty})^{r}$, and such that $T$ has finite index in $S$ - then we can define saturated fusion systems $\cal{F}$ over $S$, and centric linking systems over $\cal{F}$, see \cite{BrotoLeviOliver2007Discrete}. A \emph{$p$-local compact group} is a triple $\cal{G} = (S,\cal{F},\cal{L})$ where $S$ is a discrete $p$-toral group, $\cal{F}$ is a saturated fusion system over $S$, and $\cal{L}$ is a centric linking system associated to $\cal{F}$. In fact, it was later shown that every saturated fusion system over a discrete $p$-toral group has an associated centric linking system which is unique up to isomorphism \cite{Chermak2013Fusion,LeviLibman2015Existence}. Thus, we often define our $p$-local compact groups as simply a pair $\cal{G} = (S,\cal{F})$.

\begin{ex}
  Here we list some examples of $p$-local compact groups.
  \begin{enumerate}
    \item If $G$ is a compact Lie group, with $p$-toral subgroup $S$, then there exists a $p$-local compact group $\cal{G} = (S,\cal{F}_S(G))$ along with an equivalence of classifying spaces $BG^{\wedge}_p \simeq B\cal{G}$ \cite[Theorem 9.10]{BrotoLeviOliver2007Discrete}. \qed
    \item Suppose that $X$ is a $p$ compact group, that is a triple $(X,BX,e)$ where $X$ is a space with $H^*(X;\F_p)$ finite, $BX$ is a pointed $p$-complete space, and $e \colon X \to \Omega(BX)$ is an equivalence \cite{DwyerWilkerson1994Homotopy}. There is a notion of a Sylow subgroup $f \colon S \to X$, and moreover, there exists a $p$-local compact group $\cal{G} = (S,\cal{F}_{S,f}(X))$ with $B\cal{G} \simeq BX$ \cite[Theorem 10.7]{BrotoLeviOliver2007Discrete}. More generally, the $p$-completion of any finite loop space gives rise to a $p$-local compact group \cite{BrotoLeviOliver2014algebraic}.
  \end{enumerate}
\end{ex}
\begin{rem}
  Because, up to $p$-completion, every compact Lie group can be modeled by a $p$-local compact group, this section recovers the result for compact Lie groups in the previous section. This follows because the classifying space of a compact Lie group is always $p$-good (see \cite[Proposition VII.5.1]{BousfieldKan1972Homotopy}) and so $H_{G}^* \cong H^*(BG^\wedge_p)$.
\end{rem}

One has an immediate upper bound for $d_0(H_{\cG}^*)$ coming from the group $S$ in the case of $p$-local finite groups. 
\begin{prop}\label{prop:transfer}
  Let $\cal{G} = (S,\cF)$ be a $p$-local finite group, then $d_0(H_{\cG}^*) \le d_0(H_S^*)$. 
\end{prop}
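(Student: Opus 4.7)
The plan is to exploit the \emph{stable elements theorem} for $p$-local finite groups, which identifies $H_\cG^\ast$ with a subalgebra of $H_S^\ast$ inside the category of unstable algebras. More precisely, by the fundamental computation of Broto--Levi--Oliver in \cite{BrotoLeviOliver2003homotopy}, the natural map $BS \to |\cL|^\wedge_p = B\cG$ induces an isomorphism of unstable algebras
\[
H_\cG^\ast \xrightarrow{\;\cong\;} \lim_{P \in \cO(\cF^c)}\, H^\ast(BP)
\]
onto the subalgebra of $\cF$-stable elements in $H_S^\ast$, and in particular the restriction map $H_\cG^\ast \hookrightarrow H_S^\ast$ is a monomorphism in $\cK$ (and hence in $\cU$).

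The first step is to invoke this stable elements theorem to produce an injective morphism $\iota\colon H_\cG^\ast \hookrightarrow H_S^\ast$ of unstable $\cA$-modules. The second step is to apply \Cref{prop:dproperties}(2), which says that $d_0$ is monotone along monomorphisms: if $0 \to M' \to M$ is exact in $\cU$, then $d_0(M') \le d_0(M)$. Taking $M' = H_\cG^\ast$ and $M = H_S^\ast$ with the map $\iota$ immediately yields $d_0(H_\cG^\ast) \le d_0(H_S^\ast)$.

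There is essentially no obstacle here beyond correctly citing the stable elements theorem; the main content of the proposition is that the inclusion into $H_S^\ast$ is a genuine monomorphism of unstable modules, after which the conclusion is a formal consequence of the properties of $d_0$ already recorded in \Cref{prop:dproperties}. (One could alternatively produce the injection via the characteristic idempotent or a transfer argument, but invoking the stable elements theorem is the cleanest route.)
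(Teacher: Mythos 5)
Your proof is correct and follows essentially the same route as the paper: the paper cites \cite[Proposition 5.5]{BrotoLeviOliver2003homotopy} to see that $H_{\cG}^*$ is a direct summand of $H_S^*$ in $\cal{K}$ (so in particular the restriction is a monomorphism of unstable modules) and then applies \Cref{prop:dproperties}(2), exactly as you do via the stable elements theorem. The only difference is cosmetic — the paper invokes the (stronger) splitting rather than just injectivity, but only the monomorphism is used.
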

\begin{proof}
  By \cite[Proposition 5.5]{BrotoLeviOliver2003homotopy} $H_{\cG}^*$ is a direct summand of $H_S^*$ as an unstable algebra, and so \Cref{prop:dproperties} furnishes the result. 
\end{proof}
Even for finite groups, it is known that this inclusion is strict in general. For example, Henn--Lannes--Schwartz \cite[Section II.4.7]{HennLannesSchwartz1995Localizations} give the example $G = GL_2(\F_3)$ and $S = SD_{16}$, then $d_0(H_G^*) = 0$, but $d_0(H_S^*) = 2$. 

We now identify Rector's category $\bA_{H_{\cal{G}}^*}$ and present the relevant $T$-functor calculations. We let $\cal{F}^e$ be the full subcategory of $F$ whose objects are elementary abelian $p$-subgroups $E \le S$ which are fully-centralized in $\cal{F}$ in the sense of \cite[Definition 2.2]{BrotoLeviOliver2007Discrete}. This assumption ensures that the centralizer $p$-compact group $C_{\cal{G}}(E) = (C_S(E),C_{\cal{F}}(E))$ exists \cite[Section 1.2]{Gonzalez2016Finite}, where $C_{\cal{F}}(E)$ is the fusion system over $C_S(E)$ with objects $Q \le C_S(E)$ and morphisms
\[
\Hom_{C_{\cal{F}}(E)}(Q,Q')= \{ \psi \in \Hom_{\cal{F}}(Q,Q') \mid \exists \phi \in \Hom_{\cal{F}}(QE,Q'E),\phi |_Q = \psi, \phi|_E = \text{id}_E\}.
\] Moreover, we note that any elementary abelian $p$-subgroup $E \le S$ is isomorphic in $\cal{F}$ to one that is fully $\cal{F}$-centralized.

For the following, we note that there is a canonical map $\theta \colon BS \to B\cal{G}$. Then, for any $E \le S$, there is a map $j_E \colon H_{\cal{G}}^* \to H_E^*$ given as the composite $H_{\cal{G}}^* \xr{\theta^*} H_S^* \to H_E^*$.
\begin{prop}\label{prop:p_local_rector}
  Let $\cal{G} = (S,\cal{F})$ be a $p$-local compact group, then $H_{\cal{G}}^*$ is a finitely-generated $\F_p$-algebra, and there is an equivalence of categories
  \[
\bA_{H_{\cal{G}}^*} \simeq \cal{F}^e
  \]
  given by associating to a fully centralized subgroup $E \le S$ the pair $(E,j_E)$.
\end{prop}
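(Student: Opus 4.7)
The proposition asserts two things: that $H_{\cal{G}}^*$ is a finitely generated $\F_p$-algebra, and that Rector's category can be identified with $\cal{F}^e$ via $E \mapsto (E, j_E)$.

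The first assertion, Noetherianity of $H_{\cal{G}}^*$, is a theorem of Broto--Levi--Oliver in the finite case, and is established for $p$-local compact groups by Broto--Levi--Oliver in \cite{BrotoLeviOliver2007Discrete} (see also subsequent work of Gonz\'alez, who proved the $T$-functor statement we use below). We simply cite this.

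For the second assertion, the plan is to construct a functor $\Phi \colon \cal{F}^e \to \bA_{H_{\cal{G}}^*}$ by $E \mapsto (E, j_E)$ and a morphism $\alpha \colon E \to V$ in $\cal{F}^e$ to the morphism of unstable algebras $\alpha^\ast \colon H_V^* \to H_E^*$; commutativity of the required triangle follows from the definition of $j_E$ and the fact that morphisms in $\cal{F}$ are induced by conjugation in $\cal{L}$ (so factor through $\theta \colon BS \to B\cal{G}$ after passage to classifying spaces). The two non-trivial steps are then:
\begin{enumerate}
\item[(i)] Each $(E, j_E)$ actually lies in $\bA_{H_{\cal{G}}^*}$, i.e., $j_E$ is a \emph{finite} morphism.
\item[(ii)] The functor $\Phi$ is an equivalence of categories.
\end{enumerate}

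For (i), I would invoke the centralizer decomposition for $p$-local compact groups, which yields the $T$-functor computation
\[
T_E(H_{\cal{G}}^*; j_E) \;\cong\; H_{C_{\cal{G}}(E)}^*
\]
whenever $E$ is fully $\cal{F}$-centralized (this is due to Broto--Levi--Oliver in the finite case \cite{BrotoLeviOliver2003homotopy}, and to Gonz\'alez \cite{Gonzalez2016Finite} in the compact case). Since $C_{\cal{G}}(E)$ is a $p$-local compact group, $H^*_{C_{\cal{G}}(E)}$ is connected. By Dwyer--Wilkerson's criterion (Proposition 4.4 of \cite{DwyerWilkerson1992cohomology}, recalled in \Cref{rem:kernel}), the kernel $\ker(j_E)$ equals $\{e \in E : (j_E)_e \text{ is trivial above degree }0\}$; connectedness of $T_E(H_{\cal{G}}^*;j_E)$ forces $\ker(j_E) = 0$, and hence $j_E$ is finite.

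For (ii), essential surjectivity is the main obstacle: given $(E,f) \in \bA_{H_{\cal{G}}^*}$, we must produce a fully centralized elementary abelian subgroup of $S$ and an isomorphism with $(E, j_E)$. The plan is to use the Henn--Lannes--Schwartz equivalence $\cal{K}_d/\Nil_1 \simeq (\cal{PS}\text{-}\End V_d)^{\mathrm{op}}$ recalled in \Cref{rem:endvd-sets}, combined with a Quillen-type $\cal{F}$-isomorphism theorem identifying the profinite $\End V_d$-set associated to $H_{\cal{G}}^*$ with the one arising from the $\cal{F}$-conjugacy classes of morphisms $V_d \to S$ (this is again a result of Broto--Levi--Oliver / Gonz\'alez). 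Concretely, the image of $f$ under $s_d$ lies in the $\End V_d$-orbit of a homomorphism $E \to S$ with image fully $\cal{F}$-centralized (by replacing a representative within its $\cal{F}$-conjugacy class), giving an isomorphism $(E, f) \cong (E, j_E)$ in $\bA_{H_{\cal{G}}^*}$. Fullness and faithfulness then follow from Alperin's fusion theorem for saturated fusion systems \cite{BrotoLeviOliver2003homotopy}, which matches morphisms between fully centralized subgroups in $\cal{F}$ with $\cal{K}$-morphisms $H_V^* \to H_E^*$ over $H_{\cal{G}}^*$.

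The hardest step is (ii), where one must transport the classical Quillen/Alperin machinery to the $p$-local compact setting; everything is either standard or follows from citing the BLO/Gonz\'alez package once (i) is in place.
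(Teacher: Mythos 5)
The paper's own proof is a two-line citation: finite generation is \cite[Corollary 4.26]{bchv} and the identification of $\bA_{H_{\cal{G}}^*}$ with $\cal{F}^e$ is \cite[Proposition 4.18]{heard_depth}. You instead sketch a direct proof, which is legitimate, but your argument for step (i) contains a genuine error. You claim that connectedness of $T_E(H_{\cal{G}}^*;j_E)$ forces $\ker(j_E)=0$ via the Dwyer--Wilkerson criterion. This is a non sequitur: for a connected Noetherian unstable algebra $R$ and \emph{any} $(E,f)\in\bV_R$, the component $T_E(R;f)$ is connected, whether or not $\ker(f)$ is trivial. Indeed, by \Cref{rem:kernel} and \Cref{lem:hls_lemma}, $T_E(R;f)\cong T_{E/\ker(f)}(R;\tilde f)$, and the right-hand side is connected; so connectedness of the component carries no information about finiteness of $f$. (Concretely, for $R=H_{\Z/p}^*$ and $f$ induced by a projection $(\Z/p)^2\to\Z/p$, the component is connected but $\ker(f)\neq 0$.) The correct and simpler argument is that $j_E$ factors as $H_{\cal{G}}^*\xrightarrow{\theta^*}H_S^*\to H_E^*$, where $H_S^*$ is a finitely generated $H_{\cal{G}}^*$-module and $H_E^*$ is finite over $H_S^*$, so the composite is finite.

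Two further points. First, your attribution of Noetherianity to \cite{BrotoLeviOliver2007Discrete} is wrong in the compact case: for $p$-local \emph{finite} groups this is Broto--Levi--Oliver, but for $p$-local compact groups finite generation was an open problem settled only in \cite{bchv}, which is precisely what the paper cites; this is not a cosmetic issue, since the $T$-functor and Quillen-stratification inputs you want to invoke in step (ii) also rest on that later work. Second, your treatment of (ii) is plausible in outline (essential surjectivity via the Henn--Lannes--Schwartz $\End V_d$-set formalism plus an $\cal{F}$-isomorphism theorem for $B\cal{G}$), but fullness is not a consequence of Alperin's fusion theorem; what is needed is the detection statement that a group homomorphism $\alpha\colon E\to V$ with $\alpha^*\circ j_V=j_E$ is realized by a morphism of $\cal{F}$, which is part of the Quillen stratification package for fusion systems rather than of Alperin's theorem. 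As written, the proposal would need (i) repaired and (ii) tied to the correct references before it constitutes a proof.
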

\begin{proof}
  The finite generation is \cite[Corollary 4.26]{bchv}, while the identification of $\bA_{H_{\cal{G}^*}}$ is \cite[Proposition 4.18]{heard_depth}.
\end{proof}

Let $E < S$ be an elementary abelian $p$-subgroup, then the map $C_S(E) \times E \to S$ induces $H_S^* \to H_E^* \otimes H_{C_S(E)}^*$. The adjoint induces a map $\phi_E \colon T_E(H_S^*;\res_{S,E}^*) \to H_{C_S(E)}^*$. The following is shown by Gonzalez \cite[Lemma 5.1]{Gonzalez2016Finite}.
\begin{lem}\label{lem:t_functor_p_local}
  For any $E \in \cal{F}^e$ there is an isomorphism
  \[
T_E(H_{\cal{G}}^*;j_E) \xr{\cong} H_{C_{\cal{G}}(E)}^*
  \]
  which is the restriction of the homomorphism $\phi_E$.
\end{lem}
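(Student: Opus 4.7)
The plan is to leverage Lannes' theorem identifying $T$-functors with the mod $p$ cohomology of mapping spaces, combined with the mapping-space decomposition theorem for $p$-local compact groups due to Broto--Levi--Oliver and Gonz\'alez. First I would note that $B\cG = |\cL|^{\wedge}_p$ is $p$-complete and, by \Cref{prop:p_local_rector}, has Noetherian mod $p$ cohomology, so Lannes' theorem (in the form needed here, e.g.\ the Noetherian version from Lannes' IH\'ES paper together with its extension to mapping spaces of $p$-complete targets with Noetherian mod $p$ cohomology) applies to give an isomorphism
\[
T_E(H_{\cG}^*; j_E) \xr{\cong} H^*\!\bigl(\Map(BE, B\cG)_{\widehat{j_E}}\bigr),
\]
where $\widehat{j_E} \colon BE \to B\cG$ is the map obtained by composing the canonical $\theta \colon BS \to B\cG$ with the inclusion $BE \to BS$ (this is the homotopy class whose induced map on mod $p$ cohomology is $j_E$).

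Next I would invoke the fundamental computation of mapping spaces from $BE$ into the classifying space of a $p$-local compact group: for any fully $\cF$-centralized elementary abelian $E \le S$, Broto--Levi--Oliver \cite{BrotoLeviOliver2007Discrete} (extending \cite{BrotoLeviOliver2003homotopy} from the $p$-local finite case) show that there is a homotopy equivalence
\[
\Map(BE, B\cG)_{\widehat{j_E}} \simeq BC_{\cG}(E),
\]
where $C_{\cG}(E) = (C_S(E), C_{\cF}(E))$ is the centralizer $p$-local compact group. Passing to mod $p$ cohomology and composing with the isomorphism from the previous step produces an isomorphism $T_E(H_{\cG}^*; j_E) \cong H^*_{C_{\cG}(E)}$.

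It remains to identify this abstract isomorphism with the map $\phi_E$ of the statement. To do this I would run the same argument for the discrete $p$-toral group $S$ itself: Lannes' theorem gives $T_E(H_S^*; \res_{S,E}^*) \cong H^*(\Map(BE, BS)_{\text{incl}})$, and this mapping space component is $B C_S(E)$ by the standard computation for discrete groups (or, in our setting, for the ``trivial'' fusion system associated to $S$), and the resulting isomorphism is precisely $\phi_E$ by construction of $\phi_E$ as the adjoint of the map induced by the multiplication $C_S(E) \times E \to S$. The naturality of Lannes' isomorphism with respect to $\theta \colon BS \to B\cG$ then gives a commutative square relating the two $T$-functor computations to the restriction $H^*_{C_{\cG}(E)} \to H^*_{C_S(E)}$, which exhibits our isomorphism as the restriction of $\phi_E$.

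The main obstacle will be Step~1: checking that the hypotheses for Lannes' theorem (component-wise version, in the form that converts $T_E$ on cohomology to cohomology of a mapping space) are satisfied for $B\cG$. The delicate points are that $B\cG$ is $p$-complete but not the classifying space of an honest group, and that one needs the target's mod $p$ cohomology to be Noetherian (which we have) and the component $\Map(BE, B\cG)_{\widehat{j_E}}$ to have reasonable cohomological behavior (again available via \cite{BrotoLeviOliver2007Discrete,Gonzalez2016Finite}). Once these inputs are in place, the identification with $\phi_E$ is a formal naturality argument.
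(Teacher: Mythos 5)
The paper does not actually prove this lemma: it is quoted directly from \cite[Lemma 5.1]{Gonzalez2016Finite}, so the only thing to check is whether your reconstruction is a viable independent argument. It is not, because it runs the logic of the literature backwards and ends up circular. The mapping-space decomposition $\Map(BE,B\cG)_{B\iota}\simeq BC_{\cG}(E)$ that you take as input (\cite[Theorem 5.4]{Gonzalez2016Finite}, and its antecedents in \cite{BrotoLeviOliver2003homotopy,BrotoLeviOliver2007Discrete}) is itself \emph{deduced} from the algebraic computation $T_E(H^*_{\cG};j_E)\cong H^*_{C_{\cG}(E)}$ via Lannes' theorem: one first computes the $T$-functor, checks it is of finite type and isomorphic to the cohomology of a candidate $p$-complete space, and only then concludes that the mapping-space component is $BC_{\cG}(E)$. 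There is no independent computation of $\Map(BE,B\cG)_{B\iota}$ available for a (possibly exotic) $p$-local compact group, since $B\cG$ is not the classifying space of an honest group; so invoking the mapping-space theorem to prove the $T$-functor statement assumes what is to be proved. Relatedly, your acknowledged ``main obstacle'' --- verifying the hypotheses under which Lannes' theorem converts $T_E$ into the cohomology of a mapping space --- is not a side issue you can defer: the finite-type and component conditions on $T_E(H^*_{\cG};j_E)$ are exactly what the algebraic computation supplies, and leaving them unverified leaves the argument incomplete even on its own terms.

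The actual route (the one Gonz\'alez takes, generalizing the $p$-local finite case) is purely algebraic: $H^*_{\cG}$ is identified with the stable elements $\varprojlim_{\cF}H^*(P)\subseteq H^*_S$; since $T_E$ is exact it commutes with the finite limit computing stable elements; Lannes' computation for the discrete $p$-toral group $S$ identifies $T_E(H^*_S;\res^*_{S,E})$ with $H^*_{C_S(E)}$ via $\phi_E$; and one then checks that under $\phi_E$ the image of $T_E(H^*_{\cG};j_E)$ is precisely the subring of elements stable for the centralizer fusion system $C_{\cF}(E)$, i.e.\ $H^*_{C_{\cG}(E)}$. (For $p$-local compact groups this requires the finite-approximation machinery of \cite{Gonzalez2016Finite} to control the stable elements description.) This is also what makes the ``restriction of $\phi_E$'' clause automatic, rather than something recovered afterwards by a naturality argument. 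If you want to salvage your write-up, you should replace the appeal to the mapping-space theorem by this stable-elements argument; the mapping-space statement is then a corollary, not an ingredient.
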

For $E \in \cal{F}^e$ a special case of \cite[Theorem 5.4]{Gonzalez2016Finite} identifies $BC_{\cal{G}}(E)$ with $\Map(BE,B\cal{G})_{B\iota}$ where $\iota \colon E \to S$, and so $H_{C_{\cal{G}}(E)}^* \cong H^*(\Map(BE,B\cal{G})_{B\iota})$. Under this, the map $\rho_{H_{\cal{G}}^*,(E,j_E)} \colon H^*_{\cal{G}} \to T_E(H_{\cal{G}}^*;j_E)$ can be identified with the map induced by the evaluation map $\Map(BE,B\cal{G})_{B\iota} \to B\cal{G}$.
\begin{defn}
	Let $\cal{G} = (S,\cal{F})$ be a $p$-local compact group, then $E \in \cal{F}^e$ is called central if $\Map(BE,B\cal{G})_{B\iota} \to B\cal{G}$ is a homotopy equivalence.
\end{defn}
This does not conflict with the notion of centrality used previously in this paper, by the following lemma.
\begin{lem}\label{lem:centrality_p_local}
	$E \in \cal{F}^e$ is central if and only if $\rho = \rho_{H_{\cal{G}}^*,(E,j_E)} \colon H_{\cal{G}}^* \to T_E(H_{\cal{G}}^*;j_E) \cong H^*_{C_{\cal{G}}(E)}$ is an equivalence. In other words, $E \in \cal{F}^e$ is central if and only if $(E,j_E) \in \bA_{H_{\cal{G}}^*}$ is central. 
\end{lem}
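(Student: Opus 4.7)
The plan hinges on translating back and forth between the algebraic condition on $\rho$ and the homotopy-theoretic condition on the evaluation map, using Gonzalez's identification $BC_{\cal{G}}(E) \simeq \Map(BE,B\cal{G})_{B\iota}$ from \cite[Theorem 5.4]{Gonzalez2016Finite} together with \Cref{lem:t_functor_p_local}, which tell us that under this identification the map on mod $p$ cohomology induced by the evaluation map $\mathrm{ev} \colon \Map(BE,B\cal{G})_{B\iota} \to B\cal{G}$ is exactly $\rho$.

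The forward direction will be immediate. If $E$ is central, then by definition $\mathrm{ev}$ is a homotopy equivalence, so it induces an isomorphism on mod $p$ cohomology; invoking the identifications above, this isomorphism is $\rho$.

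The reverse direction is the substantive one. Assuming $\rho$ is an isomorphism, I would reinterpret this as saying that $\mathrm{ev}^\ast$ is an isomorphism on mod $p$ cohomology, and then upgrade this to a homotopy equivalence. Both $B\cal{G} = |\cal{L}|^\wedge_p$ and $BC_{\cal{G}}(E) \simeq |\cal{L}_{C_{\cal{G}}(E)}|^\wedge_p$ are $p$-complete by construction, and each has Noetherian (hence finite type) mod $p$ cohomology by the results recalled in \Cref{prop:p_local_rector}. The plan is then to appeal to the standard Bousfield--Kan principle that a map of $p$-complete spaces of finite $\F_p$-type inducing an isomorphism on $H^\ast(-;\F_p)$ is a homotopy equivalence.

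The main obstacle will be verifying the nilpotence/$p$-goodness hypothesis required to invoke that principle in the setting of $p$-local compact groups, where the classifying space need not be simply connected. This can be handled by appealing to the framework of Broto--Levi--Oliver: classifying spaces of $p$-local compact groups, being $p$-completions of nerves of centric linking systems, are $\F_p$-complete in the sense required, so that a mod $p$ cohomology equivalence between them is automatically a genuine homotopy equivalence. This completes the argument: $\mathrm{ev}$ is a homotopy equivalence, and $E$ is central in the sense of the definition.
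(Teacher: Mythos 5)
Your proposal is correct and follows essentially the same route as the paper: the forward direction is immediate from the identification of $\rho$ with the map induced by the evaluation map, and the converse upgrades the mod $p$ cohomology isomorphism to a homotopy equivalence using that classifying spaces of $p$-local compact groups are $p$-good/$p$-complete (the paper cites \cite[Proposition 4.4]{BrotoLeviOliver2007Discrete} together with \cite[Proposition I.5.2]{BousfieldKan1972Homotopy} for exactly this point). Your extra remarks about finite type and the Bousfield--Kan framework are just a more detailed unpacking of the same step.
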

\begin{proof}
If $E \in \cal{F}^e$ is central then this is clear from the discussion before the definition of centrality. For the converse, suppose that $\rho$ is an equivalence. Because the classifying space of a $p$-local compact group is $p$-good (combine \cite[Proposition 4.4]{BrotoLeviOliver2007Discrete} and \cite[Proposition I.5.2]{BousfieldKan1972Homotopy}), the map $\Map(BE,B\cal{G})_{B\iota} \to B\cal{G}$ is a homotopy equivalence.
\end{proof}
\begin{rem}
   By \cite[Theorem 7.4]{BrotoLeviOliver2007Discrete} if $E \in \cal{F}^e$ is central, then the $p$-local compact groups $\cal{G}$ and $C_{\cal{G}}(E)$ are isomorphic in the sense discussed on \cite[pp.~374-375]{BrotoLeviOliver2007Discrete}. In particular, there are isomorphisms of groups and categories $\alpha \colon S \to C_S(E)$ and $\alpha_{\cal{F}} \colon \cal{F} \to C_{\cal{F}}(E)$ which are compatible in a certain sense.
\end{rem}
Note that we have a natural definition of $p$-centrality for a $p$-local compact group. For the following, we let $C(\cal{G})$ denote the maximal central elementary abelian $p$-subgroup $E \in \cal{F}^e$, which exists by \Cref{thm:max_central} and the previous lemma.
\begin{defn}
  Let $\cal{G} = (S,\cal{F})$ be a $p$-local compact group, then $\cal{G}$ is $p$-central if the $p$-rank of $\cal{F}^e$ (i.e., the rank of a maximal elementary abelian $p$-group in $\cal{F}^e$) is equal to the rank of $C(\cal{G})$.
\end{defn}
\begin{lem}\label{lem:p_local_central_defect_0}
  $\cal{G}$ is $p$-central if and only if $H_{\cal{G}}^*$ has $p$-central defect 0.
\end{lem}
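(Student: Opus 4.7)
The plan is to unwind the definitions and invoke the two preceding results. Recall that $H_{\cG}^*$ has $p$-central defect $0$ precisely when $c(H_{\cG}^*) = p(H_{\cG}^*)$, where $c(H_{\cG}^*)$ is the rank of the center of $H_{\cG}^*$ (viewed as a Noetherian unstable algebra) and $p(H_{\cG}^*) = \max\{\rank(E) \mid (E,f) \in \bA_{H_{\cG}^*}\}$. Both quantities can be translated from $\bA_{H_{\cG}^*}$ to $\cF^e$ via the equivalence of categories established in \Cref{prop:p_local_rector}.

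First I would handle $p(H_{\cG}^*)$. Under the equivalence $\bA_{H_{\cG}^*} \simeq \cF^e$ of \Cref{prop:p_local_rector}, this is simply the maximal rank of an elementary abelian $p$-subgroup $E \le S$ lying in $\cF^e$. Since every elementary abelian $p$-subgroup of $S$ is $\cF$-isomorphic to one that is fully $\cF$-centralized (as noted in the paragraph preceding \Cref{prop:p_local_rector}), and $\cF$-isomorphisms preserve rank, this maximum equals the $p$-rank of $\cF^e$ in the sense of the definition of $p$-centrality.

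Next I would handle $c(H_{\cG}^*)$. By \Cref{lem:centrality_p_local}, $E \in \cF^e$ is central (in the homotopical sense) if and only if the associated object $(E,j_E) \in \bA_{H_{\cG}^*}$ is central in the sense of Dwyer--Wilkerson. Under the equivalence $\bA_{H_{\cG}^*} \simeq \cF^e$, the poset structure on isomorphism classes corresponds, so the maximal central object $(C,g) \in \bA_{H_{\cG}^*}$ produced by \Cref{thm:max_central} corresponds precisely to $C(\cG)$, the maximal central elementary abelian $p$-subgroup in $\cF^e$. In particular $c(H_{\cG}^*) = \rank(C(\cG))$.

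Putting these two identifications together, $\cG$ is $p$-central (the $p$-rank of $\cF^e$ equals $\rank(C(\cG))$) if and only if $p(H_{\cG}^*) = c(H_{\cG}^*)$, which is by definition the statement that $H_{\cG}^*$ has $p$-central defect $0$. There is no real obstacle here: the work is entirely in the bookkeeping of the translation between the $p$-local compact group side and the unstable algebra side, which is already supplied by \Cref{prop:p_local_rector} and \Cref{lem:centrality_p_local}.
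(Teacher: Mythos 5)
Your proposal is correct and is essentially the paper's argument: the paper simply states that the lemma follows immediately from the definitions and \Cref{lem:centrality_p_local}, and your write-up just makes explicit the translation of $p(H_{\cal{G}}^*)$ and $c(H_{\cal{G}}^*)$ across the equivalence $\bA_{H_{\cal{G}}^*} \simeq \cal{F}^e$ of \Cref{prop:p_local_rector}.
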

\begin{proof}
  This follows immediately from the definition and \Cref{lem:centrality_p_local}.
\end{proof}

\begin{thm}\label{thm:main_plocalcompact}
	Let $\cal{G} = (S,\cal{F})$ be a $p$-local compact group and assume that $H_{C_{\cal{G}(E)}}^*$ satisifies \Cref{hyp:duflot} for all $E \in \cal{F}^e$, then
	\[
d_0(H_{\cal{G}}^*) \le  \underset{\substack{C(\cal{G}) \le E \in \cal{F}^e \\ \depth(H_{C_{\cal{G}}(E)}^*) = c(C_{\cal{G}}(E))}} \max \{e(H_{C_{\cal{G}}(E)}^*) + \reg(H_{C_{\cal{G}}(E)}^*)\}
	\]
	If $S$ is a finite $p$-group (i.e., $\cal{G}$ is a $p$-local finite group), then $\reg(H_{C_{\cal{G}}(E)}^*) = 0$.
\end{thm}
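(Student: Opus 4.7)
The strategy is to deduce this result from \Cref{thm:main_unstable_algebra} applied to the connected Noetherian unstable algebra $R = H_{\cal{G}}^*$. First I would verify the hypotheses: by \Cref{prop:p_local_rector} the cohomology $H_{\cal{G}}^*$ is a finitely-generated $\F_p$-algebra, and it is connected because $B\cal{G}$ is a connected pointed space. The equivalence of categories $\bA_{H_{\cal{G}}^*} \simeq \cal{F}^e$ given by \Cref{prop:p_local_rector} sends $E \in \cal{F}^e$ to the pair $(E,j_E)$, and by \Cref{lem:centrality_p_local} this equivalence preserves centrality. Combined with \Cref{thm:max_central}, this shows that, up to isomorphism, the center of $H_{\cal{G}}^*$ in the sense of \Cref{defn:center} corresponds precisely to $C(\cal{G})$, and the rank $c(H_{\cal{G}}^*)$ agrees with the rank of $C(\cal{G})$.

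Next, by \Cref{lem:t_functor_p_local} there is a natural isomorphism $T_E(H_{\cal{G}}^*;j_E) \cong H_{C_{\cal{G}}(E)}^*$ for every $E \in \cal{F}^e$. Via this isomorphism, the assumption in the statement that $H_{C_{\cal{G}}(E)}^*$ satisfies \Cref{hyp:duflot} translates into the hypothesis of \Cref{thm:main_unstable_algebra} that $T_E(R;f)$ has polynomial Duflot algebra for all $(C,g) \subseteq (E,f)$. Feeding these identifications into the bound of \Cref{thm:main_unstable_algebra} directly produces the claimed inequality, where the max is taken over those $E \in \cal{F}^e$ containing $C(\cal{G})$ such that the depth of $H_{C_{\cal{G}}(E)}^*$ equals $c(C_{\cal{G}}(E))$.

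For the regularity statement, I would first observe that if $S$ is a finite $p$-group, then for each fully centralized $E \in \cal{F}^e$ the centralizer $C_{\cal{G}}(E) = (C_S(E),C_{\cal{F}}(E))$ is again a $p$-local finite group, since $C_S(E)$ is a finite $p$-group and $C_{\cal{F}}(E)$ is saturated. Hence the regularity claim reduces to showing $\reg(H_{\cal{H}}^*) = 0$ for an arbitrary $p$-local finite group $\cal{H}$. This is an extension of Symonds' regularity theorem to fusion systems, and for $p$-local finite groups it is precisely the statement of \cite[Corollary 6.9]{ccm}. Applying this to $\cal{H} = C_{\cal{G}}(E)$ gives $\reg(H_{C_{\cal{G}}(E)}^*) = 0$. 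No step is genuinely difficult once the dictionary of Section~6.2 is in place; the only nontrivial input is the Symonds--Kessar--Linckelmann regularity result, which we are citing as a black box.
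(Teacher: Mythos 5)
Your proposal is correct and follows essentially the same route as the paper: apply \Cref{thm:main_unstable_algebra} to $R = H_{\cal{G}}^*$ using \Cref{prop:p_local_rector} and \Cref{lem:t_functor_p_local} to translate the statement, and quote the regularity of the cohomology of a $p$-local finite group as a black box. The only discrepancy is the attribution of that black box: the paper cites Symonds and Kessar--Linckelmann for the vanishing of the regularity itself, whereas your reference to \cite{ccm} is for the local cohomology theorem, a closely related but not identical statement; this is a citation issue, not a mathematical gap.
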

\begin{proof}
	Combine \Cref{thm:main_unstable_algebra} with \Cref{prop:p_local_rector,lem:t_functor_p_local}. The computation of the regularity is due to Symonds \cite[Proposition 6.1]{Symonds2010CastelnuovoMumford} and Kessar--Linckelmann \cite[Theorem 0.4]{KessarLinckelmann2015CastelnuovoMumford}.
\end{proof}
\begin{rem}
	Currently, there is only a single example of an exotic family of 2-local finite groups, namely the Solomon 2-local compact groups $\Sol(q)$ for $q$ an odd prime power, where $S$ is a Sylow $2$-subgroup of $\Spin_7(q)$ \cite{LeviOliver2002Construction}. Grbi\'{c} \cite[Proposition 2]{Grbic2006cohomology} has shown that $H^*_{\Sol(q)} $ has $H^*_{DI(4)}$ as a split summand in the category of unstable algebras, where $DI(4)$ is the exotic 2-compact group of Dwyer and Wilkerson \cite{DwyerWilkerson1993new}. By \Cref{prop:dproperties} we have $d_0(H^*_{\Sol(q)}) \le d_0(H^*_{DI(4)})$.	Because $H^*_{DI(4)}$ realizes the mod 4 Dickson invariants, there is an inclusion $H^*_{DI(4)} \to H_{(\Z/2)^4}^*$ of unstable algebras, so that $d_0(H^*_{DI(4)}) = 0$. 
		\[
d_0(H^*_{\Sol(q)}) = 0.
	\]
	Unfortunately, the relevant calculations for the centralizer $2$-local finite groups are not known, so we cannot compare this to the estimate from \Cref{thm:main_plocalcompact}.

  At odd primes, the $p$-local finite groups with $S = p_+^{1+2}$, the extraspecial groups of order $p^3$ and exponent $p$, have been calculated by Ruiz and Viruel \cite{RuizViruel2004classification}. In particular, at $p = 7$, they construct three new, exotic, examples of $p$-local finite groups. By \Cref{prop:transfer} we have $d_0(H_{\cal{G}}^*) \le d_0(H_{S}^*)$ for any of these three groups. By \cite[Theorem 13.21]{Totaro2014Group} we can deduce that $d_0(H_{\cal{G}}^*) \le 4$. 
\end{rem}
\appendix

\section{Borel equivariant cohomology}\label{sec:borel}
We recall from \Cref{sec:nilpotent_filtratio} that the work of Henn--Lannes--Schwartz shows that if $R$ is a Noetherian unstable algebra, and $M \in  R_{fg}-\cal{U}$, then $d_0(M)$ is finite, and that
\[
\phi_M \colon M \to \prod_{(E,f) \in \bA_R} H_E^* \otimes T_E(M;f)^{\le n}
\]
is injective for $n \ge d_0(M)$. In this work, we have focused on the case where $M = R$. In this appendix, we specialize to the case where $R = H_G^*$ for a compact Lie group $G$, and $M = H_G^*(X)$ for $X$ a manifold. As in \Cref{ex:borel} $M \in R_{fg}-\cal{U}$ by Quillen \cite{Quillen1971spectrum}. In this case, using \cite{lannes_unpublished} (see also \cite[Theorem 2.6]{henn_notes}) the previous equation takes the form
\begin{equation}\label{eq:borel}
H_G^*(X) \to \prod_{E \le G} H_E^* \otimes H^{\le n}_{C_G(E)}(X^E),
\end{equation}
see \cite[Theorem 5.5]{HennLannesSchwartz1995Localizations}.

It is worth explain how the maps in this theorem arise (following the discussion on \cite[p.~48]{HennLannesSchwartz1995Localizations}). The canonical homomorphism $E \times C_G(E) \to C_G(E)$ induces a map $BE \times (EC_G(E) \times_{C_G(E)} X^E) \to EC_G(E) \times_{C_G(E)} X^E$. We then define $c_E$ as the composite of the previous map with the map $EC_G(E) \times_{C_G(E)} X^E \to EG \times_{G} X$. The induced maps
\[
c_E^* \colon H_G^*(X) \to  H_E^* \otimes H^*_{C_{G}(E)}(X^E)
\]
induce the homomorphism in \eqref{eq:borel} as $E$ runs over the elementary abelian $p$-subgroups of $G$.

 We will show that slight adaptations of our techniques hold in this case. The observation we use here is that we have a good notion of centrality in this case. Indeed, suppose that $E,V$ are central subgroups of $G$ acting trivially on $X$. Then the subgroup $E \circ V$ generated by $E$ and $V$ is still central, and also acts trivially on $X$. Thus, there is a maximal central subgroup of $G$ that acts trivially on $X$. Throughout this section, we let $C=C(G,X)$ denote this maximal central subgroup, and let $e(G,X)$ denote the top degree of a generator of the finitely generated $H_G^*$-module $H_C^*$. 

  We observe (see \cite{BrotoHenn1993Some}) that $H_G^*(X)$ is a $H_C^*$-comodule and that the image of the restriction map $H_G^*(X) \to H_C^*$ is a sub-Hopf algebra of $H_C^*$. Applying the Borel structure theorem, we can identify the image of this exactly as in \Cref{cor:Borel}. 

The central essential ideal is defined in the obvious way, namely as the kernel
\[
\xymatrix{
0 \ar[r] & \CEss(G,X) \ar[r] &  H^*_G(X) \ar[r] & \displaystyle \prod_{C(G,X) \lneq E} H^*_{C_G(E)}(X^E),
}
\]
One deduces, as in \Cref{thm:krulldimension} that the Krull dimension of $\CEss(G,X)$ is at most the rank of $C$. The regularity of $H_G^*(X)$ is also known in this case; it is a theorem of Symonds \cite[Theorem 0.1]{Symonds2010CastelnuovoMumford} that $\reg(H_G^*(X)) \le \dim(X) - \dim(G)$.

 The same argument as in the body of the paper then gives the following result. 
\begin{thm}\label{thm:borel_appendix}
  Let $G$ be a compact Lie group, $X$ a manifold, and suppose that the Duflot algebra for $H_{C_G(E)}^*(X^E)$ is polynomial for all $C(G;X) \le E$, then
  \[
d_0(H_G^*(X)) \le \max_{C(G,X) \le E < G}\{e(C_G(E),X^E) + \dim(X^E) - \dim(C_G(E)) \}
  \]
\end{thm}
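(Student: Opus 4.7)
The plan is to mirror, in the equivariant cohomology setting, the two-step strategy that yields Theorem~\ref{thm:main_unstable_algebra}. I would first reduce the estimate of $d_0(H_G^\ast(X))$ to the $d_0$ of central essential ideals of appropriate ``centralizers of the action,'' and then bound each of these using the Duflot-polynomial hypothesis together with Symonds' regularity bound. The whole argument takes place in $R_{fg}\text{-}\cU$ for $R = H_G^\ast$, with $M = H_G^\ast(X)$ playing the role that $R$ itself plays in the main body.

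First I would set up the central essential reduction. Because $C = C(G,X)$ is central in $G$ and acts trivially on $X$, the product of the maps $c_E^\ast$ with $C \le E$ from \eqref{eq:borel} refines the improved injection of \Cref{prop:injection_improved}, so $\CEss(G,X)$ is literally the kernel of the product of $c_E^\ast$ for $C \lneq E$. The same inductive argument on $p$-central defect used in \Cref{prop:kuhn2.7} then applies, using that for each $C \le E$ the pair $(C_G(E), X^E)$ carries its own canonical central subgroup (the analogue of $(E,h)$ in \Cref{prop:central_factor}) and that the iterated $T$-functor identity of \Cref{lem:center_center} is realized geometrically by $C_{C_G(E)}(V) = C_G(V)$ and $(X^E)^V = X^V$. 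This gives
\[
d_0(H_G^\ast(X)) \;\le\; \max_{C(G,X) \le E < G}\bigl\{ d_0\bigl(\CEss(C_G(E), X^E)\bigr)\bigr\}.
\]

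Next I would bound each $d_0(\CEss(C_G(E), X^E))$. Since $H_{C_G(E)}^\ast(X^E)$ is a Noetherian module over $H_{C_G(E)}^\ast$ carrying a compatible $H_{C(C_G(E),X^E)}^\ast$-comodule structure, \Cref{lem:freeinjective} applies verbatim (with $M$ the central essential ideal) and shows that $\CEss(C_G(E), X^E)$ is a free module over the Duflot algebra $B$ of $H_{C_G(E)}^\ast(X^E)$, and that $P_C\CEss \hookrightarrow \CEss \twoheadrightarrow Q_B\CEss$ is monic. Kuhn's lemma used in \Cref{cor:d0cess} then gives $d_0(\CEss) = e_{\prim}(\CEss) \le e_{\indec}(\CEss)$, and the local cohomology computation of \Cref{thm:d0cess}---which only uses the polynomial Duflot hypothesis, finite generation, and the splitting of the defining sequence into free $B$-modules---yields
\[
d_0\bigl(\CEss(C_G(E), X^E)\bigr) \;\le\; e(C_G(E), X^E) + \reg\bigl(H_{C_G(E)}^\ast(X^E)\bigr).
\]
Finally, Symonds' theorem \cite[Theorem 0.1]{Symonds2010CastelnuovoMumford} bounds $\reg(H_{C_G(E)}^\ast(X^E)) \le \dim(X^E) - \dim(C_G(E))$, and combining the three inequalities gives the claimed estimate.

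The main obstacle I anticipate is bookkeeping rather than a substantive new idea: one must check that each structural ingredient used in the algebra case (the $H_C^\ast$-comodule structure on $H_G^\ast(X)$ and its compatibility with the $H_G^\ast$-module structure, the Hopf-algebra image result \`a la \Cref{cor:Borel}, the existence of a Duflot subalgebra of $H_G^\ast(X)$, the freeness argument of \Cref{lem:freeinjective}, and the Duflot regular-sequence input of \Cref{thm:duflot_regular}) carries over when the target is a Borel-equivariant module of the form $H_G^\ast(X)$. All of these are either already noted in the appendix or follow from the same formal arguments once one invokes Lannes' computation $T_E(H_G^\ast(X); j_E) \cong H_{C_G(E)}^\ast(X^E)$; no essentially new algebraic input should be required.
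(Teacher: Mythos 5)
Your proposal is correct and takes essentially the same route as the paper: the appendix's proof consists precisely of noting the structural ingredients you list (the maximal central subgroup $C(G,X)$ acting trivially on $X$, the $H_C^*$-comodule structure on $H_G^*(X)$, the central essential ideal and its Krull dimension bound, and Symonds' bound $\reg(H_G^*(X)) \le \dim(X)-\dim(G)$) and then invoking ``the same argument as in the body of the paper,'' i.e.\ the two-step reduction via \Cref{prop:kuhn2.7} and \Cref{thm:d0cess} that you spell out. If anything, your write-up is more explicit than the paper's about how the inductive reduction and the geometric identities $C_{C_G(E)}(V)=C_G(V)$, $(X^E)^V=X^V$ realize \Cref{lem:center_center}.
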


\section{Depth and dimension}\label{sec:appendix}
In this appendix we briefly recall the notions of depth and dimension of graded-commutative connected Noetherian $k$-algebras for $k$ a field. Given such a $k$-algebra we write $R^j$ for the degree $j$ part of $R$. Hence, $R$ connected means that $R^0 \cong \F_p$ and $R^i = 0$ for  $i<0$. We let $\frak m = R^{> 0}$ denote the maximal homogeneous ideal of $R$. With these assumptions, the commutative algebra of $R$ is much like that of a local ring. We will follow the convention that, unless noted otherwise, everything is taken in the graded sense and ideals and elements of $R$-modules are always taken to be homogeneous.
\begin{defn}
	The Krull dimension of $R$, denoted $\dim(R)$ is the supremum of lengths of strictly increasing chains of prime ideals. For an $R$-module $M$, the dimension of $M$, $\dim_R(M)$ is defined as the dimension of $R/\Ann_R(M)$, where $\Ann_R(M) = \bigcap_{m \in M}\Ann_R(m)$ and
	\[
\Ann_R(m) = \{r \in R \mid rm= 0\}.
	\]
\end{defn}
\begin{defn}
  Let $M$ be an $R$-module, then an $M$-regular sequence is a sequence $y_1,\ldots,y_m$ in $\frak m$ such that $y_i$ is a non-zero divisor on $M/(y_i,\ldots,y_{i-1})$ for $i = 1,\ldots,m$. If $M$ is finitely generated over $R$ then the depth of $M$, denoted $\depth_R(M)$ is the supremum of the length of all $M$ -regular sequences in $\frak m$.
\end{defn}
We have the following useful characterization of $M$-regular sequences, see \cite[Proposition 12.2.1]{CarlsonTownsleyValeriElizondoZhang2003Cohomology}.
\begin{lem}\label{lem:depth_regular}
  Let $M$ be a finitely-generated $R$-module. A sequence $y_1,\ldots,y_m \in \frak m$ of homogeneous elements of $\frak m$ is an $M$-regular sequence if and only if $y_1,\ldots,y_m$ are algebraically independent in $R$ and $M$ is a free module over the polynomial subring $k[y_1,\ldots,y_m] \subseteq R$.
\end{lem}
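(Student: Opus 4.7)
The plan is to prove both implications simultaneously via the Koszul complex associated to $y_1,\ldots,y_m$, combined with the graded Nakayama lemma. The organising observation is that the regular-sequence hypothesis and the freeness-over-polynomial-subring conclusion are both detected by vanishing of $\Tor^T_i(k,M)$ in positive degrees, where $T = k[Y_1,\ldots,Y_m]$ is an auxiliary graded polynomial ring with $|Y_i| = |y_i|$ acting on $M$ through the graded homomorphism $\phi \colon T \to R$, $Y_i \mapsto y_i$.

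For the forward direction, suppose $y_1,\ldots,y_m$ is an $M$-regular sequence. Because $Y_1,\ldots,Y_m$ is itself a regular sequence in $T$, the Koszul complex $K_\bullet^T(Y_1,\ldots,Y_m;T)$ is a free resolution of $k = T/T^{>0}$ as a $T$-module. Tensoring with $M$ over $T$ yields the Koszul complex $K_\bullet(y_1,\ldots,y_m;M)$ of the elements $y_i$ acting on $M$, so its homology computes $\Tor^T_\ast(k,M)$. The standard fact that a regular sequence makes its Koszul complex acyclic in positive degrees now gives $\Tor^T_i(k,M) = 0$ for $i > 0$, whence $M$ is flat as a graded $T$-module. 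Since $M$ is finitely generated over the connected ring $R$, it is bounded below, so flatness promotes to freeness by graded Nakayama: homogeneous lifts of a $k$-basis of $M/T^{>0}M$ produce a surjection $T^{(B)} \twoheadrightarrow M$ whose kernel is killed by $-\otimes_T k$ and therefore vanishes.

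To deduce algebraic independence, let $I = \ker \phi$ and suppose $0 \neq f \in I$. Then $f$ acts as zero on $M$, but $T$ is a domain and $M$ is free over $T$, so multiplication by the non-zero element $f$ is injective on $M$; as $M \neq 0$, this forces $f = 0$, whence $\phi$ is injective. Thus $T \cong S \coloneqq k[y_1,\ldots,y_m] \subseteq R$, and the freeness of $M$ over $T$ transports to freeness over $S$. The converse is routine: if $M \cong S^{(B)}$ with $S$ a polynomial ring, then $M/(y_1,\ldots,y_{i-1})M \cong k[y_i,\ldots,y_m]^{(B)}$, on which multiplication by $y_i$ is visibly injective, giving the regular-sequence property. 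The main technical hurdle is the graded Nakayama step, promoting flatness to freeness without a finite-generation hypothesis over $T$; it rests essentially on the connectedness of $R$ (which forces $M$ to be bounded below), and is the one place where the graded-local structure is used in an essential rather than cosmetic way.
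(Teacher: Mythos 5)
Your proof is correct. The paper itself does not prove this lemma --- it is quoted from \cite[Proposition 12.2.1]{CarlsonTownsleyValeriElizondoZhang2003Cohomology} --- so any complete argument is ``new'' relative to the text; yours is the standard homological one and it goes through. Two small remarks. First, the intermediate claim that $\Tor^T_i(k,M)=0$ for $i>0$ makes $M$ \emph{flat} is doing no work and, stated baldly, would itself require a local criterion; but this is harmless because the argument you actually run (a surjection $T^{(B)}\twoheadrightarrow M$ from homogeneous lifts of a basis of $M/T^{>0}M$, whose kernel is bounded below and killed by $-\otimes_T k$, hence zero by graded Nakayama) uses only $\Tor_1^T(k,M)=0$ and bounded-belowness, and is complete as written. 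You are also right that this is the one genuinely graded-local step and that no finite generation over $T$ is needed. Second, since the ambient ring is only graded-commutative, for the forward direction one should observe that a homogeneous regular element of $\frak m$ is automatically central (an odd-degree element $y$ with $y^2=0$ in odd characteristic cannot be a non-zero-divisor on a nonzero module), so the map $\phi\colon T\to R$ from an honestly commutative polynomial ring is well defined; in the reverse direction the hypothesis that $k[y_1,\ldots,y_m]$ is a polynomial subring already enforces this. Your deduction of algebraic independence from freeness over the domain $T$, and the routine converse, are both fine.
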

We recall that the $\frak m$-torsion in $M$ is
\[
H_{\frak m}^0(M) = \{ x \in M \mid \text{ there exists } n \in \mathbb{N} \text{ with } m^{\frak n}x = 0\}.
\]
This functor is left exact, and we let $H_{\frak m}^i(M)$ denote the higher derived functors, which are the local cohomology modules of $M$. Depth and dimension are related to local cohomology in the following way, see \cite[Corollary 6.2.8]{BrodmannSharp2013Local}.
\begin{prop}\label{prop:depth_local_cohom}
Suppose that $R$ is Noetherian and connected, and let $M$ be a finitely generated $R$-module.
  \begin{enumerate}
    \item The depth of $M$ is the smallest $i$ for which $H_{\frak m}^i(M) \ne 0$.
    \item The dimension of $M$ is the largest $i$ for which $H_{\frak m}^i(M) \ne 0$.
  \end{enumerate}
\end{prop}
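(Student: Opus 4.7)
The plan is to prove both (1) and (2) by the classical arguments with local cohomology, the essential tool being the long exact sequence associated to a suitable short exact sequence of $R$-modules. Neither statement uses anything specific to the unstable-algebra setting, so I would simply adapt the graded-local analogue of the proofs in Brodmann--Sharp.

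For (1), I would argue by induction on $d = \depth_R(M)$. For the base case $d = 0$, the fact that every $x\in\frak m$ is a zero-divisor on $M$ means that $\frak m$ is contained in the union of the associated primes of $M$. By graded prime avoidance this forces $\frak m$ itself to be associated to $M$, i.e., there is an embedding $k \hookrightarrow M$ whose image is $\frak m$-torsion, so $H^0_{\frak m}(M)\ne 0$. Conversely, if $d>0$ pick an $M$-regular element $x\in\frak m$; any element of $H^0_{\frak m}(M)$ is killed by some power of $\frak m$ and in particular by some power of $x$, but $x$ acts injectively on $M$, so $H^0_{\frak m}(M)=0$. For the inductive step, observe that $\depth_R(M/xM) = d-1$. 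The short exact sequence
\[
0 \to M \xrightarrow{\,x\,} M \to M/xM \to 0
\]
yields, for each $i$, an exact piece $H^{i-1}_{\frak m}(M) \to H^{i-1}_{\frak m}(M/xM) \to H^i_{\frak m}(M) \xrightarrow{x} H^i_{\frak m}(M)$. Combined with the fact that each $H^i_{\frak m}(M)$ is $\frak m$-torsion (so $x$ acts locally nilpotently), this forces $H^i_{\frak m}(M)=0$ for $i<d$ and, using that $H^{d-1}_{\frak m}(M/xM)\ne 0$ by induction, embeds it into $H^d_{\frak m}(M)$, showing $H^d_{\frak m}(M)\ne 0$.

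For (2), I would split the statement into Grothendieck vanishing ($H^i_{\frak m}(M)=0$ for $i>\dim_R M$) and non-vanishing at the top. Vanishing is proved by reducing to the case $M = R/\frak p$ via a prime filtration (which exists because $M$ is finitely generated and $R$ Noetherian), and then inducting on $\dim(R/\frak p)$: if $\dim(R/\frak p)=0$ then $R/\frak p$ is a finite-dimensional $k$-vector space annihilated by a power of $\frak m$, so $H^i_{\frak m}(R/\frak p)=0$ for $i>0$; if $\dim(R/\frak p)>0$ pick $y\in\frak m\setminus\frak p$ and use the short exact sequence $0\to R/\frak p \xrightarrow{y} R/\frak p \to R/(\frak p,y)\to 0$ together with $\dim R/(\frak p, y)<\dim R/\frak p$. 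For non-vanishing, set $n=\dim_R M$ and apply graded Noether normalization to $R/\Ann_R(M)$: one obtains a polynomial subring $S=k[y_1,\dots,y_n]$ with $R/\Ann_R(M)$ (and hence $M$) finite over $S$. Because $M$ is finite over $S$ the image of $\frak n = S^{>0}$ generates an $\frak m$-primary ideal of $R/\Ann_R(M)$, so local cohomology can be computed equally well with respect to $\frak n$. For a nonzero finitely generated graded module over the polynomial ring $S$ of Krull dimension $n$ one has $H^n_{\frak n}(M)\ne 0$ (by the Koszul/\v{C}ech complex computation, or by the fact that the $n$-th local cohomology is the cokernel of the canonical map into the localisation at the product of the variables, which is visibly nonzero).

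The routine parts are the long exact sequence manipulations; the main obstacle is the non-vanishing at the top degree in (2), which is the only place one genuinely needs a structural input beyond the long exact sequence, namely graded Noether normalization and the explicit calculation of top local cohomology over a polynomial ring. Since the reader is referred to Brodmann--Sharp for all of this, I would keep the write-up concise and not attempt to re-derive the Koszul-complex identification of $H^n_{\frak n}$ here.
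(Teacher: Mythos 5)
Your proof is correct and is exactly the classical argument; the paper itself gives no proof of this proposition, deferring entirely to \cite[Corollary 6.2.8]{BrodmannSharp2013Local}, and your write-up reproduces the standard reasoning from that reference (induction on depth via the long exact sequence for a regular element, Grothendieck vanishing via a prime filtration, and graded Noether normalization plus the top local cohomology of a polynomial ring for the non-vanishing). You also correctly isolate the one genuinely non-formal ingredient, the non-vanishing of $H^{n}_{\frak n}$ of a faithful finitely generated module over $k[y_1,\dots,y_n]$, so deferring that single point to Brodmann--Sharp is entirely appropriate.
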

The characterization of depth in terms of local cohomology, and the independence theorem for local cohomology (see \cite[Theorem 14.1.7]{BrodmannSharp2013Local}) give the following.
\begin{lem}\label{lem:fgdepth}
  Let $R$ and $R'$ be connected Noetherian unstable algebras, and $f \colon R \to R'$ a finite homomorphism. Let $M$ be a finitely-generated $R'$-module, then
  \[
  \depth_R(M) = \depth_{R'}(M)
  \]
  where $M$ is an $R$-module by restriction of scalars. In particular,
  \[
\depth_R(R') = \depth(R').
  \]
\end{lem}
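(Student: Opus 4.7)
The plan is to reduce the equality of depths to an equality of the relevant local cohomology modules, and then to invoke the independence theorem together with a computation of radicals. By \Cref{prop:depth_local_cohom}, for a finitely generated $R$-module $N$ the depth $\depth_R(N)$ equals the smallest $i$ such that $H_{\frak m}^i(N) \ne 0$. Therefore it suffices to establish, for each $i \ge 0$, an isomorphism of the form
\[
H_{\frak m}^i(M) \cong H_{\frak m'}^i(M),
\]
where $\frak m' = (R')^{>0}$, the left-hand side being computed via the $R$-module structure on $M$, and the right-hand side via the $R'$-module structure.

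The first step is to apply the independence theorem for local cohomology (Theorem 14.1.7 of \cite{BrodmannSharp2013Local}): for the ring homomorphism $f\colon R \to R'$ and the $R'$-module $M$, one has a natural isomorphism $H_{\frak m}^i(M) \cong H_{\frak m R'}^i(M)$, where the ideal $\frak m R'\subseteq R'$ is the extension of $\frak m$ along $f$. Since $M$ is finitely generated over $R'$, and since local cohomology with support in an ideal depends only on the radical of that ideal, it remains to show that $\sqrt{\frak m R'} = \frak m'$ so that $H_{\frak m R'}^i(M) \cong H_{\frak m'}^i(M)$.

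The inclusion $\frak m R' \subseteq \frak m'$ is immediate because $f$ is a map of connected graded algebras, so $f(\frak m) \subseteq \frak m'$. For the reverse inclusion up to radicals, we use that $f$ is a finite morphism: $R'$ is a finitely generated $R$-module, hence $R'/\frak m R'$ is a finitely generated $R/\frak m = k$-module, so $R'/\frak m R'$ is a finite-dimensional graded $k$-vector space. In particular it vanishes in all sufficiently large degrees, which shows that $(\frak m')^N \subseteq \frak m R'$ for some $N$. Hence $\sqrt{\frak m R'} = \frak m'$ as required.

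Combining these steps gives $H_{\frak m}^i(M) \cong H_{\frak m'}^i(M)$ for all $i$, and comparing the smallest $i$ with non-vanishing local cohomology yields $\depth_R(M) = \depth_{R'}(M)$. The ``in particular'' statement follows by specializing to $M = R'$, which is finitely generated both over $R$ (by finiteness of $f$) and over itself, so that $\depth_R(R') = \depth_{R'}(R') = \depth(R')$. The only potential subtlety is verifying the hypotheses of the independence theorem in the graded setting, but since both the isomorphism and the radical computation are compatible with the internal grading, no difficulty arises there.
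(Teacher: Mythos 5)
Your proof is correct and follows essentially the same route as the paper, which deduces the lemma from \Cref{prop:depth_local_cohom} together with the independence theorem for local cohomology. The only addition is that you spell out the radical computation $\sqrt{\frak m R'} = \frak m'$, which the paper leaves implicit.
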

Finally, we will need the following version of a group theoretic theorem of Carlson \cite{CarlsonTownsleyValeriElizondoZhang2003Cohomology}, which is proved by the author in \cite[Theorem 3.5]{heard_depth}.
\begin{thm}\sloppy\label{thm:duflot_regular}
  Let $R$ be a connected Noetherian unstable algebra, and suppose $(E,f) \in \bA_R$ is central. If $x_1,\ldots,x_n$ is a sequence of homogeneous elements in $R$ such that the restrictions of $x_1,\ldots,x_n$ form a regular sequence in $H_E^*$, then $x_1,\ldots,x_n$ is a regular sequence in $R$.
  \end{thm}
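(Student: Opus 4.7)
The plan is to use the coaction $\Delta : R \to H_E^* \otimes R$ supplied by centrality of $(E, f)$, together with a standard ``leading term'' lifting argument, to transfer regularity from $y_i := f(x_i) \in H_E^*$ to $x_i \in R$. By \Cref{prop:central_char}, centrality provides a $\cK$-morphism $\Delta : R \to H_E^* \otimes R$ satisfying $\pi \circ \Delta = \mathrm{id}_R$ for the ring retraction $\pi := \epsilon_E \otimes \mathrm{id}_R$, and $(\mathrm{id}_{H_E^*} \otimes \epsilon_R) \circ \Delta = f$. Since the $y_i$ lie in $(H_E^*)^{>0}$, we have $|x_i|>0$, and decomposing $\Delta(x_i)$ across the bidegree splitting of $H_E^* \otimes R$ using both counit identities yields
\[
\Delta(x_i) \;=\; y_i \otimes 1 \;+\; 1 \otimes x_i \;+\; c_i, \qquad c_i \in H_E^{\geq 1} \otimes R^{\geq 1}.
\]

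The first key step is to prove that $\Delta(x_1), \ldots, \Delta(x_n)$ is a regular sequence in the algebra $A := H_E^* \otimes R$. Filter $A$ by $R$-degree, $G^p A := H_E^* \otimes R^{\geq p}$; this is a decreasing multiplicative filtration whose restriction to any internal total degree of $A$ is bounded (since $R^{\geq p}$ eventually vanishes in each fixed degree), hence automatically separated. The associated graded ring $\mathrm{gr}^G A$ is canonically isomorphic to $H_E^* \otimes R$, and the formula above shows that the leading term of $\Delta(x_i)$, living in $G^0/G^1 \cong H_E^*$, is $y_i \otimes 1$. Since $y_1, \ldots, y_n$ is regular in $H_E^*$ by hypothesis and $R$ is $\F_p$-flat (being an $\F_p$-vector space), the tensored sequence $y_1 \otimes 1, \ldots, y_n \otimes 1$ is regular on $H_E^* \otimes R$. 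A standard leading-term lemma — valid for a filtration that is bounded in each internal degree — then lifts this regularity from $\mathrm{gr}^G A$ back to $A$.

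The second step transfers regularity from $A$ to $R$ via the ring retraction $\pi$. I will check by induction that the map
\[
\bar\Delta_i : R/(x_1,\ldots,x_i) \longrightarrow A/(\Delta(x_1),\ldots,\Delta(x_i))
\]
induced by $\Delta$ is injective: if $\Delta(r) = \sum_{j \leq i} \Delta(x_j)\, z_j$ in $A$, then applying the ring map $\pi$ gives $r = \sum_j x_j\, \pi(z_j) \in (x_1,\ldots,x_i)$. Granted injectivity, suppose $x_{i+1}\bar r = 0$ in $R/(x_1,\ldots,x_i)$; then $\Delta(x_{i+1}) \cdot \bar\Delta_i(\bar r) = 0$ in $A/(\Delta(x_1),\ldots,\Delta(x_i))$, regularity of the $\Delta(x_j)$ in $A$ forces $\bar\Delta_i(\bar r) = 0$, and injectivity gives $\bar r = 0$. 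Hence $x_1,\ldots,x_n$ is regular in $R$.

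The main obstacle will be the leading-term lemma of Step 1: $\Delta(x_i)$ is not concentrated in a single filtration level — only its lowest component $y_i \otimes 1$ in $G^0/G^1$ is usable, while the terms $1 \otimes x_i$ and $c_i$ sit at higher filtration depth — so one must run a finite descent in each internal degree of $A$, iteratively pushing any putative annihilator $\bar r$ into deeper filtration modulo the previous $\Delta(x_j)$'s until the filtration on that degree is exhausted. This graded version of the leading-term lemma is standard commutative algebra, but I will need to state and verify it carefully in the present setting.
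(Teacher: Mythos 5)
The paper does not actually prove this theorem in situ --- it quotes it from \cite[Theorem 3.5]{heard_depth}, whose proof is the unstable-algebra version of Carlson's original argument for central elementary abelian subgroups. Your proposal reproduces that argument in all essentials: take the coaction $\Delta \colon R \to H_E^* \otimes R$ furnished by \Cref{prop:central_char}, note that the two counit identities force $\Delta(x_i) = f(x_i) \otimes 1 + 1 \otimes x_i + c_i$ with $c_i \in H_E^{\geq 1} \otimes R^{\geq 1}$, lift regularity from the associated graded of the second-factor filtration, and push the conclusion back to $R$ along the ring retraction $\epsilon_E \otimes 1$. Each step checks out: $y_1 \otimes 1, \ldots, y_n \otimes 1$ is regular on $H_E^* \otimes R$ because $- \otimes_{\F_p} R$ is exact; the injectivity of the induced maps on quotients follows from $\pi$ being a ring map splitting $\Delta$; and the deferred leading-term lemma is genuinely unproblematic here, since the filtration $H_E^* \otimes R^{\geq p}$ is finite in each internal degree, so the descent terminates. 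The one step you must actually write out is the inductive part of that lemma --- one needs that the initial forms of the ideal $(\Delta(x_1),\ldots,\Delta(x_k))$ are generated by $y_1 \otimes 1,\ldots,y_k \otimes 1$ (not merely contained in that ideal) in order to pass to the next quotient --- but this is exactly the standard-basis descent you sketch, and it goes through. So: correct, and essentially the same route as the cited proof.
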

An easy consequence is the following, see \cite[Corollary 3.6]{heard_depth}, which was originally proved in the case $R = H_G^*$ for $G$ a finite group by Duflot \cite{Duflot1981Depth}.
\begin{cor}\label{cor:duflot}
	Let $R$ be a Noetherian unstable algebra with center $(C,g)$, then
	\[
\depth(R) \ge \rank(C).
	\]
\end{cor}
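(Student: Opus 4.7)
The plan is to deduce this corollary directly from \Cref{thm:duflot_regular}. By the theorem, it suffices to produce a sequence $x_1, \ldots, x_c \in R$ (with $c = \rank(C)$) whose images under $g : R \to H_C^*$ form a regular sequence in $H_C^*$. The central object $(C,g) \in \bA_R$ ensures the hypothesis of the theorem is satisfied, so such a sequence will automatically be regular in $R$, giving $\depth(R) \ge c$.

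First, I would invoke \Cref{prop:restriction_comodule} together with \Cref{cor:Borel} to obtain the explicit description of the image $K = \im(g) \subseteq H_C^*$ as a sub-Hopf algebra; namely, there is a basis $u_1, \ldots, u_c$ of $H_C^1$ (with $v_i = \beta u_i$ when $p$ is odd) such that
\[
K = \begin{cases} \F_2[u_1^{2^{j_1}}, \ldots, u_c^{2^{j_c}}] & p = 2, \\ \F_p[v_1^{p^{j_1}}, \ldots, v_b^{p^{j_b}}, v_{b+1}, \ldots, v_c] \otimes \Lambda(u_{b+1}, \ldots, u_c) & p \text{ odd}. \end{cases}
\]
In both cases $K$ contains $c$ algebraically independent polynomial generators, call them $z_1, \ldots, z_c$ (at $p = 2$ these are the $u_i^{2^{j_i}}$; at $p$ odd these are $v_1^{p^{j_1}}, \ldots, v_b^{p^{j_b}}, v_{b+1}, \ldots, v_c$).

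Next, I would check that $z_1, \ldots, z_c$ form a regular sequence in $H_C^*$. This follows from \Cref{lem:depth_regular}: the elements are algebraically independent in $H_C^*$, and $H_C^*$ is a free module over $\F_p[z_1, \ldots, z_c]$ (as a polynomial ring in the remaining $u_i$, tensored with the appropriate exterior algebra, decomposes as a free module over this polynomial subring). By construction each $z_i \in K = g(R)$, so we may choose lifts $x_1, \ldots, x_c \in R$ with $g(x_i) = z_i$.

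Finally, applying \Cref{thm:duflot_regular} to the central pair $(C,g)$ and the sequence $x_1, \ldots, x_c$, whose restrictions $g(x_1), \ldots, g(x_c) = z_1, \ldots, z_c$ are regular in $H_C^*$, yields that $x_1, \ldots, x_c$ is a regular sequence in $R$. Hence $\depth(R) \ge c = \rank(C)$, as required. There is no real obstacle here beyond unpacking the Hopf-algebraic description of $\im(g)$; the substantive content lies entirely in \Cref{thm:duflot_regular}, and the corollary is a bookkeeping exercise once one has $c$ polynomial generators inside $g(R)$.
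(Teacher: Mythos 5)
Your proof is correct and is exactly the argument the paper has in mind: the corollary is stated as an easy consequence of \Cref{thm:duflot_regular} (with the details deferred to \cite[Corollary 3.6]{heard_depth}), and lifting the $c$ polynomial generators of $\im(g)$ from \Cref{cor:Borel} to a regular sequence in $R$ is precisely that argument. The only point worth making explicit is the one you already handle at odd primes, namely that the exterior generators of $K$ must be discarded and only the $c$ polynomial generators used, over which $H_C^*$ is visibly free so that \Cref{lem:depth_regular} applies.
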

We will say that $R$ has minimal depth if $\depth(R) = \rank(C)$.
\bibliographystyle{alpha}
\bibliography{nilpotence}

\begin{thebibliography}{CTVEZ03}

\bibitem[BC99]{BrotoCrespo1999spaces}
Carlos Broto and Juan~A. Crespo.
\newblock {$H$}-spaces with {N}oetherian mod two cohomology algebra.
\newblock {\em Topology}, 38(2):353--386, 1999.

\bibitem[BCHV19]{bchv}
Tobias Barthel, Nat\`alia Castellana, Drew Heard, and Gabriel Valenzuela.
\newblock Stratification and duality for homotopical groups.
\newblock {\em Adv. Math.}, 354:106733, 61, 2019.

\bibitem[BH93a]{BrotoHenn1993Some}
Carlos Broto and Hans-Werner Henn.
\newblock Some remarks on central elementary abelian {$p$}-subgroups and
  cohomology of classifying spaces.
\newblock {\em Quart. J. Math. Oxford Ser. (2)}, 44(174):155--163, 1993.

\bibitem[BH93b]{BrunsHerzog1993CohenMacaulay}
Winfried Bruns and J\"{u}rgen Herzog.
\newblock {\em Cohen-{M}acaulay rings}, volume~39 of {\em Cambridge Studies in
  Advanced Mathematics}.
\newblock Cambridge University Press, Cambridge, 1993.

\bibitem[BK72]{BousfieldKan1972Homotopy}
A.~K. Bousfield and D.~M. Kan.
\newblock {\em Homotopy limits, completions and localizations}.
\newblock Lecture Notes in Mathematics, Vol. 304. Springer-Verlag, Berlin-New
  York, 1972.

\bibitem[BK02]{BrotoKitchloo2002Classifying}
Carles Broto and Nitu Kitchloo.
\newblock Classifying spaces of {K}ac-{M}oody groups.
\newblock {\em Math. Z.}, 240(3):621--649, 2002.

\bibitem[BLO03a]{BrotoLeviOliver2003Homotopy2}
Carles Broto, Ran Levi, and Bob Oliver.
\newblock Homotopy equivalences of {$p$}-completed classifying spaces of finite
  groups.
\newblock {\em Invent. Math.}, 151(3):611--664, 2003.

\bibitem[BLO03b]{BrotoLeviOliver2003homotopy}
Carles Broto, Ran Levi, and Bob Oliver.
\newblock The homotopy theory of fusion systems.
\newblock {\em J. Amer. Math. Soc.}, 16(4):779--856, 2003.

\bibitem[BLO04]{BrotoLeviOliver2004theory}
Carles Broto, Ran Levi, and Bob Oliver.
\newblock The theory of {$p$}-local groups: a survey.
\newblock In {\em Homotopy theory: relations with algebraic geometry, group
  cohomology, and algebraic {$K$}-theory}, volume 346 of {\em Contemp. Math.},
  pages 51--84. Amer. Math. Soc., Providence, RI, 2004.

\bibitem[BLO07]{BrotoLeviOliver2007Discrete}
Carles Broto, Ran Levi, and Bob Oliver.
\newblock Discrete models for the {$p$}-local homotopy theory of compact {L}ie
  groups and {$p$}-compact groups.
\newblock {\em Geom. Topol.}, 11:315--427, 2007.

\bibitem[BLO14]{BrotoLeviOliver2014algebraic}
Carles Broto, Ran Levi, and Bob Oliver.
\newblock An algebraic model for finite loop spaces.
\newblock {\em Algebr. Geom. Topol.}, 14(5):2915--2981, 2014.

\bibitem[BS13]{BrodmannSharp2013Local}
M.~P. Brodmann and R.~Y. Sharp.
\newblock {\em Local cohomology}, volume 136 of {\em Cambridge Studies in
  Advanced Mathematics}.
\newblock Cambridge University Press, Cambridge, second edition, 2013.
\newblock An algebraic introduction with geometric applications.

\bibitem[BZ88]{BrotoZarati1988Nillocalization}
C.~Broto and S.~Zarati.
\newblock Nil-localization of unstable algebras over the {S}teenrod algebra.
\newblock {\em Math. Z.}, 199(4):525--537, 1988.

\bibitem[BZ97]{BourguibaZarati1997Depth}
Dorra Bourguiba and Said Zarati.
\newblock Depth and the {S}teenrod algebra.
\newblock {\em Invent. Math.}, 128(3):589--602, 1997.
\newblock With an appendix by J. Lannes.

\bibitem[Car83]{Carlsson1983G}
Gunnar Carlsson.
\newblock G. {B}. {S}egal's {B}urnside ring conjecture for {$({\bf
  Z}/2)\sp{k}$}.
\newblock {\em Topology}, 22(1):83--103, 1983.

\bibitem[Car95]{Carlson1995Depth}
Jon~F. Carlson.
\newblock Depth and transfer maps in the cohomology of groups.
\newblock {\em Math. Z.}, 218(3):461--468, 1995.

\bibitem[Che13]{Chermak2013Fusion}
Andrew Chermak.
\newblock Fusion systems and localities.
\newblock {\em Acta Math.}, 211(1):47--139, 2013.

\bibitem[Cre01]{Crespo}
Juan~A. Crespo.
\newblock Structure of mod {$p$} {$H$}-spaces with finiteness conditions.
\newblock In {\em Cohomological methods in homotopy theory ({B}ellaterra,
  1998)}, volume 196 of {\em Progr. Math.}, pages 103--130. Birkh\"{a}user,
  Basel, 2001.

\bibitem[CTVEZ03]{CarlsonTownsleyValeriElizondoZhang2003Cohomology}
Jon~F. Carlson, Lisa Townsley, Luis Valeri-Elizondo, and Mucheng Zhang.
\newblock {\em Cohomology rings of finite groups}, volume~3 of {\em Algebra and
  Applications}.
\newblock Kluwer Academic Publishers, Dordrecht, 2003.
\newblock With an appendix: Calculations of cohomology rings of groups of order
  dividing 64 by Carlson, Valeri-Elizondo and Zhang.

\bibitem[Duf81]{Duflot1981Depth}
J.~Duflot.
\newblock Depth and equivariant cohomology.
\newblock {\em Comment. Math. Helv.}, 56(4):627--637, 1981.

\bibitem[DW90]{DwyerWilkerson1990Spaces}
William~G. Dwyer and Clarence~W. Wilkerson.
\newblock Spaces of null homotopic maps.
\newblock {\em Ast\'{e}risque}, (191):6, 97--108, 1990.
\newblock International Conference on Homotopy Theory (Marseille-Luminy, 1988).

\bibitem[DW92]{DwyerWilkerson1992cohomology}
W.~G. Dwyer and C.~W. Wilkerson.
\newblock A cohomology decomposition theorem.
\newblock {\em Topology}, 31(2):433--443, 1992.

\bibitem[DW93]{DwyerWilkerson1993new}
W.~G. Dwyer and C.~W. Wilkerson.
\newblock A new finite loop space at the prime two.
\newblock {\em J. Amer. Math. Soc.}, 6(1):37--64, 1993.

\bibitem[DW94]{DwyerWilkerson1994Homotopy}
W.~G. Dwyer and C.~W. Wilkerson.
\newblock Homotopy fixed-point methods for {L}ie groups and finite loop spaces.
\newblock {\em Ann. of Math. (2)}, 139(2):395--442, 1994.

\bibitem[DW98]{DwyerWilkerson1998Kahler}
W.~G. Dwyer and C.~W. Wilkerson.
\newblock K\"{a}hler differentials, the {$T$}-functor, and a theorem of
  {S}teinberg.
\newblock {\em Trans. Amer. Math. Soc.}, 350(12):4919--4930, 1998.

\bibitem[Gon16]{Gonzalez2016Finite}
Alex Gonzalez.
\newblock Finite approximations of {$p$}-local compact groups.
\newblock {\em Geom. Topol.}, 20(5):2923--2995, 2016.

\bibitem[Grb06]{Grbic2006cohomology}
Jelena Grbi\'{c}.
\newblock The cohomology of exotic 2-local finite groups.
\newblock {\em Manuscripta Math.}, 120(3):307--318, 2006.

\bibitem[Gre03]{Green2003Carlsons}
David~J. Green.
\newblock On {C}arlson's depth conjecture in group cohomology.
\newblock {\em Math. Z.}, 244(4):711--723, 2003.

\bibitem[Hea20]{heard_depth}
Drew Heard.
\newblock Depth and detection for noetherian unstable algebras.
\newblock 2020.
\newblock arXiv:1907.06373. To appear in \emph{Transactions of the American
  Mathematical Society}.

\bibitem[Hen96]{Henn1996Commutative}
Hans-Werner Henn.
\newblock Commutative algebra of unstable {$K$}-modules, {L}annes'
  {$T$}-functor and equivariant mod-{$p$} cohomology.
\newblock {\em J. Reine Angew. Math.}, 478:189--215, 1996.

\bibitem[Hen98a]{Henn1998Centralizers}
Hans-Werner Henn.
\newblock Centralizers of elementary abelian {$p$}-subgroups and mod-{$p$}
  cohomology of profinite groups.
\newblock {\em Duke Math. J.}, 91(3):561--585, 1998.

\bibitem[Hen98b]{henn_notes}
Hans-Werner Henn.
\newblock Unstable modules over the {S}teenrod algebra and cohomology of
  groups.
\newblock In {\em Group representations: cohomology, group actions and topology
  ({S}eattle, {WA}, 1996)}, volume~63 of {\em Proc. Sympos. Pure Math.}, pages
  277--300. Amer. Math. Soc., Providence, RI, 1998.

\bibitem[HLS93]{HennLannesSchwartz1993categories}
Hans-Werner Henn, Jean Lannes, and Lionel Schwartz.
\newblock The categories of unstable modules and unstable algebras over the
  {S}teenrod algebra modulo nilpotent objects.
\newblock {\em Amer. J. Math.}, 115(5):1053--1106, 1993.

\bibitem[HLS95]{HennLannesSchwartz1995Localizations}
Hans-Werner Henn, Jean Lannes, and Lionel Schwartz.
\newblock Localizations of unstable {$A$}-modules and equivariant mod {$p$}
  cohomology.
\newblock {\em Math. Ann.}, 301(1):23--68, 1995.

\bibitem[KL15]{KessarLinckelmann2015CastelnuovoMumford}
Radha Kessar and Markus Linckelmann.
\newblock On the {C}astelnuovo-{M}umford regularity of the cohomology of fusion
  systems and of the {H}ochschild cohomology of block algebras.
\newblock In {\em Groups {S}t {A}ndrews 2013}, volume 422 of {\em London Math.
  Soc. Lecture Note Ser.}, pages 324--330. Cambridge Univ. Press, Cambridge,
  2015.

\bibitem[Kuh07]{Kuhn2007Primitives}
Nicholas~J. Kuhn.
\newblock Primitives and central detection numbers in group cohomology.
\newblock {\em Adv. Math.}, 216(1):387--442, 2007.

\bibitem[Kuh13]{Kuhn2013Nilpotence}
Nicholas~J. Kuhn.
\newblock Nilpotence in group cohomology.
\newblock {\em Proc. Edinb. Math. Soc. (2)}, 56(1):151--175, 2013.

\bibitem[Lan86]{lannes_unpublished}
Jean Lannes.
\newblock Cohomology of groups and function spaces.
\newblock 1986.
\newblock Unpublished manuscript.

\bibitem[Lan92]{lannes_ihes}
Jean Lannes.
\newblock Sur les espaces fonctionnels dont la source est le classifiant d'un
  {$p$}-groupe ab\'{e}lien \'{e}l\'{e}mentaire.
\newblock {\em Inst. Hautes \'{E}tudes Sci. Publ. Math.}, (75):135--244, 1992.
\newblock With an appendix by Michel Zisman.

\bibitem[LL15]{LeviLibman2015Existence}
Ran Levi and Assaf Libman.
\newblock Existence and uniqueness of classifying spaces for fusion systems
  over discrete {$p$}-toral groups.
\newblock {\em J. Lond. Math. Soc. (2)}, 91(1):47--70, 2015.

\bibitem[LO02]{LeviOliver2002Construction}
Ran Levi and Bob Oliver.
\newblock Construction of 2-local finite groups of a type studied by {S}olomon
  and {B}enson.
\newblock {\em Geom. Topol.}, 6:917--990, 2002.

\bibitem[L{\"{u}}c89]{Luck1989Transformation}
Wolfgang L{\"{u}}ck.
\newblock {\em Transformation groups and algebraic {$K$}-theory}, volume 1408
  of {\em Lecture Notes in Mathematics}.
\newblock Springer-Verlag, Berlin, 1989.
\newblock Mathematica Gottingensis.

\bibitem[LZ86]{LannesZarati1986Sur}
Jean Lannes and Sa\"{\i}d Zarati.
\newblock Sur les {${\cal U}$}-injectifs.
\newblock {\em Ann. Sci. \'{E}cole Norm. Sup. (4)}, 19(2):303--333, 1986.

\bibitem[Mil84]{Miller1984Sullivan}
Haynes Miller.
\newblock The {S}ullivan conjecture on maps from classifying spaces.
\newblock {\em Ann. of Math. (2)}, 120(1):39--87, 1984.

\bibitem[Mis92]{Mislin1992Cohomologically}
Guido Mislin.
\newblock Cohomologically central elements and fusion in groups.
\newblock In {\em Algebraic topology ({S}an {F}eliu de {G}u\'{\i}xols, 1990)},
  volume 1509 of {\em Lecture Notes in Math.}, pages 294--300. Springer,
  Berlin, 1992.

\bibitem[MM65]{MilnorMoore1965structure}
John~W. Milnor and John~C. Moore.
\newblock On the structure of {H}opf algebras.
\newblock {\em Ann. of Math. (2)}, 81:211--264, 1965.

\bibitem[MS05]{MeyerSmith2005Poincare}
Dagmar~M. Meyer and Larry Smith.
\newblock {\em Poincar\'{e} duality algebras, {M}acaulay's dual systems, and
  {S}teenrod operations}, volume 167 of {\em Cambridge Tracts in Mathematics}.
\newblock Cambridge University Press, Cambridge, 2005.

\bibitem[NR10]{NotbohmRay2010DavisJanuszkiewicz}
Dietrich Notbohm and Nigel Ray.
\newblock On {D}avis-{J}anuszkiewicz homotopy types {II}: completion and
  globalisation.
\newblock {\em Algebr. Geom. Topol.}, 10(3):1747--1780, 2010.

\bibitem[NS02]{NeuselSmith2002Invariant}
Mara~D. Neusel and Larry Smith.
\newblock {\em Invariant theory of finite groups}, volume~94 of {\em
  Mathematical Surveys and Monographs}.
\newblock American Mathematical Society, Providence, RI, 2002.

\bibitem[Pow01]{powell_tensor}
Geoffrey M.~L. Powell.
\newblock The tensor product theorem for {$\widetilde\nabla$}-nilpotence and
  the dimension of unstable modules.
\newblock {\em Math. Proc. Cambridge Philos. Soc.}, 130(3):427--439, 2001.

\bibitem[Pow07]{powell}
Geoffrey Powell.
\newblock Unstable {$K$}-modules and the nilpotent filtration.
\newblock 2007.
\newblock Available online at
  \url{http://www.math.univ-angers.fr/~powell/documents/2007/kmod.pdf}.

\bibitem[Qui71]{Quillen1971spectrum}
Daniel Quillen.
\newblock The spectrum of an equivariant cohomology ring. {I}, {II}.
\newblock {\em Ann. of Math. (2)}, 94:549--572; ibid. (2) 94 (1971), 573--602,
  1971.

\bibitem[Rec84]{Rector1984Noetherian}
D.~L. Rector.
\newblock Noetherian cohomology rings and finite loop spaces with torsion.
\newblock {\em J. Pure Appl. Algebra}, 32(2):191--217, 1984.

\bibitem[RV04]{RuizViruel2004classification}
Albert Ruiz and Antonio Viruel.
\newblock The classification of {$p$}-local finite groups over the extraspecial
  group of order {$p^3$} and exponent {$p$}.
\newblock {\em Math. Z.}, 248(1):45--65, 2004.

\bibitem[Sch88]{Schwartz1988La}
Lionel Schwartz.
\newblock La filtration nilpotente de la categorie {${\mathscr U}$} et la
  cohomologie des espaces de lacets.
\newblock In {\em Algebraic topology---rational homotopy ({L}ouvain-la-{N}euve,
  1986)}, volume 1318 of {\em Lecture Notes in Math.}, pages 208--218.
  Springer, Berlin, 1988.

\bibitem[Sch94]{schwartz_book}
Lionel Schwartz.
\newblock {\em Unstable modules over the {S}teenrod algebra and {S}ullivan's
  fixed point set conjecture}.
\newblock Chicago Lectures in Mathematics. University of Chicago Press,
  Chicago, IL, 1994.

\bibitem[Sym10]{Symonds2010CastelnuovoMumford}
Peter Symonds.
\newblock On the {C}astelnuovo-{M}umford regularity of the cohomology ring of a
  group.
\newblock {\em J. Amer. Math. Soc.}, 23(4):1159--1173, 2010.

\bibitem[Tot14]{Totaro2014Group}
Burt Totaro.
\newblock {\em Group cohomology and algebraic cycles}, volume 204 of {\em
  Cambridge Tracts in Mathematics}.
\newblock Cambridge University Press, Cambridge, 2014.

\end{thebibliography}

\end{document}